\theoremstyle{plain}
\newtheorem{theorem}{Theorem}[section]
\newtheorem{lemma}[theorem]{Lemma}
\newtheorem{proposition}[theorem]{Proposition}
\newtheorem{corollary}[theorem]{Corollary}
\theoremstyle{definition}
\newtheorem{definition}[theorem]{Definition}
\theoremstyle{remark}
\newtheorem{remark}[theorem]{Remark}
\DeclareMathOperator{\fix}{{\rm fix}}
\DeclareMathOperator{\supp}{{\rm supp}}
\DeclareMathOperator{\sym}{{\rm sym}}
\DeclareMathOperator{\cl}{{\rm cl}}
\begin{document}
\title{Second-countable compact Hausdorff spaces as remainders in $\mathbf{ZF}$ and two new notions of infiniteness}
\author{Kyriakos Keremedis, Eleftherios Tachtsis and Eliza Wajch\\
Department of Mathematics, University of the Aegean\\
Karlovassi, Samos 83200, Greece\\
kker@aegean.gr\\
Department of Statistics and Actuarial-Financial Mathematics,\\
 University of the Aegean, Karlovassi 83200, Samos, Greece\\
 ltah@aegean.gr\\
Institute of Mathematics\\
Faculty of Exact and Natural Sciences \\
Siedlce University of Natural Sciences and Humanities\\
ul. 3 Maja 54, 08-110 Siedlce, Poland\\
eliza.wajch@wp.pl}
\maketitle
\begin{abstract}

In the absence of the Axiom of Choice, necessary and sufficient conditions for a locally compact Hausdorff space to have all non-empty second-countable compact Hausdorff spaces as remainders are given in $\mathbf{ZF}$. Among other independence results, the characterization of locally compact Hausdorff spaces having all non-empty metrizable compact spaces as remainders, obtained by Hatzenhuhler and Mattson in $\mathbf{ZFC}$, is proved to be independent of $\mathbf{ZF}$. Urysohn's Metrization Theorem is generalized to the following theorem: every $T_3$-space which admits a base expressible as a countable union of finite sets is metrizable. Applications to solutions of problems concerning the existence of some special metrizable compactifications in $\mathbf{ZF}$ are shown. New concepts of a strongly filterbase infinite set and a dyadically filterbase infinite set are introduced, both stemming from the investigations on compactifications. Set-theoretic and topological definitions of the new concepts are given, and their relationship with certain known notions of infinite sets is investigated in $\mathbf{ZF}$. A new permutation model is introduced in which there exists a strongly filterbase infinite set which is weakly Dedekind-finite. All $\mathbf{ZFA}$-independence results of this article are transferable to $\mathbf{ZF}$. \medskip

\noindent\textit{Mathematics Subject Classification (2010)}:03E25, 03E35, 54A35, 54D35, 54D40, 54D45, 54E35\newline 
\textit{Keywords}: Weak forms of the Axiom of Choice, compactification, remainder, Cantor set, metrizability, Urysohn's Metrization Theorem
\end{abstract}

\section{Preliminaries}
\label{s1}
\subsection{Set-theoretic framework and preliminary definitions}
\label{s1.1}
In this note, the intended context for reasoning and statements of theorems is $\mathbf{ZF}$ without any form of the Axiom of Choice $\mathbf{AC}$. Before we pass to the content of the paper, let us establish basic terminology and notation, give a list of the weaker forms of $\mathbf{AC}$ that are used in this article, and recall several known theorems we refer to in the main part of the text.

We denote by $ON$ the class of all (von Neumann) ordinal numbers. The first infinite ordinal number is denoted by $\omega$. Then $\mathbb{N}=\omega\setminus\{0\}$. If $X$ is a set, the power set of  $X$ is denoted by $\mathcal{P}(X)$. A set $X$ is called \emph{finite} if $X$ is equipotent to a member of $\omega$; otherwise $X$ is called \emph{infinite}. A set $X$ is called \emph{countable} if $X$ is equipotent to a subset of $\omega$. An infinite countable set is called \emph{denumerable}. The set of all finite subsets of $X$ is denoted by $[X]^{<\omega}$. For every set $S$ and every ordinal $\alpha$, the set $\mathcal{P}^{\alpha}(S)$ is defined by a transfinite induction on ordinals as follows: $\mathcal{P}^{0}(S)=S$, $\mathcal{P}^{\gamma}(S)=
\bigcup\limits_{\beta\in\gamma} \mathcal{P}^{\beta}(S)$ if $\gamma$ is a limit ordinal, and $\mathcal{P}^{\gamma +1}(S)
=\mathcal{P}^{\gamma}(S)\cup\mathcal{P}(\mathcal{P}^{\gamma}(S))$ (cf. \cite[p. 43]{Je}).  If $\alpha\in On$ and it is not stated otherwise, $\alpha$ denotes also the topological space $\langle \alpha, \tau\rangle$ where $\tau$ is the topology in $\alpha$ induced by the standard linear order in $\alpha$ defined as follows: for all $x,y\in\alpha$, $x\leq y$ if and only if $x\subseteq y$.

As usual, the system $\mathbf{ZF+AC}$ is denoted by $\mathbf{ZFC}$. To stress the fact that a result is proved in $\mathbf{ZF}$ (respectively, $\mathbf{ZFC}$), we shall write at the beginning of the statements of the theorems and propositions ($\mathbf{ZF}$) or ($\mathbf{ZFC}$), respectively. Apart from models of
$\mathbf{ZF}$, we refer to some permutation models of $\mathbf{ZFA}$. Basic facts about permutation models (called also Fraenkel-Mostowski models) and Pincus transfer theorems that are applied here are given, for instance, in \cite{Je}, \cite{hr}, \cite{ktw0} and \cite{pin1}-\cite{pin2}.

In the sequel,  topological or metric spaces (called \emph{spaces} in abbreviation) are denoted by boldface letters, and the underlying sets of the spaces are denoted by lightface letters. All topological notions used in this article but not defined here are standard and they can be found, for instance, in \cite{En},  \cite{w} and \cite{ch}.

For a topological space $\mathbf{X}=\langle X, \tau\rangle$ and for $Y\subseteq X$, let $\tau|_Y=\{V\cap Y: V\in\tau\}$ and let $\mathbf{Y}=\langle Y, \tau|_Y\rangle$. Then $\mathbf{Y}$ is the topological subspace of $\mathbf{X}$ such that $Y$ is the underlying set of $\mathbf{Y}$. If this is not misleading, we may denote the topological subspace $\mathbf{Y}$ of $\mathbf{X}$ by $Y$. We denote by $\cl_{\mathbf{X}}(Y)$ or by $\cl_{\tau}(Y)$  the closure of $Y$ in $\mathbf{X}$. The collection of all compact subsets of $\mathbf{X}$ is denoted by $\mathcal{K}(\mathbf{X})$.

For a metric space $\mathbf{X}=\langle X, d\rangle$, the $d$-\textit{ball with centre $x\in X$ and radius} $r\in(0, +\infty)$ is the set 
$$B_{d}(x, r)=\{ y\in X: d(x, y)<r\}.$$
 The collection 
$$\tau(d)=\{ V\subseteq X: (\forall x\in V)(\exists n\in\omega) B_{d}(x, \frac{1}{2^n})\subseteq V\}$$
is the \textit{topology in $X$ induced by $d$}. For a set $A\subseteq X$, let $\delta_d(A)=0$ if $A=\emptyset$, and let $\delta_d(A)=\sup\{d(x,y): x,y\in A\}$ if $A\neq \emptyset$. Then $\delta_d(A)$ is the \emph{diameter} of $A$ in $\langle X, d\rangle$.  If $Y\subseteq X$, then $d_Y=d\upharpoonright Y\times Y$ and $\mathbf{Y}=\langle Y, d_Y\rangle$. Then $\mathbf{Y}$ is the metric subspace of $\mathbf{X}$ such that $Y$ is the underlying set of $\mathbf{Y}$. If this is not misleading, given a metric space $\mathbf{X}=\langle X, d\rangle$, we also denote by $\mathbf{X}$ the topological space $\langle X, \tau(d)\rangle$. For every $n\in\mathbb{N}$, $\mathbb{R}^n$ denotes also $\langle \mathbb{R}^{n}, d_e\rangle$ and $\langle \mathbb{R}^n, \tau(d_e)\rangle$  where $d_e$ is the Euclidean metric on $\mathbb{R}^n$. Subsets of $\mathbb{R}^n$, if not stated otherwise, are considered as metric subspaces of $\langle \mathbb{R}^n, d_e\rangle$ or topological subspaces of $\langle\mathbb{R}^n, \tau(d_e)\rangle$. 

We recall that a (Hausdorff) \emph{compactification} of a space $\mathbf{X}=\langle X, \tau\rangle$ is an ordered pair $\langle\mathbf{Y},\gamma\rangle$ where $\mathbf{Y}$ is a  (Hausdorff) compact space and $\gamma: \mathbf{X}\to\mathbf{Y}$ is a homeomorphic embedding such that $\gamma(X)$ is dense in $\mathbf{Y}$. A compactification  $\langle \mathbf{Y}, \gamma\rangle$ of $\mathbf{X}$ and the space $\mathbf{Y}$ are usually denoted by $\gamma\mathbf{X}$. The underlying set of $\gamma\mathbf{X}$ is denoted by $\gamma X$. The subspace $\gamma X\setminus X$ of $\gamma \mathbf{X}$ is called the \emph{remainder} of $\gamma\mathbf{X}$. A space $\mathbf{K}$ is said to be a remainder of $\mathbf{X}$ if there exists a Hausdorff compactification $\gamma\mathbf{X}$ of $\mathbf{X}$ such that $\mathbf{K}$ is homeomorphic to $\gamma X \setminus X$. For compactifications  $\alpha\mathbf{X}$ and $\gamma\mathbf{X}$ of $\mathbf{X}$, we write $\gamma\mathbf{X}\leq\alpha\mathbf{X}$ if there exists a continuous mapping $f:\alpha\mathbf{X}\to\gamma\mathbf{X}$ such that $f\circ\alpha=\gamma$. If $\alpha\mathbf{X}$ and $\gamma\mathbf{X}$ are Hausdorff compactifications of $\mathbf{X}$ such that $\alpha\mathbf{X}\leq\gamma\mathbf{X}$ and $\gamma\mathbf{X}\leq\alpha\mathbf{X}$, then we write $\alpha\mathbf{X}\approx\gamma\mathbf{X}$ and say that the compactifications $\alpha\mathbf{X}$ and $\gamma\mathbf{X}$ are \emph{equivalent}. If $n\in\mathbb{N}$, then a compactification $\gamma\mathbf{X}$ of $\mathbf{X}$ is said to be an $n$-point compactification of $\mathbf{X}$ if $\gamma X\setminus X$ is an $n$-element set. If $\mathbf{X}$ is a non-compact locally compact Hausdorff space, then there exists a unique (up to $\approx$) one-point Hausdorff compactification of $\mathbf{X}$ which can be defined as follows:   

\begin{definition}
\label{s1d01}
Let $\mathbf{X}=\langle X, \tau\rangle$ be a non-compact locally compact Hausdorff space. For an element $\infty\notin X$, we define $X(\infty)=X\cup\{\infty\}$, 
$$\tau(\infty)=\tau\cup\{X(\infty)\setminus K: K\in\mathcal{K}(\mathbf{X})\}$$
\noindent and $\mathbf{X}(\infty)=\langle X(\infty), \tau(\infty)\rangle$. Then $\mathbf{X}(\infty)$ is called the \emph{Alexandroff compactification} of $\mathbf{X}$.
\end{definition}

For every non-compact locally compact Hausdorff space $\mathbf{X}$, $\mathbf{X}(\infty)$ is the unique (up to $\approx$) one-point Hausdorff compactification of $\mathbf{X}$. Clearly, $\mathbb{N}(\infty)$ is homeomorphic to $\omega+1$.  

\begin{definition}
\label{s1d0.2}
A Hausdorff compactification $\beta\mathbf{X}$ of a space $\mathbf{X}$ is called the \emph{\v Cech-Stone compactification} of $\mathbf{X}$ if, for every compact Hausdorff space $\mathbf{K}$ and every continuous mapping $f:\mathbf{X}\to\mathbf{K}$, there exists a continuous extension $\tilde{f}: \beta\mathbf{X}\to \mathbf{K}$ of $f$ over $\beta\mathbf{X}$.
\end{definition}

In $\mathbf{ZFC}$, every Tychonoff space has its \v Cech-Stone compactification; however, in a model of $\mathbf{ZF}$,  a Tychonoff space may fail to have its \v Cech-Stone compactification (cf., e.g., \cite[Theorem 3.7]{kw0}). The book \cite{ch} is a very good introduction to Hausdorff compactifications in $\mathbf{ZFC}$.  Basic facts about Hausdorff compactifications in $\mathbf{ZF}$ can be found in \cite{kw0}.

Given a collection  $\{X_j: j\in J\}$ of sets, for every $i\in J$, we denote by $\pi_i$ the projection $\pi_i:\prod\limits_{j\in J}X_j\to X_i$ defined by $\pi_i(x)=x(i)$ for each $x\in\prod\limits_{j\in J}X_j$. If $\tau_j$ is a topology in $X_j$, then $\mathbf{X}=\prod\limits_{j\in J}\mathbf{X}_j$ denotes the Tychonoff product of the topological spaces $\mathbf{X}_j=\langle X_j, \tau_j\rangle$ with $j\in J$. If $\mathbf{X}_j=\mathbf{X}$ for every $j\in J$, then $\mathbf{X}^{J}=\prod\limits_{j\in J}\mathbf{X}_j$. As in \cite{En}, for an infinite set $J$ and the unit interval $[0,1]$ of $\mathbb{R}$, the cube $[0,1]^J$ is called the \emph{Tychonoff cube}. If $J$ is denumerable, then the Tychonoff cube $[0,1]^J$ is called the \emph{Hilbert cube}. We denote by $\mathbf{2}$ the discrete space with the underlying set $2=\{0, 1\}$. If $J$ is an infinite set, the space $\mathbf{2}^J$ is called the \emph{Cantor cube}. The Cantor cube $\mathbf{2}^{\omega}$ is known to be homeomorphic to the Cantor ternary set.

We recall that if $\prod\limits_{j\in J}X_j\neq\emptyset$, then it is said that the family $\{X_j: j\in J\}$ has a choice function, and every element of $\prod\limits_{j\in J}X_j$ is called a \emph{choice function} of the family $\{X_j: j\in J\}$. A \emph{multiple choice function} of $\{X_j: j\in J\}$ is every function $f\in\prod\limits_{j\in J}\mathcal{P}(X_j)$ such that, for every $j\in J$, $f(j)$ is a non-empty finite subset of $X_j$. A set $f$ is called \emph{partial} (\emph{multiple}) \emph{choice function} of $\{X_j: j\in J\}$ if there exists an infinite subset $I$ of $J$ such that $f$ is a (multiple) choice function of $\{X_j: j\in I\}$. Given a non-indexed family $\mathcal{A}$, we treat $\mathcal{A}$ as an indexed family $\mathcal{A}=\{x: x\in\mathcal{A}\}$ to speak about a  (partial) choice function and a (partial) multiple choice function of $\mathcal{A}$.

Let  $\{X_j: j\in J\}$ be a disjoint family of sets, that is, $X_i\cap X_j=\emptyset$ for each pair $i,j$ of distinct elements of $J$. If $\tau_j$ is a topology in $X_j$ for every $j\in J$, then $\bigoplus\limits_{j\in J}\mathbf{X}_j$ denotes the direct sum of the spaces $\mathbf{X}_j=\langle X_j, \tau_j\rangle$ with $j\in J$.

\begin{definition}
\label{s1d06}
(Cf. \cite{br}, \cite{lo} and \cite{kerta}.) 
\begin{enumerate}
\item[(i)] A space $\mathbf{X}$ is said to be \emph{Loeb} (respectively, \emph{weakly Loeb}) if the family of all non-empty closed subsets of $\mathbf{X}$ has a choice function (respectively, a multiple choice function).
\item[(ii)] If $\mathbf{X}$ is a (weakly) Loeb space, then every (multiple) choice function of the family of all non-empty closed subsets of $\mathbf{X}$ is called a (\emph{weak}) \emph{Loeb function} of $\mathbf{X}$.
\end{enumerate}
\end{definition}

That spaces $\mathbf{X}$ and $\mathbf{Y}$ are homeomorphic is denoted by $\mathbf{X}\simeq\mathbf{Y}$.

\begin{definition}
\label{s1d03}
A collection $\mathcal{V}$ of subsets of a set $X$ is called:
\begin{enumerate}
\item[(i)] \emph{stable under finite unions} (respectively, \emph{ finite intersections}) if, for every pair $U,V$ of members of $\mathcal{V}$, $U\cup V\in\mathcal{V}$ (respectively, $U\cap V\in\mathcal{V}$);
\item[(ii)] a \emph{filterbase} in $X$ if $\mathcal{V}\neq\emptyset$ and, for every  pair $U, V$ of members of $\mathcal{V}$, there exists $W\in\mathcal{V}$ such that $\emptyset\neq W\subseteq U\cap V$.
\item[(iii)] a \emph{free filterbase} if $\mathcal{V}$ is a filterbase such that $\bigcap\mathcal{V}=\emptyset$.
\end{enumerate}
\end{definition}

\begin{definition}
\label{s1d04} A set $X$ is called:
\begin{enumerate} 
\item[(i)]\emph{Dedekind-finite} if there is no injection $f:\omega\to X$; \emph{Dedekind-infinite} if $X$ is not Dedekind-finite;
\item[(ii)] \emph{quasi Dedekind-finite} if $[X]^{<\omega}$ is Dedekind-finite; \emph{quasi Dedekind-infinite} if $X$ is not quasi Dedekind-finite;
\item[(iii)] \emph{weakly Dedekind-finite} if $\mathcal{P}(X)$ is Dedekind-finite; \emph{weakly Dedekind-infinite} if $\mathcal{P}(X)$ is Dedekind-infinite;
\item[(iv)] a \emph{cuf set} if $X$ is a countable union of finite sets;
\item[(v)] \emph{amorphous} if $X$ is infinite and, for every infinite subset $Y$ of $X$, the set $X\setminus Y$ is finite;
\item[(vi)] (cf. \cite{kfbi}) \emph{filterbase infinite} if there exists a family $\{\mathcal{V}_i: i\in\omega\}$ of free filterbases in $\mathbf{X}$ such that, for every pair $i,j$ of distinct elements of $\omega$, there exist $U\in\mathcal{V}_i$ and $V\in\mathcal{V}_j$ with $U\cap V=\emptyset$; \emph{filterbase finite} if $X$ is not filterbase infinite.
\end{enumerate}
\end{definition}

In the following definition, we introduce two new concepts that are stronger than the known concept of a filterbase infinite set.

\begin{definition}
\label{s1d:fbi}
A set $A$ is called:
\begin{enumerate}
\item[(a)] \emph{strongly filterbase infinite} if there exists a family $\mathcal{V}=\{\mathcal{V}_i: i\in\omega\}$ such that, for every  $i\in\omega$, $\mathcal{V}_i$ is a filterbase in $A$ such that:
\begin{enumerate}
\item[(i)] for every $i\in\omega$,  $\mathcal{V}_i$ is stable under finite unions and finite intersections, and  each member of $\mathcal{V}_i$ is infinite;
\item[(ii)] for every $i\in\omega$ and for each pair $U,V$ of members of $\mathcal{V}_i$, the set $U\setminus V$ is finite;
\item[(iii)]  for every $i\in\omega$ and every $j\in\omega\setminus\{i\}$, there exist $V\in \mathcal{V}_i$ and $W\in\mathcal{V}_j$ such that $U\cap V$ is finite;
\item[(iv)] for every $n\in\omega$ and $V_i\in\mathcal{V}_i$ with $i\in n$, the set $A\setminus\bigcup\limits_{i\in n}V_i$ is infinite;
\end{enumerate}
\item[(b)] \emph{dyadically filterbase infinite} if there exists a family $\mathcal{V}=\{\mathcal{V}^n_i: n\in\mathbb{N}, i\in \{1,\ldots, 2^n\}\}$ such that, for every $n\in\mathbb{N}$, the following conditions are satisfied:
\begin{enumerate}
\item[(v)] for every $i\in\{1,\ldots, 2^n\}$, $\mathcal{V}_i^n$ is a filterbase in $\mathbf{X}$ such that $\mathcal{V}_i^n$ is stable under finite unions and finite intersections, and each member of $\mathcal{V}_i^n$ is infinite;
\item[(vi)]  for every $i\in\{1,\ldots, 2^n\}$ and for any $U, V\in\mathcal{V}_i^n$, $U\setminus V$ is finite;
\item[(vii)] for every pair $i,j$ of distinct elements of $\{1,\ldots, 2^n\}$, for any $W\in \mathcal{V}_{i}^n$ and $G\in\mathcal{V}_j^n$,  there exist $U\in \mathcal{V}_{2i-1}^{n+1}, V\in \mathcal{V}_{2i}^{n+1}$, such that the sets $(U\cup V)\setminus W$ and $( U\cup V)\cap G)$ are both finite; 
\item[(viii)] if, for every $i\in\{1,\ldots, 2^n\}$,   $V_i\in\mathcal{V}_i^n$, then $A\setminus\bigcup\limits_{i=1}^{2^n}V_i$ is finite;
\end{enumerate}
\item[(c)] \emph{strongly} (respectively, \emph{dyadically}) \emph{filterbase finite} if $A$ is not strongly (respectively, dydadically) filterbase infinite.  
\end{enumerate}
\end{definition}

\begin{remark}
\label{s1r:def}
That Definition \ref{s1d:fbi} stems from our investigations on compactifications is shown in Section \ref{s4} where the following topological characterizations of strongly (respectively, dyadically) filterbase infinite sets are obtained: a set $X$ is strongly (respectively, dyadically) filterbase infinite if and only if $\omega+1$ (respectively, $\mathbf{2}^{\omega}$) is a remainder of the discrete space $\langle X, \mathcal{P}(X)\rangle$ (cf. Corollaries \ref{s2c06}(i) and \ref{s4:def}).
\end{remark}

\begin{definition}
\label{s1d05}
\begin{enumerate}
\item[(i)] A space $\mathbf{X}$ is called a \emph{cuf} (respectively, an \emph{amorphous}) space if its underlying set $X$ is a cuf (respectively, an amorphous) set.
\item[(ii)] A base $\mathcal{B}$ of a space $\mathbf{X}$ is called a \emph{cuf base} if $\mathcal{B}$ is a cuf set.
\end{enumerate}
\end{definition}

\subsection{The list of forms weaker than $\mathbf{AC}$}
\label{s1.2}

In this subsection, for readers' convenience, we define and denote most of the weaker forms of $\mathbf{AC}$ used directly in this paper. If a form is not defined in the forthcoming sections, its definition can be found in this subsection. For the known forms given in \cite{hr}, we quote in their statements the form number under which they are recorded in \cite{hr}.

\begin{definition}
\label{s1d7}
\begin{enumerate}
\item $\mathbf{IQDI}$: Every infinite set is quasi Dedekind-infinite.
\item $\mathbf{IWDI}$ (\cite[Form 82]{hr}): Every infinite set is weakly Dedekind-infinite. 
\item $\mathbf{CAC}$ (\cite[Form 8]{hr}): Every denumerable family of non-empty sets has a choice function.
\item $\mathbf{CAC}_{fin}$ ( \cite[Form 10]{hr}): Every denumerable family of non-empty finite sets has a choice function.
\item $\mathbf{AC}_{fin}$ (\cite[Form 62]{hr}): Every non-empty family of non-empty finite sets has a choice function.
\item $\mathbf{CMC}$ (the Countable Axiom of Multiple Choice, \cite[Form 126]{hr}): Every denumerable family of non-empty sets has a multiple choice function.
\item $\mathbf{NAS}$ ( \cite[Form 64]{hr}): There are no amorphous sets.
\item $\mathbf{BPI}$ (the Boolean Prime Ideal Principle,  \cite[Form 14]{hr}): Every Boolean algebra has a prime ideal. (Equivalently: For every set $S$, every proper filter over $S$ can be extended to an ultrafilter over $S$.)
\item $\mathbf{M}(C, S)$: Every compact metrizable space is separable. (Cf. \cite{k02}, \cite{k01} and \cite{ktw0}.)
\item $\mathbf{M}(C, cuf)$: Every compact metrizable space has a cuf base.
\item $\mathbf{IFBI}$: Every infinite set is filterbase infinite. (Cf. \cite{kfbi}.)
\item $\mathbf{ISFBI}$: Every infinite set is strongly filterbase infinite.
\item $\mathbf{IDFBI}$: Every infinite set is dyadically filterbase infinite. 
\end{enumerate} 
\end{definition}

The forms $\mathbf{M}(C, cuf)$, $\mathbf{ISFBI}$ and $\mathbf{IDFBI}$ are newly introduced here. Basic facts about $\mathbf{M}(C, cuf)$  are given in Section \ref{s2}. That $\mathbf{ISFBI}$ and $\mathbf{IDFBI}$ are important in the theory of Hausdorff compactifications in $\mathbf{ZF}$ and independent of $\mathbf{ZF}$ is shown in Section \ref{s4}.

\subsection{Some known results}
\label{s1.3}

\begin{theorem}
\label{t:hm}
(Cf. \cite[Theorem 2.1]{hm0}.) $(\mathbf{ZFC})$. For every locally compact non-compact Hausdorff space $\mathbf{X}$, the following conditions (A)--(C) are all equivalent:
\begin{enumerate}
\item[(A)] There exist sequences $(\alpha_n\mathbf{X})_{n\in\mathbb{N}}$ of Hausdorff compactifications of $\mathbf{X}$ and $(\psi_n)_{n\in\mathbb{N}}$ of bijections $\psi_n:\{1,\ldots, 2^n\}\to\alpha_n X\setminus X$, such that, for every $n\in\mathbb{N}$, $\alpha_n\mathbf{X}\leq\alpha_{n+1}\mathbf{X}$ and if $h:\alpha_{n+1}\mathbf{X}\to\alpha_n\mathbf{X}$ is the continuous mapping with $h\circ\alpha_{n+1}=\alpha_n$, then $h^{-1}_{n+1}(\psi_{n+1}(2i-1))=h^{-1}_{n+1}(\psi(2i))=h^{-1}_n(\psi_n(i))$.
\item[(B)] Every non-empty compact metrizable space is a remainder of $\mathbf{X}$.
\item[(C)] There exists a sequence $(\mathcal{G}_n)_{n\in\mathbb{N}}$ such that, for every $n\in\mathbb{N}$, $\mathcal{G}_n$ is a family of pairwise disjoint open set of $\mathbf{X}$, $\mathcal{G}_n=\{G^n_i: i\in\{1,\ldots,2^n\}\}$ and, for every $i\in\{1,\ldots, 2^n\}$, the following conditions are satisfied:
\begin{enumerate}
\item[(i)] for every $i\in\{1,\ldots , 2^n\}$, $G^{n+1}_{2i-1}\cup G^{n+1}_{2i}\subseteq G^n_{i}$;
\item[(ii)] $K_n=X\setminus\bigcup\{G^n_i: i\in\{1,\ldots,2^n\}\}$ is compact;
\item[(iii)] for every $i\in\{1,\ldots, 2^n\}$, $K_n\cup G^n_i$ is non-compact.
\end{enumerate}
\end{enumerate}
\end{theorem}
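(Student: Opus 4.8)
The plan is to route all three conditions through the Cantor cube $\mathbf{2}^{\omega}$, exploiting the classical Hausdorff--Alexandroff theorem that every non-empty compact metrizable space is a continuous image of $\mathbf{2}^{\omega}$, together with the standard correspondence between finite-point Hausdorff compactifications and finite families of pairwise disjoint open sets with compact complement. Concretely, I would establish the cycle $(\mathrm{C})\Leftrightarrow(\mathrm{A})\Rightarrow(\mathrm{B})\Rightarrow(\mathrm{C})$. For $(\mathrm{A})\Leftrightarrow(\mathrm{C})$ I would use the Magill-type description of $m$-point compactifications. Given $\alpha_n\mathbf{X}$ as in (A), the finitely many remainder points $\psi_n(1),\dots,\psi_n(2^n)$ lie in the compact Hausdorff space $\alpha_n\mathbf{X}$, so they admit pairwise disjoint open neighbourhoods $U^n_i$; setting $G^n_i=U^n_i\cap X$ gives pairwise disjoint open subsets of $\mathbf{X}$ for which $K_n=X\setminus\bigcup_i G^n_i=\alpha_n X\setminus\bigcup_i U^n_i$ is closed in $\alpha_n\mathbf{X}$, hence compact, while $K_n\cup G^n_i$ is non-compact because $\psi_n(i)\in\cl_{\alpha_n\mathbf{X}}(G^n_i)\setminus X$. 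Choosing the $U^{n+1}$ inside the $h^{-1}_{n+1}$-preimages of the $U^n$ and using $h^{-1}_{n+1}(\psi_{n+1}(2i-1))=h^{-1}_{n+1}(\psi_{n+1}(2i))=h^{-1}_n(\psi_n(i))$ yields the nesting (i). Conversely, from a family as in (C) one reconstructs $\alpha_n\mathbf{X}$ by adjoining points $\psi_n(1),\dots,\psi_n(2^n)$ with basic neighbourhoods $\{\psi_n(i)\}\cup(G^n_i\setminus L)$ for $L\in\mathcal{K}(\mathbf{X})$; local compactness and disjointness give the Hausdorff property, (ii) gives compactness, (iii) guarantees each added point is a genuine non-isolated limit so that $\alpha_n X$ is dense, and (i) produces the bonding maps $h_{n+1}$ realizing $\alpha_n\mathbf{X}\le\alpha_{n+1}\mathbf{X}$ with the required doubling behaviour.

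For $(\mathrm{A})\Rightarrow(\mathrm{B})$ I would form the inverse limit $\alpha\mathbf{X}=\varprojlim_n\alpha_n\mathbf{X}$ along the bonding maps; as an inverse limit of compact Hausdorff spaces it is compact Hausdorff, the compatible embeddings $\alpha_n$ induce a dense embedding of $\mathbf{X}$, and its remainder is $\varprojlim_n(\alpha_n X\setminus X)$ with the doubling bonding maps, i.e.\ the inverse limit of the binary-splitting system on the sets $\{1,\dots,2^n\}$, which is homeomorphic to $\mathbf{2}^{\omega}$. Thus $\mathbf{2}^{\omega}$ is a remainder of $\mathbf{X}$. Given an arbitrary non-empty compact metrizable $\mathbf{K}$, the Hausdorff--Alexandroff theorem supplies a continuous surjection $q\colon\mathbf{2}^{\omega}\to\mathbf{K}$, and I would invoke the standard lemma that remainders are closed under continuous surjections onto compact Hausdorff spaces: collapsing the remainder of $\alpha\mathbf{X}$ along $q$ while keeping $X$ untouched produces a Hausdorff compactification of $\mathbf{X}$ with remainder homeomorphic to $\mathbf{K}$, which is (B).

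For $(\mathrm{B})\Rightarrow(\mathrm{C})$ I would apply (B) to $\mathbf{K}=\mathbf{2}^{\omega}$, obtaining a Hausdorff compactification $\gamma\mathbf{X}$ whose remainder $R=\gamma X\setminus X$ is homeomorphic to $\mathbf{2}^{\omega}$. The standard basis of clopen cylinders gives, for each $n$, a partition of $R$ into $2^{n}$ non-empty clopen (hence compact) sets $C^n_1,\dots,C^n_{2^n}$ with $C^{n+1}_{2i-1}\cup C^{n+1}_{2i}=C^n_i$. Using compactness and regularity of $\gamma\mathbf{X}$, I would choose inductively open sets $W^n_i\subseteq\gamma X$ with $W^n_i\cap R=C^n_i$ that are pairwise disjoint for fixed $n$ and satisfy $\cl_{\gamma\mathbf{X}}(W^{n+1}_{2i-1})\cup\cl_{\gamma\mathbf{X}}(W^{n+1}_{2i})\subseteq W^n_i$; then $G^n_i=W^n_i\cap X$ satisfies (i), $K_n=\gamma X\setminus\bigcup_i W^n_i$ is closed in $\gamma\mathbf{X}$ and contained in $X$, hence compact, giving (ii), and $K_n\cup G^n_i$ is non-compact because $\cl_{\gamma\mathbf{X}}(G^n_i)\supseteq C^n_i\neq\emptyset$ meets $R$ outside $X$, giving (iii).

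I expect the main obstacle to be the simultaneous open-set bookkeeping in $(\mathrm{B})\Rightarrow(\mathrm{C})$, and symmetrically in the reconstruction half of $(\mathrm{A})\Leftrightarrow(\mathrm{C})$: arranging within-level disjointness, across-level nesting, and compactness of the complements $K_n$ all at once forces a careful inductive shrinking argument. The two essential external inputs are the Hausdorff--Alexandroff surjection $\mathbf{2}^{\omega}\to\mathbf{K}$ and the lemma that a continuous image of a remainder inside a compact Hausdorff space is again a remainder; once these are in place, identifying the doubling inverse limit with $\mathbf{2}^{\omega}$ in $(\mathrm{A})\Rightarrow(\mathrm{B})$ is routine.
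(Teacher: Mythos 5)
Your argument is correct as a $\mathbf{ZFC}$ proof, but it follows a genuinely different route from the paper's treatment, and the comparison is instructive. The paper never reproves Theorem \ref{t:hm} itself: it cites \cite{hm0}, where $(\mathrm{C})\Rightarrow(\mathrm{B})$ is obtained through the \v Cech--Stone compactification; what the paper proves instead are the choice-free analogues, Theorems \ref{s4t:main1} and \ref{s4t:main2}, in which ``compact metrizable'' is weakened to ``second-countable compact Hausdorff'' and every step of your bookkeeping that requires selections is replaced by a canonical object. Concretely: where you choose, level by level, disjoint neighbourhoods $U^n_i$ of the remainder points (in your $(\mathrm{A})\Rightarrow(\mathrm{C})$) or shrinkings $W^n_i$ with $W^n_i\cap R=C^n_i$ and nested closures (in your $(\mathrm{B})\Rightarrow(\mathrm{C})$), the paper takes $\mathcal{V}^n_i$ to be the family of \emph{all} open $V\subseteq X$ with $V\cup A^n_i$ open in the compactification and $\cl_{\alpha\mathbf{X}}(V)=\cl_{\mathbf{X}}(V)\cup A^n_i$ --- a filterbase-style condition (condition ($c$) of Theorem \ref{s4t:main1}) that needs no choices and is why the paper's necessary-and-sufficient condition differs from (C); and where you realize the Cantor-set remainder as the inverse limit $\varprojlim_n\alpha_n\mathbf{X}$ of the doubling system, the paper glues the Cantor ternary set $C$ onto $X$ directly, via the explicit base $\tau\cup\{(A^n_i\cup V)\setminus F: F\in\mathcal{K}(\mathbf{X})\}$ (or $(A^n_i\cup G^n_i)\setminus K$ in Theorem \ref{s4t:main2}), verifying compactness by hand. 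Your two external inputs coincide exactly with the paper's: the lemma that a continuous image of a remainder is a remainder is Magill's Theorem \ref{t:Mag}, and the Hausdorff--Alexandroff surjection $\mathbf{2}^{\omega}\to\mathbf{K}$ is Theorem \ref{s1t016} (in $\mathbf{ZFC}$ compact metrizable and compact second-countable Hausdorff coincide, so this suffices for (B)). Your approach buys a cleaner conceptual picture at the price of choice: the recursive selections in $(\mathrm{A})\Rightarrow(\mathrm{C})$ and $(\mathrm{B})\Rightarrow(\mathrm{C})$, and the identification of compact metrizable with second-countable, are precisely the points that cannot be salvaged in $\mathbf{ZF}$ --- by Theorem \ref{s5t5.11} the implications $(\mathrm{A})\Rightarrow(\mathrm{B})$ and $(\mathrm{C})\Rightarrow(\mathrm{B})$ of Theorem \ref{t:hm} are unprovable there --- so some appeal to $\mathbf{AC}$ in your cycle is unavoidable, and in $\mathbf{ZFC}$ your countably many dependent choices are harmless. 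One small verification you elide but should record in $(\mathrm{C})\Rightarrow(\mathrm{A})$: that $K_n\cup G^n_i$ is closed in $\mathbf{X}$ (since $\cl_{\mathbf{X}}(G^n_i)\subseteq G^n_i\cup K_n$ by the pairwise disjointness of the $G^n_j$), which is what converts (iii) into the statement that no added point $\psi_n(i)$ is isolated.
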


\begin{corollary}
\label{c:hm}
(Cf. \cite[Corollary 3.1]{hm0}.)
$(\mathbf{ZFC})$
\begin{enumerate}
\item[(A)] If $\mathbf{X}$ is a locally compact Hausdorff space which admits a family $\{G_n: n\in\mathbb{N}\}$ of pairwise disjoint open sets such that the set $K=X\setminus\bigcup\limits_{n\in\mathbb{N}}G_n$ is compact and, for every $n\in\mathbb{N}$, $K\cup G_n$ is not compact, then every non-empty compact Hausdorff second-countable space is a remainder of $\mathbf{X}$.
\item[(B)] If $\mathbf{X}$ is the direct sum of a compact Hausdorff space and an infinite discrete space, then every non-empty compact Hausdorff second-countable space is a remainder of $\mathbf{X}$.
\end{enumerate}
\end{corollary}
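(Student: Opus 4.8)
The plan is to deduce both parts from Theorem \ref{t:hm}, exploiting the fact that in $\mathbf{ZFC}$ a compact Hausdorff space is metrizable if and only if it is second-countable: a compact Hausdorff space is normal, hence $T_3$, so Urysohn's Metrization Theorem yields metrizability from second-countability, while conversely a compact metric space is totally bounded, hence separable, hence second-countable. Consequently, in both (A) and (B) the desired conclusion is precisely condition (B) of Theorem \ref{t:hm}, and it will suffice to verify condition (C) of that theorem for the space $\mathbf{X}$ at hand.

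For part (A), I would first observe that $\mathbf{X}$ is non-compact: the set $K\cup G_1=X\setminus\bigcup_{n\geq 2}G_n$ is closed in $X$ (its complement is open) and is non-compact by hypothesis, so $X$ cannot be compact and Theorem \ref{t:hm} applies. Next I would split the index set $\mathbb{N}$ into a dyadic tree of infinite sets, choosing recursively, for each $n\in\mathbb{N}$ and each $i\in\{1,\dots,2^n\}$, an infinite set $A^n_i\subseteq\mathbb{N}$ so that at each level the $A^n_i$ are pairwise disjoint with $\bigcup_{i}A^n_i=\mathbb{N}$ and $A^{n+1}_{2i-1}\cup A^{n+1}_{2i}=A^n_i$ a disjoint union of two infinite sets; this splitting is routine in $\mathbf{ZFC}$. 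I would then put $G^n_i=\bigcup_{k\in A^n_i}G_k$ and $\mathcal{G}_n=\{G^n_i:i\in\{1,\dots,2^n\}\}$. The sets $G^n_i$ are open and pairwise disjoint, condition (i) of (C) holds since $G^{n+1}_{2i-1}\cup G^{n+1}_{2i}=G^n_i$, and condition (ii) holds since $K_n=X\setminus\bigcup_i G^n_i=X\setminus\bigcup_{k\in\mathbb{N}}G_k=K$ is compact.

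The hard part will be condition (iii), the non-compactness of $K\cup G^n_i$. The key observation is that, for any fixed $k\in A^n_i$, the set $K\cup G_k$ is closed in the subspace $K\cup G^n_i$, because $(K\cup G^n_i)\setminus(K\cup G_k)=\bigcup_{m\in A^n_i\setminus\{k\}}G_m$ is open. Were $K\cup G^n_i$ compact, its closed subset $K\cup G_k$ would be compact as well, contradicting the hypothesis that $K\cup G_k$ is non-compact. Hence condition (C) of Theorem \ref{t:hm} is satisfied, and the theorem delivers the conclusion of (A).

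For part (B), write $\mathbf{X}=\mathbf{C}\oplus\mathbf{D}$ with $\mathbf{C}$ compact Hausdorff and $\mathbf{D}$ infinite discrete; then $\mathbf{X}$ is locally compact Hausdorff. I would partition the infinite set $D$ into a family $\{D_n:n\in\mathbb{N}\}$ of pairwise disjoint infinite sets (possible in $\mathbf{ZFC}$, e.g. by reindexing a denumerable subset of $D$ by $\mathbb{N}\times\mathbb{N}$ and absorbing the remainder into one piece) and set $G_n=D_n$, which is open in $\mathbf{X}$. Then $K=X\setminus\bigcup_n G_n=C$ is compact, while $K\cup G_n=C\oplus D_n$ is non-compact because $D_n$ is an infinite discrete clopen subspace. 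Thus the hypotheses of part (A) are met, and (B) follows. I expect no obstacle here beyond the elementary $\mathbf{ZFC}$ splitting of $D$.
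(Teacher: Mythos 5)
Your proof is correct, but it takes a different route from the one this paper actually uses. You reduce both parts to condition (C) of Theorem \ref{t:hm} -- for (A) via a dyadic tree $\{A^n_i\}$ of infinite index sets with $G^n_i=\bigcup_{k\in A^n_i}G_k$, and for (B) by partitioning $D$ into infinitely many infinite pieces so as to fall under (A) -- and then invoke the $\mathbf{ZFC}$ theorem. All the steps check out: the non-compactness of $\mathbf{X}$ via the closed set $K\cup G_1$, the identity $K_n=K$, the closedness of $K\cup G_k$ inside $K\cup G^n_i$ (its relative complement $\bigcup_{m\in A^n_i\setminus\{k\}}G_m$ is open) giving (iii), and the identification of second-countable compact Hausdorff with compact metrizable in $\mathbf{ZFC}$. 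This is essentially the original Hatzenbuhler--Mattson derivation of their Corollary 3.1 from their Theorem 2.1. The present paper, by contrast, does not derive (A) from Theorem \ref{t:hm} at all: in Theorem \ref{s2t02} it gives a direct $\mathbf{ZF}$ construction of the compactification, taking a countable dense set $\{z_n\}$ and countable base $\mathcal{C}$ of the target space $\mathbf{Z}$ (available in $\mathbf{ZF}$ by Theorems \ref{t:UMT} and \ref{s1t07}) and gluing via the sets $U_C=C\cup\bigcup\{G_n: z_n\in C\}$; part (B) is then handled by a partition argument close to yours (cf.\ Theorem \ref{s2t03} and Corollary \ref{s2c03}). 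What each approach buys: yours is shorter given Theorem \ref{t:hm}, but it inherits that theorem's reliance on $\mathbf{AC}$ (its proof in the cited source goes through \v Cech--Stone compactifications), so it proves only the $\mathbf{ZFC}$ statement; the paper's direct construction shows (A) is a theorem of $\mathbf{ZF}$ -- one of the main points of the article -- and moreover yields the metrizability of the resulting compactification when $\mathbf{X}$ has a cuf base, which is invisible from your reduction. Note also that in $\mathbf{ZF}$ your proof of (B) would break at the very first step, since an infinite, even Dedekind-finite, discrete set need not admit a partition into infinitely many infinite pieces (indeed the paper shows (B) fails in $\mathbf{ZF}$-models with amorphous sets, cf.\ Theorem \ref{s5t:main3}), so your explicit flagging of the $\mathbf{ZFC}$ splitting is exactly where the choice is consumed.
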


\begin{theorem}
\label{t:Mag}
(Cf. \cite{mag} and \cite[Theorem 7.2]{ch}.) (Magill's Theorem.) $(\mathbf{ZF})$ Let $\mathbf{X}$ be locally compact Hausdorff space and let $\mathbf{K}$ be a compact Hausdorff space. Then $\mathbf{K}$ is a remainder of $\mathbf{X}$ if and only if $\mathbf{K}$ is a continuous image of a remainder of $\mathbf{X}$. Furthermore, if $\alpha\mathbf{X}$ is a Hausdorff compactification of $\mathbf{X}$ such that $\mathbf{K}$ is a continuous image of $\alpha X\setminus X$, then there exists a Hausdorff compactification $\gamma\mathbf{X}$ of $\mathbf{X}$ such that $\gamma\mathbf{X}\leq\alpha\mathbf{X}$ and $\mathbf{K}$ is homeomorphic to $\gamma X\setminus X$.
\end{theorem}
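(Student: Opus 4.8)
The ``only if'' part is immediate: if $\mathbf K$ is a remainder of $\mathbf X$, there is a Hausdorff compactification $\alpha\mathbf X$ with $\mathbf K\simeq \alpha X\setminus X$, and a homeomorphism is in particular a continuous surjection, so $\mathbf K$ is a continuous image of the remainder $\alpha X\setminus X$. The substance of the theorem is the ``Furthermore'' part, which yields the ``if'' direction as well; so the plan is to prove that part. Assume $\alpha\mathbf X$ is a Hausdorff compactification of $\mathbf X$ and $f\colon \alpha X\setminus X\to K$ is a continuous surjection, and write $R=\alpha X\setminus X$; since $\mathbf X$ is locally compact and dense in the Hausdorff space $\alpha\mathbf X$, the set $X$ is open and $R$ is compact and closed in $\alpha\mathbf X$. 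I would let $\gamma X$ be the quotient of $\alpha X$ by the equivalence relation $\sim$ that identifies $r,r'\in R$ exactly when $f(r)=f(r')$ and leaves the points of $X$ untouched, equipped with the quotient topology and quotient map $q\colon\alpha X\to\gamma X$.

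Granting for the moment that $\gamma\mathbf X$ is Hausdorff, the rest is routine and choice-free. The space $\gamma X=q(\alpha X)$ is compact as a continuous image of a compact space; $q\upharpoonright X$ is a homeomorphism onto the set $q(X)$, which is open because $X$ and all its open subsets are open and $\sim$-saturated in $\alpha\mathbf X$, and dense because $\gamma X=q(\cl_{\alpha X}(X))\subseteq\cl_{\gamma X}(q(X))$; and the continuous bijection $\bar f\colon q(R)\to K$ induced by $f$ is a homeomorphism since $q(R)$ is compact and $\mathbf K$ is Hausdorff. Taking $\gamma=q\circ\alpha$ as the embedding, the map $q$ satisfies $q\circ\alpha=\gamma$, so $\gamma\mathbf X\leq\alpha\mathbf X$, and $q(R)=\gamma X\setminus q(X)$ is the remainder, homeomorphic to $\mathbf K$.

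The genuinely delicate point, and the step I expect to be the main obstacle in $\mathbf{ZF}$, is verifying that $\gamma\mathbf X$ is Hausdorff without invoking any form of choice. My plan is to reduce this to two choice-free facts. First, every compact Hausdorff space is normal in $\mathbf{ZF}$: to separate a point $x$ from a disjoint closed set $C$ one does not \emph{choose} a separating pair at each point of $C$, but instead takes the canonical family $\mathcal V=\{V\ \text{open}: x\notin\cl_{\alpha X}(V)\}$, notes that Hausdorffness makes $\mathcal V$ cover the compact set $C$, and extracts a finite subcover directly from the definition of compactness; the union of this subcover and the complement of its closure separate $C$ and $x$, yielding regularity, and the same device upgrades regularity to normality. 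Second, $q$ is a closed map: for closed $F\subseteq\alpha X$ the saturation $q^{-1}(q(F))$ equals $\pi_2\bigl((\sim)\cap(F\times\alpha X)\bigr)$, the image under a projection of a closed, hence compact, subset of the compact space $\alpha X\times\alpha X$, hence closed. Here I use that $\sim$ is closed in $\alpha X\times\alpha X$, which holds because it is the union of the diagonal of $\alpha X$ with $(f\times f)^{-1}(\Delta_K)\subseteq R\times R$, both of which are closed.

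With normality and closedness of $q$ in hand, Hausdorffness of $\gamma\mathbf X$ follows cleanly: given distinct $z_1,z_2\in\gamma X$, their fibres $A=q^{-1}(z_1)$ and $B=q^{-1}(z_2)$ are disjoint closed subsets of $\alpha X$, so normality yields disjoint open $U\supseteq A$ and $V\supseteq B$; then $O_1=\gamma X\setminus q(\alpha X\setminus U)$ and $O_2=\gamma X\setminus q(\alpha X\setminus V)$ are open because $q$ is closed, contain $z_1$ and $z_2$ respectively, and are disjoint since $q^{-1}(O_1)\subseteq U$ and $q^{-1}(O_2)\subseteq V$. I would stress throughout that each appeal to compactness is an application of the definition (existence of a finite subcover of a \emph{given} cover) rather than a simultaneous selection, which is precisely what keeps the whole argument inside $\mathbf{ZF}$.
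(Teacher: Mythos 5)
Your proof is correct and is essentially the classical argument: the paper offers no proof of its own, citing Magill and Chandler (Theorem 7.2), and those sources establish the ``Furthermore'' part by exactly your construction --- the quotient of $\alpha\mathbf{X}$ collapsing the fibres of $f$ on the remainder. Your added verifications (normality of compact Hausdorff spaces via the canonical cover $\{V:x\notin\cl_{\alpha X}(V)\}$, closedness of $q$ via compactness of $(\sim)\cap(F\times\alpha X)$ and closedness of $\sim=\Delta_{\alpha X}\cup(f\times f)^{-1}(\Delta_K)$) are precisely the points one must check to see the argument is choice-free, which is the observation the paper attributes to \cite{pw} and \cite{kw0}.
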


It was noticed in \cite{pw} and \cite{kw0} that the following Taimanov's Theorem is valid in $\mathbf{ZF}$:

\begin{theorem}
\label{t:Taim}
(Cf. \cite[Theorem 3.2.1]{En}.) (Taimanov's Theorem.) $(\mathbf{ZF})$ Let $\mathbf{X}$ be a dense subspace of a topological space $\mathbf{T}$ and let $f$ be a continuous mapping of $\mathbf{X}$ into a compact Hausdorff space $\mathbf{Y}$. Then $f$ is continuously extendable to a mapping $\tilde{f}:\mathbf{T}\to\mathbf{Y}$ if and only if, for each pair $A, B$ of disjoint closed sets of $\mathbf{Y}$, $\cl_{\mathbf{T}}(f^{-1}(A))\cap\cl_{\mathbf{T}}(f^{-1}(B))=\emptyset$.
\end{theorem}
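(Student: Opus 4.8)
The plan is to prove the two implications of the equivalence separately and, since the intended framework is $\mathbf{ZF}$, to keep careful track of any use of choice. The necessity of the closure condition is the easy direction and needs no choice: if $\tilde{f}:\mathbf{T}\to\mathbf{Y}$ is a continuous extension of $f$ and $A,B$ are disjoint closed subsets of $\mathbf{Y}$, then $\tilde{f}^{-1}(A)$ and $\tilde{f}^{-1}(B)$ are disjoint closed subsets of $\mathbf{T}$, while $f^{-1}(A)\subseteq\tilde{f}^{-1}(A)$ and $f^{-1}(B)\subseteq\tilde{f}^{-1}(B)$ because $\tilde{f}$ extends $f$; hence $\cl_{\mathbf{T}}(f^{-1}(A))\cap\cl_{\mathbf{T}}(f^{-1}(B))\subseteq\tilde{f}^{-1}(A)\cap\tilde{f}^{-1}(B)=\emptyset$.

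For the sufficiency I would build $\tilde{f}$ pointwise. Assuming the closure condition, for each $t\in T$ consider the family $\mathcal{F}_t=\{\cl_{\mathbf{Y}}(f(U\cap X)):U\text{ open in }\mathbf{T},\ t\in U\}$. Density of $X$ in $\mathbf{T}$ makes every $U\cap X$ non-empty, so the members of $\mathcal{F}_t$ are non-empty closed sets, and $\mathcal{F}_t$ is downward directed since $\cl_{\mathbf{Y}}(f((U_1\cap U_2)\cap X))$ lies in the intersection of the two corresponding members. Thus $\mathcal{F}_t$ has the finite intersection property, and here I would invoke compactness of $\mathbf{Y}$ in its equivalent $\mathbf{ZF}$-form (a family of closed sets with the finite intersection property has non-empty intersection, which is choice-free by De Morgan) to conclude $F_t=\bigcap\mathcal{F}_t\neq\emptyset$.

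The crucial and most delicate step is to show that $F_t$ is a singleton, and this is where $\mathbf{ZF}$-care is needed. Suppose $y_1,y_2\in F_t$ with $y_1\neq y_2$. I would first record that a compact Hausdorff space is regular in $\mathbf{ZF}$: for a point $y$ and a closed set $C$ with $y\notin C$, the \emph{canonically} defined open family $\{W:W\text{ open},\ y\notin\cl_{\mathbf{Y}}(W)\}$ covers the compact set $C$, and extracting one finite subcover (a single existential instantiation, not a choice function) yields disjoint open sets separating $y$ and $C$ without any appeal to choice. Applying this twice separates $y_1$ and $y_2$ by disjoint closed sets $A\ni y_1$, $B\ni y_2$ with open neighbourhoods $V_1\subseteq A$ of $y_1$ and $V_2\subseteq B$ of $y_2$. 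Since $y_1\in\cl_{\mathbf{Y}}(f(U\cap X))$ for every open $U\ni t$, the neighbourhood $V_1$ meets each $f(U\cap X)$, so $U\cap f^{-1}(A)\neq\emptyset$; as $U$ is arbitrary, $t\in\cl_{\mathbf{T}}(f^{-1}(A))$, and symmetrically $t\in\cl_{\mathbf{T}}(f^{-1}(B))$. This contradicts the hypothesis, so $F_t=\{y_t\}$ for a unique $y_t$. I expect this singleton step, together with its choice-free realisation, to be the main obstacle: the classical argument tacitly uses regularity/normality of $\mathbf{Y}$, which must be re-derived in $\mathbf{ZF}$ via the canonical cover above.

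Finally I would set $\tilde{f}(t)=y_t$; it is essential for the $\mathbf{ZF}$ reading that this requires no choice, since each $F_t$ is a singleton and hence $\tilde{f}$ is given by an explicit formula rather than by selecting values. That $\tilde{f}$ extends $f$ follows because for $x\in X$ one has $f(x)\in f(U\cap X)$ for every open $U\ni x$, whence $f(x)\in F_x=\{\tilde{f}(x)\}$. For continuity, given $t$ and an open $O\ni\tilde{f}(t)$ in $\mathbf{Y}$, the relation $\bigcap\mathcal{F}_t=\{y_t\}\subseteq O$ and the finite-intersection form of compactness produce a single open $U_0\ni t$ with $\cl_{\mathbf{Y}}(f(U_0\cap X))\subseteq O$; then for every $s\in U_0$ the set $U_0$ is a neighbourhood of $s$, so $\tilde{f}(s)\in F_s\subseteq\cl_{\mathbf{Y}}(f(U_0\cap X))\subseteq O$, giving $\tilde{f}(U_0)\subseteq O$. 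Since both the compactness-as-finite-intersection equivalence and the regularity of compact Hausdorff spaces are theorems of $\mathbf{ZF}$, and the value of $\tilde{f}$ is canonically determined, the whole construction is carried out in $\mathbf{ZF}$.
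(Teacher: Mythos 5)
Your proof is correct, and it is essentially the classical argument for Taimanov's Theorem (Engelking, Theorem 3.2.1) that the paper itself does not reprove but merely cites, noting (via \cite{pw} and \cite{kw0}) that it is valid in $\mathbf{ZF}$. Your $\mathbf{ZF}$-audit identifies exactly the right points — the choice-free finite-intersection-property form of compactness, the canonical-cover derivation of regularity of compact Hausdorff spaces, and the fact that $\tilde{f}$ is definable because each $F_t$ is a singleton — which is precisely why the textbook proof goes through without choice.
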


 \begin{theorem} 
 \label{t:UMT}
(Cf. \cite[Corollary 4.8]{gt}.) (Urysohn's Metrization Theorem.) $(\mathbf{ZF})$ Every second-countable $T_3$-space is metrizable.
 \end{theorem}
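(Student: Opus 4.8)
The plan is to run the classical embedding-into-the-Hilbert-cube proof, taking care that every construction is \emph{canonical} — determined outright by a fixed countable base rather than by unavailable choices — so that no fragment of $\mathbf{AC}$ is invoked. Fix a base $\mathcal{B}=\{B_n:n\in\omega\}$ of the $T_3$-space $\mathbf{X}=\langle X,\tau\rangle$, indexed by $\omega$; this indexing is the only ``choice'' we ever use, and it is given for free. The whole point of the argument will be to exploit this $\omega$-indexing so that normality, Urysohn's Lemma, and the embedding are all \emph{defined} rather than selected.

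First I would prove that $\mathbf{X}$ is normal in $\mathbf{ZF}$, and moreover that there is a \emph{definable} normality operator. Given disjoint closed sets $A,B$, regularity shows that the canonically defined index sets $I_A=\{n\in\omega:\cl_{\mathbf{X}}(B_n)\cap B=\emptyset\}$ and $I_B=\{n\in\omega:\cl_{\mathbf{X}}(B_n)\cap A=\emptyset\}$ yield subfamilies of $\mathcal{B}$ covering $A$ and $B$ respectively. Setting
\[U=\bigcup_{n\in I_A}\Big(B_n\setminus\bigcup_{k\in I_B,\,k\le n}\cl_{\mathbf{X}}(B_k)\Big),\qquad V=\bigcup_{n\in I_B}\Big(B_n\setminus\bigcup_{k\in I_A,\,k\le n}\cl_{\mathbf{X}}(B_k)\Big)\]
produces disjoint open sets with $A\subseteq U$ and $B\subseteq V$ by the usual verification. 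Crucially, $U$ and $V$ are \emph{functions} of $(A,B)$ — no element is ever chosen — so we obtain a $\mathbf{ZF}$-definable operator $\mathcal{N}(A,B)=(U,V)$. Feeding $\mathcal{N}$ into the standard dyadic recursion of Urysohn's Lemma (at each interpolation step, apply $\mathcal{N}$ to $\cl_{\mathbf{X}}(U_{q'})$ and $X\setminus U_{q''}$) makes that recursion completely determined, so Urysohn's Lemma holds canonically on $\mathbf{X}$.

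Next, for the countable, well-orderable index set $P=\{(m,n)\in\omega\times\omega:\cl_{\mathbf{X}}(B_m)\subseteq B_n\}$, the canonical Urysohn Lemma yields, uniformly in $(m,n)\in P$, a continuous $f_{m,n}:\mathbf{X}\to[0,1]$ with $f_{m,n}\upharpoonright\cl_{\mathbf{X}}(B_m)\equiv 0$ and $f_{m,n}\upharpoonright(X\setminus B_n)\equiv 1$; since $P$ is a definable subset of $\omega\times\omega$ and each $f_{m,n}$ is determined, the family $\{f_{m,n}:(m,n)\in P\}$ exists in $\mathbf{ZF}$ with no appeal to choice. I would then form the diagonal map $F:\mathbf{X}\to[0,1]^P$, $F(x)=(f_{m,n}(x))_{(m,n)\in P}$, and verify via two applications of regularity that this family separates points and separates points from closed sets: given $x\notin C$ with $C$ closed, nesting basic sets produces $(m,n)\in P$ with $x\in B_m$ and $\cl_{\mathbf{X}}(B_m)\subseteq B_n\subseteq X\setminus C$, whence $f_{m,n}(x)=0\notin\{1\}=\cl_{[0,1]}(f_{m,n}(C))$. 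By the choice-free embedding lemma for a fixed indexed family of maps, $F$ is a homeomorphic embedding. Finally, since $P$ is countable it injects canonically into $\omega$, so $[0,1]^P$ embeds in the Hilbert cube $[0,1]^{\omega}$, which is metrized in $\mathbf{ZF}$ by $d(x,y)=\sum_{n\in\omega}2^{-n}|x(n)-y(n)|$; a subspace of a metrizable space is metrizable, hence $\mathbf{X}$ is metrizable.

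The main obstacle is not any single deep step but the bookkeeping that guarantees \emph{choicelessness} throughout: one must check that normality, the Urysohn recursion, and the entire family $\{f_{m,n}\}$ are genuinely defined as functions of the fixed base and of the input closed sets, rather than selected. The delicate point is the Urysohn recursion — ordinarily each dyadic interpolation step \emph{picks} an interpolating open set — which is exactly what the definable operator $\mathcal{N}$ is designed to pin down; once $\mathcal{N}$ is in hand, the remainder of the argument is routine and uniform.
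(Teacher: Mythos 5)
Your proof is correct and takes essentially the same route as the paper: the paper quotes this theorem from Good--Tree and proves its generalization (Theorem \ref{s2t01}) by exactly your scheme --- a $\mathbf{ZF}$-definable normality operator (the operator $D$ there, your $\mathcal{N}$), a canonical Urysohn Lemma run through that operator, the index set $J=\{\langle U,V\rangle: U,V\in\mathcal{B},\ \cl_{\mathbf{X}}(U)\subseteq V\}$ (your $P$), and a diagonal embedding into a cube over that index set. The only difference is cosmetic: you metrize $[0,1]^{P}$ directly by the standard Hilbert-cube sum metric, whereas the paper, needing the more general cuf-base case, concludes by invoking Theorem \ref{s1t09}.
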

 
 \begin{theorem}
 \label{s1t07} 
 (Cf. \cite{kt}.) $(\mathbf{ZF})$ A compact metrizable space $\mathbf{X}$ is second-countable if and only if it is separable which holds if and only if $\mathbf{X}$ is Loeb.
 \end{theorem}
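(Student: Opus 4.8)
The plan is to prove the three conditions equivalent for a compact metrizable $\mathbf{X}=\langle X,d\rangle$ by establishing the cycle of implications ``second-countable $\Rightarrow$ Loeb $\Rightarrow$ separable $\Rightarrow$ second-countable'' entirely in $\mathbf{ZF}$, with compactness entering at the two crucial selection steps. Throughout I would rely on two elementary choice-free facts about a compact space: (a) a decreasing sequence of non-empty closed subsets has non-empty intersection, since otherwise the complements form an open cover whose finite subcover is already all of $X$; and (b) every infinite subset of $X$ has an accumulation point, since otherwise the open sets meeting that subset in at most one point would cover $X$ and admit a finite subcover, forcing the subset to be finite. Neither argument uses any form of choice.

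For second-countable $\Rightarrow$ Loeb I would fix an enumeration $\{B_n:n\in\omega\}$ of a countable base and, for a non-empty closed set $C$, define $F(C)$ by a canonical shrinking construction. Put $C_0=C$; given a non-empty closed $C_k\subseteq C$, let $n_k$ be the least $n$ with $B_n\cap C_k\neq\emptyset$ and $\delta_d(B_n)\leq 2^{-k}$ (such $n$ exists because in a metric space every point has arbitrarily small basic neighbourhoods), and set $C_{k+1}=\cl_{\mathbf{X}}(B_{n_k}\cap C_k)$. Then $(C_k)_{k\in\omega}$ is a decreasing sequence of non-empty closed subsets of $C$ with $\delta_d(C_k)\to 0$, so by (a) the intersection $\bigcap_{k\in\omega}C_k$ is non-empty and of diameter $0$, hence a single point, which I declare to be $F(C)$. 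Since the entire recursion is determined by the fixed enumeration and by $d$, the resulting $F$ is a choice function on the non-empty closed subsets of $\mathbf{X}$, i.e. a Loeb function.

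For Loeb $\Rightarrow$ separable I would convert a Loeb function $F$ into canonical finite nets. For each $n\in\omega$ build a $2^{-n}$-net greedily: set $x^n_0=F(X)$ and, having $x^n_0,\dots,x^n_k$, let $C=X\setminus\bigcup_{i\leq k}B_d(x^n_i,2^{-n})$ and put $x^n_{k+1}=F(C)$ if $C\neq\emptyset$, stopping otherwise. The points so produced are pairwise $2^{-n}$-separated, so by (b) the construction must terminate, yielding a finite net $N_n$ equipped with a canonical enumeration. Then $D=\bigcup_{n\in\omega}N_n$ is dense, and the map $(n,j)\mapsto x^n_j$ exhibits $D$ as the image of a subset of the well-orderable set $\omega\times\omega$, so $D$ is countable; hence $\mathbf{X}$ is separable. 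The remaining implication separable $\Rightarrow$ second-countable is the standard $\mathbf{ZF}$ argument: from an enumeration $\{d_n:n\in\omega\}$ of a dense set, the family $\{B_d(d_n,q):n\in\omega,\ q\in\mathbb{Q}\cap(0,+\infty)\}$ is a countable base.

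I expect the main obstacle to be the uniform suppression of choice, that is, arranging every point-selection to be canonical. In the direction second-countable $\Rightarrow$ Loeb this is handled by the ``least-index'' shrinking construction, whose only non-trivial input is fact (a); in the direction Loeb $\Rightarrow$ separable the delicate points are the termination of the greedy net, which is exactly where fact (b) is needed, and the subsequent verification that the union of the finite nets is genuinely countable in $\mathbf{ZF}$ rather than merely a cuf set, which is secured by the canonical enumerations together with the well-orderability of $\omega\times\omega$.
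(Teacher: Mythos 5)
Your proof is correct, and since the paper states this theorem without proof (citing \cite{kt}), the relevant comparison is with the standard argument there, which yours essentially reproduces: canonical nested-basic-set selection for second-countable $\Rightarrow$ Loeb, greedy $\varepsilon$-nets with termination via compactness for Loeb $\Rightarrow$ separable, and rational-radius balls about a dense sequence for separable $\Rightarrow$ second-countable. All selection steps are made canonical (least index, fixed Loeb function), so no choice is used and the cycle of implications is sound in $\mathbf{ZF}$.
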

 
 \begin{theorem}
 \label{s1t08}
 (Cf. \cite{k} and \cite{kt}.) $(\mathbf{ZF})$ 
 $$\mathbf{BPI}\rightarrow\mathbf{M}(C,S)\rightarrow\mathbf{CAC}_{fin}.$$
 \end{theorem}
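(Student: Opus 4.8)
The plan is to establish the two implications separately, using Theorem \ref{s1t07} throughout to translate $\mathbf{M}(C,S)$ into the equivalent assertion that every compact metrizable space is second-countable (equivalently, separable, equivalently Loeb). For the first implication, $\mathbf{BPI}\to\mathbf{M}(C,S)$, let $\mathbf{X}=\langle X,d\rangle$ be a compact metrizable space with a fixed compatible metric $d$; the goal is to produce a countable dense subset. In $\mathbf{ZF}$ compactness already yields total boundedness: for each $n\in\omega$ the open cover $\{B_d(x,2^{-n}):x\in X\}$ has a finite subcover, so the least cardinality $k_n$ of a finite $2^{-n}$-net is a well-defined natural number. The difficulty is that choosing one net for every $n$ at once is a countable selection, and $\mathbf{BPI}$ does not imply $\mathbf{CAC}$; hence the naive argument fails. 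My way around this is to recast net-selection as a product of compact spaces. For each $n$ let $C_n$ be the set of tuples $\langle x_1,\dots,x_{k_n}\rangle\in X^{k_n}$ whose closed balls of radius $2^{-n}$ cover $X$. One checks in $\mathbf{ZF}$ that $C_n$ is non-empty (a minimal net, listed as a tuple) and closed in the compact Hausdorff space $X^{k_n}$ (covering by finitely many closed balls survives coordinatewise limits, since the index set is finite), hence $C_n$ is itself compact Hausdorff.

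The key step is the lemma that $\mathbf{BPI}$ implies that the product of any family of non-empty compact Hausdorff spaces is non-empty. I would prove this by embedding each compact Hausdorff factor $K_i$, which is normal and hence Tychonoff by the $\mathbf{ZF}$-provable Urysohn Lemma, as a closed subspace of a cube $[0,1]^{A_i}$ via its evaluation map; then $\prod_i K_i$ is homeomorphic to a closed subspace of the Tychonoff cube $[0,1]^{\bigsqcup_i A_i}$, which is compact under $\mathbf{BPI}$. Writing $\prod_i K_i$ as the intersection of the closed sets $D_i=\{f:f\upharpoonright A_i\in K_i\}$, whose finite intersections are non-empty in $\mathbf{ZF}$ (choose points from finitely many factors and extend by $0$ on the remaining coordinates), the finite intersection property together with compactness yields a point of the product. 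Applying this to $\{C_n:n\in\omega\}$ gives a sequence $\langle F_n\rangle_{n\in\omega}$ with $F_n\in C_n$; the set $D$ of all coordinates of all the $F_n$ is indexed by a subset of $\omega\times\omega$, hence countable, and is dense because every point of $X$ lies within $2^{-n}$ of a coordinate of $F_n$ for each $n$. Thus $\mathbf{X}$ is separable, establishing $\mathbf{M}(C,S)$.

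For the second implication, $\mathbf{M}(C,S)\to\mathbf{CAC}_{fin}$, given a denumerable family $\{A_n:n\in\omega\}$ of non-empty finite sets I would first disjointify it, adjoin a point $\ast$, set $X=\{\ast\}\cup\bigcup_{n}A_n$, and define the ultrametric $d(x,y)=2^{-\min(\ell(x),\ell(y))}$ for $x\neq y$, where $\ell(a)=n$ for $a\in A_n$ and $\ell(\ast)=\infty$. Then $\langle X,d\rangle$ is a metric space in which each point of $\bigcup_n A_n$ is isolated while $\ast$ is the unique limit point, and it is compact in $\mathbf{ZF}$ (any neighbourhood of $\ast$ omits only finitely many of the $A_n$, leaving a finite set to be covered). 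By $\mathbf{M}(C,S)$ this compact metrizable space has a countable dense subset $D$; since isolated points lie in every dense set, $\bigcup_n A_n\subseteq D$ is countable. Fixing an injection $g:\bigcup_n A_n\to\omega$ and letting $f(n)$ be the $g$-least element of $A_n$ produces a choice function, so $\mathbf{CAC}_{fin}$ holds.

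The main obstacle is the first implication, and specifically the product lemma for compact Hausdorff spaces: once the simultaneous choice of nets is recast as the non-emptiness of $\prod_{n}C_n$ and resolved through $\mathbf{BPI}$ via compactness of Tychonoff cubes, the remaining verifications (non-emptiness and closedness of each $C_n$, and countability and density of $D$) are routine $\mathbf{ZF}$ arguments, as is the entire second implication.
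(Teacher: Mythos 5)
Your second implication is correct and is the standard argument: the one-point ultrametric space assembled from the (canonically) disjointified finite sets is compact and metrizable in $\mathbf{ZF}$, every dense set must contain all isolated points, and one fixed injection of the countable dense set into $\omega$ yields the choice function. Your reduction of the first implication is also sound as far as it goes: each $C_n$ is non-empty (minimality of $k_n$ is a choice-free way to pin down the exponent), closed in $X^{k_n}$ by the fixed-witness argument you indicate, and $X^{k_n}$ is compact since finite Tychonoff is provable in $\mathbf{ZF}$; countability and density of the coordinate set $D$ are routine.

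The genuine gap is inside your key product lemma, at the words ``normal and hence Tychonoff by the $\mathbf{ZF}$-provable Urysohn Lemma.'' Urysohn's Lemma is \emph{not} a theorem of $\mathbf{ZF}$: its usual proof makes countably many dependent choices of interpolating open sets; it is Form 78 of \cite{hr}, known to be independent of $\mathbf{ZF}$ (it follows from $\mathbf{DC}$, not from $\mathbf{ZF}$ alone). Correspondingly, a compact Hausdorff space can fail to be completely regular in a model of $\mathbf{ZF}$ --- this very paper exploits such a model in Proposition \ref{s2p010}, citing \cite{gt} --- so the evaluation embeddings $K_i\to[0,1]^{A_i}$ on which your lemma rests need not exist, and even when the factors are Tychonoff, selecting a separating family $A_i$ for every $i$ simultaneously is itself an instance of choice. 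Fortunately the damage is local, because in your application the factors are not arbitrary: each $C_n$ is a closed subspace of $X^{k_n}$ and carries the canonical max-metric $\rho_n$ induced by the fixed metric $d$, so the choice-free evaluation $z\mapsto(\min\{\rho_n(z,w),1\})_{w\in C_n}$ embeds $C_n$ into $[0,1]^{C_n}$ uniformly in $n$; restricting your lemma to families of compact \emph{metric} spaces with distinguished metrics, the rest of your argument (closedness of the sets $D_n$, non-emptiness of finite intersections by finite choice plus extension by $0$, compactness of the Tychonoff cube from $\mathbf{BPI}$ via \cite[Theorem 4.70]{her}, and definable recovery of the points through the injections $e_n$) goes through, and the theorem is saved. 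For the record, the unrestricted lemma is true under $\mathbf{BPI}$ --- it is Form 343 of \cite{hr} --- but its proof runs through ultrafilters (e.g., extend the natural filter on the set of finite partial choice functions to an ultrafilter and take the unique limit of its projection in each compact Hausdorff factor), not through embeddings into cubes. Note also that the paper itself does not prove this theorem but quotes it from \cite{k} and \cite{kt}, so there is no in-paper proof to compare against.
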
  
 
 \begin{theorem}
\label{t:loeb} 
(Cf. \cite{lo}.) $(\mathbf{ZF})$ Let $\kappa $ be an infinite cardinal number of von Neumann, $\{\mathbf{X}_{i}:i\in \kappa \}$ be a family of compact spaces, $\{f_{i}:i\in \kappa \}$ be a collection of functions such
that for every $i\in \kappa ,f_{i}$ is a Loeb function of $\mathbf{X}_{i}$. Then the Tychonoff product $%
\mathbf{X}=\prod\limits_{i\in \kappa }\mathbf{X}_{i}$ is compact. In particular, the Tychonoff cube $[0, 1]^{\omega}$ is compact.
\end{theorem}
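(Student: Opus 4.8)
The plan is to use the finite intersection property (FIP) characterization of compactness, which is available in $\mathbf{ZF}$: a space is compact if and only if every family of its closed sets with the FIP has non-empty intersection. So let $\mathcal{F}_0$ be a family of closed subsets of $X=\prod_{i\in\kappa}X_i$ with the FIP, and let $\mathcal{F}$ be the set of all finite intersections of members of $\mathcal{F}_0$. Then $\mathcal{F}$ is a set of non-empty closed sets that is stable under finite intersections and satisfies $\bigcap\mathcal{F}=\bigcap\mathcal{F}_0$; forming $\mathcal{F}$ requires no choice. It therefore suffices to produce a point of $\bigcap\mathcal{F}$.

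First I would construct the desired point $x\in X$ by transfinite recursion on $\alpha\leq\kappa$, defining $x\upharpoonright\alpha\in\prod_{i<\alpha}X_i$ while maintaining the invariant $(\ast)$: for every $F\in\mathcal{F}$ and every basic open neighbourhood $W$ of $x\upharpoonright\alpha$ in $\prod_{i<\alpha}X_i$ (one restricting only finitely many coordinates), the set $\pi_{<\alpha}^{-1}(W)\cap F$ is non-empty, where $\pi_{<\alpha}\colon X\to\prod_{i<\alpha}X_i$ is the projection. For $\alpha=0$ the invariant is just the non-emptiness of each $F$, which holds by the FIP. Limit stages $\lambda$ are immediate: any basic neighbourhood of $x\upharpoonright\lambda$ restricts only finitely many coordinates, hence is determined below some $\beta<\lambda$, where $(\ast)$ is already known.

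The successor step is the crux, and it is exactly where the Loeb functions remove the need for $\mathbf{AC}$. Given $x\upharpoonright\alpha$ satisfying $(\ast)$, for each $F\in\mathcal{F}$ and each basic neighbourhood $W$ of $x\upharpoonright\alpha$ I set $A(F,W)=\cl_{\mathbf{X}_\alpha}\{z(\alpha): z\in F,\ z\upharpoonright\alpha\in W\}$, which is a non-empty closed subset of $X_\alpha$ by $(\ast)$. Refining $F$ and $W$ simultaneously (using that $\mathcal{F}$ is stable under finite intersections and that finitely many basic neighbourhoods have a common basic refinement through $x\upharpoonright\alpha$) shows $A(F,W)\subseteq A(F',W')$ whenever $F\subseteq F'$ and $W\subseteq W'$, so $\{A(F,W)\}$ has the FIP in the compact space $\mathbf{X}_\alpha$; hence $C_\alpha=\bigcap_{F,W}A(F,W)$ is a non-empty closed subset of $X_\alpha$. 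I then define, canonically and with no appeal to choice, $x(\alpha)=f_\alpha(C_\alpha)$ using the given Loeb function $f_\alpha$ of $\mathbf{X}_\alpha$. To see $(\ast)$ is preserved, observe that a basic neighbourhood of $x\upharpoonright(\alpha+1)$ is determined by a basic neighbourhood $W$ of $x\upharpoonright\alpha$ together with an open $U\ni x(\alpha)$; since $x(\alpha)\in A(F,W)$, the open set $U$ meets $\{z(\alpha): z\in F,\ z\upharpoonright\alpha\in W\}$, which yields the required witness $z\in F$.

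At the end, $x=x\upharpoonright\kappa\in X$ satisfies $(\ast)$ with $\alpha=\kappa$, i.e.\ every basic neighbourhood of $x$ meets every $F\in\mathcal{F}$; as each $F$ is closed, $x\in\bigcap\mathcal{F}=\bigcap\mathcal{F}_0$, proving compactness. Since $x(\alpha)$ is \emph{uniquely} determined from $x\upharpoonright\alpha$ and the fixed data $(\mathbf{X}_i)_{i\in\kappa}$, $(f_i)_{i\in\kappa}$, $\mathcal{F}$ by an explicit operation, the recursion is a legitimate instance of transfinite recursion in $\mathbf{ZF}$ and uses no form of $\mathbf{AC}$. The main obstacle is the bookkeeping in the successor step: formulating the invariant $(\ast)$ so that it is simultaneously preserved through the recursion and strong enough at $\kappa$ to force $x$ into every $F$, and verifying that the closures $A(F,W)$ form an FIP family so that the Loeb function has a canonical non-empty closed set on which to act. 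For the final assertion, the map $C\mapsto\inf C$ is a Loeb function of $[0,1]$ in $\mathbf{ZF}$ (the infimum of a non-empty closed bounded subset of $\mathbb{R}$ exists by completeness and lies in the set), so taking $\kappa=\omega$, $\mathbf{X}_i=[0,1]$ and $f_i(C)=\inf C$ for every $i$ shows that $[0,1]^{\omega}$ is compact.
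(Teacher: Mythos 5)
Your proof is correct and follows essentially the same route as the argument the paper relies on: Theorem \ref{t:loeb} is quoted from Loeb's paper \cite{lo}, whose proof is precisely this transfinite recursion along the well-ordered index set, maintaining the finite-intersection invariant and using the given Loeb function $f_\alpha$ to select canonically a point of the non-empty closed cluster set $C_\alpha$ at each successor stage, so that no appeal to $\mathbf{AC}$ is made. Your treatment of the limit stages, the FIP verification for the sets $A(F,W)$, and the Loeb function $C\mapsto\inf C$ on $[0,1]$ for the final assertion all match the cited argument.
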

 
 \begin{theorem}
 \label{s1t09}
 (Cf. \cite{ew}.)
$(\mathbf{ZF})$ Let $\mathbf{X}$ be a metrizable space which consists of at least two points. Then, for every non-empty set $J$, the space $\mathbf{X}^J$ is metrizable if and only if $J$ is a cuf set.
\end{theorem}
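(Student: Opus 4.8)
The plan is to prove both implications directly in $\mathbf{ZF}$, the guiding principle being that every auxiliary set I use must be \emph{defined} from the given data rather than \emph{chosen}, since the naive arguments in both directions secretly appeal to a form of countable choice which is unavailable here.

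For the implication ``$J$ a cuf set $\Rightarrow \mathbf{X}^J$ metrizable'', I would first fix a metric $d$ inducing the topology of $\mathbf{X}$ and, replacing $d$ by $\min\{1,d\}$, assume $d\le 1$. Since $J$ is a cuf set, the hypothesis itself hands me a family $\{F_n:n\in\omega\}$ of finite sets with $J=\bigcup_{n\in\omega}F_n$, so no choice is needed to obtain it. For each $n$ I set $d_n(x,y)=\max_{j\in F_n}d(x(j),y(j))$ (with the convention $d_n=0$ when $F_n=\emptyset$), a pseudometric on $X^J$ defined uniformly from $d$ and the finite set $F_n$, the maximum existing precisely because $F_n$ is finite. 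Then I define
$$\rho(x,y)=\sum_{n\in\omega}2^{-n}d_n(x,y).$$
It remains to check, by the standard choice-free two-inclusion argument, that $\tau(\rho)$ is the product topology: a subbasic set $\pi_j^{-1}(U)$ is $\rho$-open because $j\in F_n$ for some $n$ and a sufficiently small $\rho$-ball controls the $j$-th coordinate, while conversely any $\rho$-ball about $x$ contains a basic product-open neighbourhood obtained by fixing an index $N$ with small tail $\sum_{n>N}2^{-n}$ and then restricting the finitely many coordinates in $\bigcup_{n\le N}F_n$.

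For the converse ``$\mathbf{X}^J$ metrizable $\Rightarrow J$ a cuf set'', I fix distinct points $a,b\in X$ (available since $\mathbf{X}$ has at least two points) and a metric $\rho$ inducing the product topology on $X^J$. Let $\mathbf{a}\in X^J$ be constant with value $a$, and for each $j\in J$ let $\mathbf{a}^j\in X^J$ agree with $\mathbf{a}$ everywhere except that $\mathbf{a}^j(j)=b$. Because $a\neq b$, we have $\rho(\mathbf{a},\mathbf{a}^j)>0$ for every $j$. I then define, for each $n\in\omega$,
$$F_n=\{\,j\in J:\rho(\mathbf{a},\mathbf{a}^j)\ge 2^{-n}\,\}.$$
These sets are defined outright from $\rho$, $\mathbf{a}$ and $b$, with no selection involved, and $J=\bigcup_{n\in\omega}F_n$ since each $\rho(\mathbf{a},\mathbf{a}^j)$ is positive, hence exceeds some $2^{-n}$.

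The step I expect to be the main obstacle, and the one demanding care, is proving that each $F_n$ is finite without choosing anything. Since $B_\rho(\mathbf{a},2^{-n})$ is open in the product topology and contains $\mathbf{a}$, there \emph{exists} a finite set $S\subseteq J$ and open sets $U_i$ ($i\in S$) with $\mathbf{a}\in\bigcap_{i\in S}\pi_i^{-1}(U_i)\subseteq B_\rho(\mathbf{a},2^{-n})$. For any $j\notin S$ the point $\mathbf{a}^j$ lies in this basic set, as it agrees with $\mathbf{a}$ on all of $S$; hence $\rho(\mathbf{a},\mathbf{a}^j)<2^{-n}$, that is, $j\notin F_n$. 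Therefore $F_n\subseteq S$, so $F_n$ is finite. The decisive subtlety is that for each fixed $n$ I need only the \emph{existence} of one such finite $S$ to conclude that $F_n$ is contained in a finite set; I never pick the witnesses $S_n$ simultaneously across $n$, so the whole argument remains valid in $\mathbf{ZF}$. This exhibits $J$ as a countable union of finite sets, completing the proof, and makes transparent that the hypothesis ``$\mathbf{X}$ has at least two points'' is used only to guarantee $\mathbf{a}^j\neq\mathbf{a}$.
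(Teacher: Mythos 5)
Your proof is correct, but there is nothing in the paper to compare it against: Theorem \ref{s1t09} is quoted from \cite{ew} without proof, so the appropriate benchmark is the standard argument, and yours is essentially it, executed with the right $\mathbf{ZF}$ discipline. In the forward direction, the weighted sum $\rho=\sum_{n}2^{-n}d_{n}$ of the finite sup-pseudometrics $d_{n}$ is the usual construction, and your two-inclusion check of $\tau(\rho)$ against the product topology is choice-free as claimed; the one point you leave tacit is that $\rho$ is a genuine metric rather than a pseudometric --- if $x\neq y$ they differ at some coordinate $j$, and $j\in F_{n}$ for some $n$ forces $d_{n}(x,y)>0$ --- which deserves a sentence (alternatively, a pseudometric inducing a Hausdorff topology is a metric). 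In the converse direction, your treatment of the decisive subtlety is exactly right and is the heart of the matter: the sets $F_{n}=\{j\in J:\rho(\mathbf{a},\mathbf{a}^{j})\ge 2^{-n}\}$ are defined uniformly from $\rho$, $a$, $b$, so the function $n\mapsto F_{n}$ exists outright, and the finiteness of each individual $F_{n}$ is a property certified by the mere existence of one finite restricting set $S$; no simultaneous selection of witnesses $S_{n}$ is ever performed, which is precisely where a careless proof would invoke countable choice. Your closing observation that the two-point hypothesis enters only to make $\mathbf{a}^{j}\neq\mathbf{a}$ is also accurate --- for a one-point $\mathbf{X}$ the product is metrizable for every $J$, so the hypothesis is necessary for the ``only if'' direction.
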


\begin{theorem}
\label{s1t016}
(Cf. \cite[Lemma 4.1]{kw2}.) $(\mathbf{ZF})$  Every non-empty second-countable compact Hausdorff space is a continuous image of the Cantor cube $\mathbf{2}^{\omega}$.
\end{theorem}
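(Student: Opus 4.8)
The plan is to exhibit $\mathbf{X}$ as a continuous image of $\mathbf{2}^{\omega}$ by factoring through the Hilbert cube. First I would observe that a non-empty second-countable compact Hausdorff space is $T_3$ (indeed $T_4$), so Urysohn's Metrization Theorem (Theorem \ref{t:UMT}) makes $\mathbf{X}$ metrizable; I fix once and for all a metric $d$ inducing its topology and an enumeration $\{B_n:n\in\omega\}$ of a countable base (fixing one $d$ and one enumeration are single instantiations, harmless in $\mathbf{ZF}$). By Theorem \ref{s1t07} such an $\mathbf{X}$ is separable and Loeb, but the structural fact I really want is a \emph{canonical} embedding of $\mathbf{X}$ into $[0,1]^{\omega}$ read off from the pair $(d,\{B_n\})$ with no further selection.

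To build it I would, for each pair $(m,n)$ with $\cl_{\mathbf{X}}(B_m)\subseteq B_n$, define the continuous map $f_{m,n}:\mathbf{X}\to[0,1]$ by the explicit metric formula $f_{m,n}(x)=d(x,X\setminus B_n)/(d(x,X\setminus B_n)+d(x,\cl_{\mathbf{X}}(B_m)))$ (with the evident convention when $B_n=X$). Since $d$ and the enumeration are fixed, the family $\{f_{m,n}\}$ is a genuine set indexed by a subset of $\omega\times\omega$, no choice being invoked. By regularity of the metric this family separates points from closed sets, so the diagonal map $e=(f_{m,n})$ embeds $\mathbf{X}$ homeomorphically into $[0,1]^{\omega\times\omega}\simeq[0,1]^{\omega}$: a continuous injection from a compact space into a Hausdorff space is a homeomorphism onto its image, and its image $e(X)$, being a continuous image of the compact $\mathbf{X}$, is compact and hence closed in the Hausdorff cube. (If one prefers to invoke compactness of $[0,1]^{\omega}$ itself it is available in $\mathbf{ZF}$ from Loeb's Theorem, Theorem \ref{t:loeb}, but only closedness of $e(X)$ is needed here.)

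Next I would map $\mathbf{2}^{\omega}$ continuously onto the Hilbert cube. The map $\phi:\mathbf{2}^{\omega}\to[0,1]$, $\phi(x)=\sum_{n\in\omega}x(n)2^{-(n+1)}$, is continuous and onto, and a fixed binary-expansion rule gives a section witnessing surjectivity without choice. Using the canonical homeomorphism $\mathbf{2}^{\omega}\simeq(\mathbf{2}^{\omega})^{\omega}$ coming from a definable pairing $\omega\times\omega\to\omega$, the coordinatewise product of $\phi$ yields a continuous surjection $\Phi:\mathbf{2}^{\omega}\to[0,1]^{\omega}$. Then $D=\Phi^{-1}(e(X))$ is a non-empty closed subset of $\mathbf{2}^{\omega}$ (non-empty as $\mathbf{X}\neq\emptyset$ and $\Phi$ is onto), and $\Phi\upharpoonright D$ maps $D$ continuously onto $e(X)$. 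Finally, writing $T_D=\{s\in 2^{<\omega}:[s]\cap D\neq\emptyset\}$ for the pruned tree of $D$ (so that $D=[T_D]$, since $D$ is closed), I would define a retraction $r:\mathbf{2}^{\omega}\to D$ by the recursion that follows $x$ along $T_D$ whenever $(r(x)\upharpoonright n)^{\frown}x(n)\in T_D$ and otherwise takes the unique admissible successor forced by prunedness; this $r$ is non-expanding, hence continuous, involves no choice, and satisfies $r\upharpoonright D=\mathrm{id}$. Composing, $e^{-1}\circ(\Phi\upharpoonright D)\circ r:\mathbf{2}^{\omega}\to\mathbf{X}$ is the desired continuous surjection.

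The steps that are trivial in $\mathbf{ZFC}$ but require vigilance here are exactly those where the classical argument is tempted to choose: the embedding $e$ must be recovered canonically from $(d,\{B_n\})$, surjectivity of $\Phi$ from a fixed expansion rule, and—this I expect to be the real crux—the retraction onto the closed set $D$ must be produced by the explicit tree recursion rather than by any nearest-point selection. I anticipate the main point of the write-up to be the verification that this recursion is well defined, fixes $D$ pointwise, and is continuous, thereby certifying that the entire construction goes through in $\mathbf{ZF}$ without any fragment of choice.
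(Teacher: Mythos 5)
Your proof is correct in $\mathbf{ZF}$, and the points you flag as needing vigilance are exactly the right ones, all handled soundly: fixing one metric $d$ and one enumeration of the base are single existential instantiations; the separating family $\{f_{m,n}\}$ is definable outright from the pair $(d,(B_n)_{n\in\omega})$ (the denominator never vanishes since $\cl_{\mathbf{X}}(B_m)\subseteq B_n$, though you should spell out the convention when $B_n=X$, e.g.\ declare $f_{m,n}\equiv 1$ there, harmless since the vanishing condition off $B_n$ is then vacuous); the facts that compact subsets of Hausdorff spaces are closed and that compact Hausdorff spaces are $T_3$ need only finitely many choices, so $e$ is a closed embedding choice-freely; surjectivity of $\Phi$ follows from a definable expansion rule; and your pruned-tree retraction is the genuine crux and works as claimed — at each node of $T_D$ there are at most two successors, prunedness forces the alternative when $(r(x)\upharpoonright n)^{\frown}x(n)\notin T_D$, the recursion depends only on $x\upharpoonright n$, so $r$ is $1$-Lipschitz, fixes $D$ pointwise, and maps into $[T_D]=D$ because $D$ is closed.

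One caveat on the comparison you were asked about: the paper does not prove Theorem \ref{s1t016} internally at all — it imports it from \cite[Lemma 4.1]{kw2} — so there is no in-text proof to match line by line. Relative to the standard choice-free treatment behind that citation, which likewise starts from Urysohn's Metrization Theorem (Theorem \ref{t:UMT}) but then builds the surjection directly on $\mathbf{X}$ via a canonical dyadic scheme of finite closed covers of shrinking mesh (selected by minimal indices from the enumerated base, in the Loeb spirit of Theorem \ref{s1t07}), your route is genuinely different in the middle: you factor through the Hilbert cube, using the canonical embedding $e:\mathbf{X}\to[0,1]^{\omega\times\omega}$, the binary-expansion surjection $\Phi:\mathbf{2}^{\omega}\to[0,1]^{\omega}$, and then the retraction of $\mathbf{2}^{\omega}$ onto the closed set $D=\Phi^{-1}(e(X))$. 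What your factorization buys is modularity — the only delicate bespoke construction is the tree retraction, with everything else assembled from reusable canonical ingredients, and you even avoid invoking compactness of $[0,1]^{\omega}$ (Theorem \ref{t:loeb}) by needing only closedness of $e(X)$; what the direct Cantor-scheme proof buys is that it stays inside $\mathbf{X}$, avoiding the embedding lemma and the expansion map entirely. Both are legitimate $\mathbf{ZF}$ proofs.
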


\begin{theorem}
\label{s1t017}
(Cf. \cite[Theorem 5.1]{kk}.) $(\mathbf{ZF})$ For every infinite set $J$, the Tychonoff cube $[0, 1]^J$ is a continuous image of the Cantor cube $\mathbf{2}^{\omega\times J}$. In consequence, if $\mathbf{2}^{\omega\times J}$ is compact (respectively, Loeb), then $[0, 1]^J$ is compact (respectively, Loeb).
\end{theorem}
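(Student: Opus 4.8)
The plan is to exhibit an explicit continuous surjection $\Psi\colon\mathbf{2}^{\omega\times J}\to[0,1]^J$ and then to read off the two consequences from the fact that compactness and the Loeb property are preserved under continuous surjections in $\mathbf{ZF}$. First I would fix the binary-expansion map $\phi\colon\mathbf{2}^{\omega}\to[0,1]$ given by $\phi(x)=\sum_{n\in\omega}x(n)2^{-(n+1)}$. This is defined entirely within $\mathbf{ZF}$, it is continuous (each partial sum depends on finitely many coordinates and the series converges uniformly), and it is surjective because every $t\in[0,1]$ admits a binary expansion. The crucial point for the choiceless setting is that $\phi$ admits a $\mathbf{ZF}$-\emph{definable} right inverse $s\colon[0,1]\to\mathbf{2}^{\omega}$ with $\phi\circ s=\mathrm{id}_{[0,1]}$: one takes $s(t)$ to be the canonical (say, greedy) binary expansion of $t$, which is a single explicitly defined function and requires no choice. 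Note that $s$ need not be continuous; only the equation $\phi\circ s=\mathrm{id}$ will be used.

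Next I would define $\Psi\colon\mathbf{2}^{\omega\times J}\to[0,1]^J$ by $\Psi(f)(j)=\phi(f_j)$, where $f_j\in\mathbf{2}^{\omega}$ is given by $f_j(n)=f(n,j)$. Continuity of $\Psi$ follows from the universal property of the product topology: for each $j\in J$ the composite $\pi_j\circ\Psi$ equals $\phi$ precomposed with the projection $f\mapsto f_j$, and both of these are continuous, so $\pi_j\circ\Psi$ is continuous for every $j$, whence $\Psi$ is continuous. For surjectivity --- and this is the step that must be handled with care in $\mathbf{ZF}$, since a product of surjections need not be a surjection without a choice function on $J$ --- I would use the definable section $s$ pointwise: given $y\in[0,1]^J$, set $f(n,j)=s(y(j))(n)$. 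This defines a single element $f\in\mathbf{2}^{\omega\times J}$ with no appeal to choice, and then $\Psi(f)(j)=\phi(s(y(j)))=y(j)$ for every $j$, so $\Psi(f)=y$. Thus $\Psi$ is a continuous surjection of $\mathbf{2}^{\omega\times J}$ onto $[0,1]^J$.

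Finally I would deduce the two consequences. If $\mathbf{2}^{\omega\times J}$ is compact, then $[0,1]^J$, being a continuous image of a compact space, is compact (this implication is provable in $\mathbf{ZF}$). If $\mathbf{2}^{\omega\times J}$ is Loeb, let $\ell$ be a Loeb function of it; for every non-empty closed set $C\subseteq[0,1]^J$ the preimage $\Psi^{-1}(C)$ is non-empty (by surjectivity of $\Psi$) and closed (by continuity of $\Psi$), so $C\mapsto\Psi\bigl(\ell(\Psi^{-1}(C))\bigr)$ is a well-defined choice function on the family of all non-empty closed subsets of $[0,1]^J$, witnessing that $[0,1]^J$ is Loeb.

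I expect the main obstacle to be precisely the surjectivity of $\Psi$: the naive argument ``a product of surjections is a surjection'' is not available in $\mathbf{ZF}$, and the entire force of the proof lies in replacing it by the uniform, choice-free construction furnished by the definable section $s$ of the single map $\phi$. Once $s$ is in hand, every remaining step (continuity of $\Psi$, preservation of compactness, and preservation of the Loeb property) is routine and choiceless.
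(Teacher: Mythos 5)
Your proposal is correct and is essentially the canonical argument: the paper states Theorem~\ref{s1t017} as a known result cited from \cite{kk} without reproducing a proof, and the coordinatewise binary-expansion map $\Psi$, made surjective via the single $\mathbf{ZF}$-definable greedy section $s$ of $\phi$ (which correctly sidesteps the failure of ``a product of surjections is a surjection'' without choice), is exactly the expected construction. The transfer of compactness and of the Loeb property along the continuous surjection $\Psi$ is likewise handled correctly and choicelessly.
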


\subsection{The content of the paper in brief}
\label{s1.4}

The main aim of this article is to investigate both Theorem \ref{t:hm} and Corollary \ref{c:hm} in the absence of the Axiom of Choice. The Axiom of Choice is involved in their proofs in \cite{hm0} and, besides, for a locally compact Hausdorff space satisfying conditions (A)--(C) of Theorem \ref{t:hm}, no direct construction of a Hausdorff compactification of $\mathbf{X}$ with the remainder homeomorphic to the Cantor cube $\mathbf{2}^{\omega}$ is given in \cite{hm0}. It is true in $\mathbf{ZFC}$ that a locally compact Hausdorff space $\mathbf{X}$ has a metrizable compactification if and only if $\mathbf{X}$ is second-countable. It is also true in $\mathbf{ZFC}$ that if $\mathbf{X}$ is a locally compact Hausdorff space which has a countable network, then every Hausdorff compactification $\gamma\mathbf{X}$  of $\mathbf{X}$ with a second-countable remainder is metrizable and second-countable because $\gamma\mathbf{X}$ has a countable network. The situation in $\mathbf{ZF}$ is different than in $\mathbf{ZFC}$. Namely, it was shown in \cite{keremTac} that there exists a model $\mathcal{M}$ of $\mathbf{ZF}$ in which there exists a countable compact Hausdorff space which is not metrizable, so not second-countable. Hence, in $\mathbf{ZF}$, a compact Hausdorff space with a countable network may fail to be metrizable. 

In Section \ref{s2}, we generalize Urysohn's Metrization Theorem by showing that it holds in $\mathbf{ZF}$ that every $T_3$-space having a cuf base is metrizable because it is embeddable in a metrizable Tychonoff cube (cf. Theorem \ref{s2t01}). We also show, among other results, that $\mathbf{M}(C, S)$ is equivalent to the conjunction $\mathbf{M}(C, cuf)\wedge\mathbf{CAC}_{fin}$, so a compact metrizable space can fail to have a cuf base in $\mathbf{ZF}$ (cf. Theorem \ref{s2t2}). We deduce that a metrizable weakly Loeb space has a cuf base if and only if it has a dense cuf set (cf. Corollary \ref{s2c017}).

In Section \ref{s3}, we prove in $\mathbf{ZF}$ that a space $\mathbf{X}$ is embeddable in a metrizable compact Tychonoff cube if and only if $\mathbf{X}$ is a second-countable $T_3$-space (cf. Theorem \ref{s3t2}). We show that there is a model of $\mathbf{ZF}$ in which there is a cuf set $J$ such that the Tychonoff cube $[0, 1]^J$ is not compact but the Cantor cube $\mathbf{2}^J$ is compact (cf. Theorem \ref{s3t3}). We extend Theorem \ref{s1t017} by showing in $\mathbf{ZF}$ that, for every infinite set $J$, if $[0, 1]^J$ is compact (respectively, Loeb), then $\mathbf{2}^{\omega\times j}$ is compact (respectively, Loeb) (cf. Theorem \ref{s3t5} and Corollary \ref{s3c6}).

Section \ref{s4} is devoted to Theorem \ref{t:hm} and Corollary \ref{c:hm}.  We prove in $\mathbf{ZF}$ that condition (C) of Theorem \ref{t:hm} is sufficient for a locally compact Hausdorff space to have every non-empty second-countable compact Hausdorff space as a remainder (cf. Theorem \ref{s4t:main2}). To do this in $\mathbf{ZF}$, given a locally compact Hausdorff space $\mathbf{X}$ satisfying (C) of Theorem \ref{t:hm}, we show a direct construction of a Hausdorff compactification $\gamma\mathbf{X}$ of $\mathbf{X}$ with $\gamma X\setminus X$ homeomorphic to the Cantor ternary set, and we show $\gamma\mathbf{X}$ is metrizable if $\mathbf{X}$ has a cuf base (cf. the proof to Theorem \ref{s4t:main2}). We also show that it is provable in $\mathbf{ZF}$ that, for every locally compact Hausdorff space $\mathbf{X}$, condition (A) of Theorem \ref{t:hm} is necessary and sufficient for $\mathbf{X}$ to have every non-empty second-countable compact Hausdorff space as a remainder (cf. Theorem \ref{s4t:main1}).  We modify condition (C) of Theorem \ref{t:hm} to get in $\mathbf{ZF}$  a necessary and sufficient condition for $\mathbf{X}$ to have every non-empty second-countable compact Hausdorff space as a remainder (cf. Theorem \ref{s4t:main1}). In particular, that a set $X$ is dyadically filterbase infinite is equivalent to our modification of (C) of Theorem \ref{t:hm} for the discrete space $\langle X, \mathcal{P}(X)\rangle$ (cf. Corollary \ref{s2c06}(i)). By describing another direct construction of a compactification, we show that Corollary \ref{c:hm}(A) is provable in $\mathbf{ZF}$ (cf. Theorem \ref{s2t02}) and we give some useful modifications of Corollary \ref{c:hm}(A) (cf., e.g., Theorem \ref{s2t03}). To get a topological characterization of a strongly filterbase infinite set (cf. Definition \ref{s1d:fbi}($a$)) in Corollary \ref{s4:def}, we give in $\mathbf{ZF}$ a necessary and sufficient condition for a locally compact Hausdorff space to have $\mathbb{N}(\infty)$ as a remainder (cf. Theorem \ref{t:char}) and, for an arbitrary locally compact Hausdorff space $\mathbf{X}$ satisfying our necessary condition to have $\mathbb{N}(\infty)$ as a remainder, we  show one more direct construction of a Hausdorff compactification $\gamma\mathbf{X}$ of $\mathbf{X}$ with the remainder $\gamma X\setminus X$ homeomorphic to $\mathbb{N}(\infty)$ (cf. the proof to Theorem \ref{t:char}). We also prove that if $\mathbf{D}$ is an amorphous discrete space, then, for every non-empty first-countable compact Hausdorff space, the space $\mathbf{K}\times\mathbf{D}(\infty)$ is the \v Cech-Stone compactification of $\mathbf{K}\times\mathbf{D}$ (cf. Theorem \ref{t:amdis}). In Section \ref{s5}, we use this result to a proof that the implications $(A)\rightarrow (B)$ and $(C)\rightarrow (B)$ of Theorem \ref{t:hm} are both independent of $\mathbf{ZF}$ (cf. Theorem \ref{s5t5.11}).

Section \ref{s5} contains other independence results relevant to Hausdorff compactifications. For instance, we notice that the statement ``All non-empty metrizable compact spaces are remainder of a metrizable compactification of $\mathbb{N}$ is independent of $\mathbf{ZF}$ because it implies $\mathbf{CAC}_{fin}$ in $\mathbf{ZF}$ (cf. Theorem \ref{s2t07}). We prove that the following implications are true in $\mathbf{ZF}$:  
$$\mathbf{IWDI}\rightarrow \text{Corollary } \ref{c:hm}(B)\rightarrow\mathbf{IDFBI}\rightarrow\mathbf{ISFBI}\rightarrow\mathbf{IFBI}\rightarrow\mathbf{NAS},$$
so condition (B) of Corollary \ref{c:hm} is independent of $\mathbf{ZF}$ (cf. Theorem \ref{s5t:main3}). We remark that, in view of Corollary \ref{s2c06}(iii), it holds in $\mathbf{ZF}$ that if a set $D$ is quasi Dedekind-infinite or equipotent to a subset of $\mathbb{R}$, then $D$ is dyadically filterbase infinite. In the proof to Theorem \ref{thm:1}, we show that there exists a model of $\mathbf{ZF}$ in which there is a quasi Dedekind-finite, dyadically filterbase infinite set which is not equipotent to $\mathbb{R}$. We prove that there exists a $\mathbf{ZF}$-model for $\mathbf{BPI}\wedge\neg\mathbf{IFBI}$ (cf. Theorem \ref{thm:2}) and deduce that $\mathbf{BPI}$ is independent of $\mathbf{NAS}\wedge\neg\mathbf{IFBI}$. We prove that the statement ``Every strongly filterbase infinite set is weakly Dedekind-infinite'' is independent of $\mathbf{ZF}$ (cf. Theorem \ref{thm:4}). To do this, in the proof to Theorem \ref{thm:4}, we construct a new permutation model in which there exists a strongly filterbase infinite set which is weakly Dedekind-finite. Finally, we notice that there exists a model of $\mathbf{ZF}$ in which there is a locally compact not completely regular Hausdorff space $\mathbf{X}$ such that all non-empty second-countable compact Hausdorff spaces are remainders of $\mathbf{X}$ (cf. Proposition \ref{s2p010}).

A list of open problems is included in Section \ref{s6}.

\section{A generalization of Urysohn's Metrization\newline Theorem}
\label{s2}

The following theorem generalizes Urysohn's Metrization Theorem (see \ref{t:UMT}):

\begin{theorem}
\label{s2t01}
$(\mathbf{ZF})$ If a $T_3$-space $\mathbf{X}$ has a cuf base, then $\mathbf{X}$ is metrizable.
\end{theorem}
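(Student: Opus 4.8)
The plan is to embed $\mathbf{X}$ as a subspace of a metrizable Tychonoff cube $[0,1]^{P}$ whose index set $P$ is a cuf set, and then invoke Theorem \ref{s1t09}: since $[0,1]$ is a metrizable space with at least two points and $P$ is cuf, $[0,1]^{P}$ is metrizable, hence so is its subspace $\mathbf{X}$. Concretely, I would fix a cuf base $\mathcal{B}=\bigcup_{n\in\omega}\mathcal{B}_{n}$ with each $\mathcal{B}_{n}$ finite and set
$$P=\{(U,V)\in\mathcal{B}\times\mathcal{B}:\cl_{\mathbf{X}}(U)\subseteq V\}.$$
Because $\omega\times\omega$ is canonically equipotent to $\omega$ and each $\mathcal{B}_{n}\times\mathcal{B}_{m}$ is finite, $\mathcal{B}\times\mathcal{B}$ is a cuf set, so its subset $P$ is cuf as well. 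The goal is then to attach to each $p=(U,V)\in P$ a continuous $f_{p}:\mathbf{X}\to[0,1]$ with $f_{p}\equiv 1$ on $\cl_{\mathbf{X}}(U)$ and $f_{p}\equiv 0$ on $X\setminus V$, so that the evaluation map $e:X\to[0,1]^{P}$, $e(x)=(f_{p}(x))_{p\in P}$, is a topological embedding. If $X$ has at most one point the statement is trivial; if $X$ has at least two points, $T_{1}$-separation forces $P\neq\emptyset$.

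The hard part will be producing the family $\{f_{p}:p\in P\}$ \emph{without} the Axiom of Choice. The classical route runs through normality and Urysohn's Lemma, both of which make arbitrarily many unguided selections (one interpolating open set per dyadic level, one function per pair), and in $\mathbf{ZF}$ there is no reason a simultaneous selection exists. The key idea is to exploit the block decomposition $\mathcal{B}=\bigcup_{n}\mathcal{B}_{n}$ to render every such step \emph{canonical}: selections from the base are replaced by finite unions taken within a single block $\mathcal{B}_{n}$, which require no choice.

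I would first establish normality canonically. Given disjoint closed sets $C,D$, define for each $n\in\omega$ the open sets $G_{n}=\bigcup\{B\in\mathcal{B}_{n}:\cl_{\mathbf{X}}(B)\cap D=\emptyset\}$ and $H_{n}=\bigcup\{B\in\mathcal{B}_{n}:\cl_{\mathbf{X}}(B)\cap C=\emptyset\}$. Being finite unions, they satisfy $\cl_{\mathbf{X}}(G_{n})\cap D=\emptyset$ and $\cl_{\mathbf{X}}(H_{n})\cap C=\emptyset$, while regularity ($T_{3}$) guarantees that $\{G_{n}:n\in\omega\}$ covers $C$ and $\{H_{n}:n\in\omega\}$ covers $D$. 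The standard shrinking $G_{n}'=G_{n}\setminus\bigcup_{k\le n}\cl_{\mathbf{X}}(H_{k})$ and $H_{n}'=H_{n}\setminus\bigcup_{k\le n}\cl_{\mathbf{X}}(G_{k})$ then yields disjoint open sets $\bigcup_{n}G_{n}'\supseteq C$ and $\bigcup_{n}H_{n}'\supseteq D$, all defined without choice. This gives a canonical interpolation operator $I(C,W)$, namely the first of these two sets applied to the disjoint closed sets $C$ and $X\setminus W$ (for $C$ closed, $W$ open, $C\subseteq W$), which produces an open set with $C\subseteq I(C,W)\subseteq\cl_{\mathbf{X}}(I(C,W))\subseteq W$. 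Feeding $I$ into the dyadic recursion of Urysohn's Lemma assigns to each $p=(U,V)\in P$ a canonically determined function $f_{p}$ as required.

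Finally, I would check that $\{f_{p}\}$ separates points from closed sets, so that $e$ is an embedding. If $x\notin C$ with $C$ closed, choose $V\in\mathcal{B}$ with $x\in V\subseteq X\setminus C$ and, by regularity, $U\in\mathcal{B}$ with $x\in U\subseteq\cl_{\mathbf{X}}(U)\subseteq V$; then $p=(U,V)\in P$, $f_{p}(x)=1$, and $f_{p}\equiv 0$ on $C\subseteq X\setminus V$. Thus $e$ is injective (using $T_{1}$) and a homeomorphism onto its image, every verification being constructive in $\mathbf{ZF}$. Composing with Theorem \ref{s1t09} completes the proof. In summary, the only genuine obstacle is the removal of choice from the normality/Urysohn step; once the block structure of the cuf base is used to make the separating functions canonical, the passage to a metrizable Tychonoff cube and the appeal to Theorem \ref{s1t09} are routine.
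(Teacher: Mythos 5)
Your proof is correct and follows essentially the same route as the paper's own argument: the paper likewise builds a choice-free ``separation operator'' $D(H,K)$ from the blocks $\mathcal{B}_n$ (your $G_n', H_n'$ shrinking is exactly its $U_H, V_K$), feeds it into the dyadic Urysohn recursion to obtain a canonical family of functions indexed by the cuf set $J=\{\langle U,V\rangle : U,V\in\mathcal{B},\ \cl_{\mathbf{X}}(U)\subseteq V\}$ (your $P$), and embeds $\mathbf{X}$ into the metrizable cube $[0,1]^{J}$ via Theorem \ref{s1t09}. Apart from the reversed $0/1$ convention and your more explicit verification of the point--closed-set separation, the two proofs coincide.
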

\begin{proof}
We modify the proof to Proposition 4.6 in \cite{gt}. Let $\mathcal{B}=\bigcup_{n\in\omega}\mathcal{B}_n$ be a base of a $T_3$-space $\mathbf{X}$ such that, for every $n\in\omega$, $\mathcal{B}_n$ is a finite set. Let $H,K$ be a pair of non-empty disjoint closed sets of $\mathbf{X}$. For every $n\in\omega$, let $U_n=\bigcup\{ U\in\mathcal{B}_n: K\cap \text{cl}_{\mathbf{X}}U=\emptyset\}$ and $V_n=\{V\in\mathcal{B}_n: H\cap\text{cl}_{\mathbf{X}}V=\emptyset\}$. Let $D(H,K)=\langle U_H, V_K\rangle$ where $U_H=\bigcup_{n\in\omega}(U_n\setminus\text{cl}_{\mathbf{X}}(\bigcup_{j\in n+1}V_j))$ and 
$V_K=\bigcup_{n\in\omega}(V_n\setminus\text{cl}_{\mathbf{X}}(\bigcup_{j\in n+1}U_j))$. Let $J=\{\langle U, V\rangle: U,V\in\mathcal{B}\text{ and } \text{cl}_{\mathbf{X}}U\subseteq V\}$. Now, let us modify the proof to Corollary 4.7 in \cite{gt}. Namely, using the operator $D$ and arguing similarly to the proof of Urysohn's lemma, we can define in $\mathbf{ZF}$ a family $\mathcal{F}=\{f_{\langle U,V\rangle}: \langle U, V\rangle\}\in J\}$ of continuous functions $f_{\langle U, V\rangle}:\mathbf{X}\to [0,1]$ such that $U\subseteq f^{-1}_{\langle U, V\rangle}(0)$ and $X\setminus V \subseteq f^{-1}_{\langle U, V\rangle}(1)$. Then $\mathbf{X}$ is embeddable in $[0,1]^{\mathcal{F}}$. Since $\mathcal{F}$ is a cuf set, the cube $[0,1]^{\mathcal{F}}$ is metrizable by Theorem \ref{s1t09}. Hence $\mathbf{X}$ is metrizable.
\end{proof}

Let us show our first simple application of Theorem \ref{s2t01}. More applications of Theorem \ref{s2t01} are given in Sections \ref{s3} and \ref{s4}.

\begin{proposition}
\label{s2p02}
$(\mathbf{ZF})$
\begin{enumerate}
\item[(i)] If $\mathbf{X}$ is a non-compact locally compact Hausdorff space which has a cuf base, then the Alexandroff compactification $\mathbf{X}(\infty)$ of $\mathbf{X}$ is metrizable.
\item[(ii)]  (Cf. \cite[Proposition 3.5]{kw1}.) The Alexandroff compactification $\mathbf{D}(\infty)$ of an infinite discrete space is metrizable if and only if $\mathbf{D}$ is a cuf space. 
\item[(iii)] An infinite discrete space $\mathbf{D}$ has a metrizable compactification if and only if $\mathbf{D}$ is a cuf space. 
\end{enumerate}
\end{proposition}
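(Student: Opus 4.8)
The plan is to derive all three parts from the generalization of Urysohn's Metrization Theorem, Theorem~\ref{s2t01}: it suffices to equip $\mathbf{X}(\infty)$ with a cuf base and to verify that $\mathbf{X}(\infty)$ is a $T_3$-space, after which (ii) and (iii) follow by short additional arguments. I would prove (i) in full and then reduce (ii) and (iii) to it.

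For (i), first I would record that $\mathbf{X}(\infty)$ is compact Hausdorff, hence $T_3$ in $\mathbf{ZF}$: given a point and a disjoint closed set $F$, the collection of open sets that can be separated from the point by \emph{some} disjoint open neighbourhood covers the compact set $F$, so finitely many of them cover $F$, after which a single application of finite choice yields the separating pair. Consequently $\mathbf{X}$, being an open subspace of the regular space $\mathbf{X}(\infty)$, is itself regular, so the subfamily $\mathcal{B}'=\{U\in\mathcal{B}:\cl_{\mathbf{X}}(U)\text{ is compact}\}$ of the given cuf base $\mathcal{B}$ is again a base of $\mathbf{X}$ (here local compactness together with regularity is used), and $\mathcal{B}'$ is a cuf set as a subset of one. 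The heart of the matter is a cuf neighbourhood base at $\infty$: I would take $\mathcal{N}_{\infty}=\{X(\infty)\setminus\cl_{\mathbf{X}}(\bigcup\mathcal{F}):\mathcal{F}\in[\mathcal{B}']^{<\omega}\}$. One then uses the $\mathbf{ZF}$-facts that the finite subsets of a cuf set form a cuf set, that a surjective image of a cuf set is a cuf set, and that a union of two cuf sets is a cuf set, to conclude that $\mathcal{B}'\cup\mathcal{N}_{\infty}$ is a cuf family. That $\mathcal{N}_{\infty}$ is cofinal among the neighbourhoods $X(\infty)\setminus K$ of $\infty$ follows because each compact $K$ is covered by finitely many members of $\mathcal{B}'$, whose closures have compact union containing $K$. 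Thus $\mathcal{B}'\cup\mathcal{N}_{\infty}$ is a cuf base of the $T_3$-space $\mathbf{X}(\infty)$, and Theorem~\ref{s2t01} gives metrizability.

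Part (ii) then splits cleanly. For the ``if'' direction, a cuf discrete space carries the cuf base of its singletons, so (i) applies (alternatively, one cites \cite[Proposition~3.5]{kw1}). For ``only if'', I would fix a metric $\rho$ inducing the topology of $\mathbf{D}(\infty)$ and note that each ball $B_{\rho}(\infty,1/2^{n})$ is a neighbourhood of $\infty$, so its complement meets $D$ in a finite set $K_n$; since $\rho(\infty,d)>0$ for every $d\in D$, one has $D=\bigcup_{n\in\omega}K_n$, a canonical cuf decomposition. Part (iii) is immediate from (ii) in the ``if'' direction. For ``only if'', given a metrizable compactification $\gamma\mathbf{D}$ with metric $\rho$, each point of $D$ is isolated in $\gamma D$ by some $\rho$-ball, so $D=\bigcup_{n\in\omega}D_n$, where $D_n$ consists of the points $1/2^{n}$-isolated from $D$; each $D_n$ is $1/2^{n}$-separated and therefore finite, since a finite cover of the compact space $\gamma D$ by $1/2^{n+1}$-balls can contain at most one point of $D_n$ per ball.

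The main obstacle is the $\mathbf{ZF}$-construction in (i): producing a genuinely cuf neighbourhood base at $\infty$ without any appeal to choice, which forces the passage from $\mathcal{B}$ to $\mathcal{B}'$, the use of finite unions of closures, and the verification that being cuf is preserved under taking finite subsets and surjective images. The companion delicate point is the choice-free finiteness of $1/2^{n}$-separated subsets of a compact metric space in (iii), which again rests solely on extracting a finite subcover and counting.
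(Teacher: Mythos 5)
Your proposal is correct in $\mathbf{ZF}$, and for parts (i) and (ii) it follows essentially the paper's route: pass to the subfamily $\mathcal{B}'$ of base members with compact closures and adjoin complements of compact sets built from closures of base members, then invoke Theorem~\ref{s2t01}; for (ii) ``only if'', read off finite sets from a countable neighbourhood base at $\infty$, exactly as the paper's remark does. In fact your treatment of (i) is slightly more careful than the paper's: the paper declares $\mathcal{B}^{\ast}\cup\{X(\infty)\setminus\cl_{\mathbf{X}}(U):U\in\mathcal{B}^{\ast}\}$ to be a base of $\mathbf{X}(\infty)$, but complements of closures of \emph{single} base members need not be cofinal among neighbourhoods of $\infty$ (take $\mathbf{X}=\mathbb{R}\setminus\{0\}$ with the base of bounded open intervals missing $0$: no single member's closure contains a compact set with points on both sides of $0$); your family $\mathcal{N}_{\infty}=\{X(\infty)\setminus\cl_{\mathbf{X}}(\bigcup\mathcal{F}):\mathcal{F}\in[\mathcal{B}']^{<\omega}\}$, together with the choice-free observations that $[\,\cdot\,]^{<\omega}$, subsets, surjective images and finite unions preserve cuf-ness, supplies the missing finite-union step. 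Where you genuinely diverge is (iii) ``only if'': the paper collapses the whole remainder $\gamma D\setminus D$ to the point $\infty$, equips $\mathbf{D}(\infty)$ with a quotient metric, and reduces to (ii), whereas you decompose $D$ directly by separation scale, $D=\bigcup_{n}D_n$ with $D_n$ the $1/2^{n}$-isolated points, and prove each $D_n$ finite by a finite-subcover counting argument. Your route buys two things: it is self-contained (no reduction through (ii)), and it sidesteps a defect in the paper's formula --- setting $\rho(x,y)=d(x,y)$ on $D$ and $\rho(x,\infty)=d(x,\gamma D\setminus D)$ can violate the triangle inequality through $\infty$ when the remainder has positive diameter, so one would have to replace it by the standard quotient metric $\min\{d(x,y),\,d(x,\gamma D\setminus D)+d(y,\gamma D\setminus D)\}$; the paper's route, once repaired this way, is shorter and recycles (ii). One cosmetic point: in (iii) you should justify in passing that every point of $D$ is isolated in $\gamma D$ (density of $D$ plus Hausdorffness, or openness of a locally compact dense subspace, both available in $\mathbf{ZF}$), which you assert without proof.
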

\begin{proof}
(i) Assuming that $\mathcal{B}$ is a cuf base of a non-compact locally compact Hausdorff space $\mathbf{X}$, we fix an element $\infty\notin X$ and notice that if $\mathcal{B}^{\ast}=\{U\in\mathcal{B}: \text{cl}_{\mathbf{X}}(U)\text{ is compact}\}$, then the family $\mathcal{B}^{\ast}\cup\{X(\infty)\setminus \text{cl}_{\mathbf{X}}(U): U\in\mathcal{B}^{\ast}\}$ is a cuf base of $\mathbf{X}(\infty)$, so $\mathbf{X}(\infty)$ is metrizable by Theorem \ref{s2t01}. 

(ii) That (ii) holds was proved in \cite{kw1}. One can also notice that (ii) follows from (i) and the obvious fact that if $\mathbf{D}$ is an infinite discrete space and  $\infty$ has a countable base of neighborhoods in $\mathbf{D}(\infty)$, then $\mathbf{D}$ is a cuf space. 

(iii) Suppose that $\gamma\mathbf{D}$ is a metrizable compactification of an infinite discrete space $\mathbf{D}$. Let  $d$ be any metric which  induces the topology of $\gamma \mathbf{D}$ and let $\infty=\gamma D\setminus D$. We define a metric $\rho$ on $\mathbf{D}(\infty)$ as follows.  For $x,y\in D$, we put $\rho(x,y)=d(x,y)$.  For $x\in D$, we put $\rho(\infty, x)=\rho(x,\infty)=d(x, \gamma D\setminus D)$. Moreover, $\rho(\infty, \infty)=0$. Then $\rho$ induces the topology of $\mathbf{D}(\infty)$. Therefore, to conclude the proof to (iii), it suffices to apply (ii). 
\end{proof}

In view of Proposition \ref{s2p02}(ii), it is provable in $\mathbf{ZF}$ that, for every infinite discrete space $\mathbf{D}$, if $\mathbf{D}(\infty)$ is metrizable, then $\mathbf{D}(\infty)$ has a cuf base. However, the following theorem shows, among other facts, that the form $\mathbf{M}(C, cuf)$ is independent of $\mathbf{ZF}$ and, in every model of $\mathbf{ZF}+\neg\mathbf{M}(C, cuf)$, there exists a non-compact locally compact Hausdorff space $\mathbf{X}$ without any cuf base,  such that $\mathbf{X}(\infty)$ is metrizable. Hence, the converse of Proposition \ref{s2p02}(i) fails in a model of $\mathbf{ZF}$.

\begin{theorem}
\label{s2t2}
\begin{enumerate}
\item[(a)] The following conditions (i)--(iv) are satisfied in $\mathbf{ZF}$:
\begin{enumerate}
\item[(i)] $\mathbf{M}(C, cuf)$ is equivalent to the statement: for every non-compact locally compact Hausdorff space $\mathbf{X}$, if $\mathbf{X}(\infty)$ is metrizable, then $\mathbf{X}$ has a cuf base;
\item[(ii)] $\mathbf{M}(C, S)\leftrightarrow (\mathbf{M}(C, cuf)\wedge\mathbf{CAC}_{fin})$;
\item[(iii)] $\mathbf{M}(C, cuf)$ is equivalent to the statement: every compact metric space $\langle X, d\rangle$ has a base $\mathcal{B}$ such that $\mathcal{B}=\bigcup\limits_{n\in\mathbb{N}}\mathcal{B}_n$ where, for every $n\in\mathbb{N}$, $\mathcal{B}_n$ is a finite cover of $X$, $\mathcal{B}_{n+1}$ is a refinement of $\mathcal{B}_n$ and, for every $B\in\mathcal{B}_n$, $\delta_d(B)<\frac{1}{n}$;
\item[(iv)] $\mathbf{CMC}$ implies $\mathbf{M}(C, cuf)$.  
\end{enumerate}
\item[(b)] There exists a model of $\mathbf{ZF}$ in which $\mathbf{M}(C, cuf)$ fails. 
\item[(c)] In the Second Fraenkel Model $\mathcal{N}2$ of \cite{hr},  $\mathbf{M}(C, cuf)$ is true and $\mathbf{M}(C, S)$ is false.
\end{enumerate} 
\end{theorem}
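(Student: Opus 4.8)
Theorem \ref{s2t2} combines several distinct claims, so I would prove them in the order (a)(iii), (a)(iv), (a)(ii), (a)(i), then (b) and (c). The plan is to build up the equivalent reformulations of $\mathbf{M}(C,cuf)$ first, since they feed into the harder decomposition results.

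\begin{proof}[Proof sketch]
I would begin with part (a)(iii), which is purely a reformulation of having a cuf base for compact metric spaces. For the nontrivial direction, suppose $\mathbf{M}(C,cuf)$ holds and let $\langle X,d\rangle$ be compact metric with a cuf base $\mathcal{B}=\bigcup_{k\in\omega}\mathcal{C}_k$, each $\mathcal{C}_k$ finite. For each $n\in\mathbb{N}$ I would form, from finitely many levels at a time, the finite family of all open balls $B_d(x,r)$ whose closures have diameter $<\tfrac1n$ and which arise as finite intersections/selections from the $\mathcal{C}_k$; then replace it by a finite subcover of $X$ (finite subcovers can be chosen \emph{canonically} from a fixed finite indexing, so no choice is needed), and finally intersect successive levels to guarantee refinement. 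The point is that all choices here are among \emph{finite} sets equipped with a fixed well-ordering inherited from $\omega$ via the cuf enumeration, so the whole sequence $(\mathcal{B}_n)$ is definable in $\mathbf{ZF}$. The converse is immediate since such a $\mathcal{B}$ is visibly a cuf base.

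For (a)(iv), assume $\mathbf{CMC}$ and let $\langle X,d\rangle$ be compact metric. For each $n\in\mathbb{N}$ the open cover $\{B_d(x,\tfrac1n):x\in X\}$ has, by compactness, \emph{some} finite subcover; the family of all finite subcovers indexed by $n$ is a denumerable family of non-empty sets, so $\mathbf{CMC}$ yields a multiple choice function selecting, for each $n$, a finite set $\mathcal{U}_n$ of finite subcovers. Taking $\bigcup\mathcal{U}_n$ gives a finite family $\mathcal{B}_n$ of balls covering $X$, and $\bigcup_n\mathcal{B}_n$ is the desired cuf base. Part (a)(ii) is where I expect the real work. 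The implication $\mathbf{M}(C,S)\rightarrow\mathbf{M}(C,cuf)$ is easy: a countable dense set in a compact metric space yields the countable base of rational-radius balls, which is a fortiori a cuf base. For the reverse, assume $\mathbf{M}(C,cuf)\wedge\mathbf{CAC}_{fin}$ and let $\langle X,d\rangle$ be compact metric with cuf base $\bigcup_n\mathcal{B}_n$. Each $\mathcal{B}_n$ is a finite set of non-empty open sets; using $\mathbf{CAC}_{fin}$ I would choose a point from each member of each $\mathcal{B}_n$ simultaneously (a denumerable family of non-empty finite sets, namely the $\mathcal{B}_n$'s, after picking one point per basic set which itself requires a choice from a finite collection), producing a countable set $S$. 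That $S$ is dense follows because $\mathcal{B}$ is a base. Conversely, using Theorem \ref{s1t08} and Theorem \ref{s1t07}, I must recover $\mathbf{M}(C,S)$ as exactly the conjunction, so I would verify that $\mathbf{M}(C,S)$ also implies $\mathbf{CAC}_{fin}$; the latter follows from Theorem \ref{s1t08}, and implies $\mathbf{M}(C,cuf)$ as just noted, closing the loop.

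For (a)(i), the nontrivial direction is that the stated implication forces $\mathbf{M}(C,cuf)$. Given a compact metric $\mathbf{K}$, I would embed it as the remainder-type object by forming a suitable non-compact locally compact $\mathbf{X}$ whose Alexandroff compactification reflects $\mathbf{K}$; concretely, if $\mathbf{K}$ is compact metric then $\mathbf{X}=\mathbf{K}\times\mathbb{N}$ (or a punctured analogue) is locally compact non-compact with $\mathbf{X}(\infty)$ metrizable precisely because $\mathbf{K}$ is, and a cuf base of $\mathbf{X}$ restricts to a cuf base of $\mathbf{K}$. Conversely $\mathbf{M}(C,cuf)$ gives, via Proposition \ref{s2p02}(i) and the computation of bases of $\mathbf{X}(\infty)$, the stated implication. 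For (b), I would invoke (a)(iv) contrapositively together with a known $\mathbf{ZF}$-model where $\mathbf{CMC}$-type selection for finite covers fails for some compact metric space; the standard choice is a model built from a Cohen-style or Fraenkel-style construction producing a compact metric space with no cuf base, transferred to $\mathbf{ZF}$ via the Pincus theorems cited in the preliminaries. Finally for (c), in the Second Fraenkel Model $\mathcal{N}2$ the atoms come in a denumerable family of pairs, so $\mathbf{CAC}_{fin}$ fails (one cannot choose from the pairs), giving $\neg\mathbf{M}(C,S)$ by Theorem \ref{s1t08}; but $\mathbf{CMC}$ is known to hold in $\mathcal{N}2$ (the support structure permits multiple choice), so $\mathbf{M}(C,cuf)$ holds by (a)(iv). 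The main obstacle throughout is (a)(ii): isolating exactly where $\mathbf{CAC}_{fin}$ is needed (to pass from a cuf base to a countable dense set) versus where it is avoidable, and confirming that the selection of representatives from the finite families $\mathcal{B}_n$ is genuinely a countable-finite choice rather than an unrestricted one.
\end{proof}
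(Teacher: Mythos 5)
Your overall architecture matches the paper's (least-index canonical subcovers for (a)(iii), $\mathbf{CMC}$ applied to finite subcovers for (a)(iv), $\mathbf{CMC}\wedge\neg\mathbf{CAC}_{fin}$ in $\mathcal{N}2$ for (c), and for (a)(i) a punctured compact space or $\mathbf{K}\times\mathbb{N}$ with an explicitly defined metric on the one-point compactification, which is a legitimate variant of the paper's device of deleting an accumulation point). But two steps are genuinely broken. First, in the reverse direction of (a)(ii) you write that $\mathbf{CAC}_{fin}$ lets you ``choose a point from each member of each $\mathcal{B}_n$.'' The members of $\mathcal{B}_n$ are arbitrary non-empty \emph{open} subsets of $X$, not finite sets, so this is an instance of $\mathbf{CAC}$ for a cuf family of open sets, which $\mathbf{CAC}_{fin}$ does not yield; your parenthetical remark that picking one point per basic set ``requires a choice from a finite collection'' is simply false. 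The correct use of $\mathbf{CAC}_{fin}$ is structural, not pointwise: $\mathbf{CAC}_{fin}$ implies that every cuf set is countable (choose, for each $n$, an enumeration of the finite set $\mathcal{B}_n$ from the finite non-empty set of its enumerations; cf.\ Form [10 A] of \cite{hr}), so the cuf base is a countable base, $\mathbf{X}$ is second-countable, and separability then follows from Theorem \ref{s1t07}, which is a $\mathbf{ZF}$ theorem. No dense set need ever be chosen by hand.

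Second, your argument for (b) does not go through. Invoking (a)(iv) ``contrapositively'' only gives that $\neg\mathbf{M}(C,cuf)$ implies $\neg\mathbf{CMC}$; a model in which $\mathbf{CMC}$ fails is of no use, since $\mathbf{CMC}$ is strictly stronger than $\mathbf{M}(C,cuf)$, and ``a known model producing a compact metric space with no cuf base'' is precisely the object whose existence is to be proved --- you exhibit no such model, and it is not clear that a Fraenkel-style construction plus Pincus transfer handles this statement directly. The paper instead obtains (b) as a corollary of (a)(ii): by \cite{ktw0} there is a $\mathbf{ZF}$-model of $\mathbf{CAC}_{fin}\wedge\neg\mathbf{M}(C,S)$, and in any such model $\mathbf{M}(C,cuf)$ must fail, since otherwise (ii) would force $\mathbf{M}(C,S)$. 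Once your (ii) is repaired as above, this derivation is immediate and is the missing idea in your (b); note also that this makes the logical order matter --- (ii) has to be fully proved (both directions) before (b), whereas in your sketch (ii) is left as ``the main obstacle'' while (b) silently depends on it.
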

\begin{proof}
($a$) To prove (i), let us consider an arbitrary infinite compact metrizable space $\mathbf{Y}$. Then $\mathbf{Y}$ has an accumulation point. Let $\infty$ be an accumulation point of $\mathbf{Y}$ and let $X=Y\setminus\{\infty\}$. Then the subspace $\mathbf{X}$ of $\mathbf{Y}$ is a locally compact, non-compact Hausdorff space such that $\mathbf{X}(\infty)=\mathbf{Y}$, so $\mathbf{X}(\infty)$ is metrizable. We notice that $\mathbf{X}(\infty)$ has a cuf base if and only if $\mathbf{Y}$ has a cuf base. Hence (i) is true in $\mathbf{ZF}$. 

To prove (ii), we notice that, by Theorem \ref{s1t08}, $\mathbf{M}(C, S)$ implies $\mathbf{CAC}_{fin}$. Clearly, $\mathbf{M}(C, S)$ implies that every compact metrizable space is second-countable. Hence $\mathbf{M}(C, S)$ implies that the conjunction $\mathbf{M}(C, cuf)\wedge\mathbf{CAC}_{fin}$ is true. 

To prove (iii), let us consider an arbitrary compact metric space $\mathbf{X}=\langle X, d\rangle$ and assume that $\mathcal{E}=\bigcup\limits_{n\in\mathbb{N}}\mathcal{E}_n$ is a base of $\mathbf{X}$ such that, for every $n\in\omega$, the family $\mathcal{E}_n$ is finite. Without loss of generality, we may assume that $\mathcal{E}_n\subseteq\mathcal{E}_{n+1}$ for every $n\in\mathbb{N}$. Since $\mathbf{X}$ is compact, we can define $m_1=\min\{ m\in\mathbb{N}: X=\bigcup\{E\in \mathcal{E}_m: \delta_d(E)<1\}$ and $\mathcal{B}_1=\{E\in\mathcal{E}_{m_	1}: \delta_d(E)<1\}$. Suppose that $k\in\mathbb{N}$ is such that $m_k$ has been defined so that the family $\mathcal{B}_k=\{E\in\mathcal{E}_{m_k}: \delta_d(E)<\frac{1}{k}\}$ covers $X$. Let 
$$\mathcal{V}_k=\{V\in\mathcal{E}: \delta_{d}(V)<\frac{1}{k+1}\wedge (\exists E\in\mathcal{B}_k) V\subseteq E\}.$$
Then $\mathcal{V}_k$ is an open cover of $\mathbf{X}$. By the compactness of $\mathbf{X}$, we can define
$m_{k+1}=\min\{m\in\mathbb{N}: X=\bigcup\{V\in\mathcal{V}_k: V\in\mathcal{E}_m\}\}$ and $\mathcal{B}_{k+1}=\mathcal{V}_k\cap\mathcal{E}_{m_{k+1}}$. In this way, we have inductively defined the sequence $(\mathcal{B}_k)_{k\in\mathbb{N}}$ witnessing that (iii) holds.

To prove (iv), let us consider an arbitrary compact metric space $\mathbf{X}=\langle X, d\rangle$. For every $n\in\mathbb{N}$, let $\mathcal{F}_n=\{ F\in[X]^{<\omega}: X=\bigcup\limits_{x\in F}B_d(x,\frac{1}{n})\}$. By the compactness of $\mathbf{X}$, for every $n\in\mathbb{N}$, the family $\mathcal{F}_n$ is non-empty. Assuming $\mathbf{CMC}$, we can fix a multiple choice function $f$ of the family $\{\mathcal{F}_n: n\in\mathbb{N}\}$. For every $n\in\mathbb{N}$, we define $\mathcal{B}_n=\{B_d(x, \frac{1}{n}): x\in\bigcup f(n)\}$ Then $\mathcal{B}=\bigcup\limits_{n\in\mathbb{N}}\mathcal{B}_n$ is a cuf base of $\mathbf{X}$. \smallskip

($b$) It was shown in \cite{ktw0} that there exists a model $\mathcal{M}$ of $\mathbf{ZF}$ in which the conjunction $\neg\mathbf{M}(C, S)\wedge\mathbf{CAC}_{fin}$ is true. Then it follows from (ii) that $\mathbf{M}(C, cuf)$ fails in $\mathcal{M}$.\smallskip

($d$) It is known that the conjunction  $\mathbf{CMC}\wedge\neg\mathbf{CAC}_{fin}$ is true in $\mathcal{N}2$ (see \cite[p. 126]{hr}). Hence, it follows from the proofs to (ii) and (iv) that $\mathbf{M}(C, S)$ fails in $\mathcal{N}2$ and $\mathbf{M}(C, cuf)$ is true in $\mathcal{N}2$.
\end{proof}

The known fact of $\mathbf{ZF}$ that every separable metrizable space is second-countable can be modified as follows:

\begin{proposition}
\label{s2p015}
$(\mathbf{ZF})$
If a metrizable space $\mathbf{X}$ has a dense cuf set, then $\mathbf{X}$ has a cuf base. In particular, every metrizable cuf space has a cuf base. 
\end{proposition}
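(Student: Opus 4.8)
The plan is to imitate the classical $\mathbf{ZF}$ argument that a separable metrizable space is second-countable, replacing the countable dense set by a dense cuf set and the resulting countable base by a cuf base. Fix a metric $d$ inducing the topology of $\mathbf{X}$ and a dense set $D\subseteq X$ together with a witnessing decomposition $D=\bigcup_{n\in\omega}D_n$ into finite sets; replacing $D_n$ by $\bigcup_{j\in n+1}D_j$, I may assume that $D_n\subseteq D_{n+1}$ for every $n\in\omega$. I then propose the family
$$\mathcal{B}=\{B_d(x,\tfrac{1}{2^m}): x\in D,\ m\in\omega\}$$
as the desired cuf base of $\mathbf{X}$.

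First I would check that $\mathcal{B}$ is a base, which needs no choice because it is a pointwise statement. Given an open set $U$ and a point $x\in U$, I fix $n\in\omega$ with $B_d(x,\frac{1}{2^n})\subseteq U$ and use the density of $D$ to obtain some $y\in D$ with $d(x,y)<\frac{1}{2^{n+2}}$; a direct triangle-inequality computation then shows $x\in B_d(y,\frac{1}{2^{n+1}})\subseteq B_d(x,\frac{1}{2^n})\subseteq U$, and $B_d(y,\frac{1}{2^{n+1}})\in\mathcal{B}$. Hence every open set equals the union of the members of $\mathcal{B}$ that it contains, so $\mathcal{B}$ is a base of $\mathbf{X}$.

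Next I would verify that $\mathcal{B}$ is a cuf set. Since the radii are indexed canonically by $\omega$ and $D=\bigcup_{n\in\omega}D_n$ with each $D_n$ finite, $\mathcal{B}$ is the image of $D\times\omega=\bigcup_{(n,m)\in\omega\times\omega}(D_n\times\{m\})$ under the function $(x,m)\mapsto B_d(x,\frac{1}{2^m})$. Each set $D_n\times\{m\}$ is finite, $\omega\times\omega$ is denumerable, and the image of a finite set under a function is finite; therefore $\mathcal{B}=\bigcup_{(n,m)\in\omega\times\omega}\{B_d(x,\frac{1}{2^m}): x\in D_n\}$ is a countable union of finite sets, i.e. a cuf set. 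This settles the first assertion. For the ``in particular'' clause, if $\mathbf{X}$ is a metrizable cuf space, then its underlying set $X$ is a cuf set and $X$ is (trivially) dense in $\mathbf{X}$, so the first part applies with $D=X$.

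I expect the only delicate point to be bookkeeping: ensuring that every step is genuinely choice-free. There are two places where one is tempted to ``choose'' — the witness $y$ in the base check and witnesses across the pieces $D_n$ in the cuf check — and the plan handles them by invoking a single existential statement per point in the first case and by the uniform image construction in the second, rather than by any simultaneous selection. Since the whole family $\mathcal{B}$ and its decomposition into finite pieces are defined explicitly and uniformly from the given data $d$ and $D=\bigcup_{n\in\omega}D_n$, no choice function is needed, so the argument goes through in $\mathbf{ZF}$.
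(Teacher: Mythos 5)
Your proof is correct and follows essentially the same route as the paper's: both form the family of balls $B_d(x,r)$ centred at points of the dense cuf set with radii from a fixed countable list, and observe that this family is a base and is a cuf set by the explicit double indexing over $\omega\times\omega$ (the paper uses radii $\frac{1}{m+1}$ where you use $\frac{1}{2^m}$, an immaterial difference). Your triangle-inequality verification and the uniform-image argument simply spell out the details the paper leaves to the reader with ``one can easily observe,'' and your choice-freeness bookkeeping is sound.
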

\begin{proof}
Assume that $A=\bigcup\limits_{n\in\omega}A_n$ is a dense set in a metric space  $\mathbf{X}=\langle X, d\rangle$ such that, for every $n\in\omega$, $A_n$ is a non-empty finite set. For $n,m\in\omega$, we define $\mathcal{B}_{m,n}=\{B_d(x,\frac{1}{m+1}): x\in A_n\}$. One can easily observe that $\mathcal{B}=\bigcup\limits_{n,m\in\omega}\mathcal{B}_{n,m}$ is a cuf base of $\mathbf{X}$. 
\end{proof}

We do not know if it can be proved in $\mathbf{ZF}$ that every compact metrizable space which admits a cuf base has a dense cuf set. However, we can give below a partial solution to this problem.

\begin{proposition}
\label{s2p016}
$(\mathbf{ZF})$ Let $\mathbf{X}$ be a weakly Loeb regular space which has a cuf base. Then $\mathbf{X}$ has a dense cuf set. 
\end{proposition}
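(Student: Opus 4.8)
The plan is to use the weak Loeb function to select, in a canonical (choice-free) way, finitely many points inside each level of the cuf base, and to exploit regularity to guarantee that these points are dense. First I would fix a cuf base $\mathcal{B}=\bigcup_{n\in\omega}\mathcal{B}_n$ of $\mathbf{X}$, where each $\mathcal{B}_n$ is finite, together with a weak Loeb function $F$ of $\mathbf{X}$ (which exists since $\mathbf{X}$ is weakly Loeb), so that for every non-empty closed subset $C$ of $\mathbf{X}$ the value $F(C)$ is a non-empty finite subset of $C$. The key idea is that, although $F$ acts on closed sets rather than on open sets, we can feed it the closures $\cl_{\mathbf{X}}(U)$ of the basic open sets $U$.

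Next I would define, for each $n\in\omega$,
\[
D_n=\bigcup\{F(\cl_{\mathbf{X}}(U)): U\in\mathcal{B}_n,\ U\neq\emptyset\},
\]
and put $D=\bigcup_{n\in\omega}D_n$. Since $\mathcal{B}_n$ is finite and $F$ assigns a finite set to each of the finitely many closed sets $\cl_{\mathbf{X}}(U)$ arising from $U\in\mathcal{B}_n$, the set $D_n$ is a finite union of finite sets, hence finite; this step is entirely choice-free, because $F$ is a single fixed function and each union is finite. Consequently $D$ is a countable union of finite sets, i.e.\ a cuf set.

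It remains to show that $D$ is dense, and this is where regularity enters. Let $V$ be a non-empty open subset of $\mathbf{X}$ and pick $x\in V$. By regularity there is an open set $W$ with $x\in W\subseteq\cl_{\mathbf{X}}(W)\subseteq V$, and since $\mathcal{B}$ is a base there is $U\in\mathcal{B}$ with $x\in U\subseteq W$. Then $U\neq\emptyset$ and $\cl_{\mathbf{X}}(U)\subseteq\cl_{\mathbf{X}}(W)\subseteq V$, so the non-empty finite set $F(\cl_{\mathbf{X}}(U))\subseteq\cl_{\mathbf{X}}(U)\subseteq V$ lies in $D$ and witnesses $D\cap V\neq\emptyset$. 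Hence $D$ meets every non-empty open set and is dense, completing the argument.

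I expect no serious obstacle here: the only point requiring care is to ensure that the construction invokes no choice beyond the single weak Loeb function $F$, which is guaranteed by the finiteness of each $\mathcal{B}_n$ (so that each $D_n$ is a genuinely finite union, formed without selecting anything further). The conceptual crux is the two-step translation, namely passing from open sets to their closures so that $F$ becomes applicable, and then using regularity to shrink an arbitrary open set to a basic open set whose \emph{closure} still lies inside it, thereby forcing the selected points into $V$.
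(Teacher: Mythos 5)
Your proposal is correct and follows essentially the same route as the paper's proof: apply the weak Loeb function to the closures of the basic open sets levelwise to get finite sets $D_n$, and use regularity to shrink an arbitrary open set to a basic open set whose closure is contained in it, so the selected points land in the given open set. The paper leaves the density verification as ``easily seen''; your argument fills in exactly that step.
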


\begin{proof} Let $\mathcal{B}=\bigcup\limits_{n\in\omega}\mathcal{B}_n$ be a base of $\mathbf{X}$ such that, for every $n\in\omega$, the family $\mathcal{B}_n$ is finite. We may assume that $\emptyset\notin\mathcal{B}$. Let $f$ be a weak Loeb function of $\mathbf{X}$ and let $D_n=\bigcup\{f(\text{cl}_{\mathbf{X}}(U)): U\in\mathcal{B}_n\}$ for every $n\in\omega$. Then, for every $n\in\omega$, the set $D_n$ is finite. Since $\mathbf{X}$ is regular and $\mathcal{B}$ is a base of $\mathbf{X}$, it is easily seen that the set $D=\bigcup\limits_{n\in\omega}D_n$ is dense in $\mathbf{X}$.
\end{proof}

\begin{corollary}
\label{s2c017}
$(\mathbf{ZF})$ Let $\mathbf{X}$ be a metrizable weakly Loeb space. Then $\mathbf{X}$ has a cuf base if and only if $\mathbf{X}$ has a dense cuf set.
\end{corollary}

\section{A little more on metrizable Tychonoff and Cantor cubes}
\label{s3}

Let us recall that $\mathbf{BPI}$ is equivalent to each of the statements: ``every Tychonoff cube is compact'' and ``every Cantor cube is compact'' (see, e.g., \cite[Theorem 4.70]{her}). By Theorem \ref{s1t016}, it is provable in $\mathbf{ZF}$ that the Tychonoff cube $[0, 1]^{\omega}$ is a continuous image of the Cantor cube $\mathbf{2}^{\omega}$. By Theorem \ref{s1t09}, it holds in $\mathbf{ZF}$ that if $J$ is an infinite cuf set, then both cubes $[0, 1]^J$ and $\mathbf{2}^J$ are metrizable. One may ask if it is provable in $\mathbf{ZF}$ that, for every infinite cuf set $J$, the Tychonoff cube $[0, 1]^J$ is a continuous image of the Cantor cube $\mathbf{2}^J$. Bearing in mind Theorem \ref{s1t017}, one may also ask whether it can be proved in $\mathbf{ZF}$ that, for every infinite set $J$, if the Tychonoff cube $[0, 1]^J$ is compact (respectively, Loeb), then so is the Cantor cube $\mathbf{2}^{\omega\times J}$. In this section, we give answers to these questions. Moreover, by applying Theorem \ref{s2t01} and its proof, we obtain the following proposition and the forthcoming Theorem \ref{s3t2}.

\begin{proposition}
\label{s3p1}
$(\mathbf{ZF})$
\begin{enumerate}
\item[(i)] A Tychonoff cube is metrizable if and only if it has a cuf base.
\item[(ii)] A Cantor cube is metrizable if and only if it has a cuf base.
\item[(iii)] A $T_3$-space $\mathbf{X}$ has a cuf base if and only if $\mathbf{X}$ is embeddable in a metrizable Tychonoff cube.
\end{enumerate}
\end{proposition}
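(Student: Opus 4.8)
The plan is to reduce all three parts to Theorems~\ref{s2t01} and~\ref{s1t09} together with Proposition~\ref{s2p015}, treating (i) and (ii) in parallel since the only difference between them is the factor space ($[0,1]$ versus $\mathbf{2}$, both metrizable with at least two points). For the ``if'' directions of (i) and (ii) I would first record that in $\mathbf{ZF}$ every Tychonoff cube and every Cantor cube is a $T_3$-space: each factor is regular and $T_1$, and regularity is productive without any choice, because a closed neighbourhood inside a basic open set is produced by shrinking only the \emph{finitely many} constrained coordinates (a finite selection, available in $\mathbf{ZF}$). Hence any such cube that has a cuf base is metrizable immediately by Theorem~\ref{s2t01}.

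For the ``only if'' directions, suppose the cube $\mathbf{Y}^J$, with $\mathbf{Y}\in\{[0,1],\mathbf{2}\}$, is metrizable. Since $\mathbf{Y}$ has at least two points, Theorem~\ref{s1t09} forces $J$ to be a cuf set; write $J=\bigcup_{n\in\omega}J_n$ with each $J_n$ finite and $J_n\subseteq J_{n+1}$. I would then exhibit a dense cuf set $D$ and invoke Proposition~\ref{s2p015}. Fix a canonically enumerated countable dense subset $E$ of $\mathbf{Y}$ (take $\mathbb{Q}\cap[0,1]$ for $[0,1]$ and $\{0,1\}$ for $\mathbf{2}$), arranged so that $0\in E$ is its first member, and for $k\in\omega$ let $E_k$ be the set of its first $k$ members. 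Setting $D_{n,k}=\{x\in Y^J:\supp(x)\subseteq J_n\text{ and }x_j\in E_k\text{ for all }j\in J_n\}$, each $D_{n,k}$ is finite, so $D=\bigcup_{n,k\in\omega}D_{n,k}$ is a cuf set; density holds because a basic open set of $\mathbf{Y}^J$ constrains only finitely many coordinates, each of which meets the dense set $E$. Proposition~\ref{s2p015} then supplies a cuf base, proving the forward direction of both (i) and (ii). The Cantor case is in fact simpler, since $\{x:\supp(x)\subseteq J_n\}$ is already finite and the extra index $k$ can be dispensed with.

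For (iii), the forward implication is essentially contained in the proof of Theorem~\ref{s2t01}: if the $T_3$-space $\mathbf{X}$ has a cuf base, that proof embeds $\mathbf{X}$ into $[0,1]^{\mathcal{F}}$ for a cuf index set $\mathcal{F}$, and $[0,1]^{\mathcal{F}}$ is metrizable by Theorem~\ref{s1t09}; thus $\mathbf{X}$ embeds into a metrizable Tychonoff cube. Conversely, if $\mathbf{X}$ is (homeomorphic to) a subspace of a metrizable Tychonoff cube $[0,1]^J$, then by part (i) the cube has a cuf base $\mathcal{B}=\bigcup_{n\in\omega}\mathcal{B}_n$, and restricting along the embedding, $\bigcup_{n\in\omega}\{B\cap X:B\in\mathcal{B}_n\}$ is a base of $\mathbf{X}$; since $B\mapsto B\cap X$ sends the finite set $\mathcal{B}_n$ to a finite set, this base is a cuf set, so $\mathbf{X}$ has a cuf base.

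The only genuinely delicate point is the forward direction of (i): producing a dense set that is a \emph{countable union of finite sets} rather than merely a countable union of countable sets, since the latter need not be a cuf set in $\mathbf{ZF}$. The double indexing over the support level $n$ and the number $k$ of admissible coordinate values, together with the canonical enumeration of $E$, is exactly what repairs this and keeps the whole construction choice-free. The remaining ingredients (productivity of regularity, finiteness of the $D_{n,k}$, density, and the subspace argument in (iii)) are routine.
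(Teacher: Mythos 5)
Your proposal is correct, and its overall skeleton matches the paper's: the ``if'' directions of (i)--(ii) via Theorem \ref{s2t01}, the reduction of $J$ to a cuf set via Theorem \ref{s1t09}, and part (iii) exactly as in the paper (forward direction from the embedding in the proof of Theorem \ref{s2t01}, converse by restricting a cuf base of the cube to the subspace). Where you genuinely diverge is the ``only if'' direction of (i)--(ii): the paper constructs a cuf base of the cube \emph{directly}, taking for each $n$ a canonical finite cover $\mathcal{E}_n$ of $[0,1]$ by intervals of mesh about $1/n$ and forming the finite family $\mathcal{B}_n$ of basic boxes supported in $K_n$ with factors from $\mathcal{E}_n$; you instead build a dense cuf set $D=\bigcup_{n,k}D_{n,k}$ of finitely supported points with canonically enumerated coordinate values and route through Proposition \ref{s2p015} (dense cuf set $+$ metrizability $\Rightarrow$ cuf base). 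Both are choice-free: your double indexing over support level $n$ and truncation level $k$, with $0$ first in the enumeration, correctly guarantees each $D_{n,k}$ is finite and $D$ is a cuf set rather than merely a countable union of countable sets, which is exactly the pitfall you flag. Your route is shorter given Proposition \ref{s2p015} already in hand and makes the link with Corollary \ref{s2c017} visible (in a metrizable weakly Loeb space, cuf bases and dense cuf sets are equivalent); the paper's direct construction buys an explicit canonical base of the cube without passing through any dense set or through the metric, which is marginally more self-contained. Your explicit verification that the cubes are $T_3$ in $\mathbf{ZF}$ (regularity being productive via finite selections) is a point the paper leaves implicit, and it is needed to invoke Theorem \ref{s2t01}, so including it is a small improvement rather than a detour.
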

\begin{proof}
(i) Let $J$ be an infinite set. It follows from Theorem \ref{s2t01} that if the cube $[0, 1]^J$ has a cuf base, then it is metrizable. On the other hand, if $[0, 1]^J$ is metrizable, then, by Theorem \ref{s1t09}, $J$ is a cuf set. Suppose that $J=\bigcup\limits_{n\in\mathbb{N}}K_n$ where, for every $n\in\mathbb{N}$, $K_n$ is a non-empty finite set and $K_n\subseteq K_{n+1}$. For every $n\in\mathbb{N}$, let $\mathcal{E}_n=\{[0, \frac{1}{n}), (\frac{n-1}{n}, 1]\}\cup\{(\frac{i-1}{n}, \frac{i+1}{n}): i\in n\}$, $\mathcal{G}_n=\{\prod\limits_{k\in K_n}G_k: (\forall k\in K_n) (G_k\in\mathcal{E}_n)\}$ and $\mathcal{B}_n=\{\prod\limits_{j\in J}H_k: (\forall j\in J\setminus K_n)(H_k=[0,1]) \wedge \prod\limits_{k\in K_n}H_k\in\mathcal{G}_n\}$. Then $\mathcal{B}=\bigcup\limits_{n\in\mathbb{N}}\mathcal{B}_n$ is a cuf base of $[0, 1]^J$.
By a slight modification of the proof to (i), we can get a proof to (ii).

(iii) Let $\mathbf{X}$ be a $T_3$-space. We have shown in the proof to Theorem \ref{s2t01} that if $\mathbf{X}$ has a cuf base, then $\mathbf{X}$ is embeddable in a metrizable Tychonoff cube. On the other hand, if $\mathbf{X}$ is homeomorphic to a subspace of a metrizable Tychonoff cube $[0, 1]^J$ then $\mathbf{X}$ has a cuf base because, by (i), the cube $[0, 1]^J$ has a cuf base. 
\end{proof}

 The following theorem, taken together with Proposition \ref{s3p1} and the known fact that a cuf set may fail to be countable in $\mathbf{ZF}$, shows that it is unprovable in $\mathbf{ZF}$ that every $T_3$-space which has a cuf base can be embedded in a compact metrizable Tychonoff cube.

\begin{theorem}
\label{s3t2}
$(\mathbf{ZF})$ 
\begin{enumerate}
\item[(i)] For an infinite set $J$, the Tychonoff cube $[0, 1]^J$ is both compact and metrizable if and only if $J$ is countable. In particular, if $J$ is an uncountable cuf set, then the cube $[0, 1]^J$ is metrizable but not compact.
\item[(ii)] A $T_3$-space $\mathbf{X}$ is embeddable in a compact metrizable Tychonoff cube if and only if $\mathbf{X}$ is second-countable.
\end{enumerate}
\end{theorem}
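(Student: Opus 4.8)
The plan is to prove each part by combining the metrizability criterion from Proposition \ref{s3p1} with the compactness facts available in the excerpt, namely Theorem \ref{t:loeb} (the Hilbert cube $[0,1]^{\omega}$ is compact in $\mathbf{ZF}$) and the characterization of $\mathbf{BPI}$ in terms of Tychonoff cubes being compact. For part (i), I would argue that $[0,1]^J$ is metrizable iff $J$ is a cuf set (this is exactly Theorem \ref{s1t09}, already invoked in the proof of Proposition \ref{s3p1}(i)), and separately that $[0,1]^J$ is compact-and-metrizable iff $J$ is countable. The forward direction of the harder equivalence: if $[0,1]^J$ is both compact and metrizable, then metrizability forces $J$ to be a cuf set, and I must upgrade ``cuf'' to ``countable'' using compactness. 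The reverse direction is the clean half: if $J$ is countable then $J$ is denumerable or finite, and $[0,1]^{\omega}$ is compact by Theorem \ref{t:loeb} and metrizable by Theorem \ref{s1t09}, whence $[0,1]^J$ is compact and metrizable.

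The crux of part (i) is the implication ``compact $+$ metrizable $\Rightarrow$ $J$ countable.'' I would first record that $J$ is a cuf set, say $J=\bigcup_{n\in\omega}K_n$ with each $K_n$ finite, $K_n\subseteq K_{n+1}$. A cuf set need not be countable in $\mathbf{ZF}$ (one cannot in general well-order the finite pieces into a single injection to $\omega$), so I must extract a choice-free argument that compactness of $[0,1]^J$ rules out the genuinely uncountable cuf case. The natural route is to suppose $J$ is uncountable and derive non-compactness: if $J$ were an uncountable cuf set, I would exhibit an open cover (or a filterbase of closed sets with empty intersection) built canonically from the coordinates so that no choice is needed to define it, and show it has no finite subcover. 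The key point I expect to be the main obstacle is producing this witness to non-compactness \emph{canonically}, without selecting representatives from the $K_n$; the finite sets $K_n$ supply finitely many coordinates each, and the $\mathbf{2}$- or interval-valued structure on those finitely many coordinates can be enumerated in $\mathbf{ZF}$ (finite sets are well-orderable), so a diagonal-style family of basic closed sets indexed by $n\in\omega$ and running over the finite blocks should be definable without choice. I would then check this family is a free filterbase of closed sets witnessing the failure of compactness, contradicting the hypothesis.

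For part (ii), the plan is purely to assemble earlier results. If $\mathbf{X}$ is embeddable in a compact metrizable Tychonoff cube $[0,1]^J$, then by part (i) the index set $J$ is countable; a countable product $[0,1]^J\simeq[0,1]^{\omega}$ (or a finite cube) is second-countable in $\mathbf{ZF}$, and second-countability is hereditary, so $\mathbf{X}$ is second-countable. Conversely, if $\mathbf{X}$ is a second-countable $T_3$-space, then $\mathbf{X}$ has a countable, hence cuf, base, so by the embedding produced in the proof of Theorem \ref{s2t01} (and recorded in Proposition \ref{s3p1}(iii)) $\mathbf{X}$ embeds in a metrizable Tychonoff cube $[0,1]^{\mathcal{F}}$; I would verify that the index family $\mathcal{F}$ arising from a \emph{countable} base is in fact countable (not merely cuf), so that $[0,1]^{\mathcal{F}}$ is the Hilbert cube, which is compact by Theorem \ref{t:loeb} and metrizable by Theorem \ref{s1t09}. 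The only subtlety here is confirming that a countable base yields a countable, rather than merely cuf, indexing set $J=\{\langle U,V\rangle:U,V\in\mathcal{B},\ \mathrm{cl}_{\mathbf{X}}U\subseteq V\}$ for the embedding; since $\mathcal{B}$ is countable this is a subset of $\mathcal{B}\times\mathcal{B}$ and hence countable in $\mathbf{ZF}$, closing the argument.
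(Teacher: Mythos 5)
Your proposal is correct in overall architecture and, for part (ii) and the easy directions of part (i), it coincides with the paper's own proof: metrizability of $[0,1]^J$ is converted to ``$J$ is cuf'' via Theorem \ref{s1t09}, the countable case is handled by Theorem \ref{t:loeb}, and the converse of (ii) reuses the countable index family $\mathcal{F}$ from the proof of Theorem \ref{s2t01} (your check that $\{\langle U,V\rangle: U,V\in\mathcal{B},\ \cl_{\mathbf{X}}U\subseteq V\}$ is countable when $\mathcal{B}$ is countable is exactly the point the paper records). The genuine divergence is at the crux of (i), the implication ``compact $+$ cuf $\Rightarrow$ countable'': the paper constructs no witness to non-compactness at all, but instead quotes Theorem 6 of \cite{kttych}, which asserts in $\mathbf{ZF}$ that compactness of $[0,1]^I$ forces every union of an ordinal-indexed family of non-empty finite subsets of $I$ to be well-orderable; applied to a cuf decomposition of $J$, this makes $J$ a well-orderable countable union of finite sets, hence countable. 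Your plan replaces this citation by a direct canonical construction, i.e., it amounts to re-proving the $\alpha=\omega$ case of the cited theorem. That is viable and more self-contained; the paper's route is shorter and delivers the more general fact.

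One concrete warning about your sketch, which at present only says that a diagonal-style family of basic closed sets ``should be definable'': the one real trap is that injectivity on a finite block of coordinates is an \emph{open} condition on $[0,1]^J$, not a closed one, so the naive closed sets you gesture at do not exist. The repair is to force values into a fixed finite target: set $K_n'=\bigcup_{m\leq n}K_m$, $D_m=\{2^{-k}: k<|K_m'|\}$, and
$$C_n=\{x\in[0,1]^J:\ (\forall m\leq n)\ x\upharpoonright K_m' \text{ is a bijection of } K_m' \text{ onto } D_m\}.$$
Each $C_n$ is closed because it is the preimage, under projection onto the finitely many coordinates in $K_n'$, of a \emph{finite} (hence closed) set of tuples; each $C_n$ is non-empty (enumerate $K_0'$, then $K_1'\setminus K_0'$, and so on --- a choice-free existence statement for each fixed $n$); and the $C_n$ are decreasing. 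Any $x\in\bigcap_{n\in\omega}C_n$ is injective on $J$ with values in $\{2^{-k}:k\in\omega\}$, since any two indices lie in a common $K_n'$; so if $J$ is uncountable the decreasing family $(C_n)_{n\in\omega}$ of non-empty closed sets has empty intersection and compactness fails. With this patch your argument closes the only under-specified step, and the rest of your proposal matches the paper.
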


\begin{proof}
(i) Let $J$ be an infinite set such that the cube $[0, 1]^J$ is metrizable. By Theorem \ref{s1t09}, $J$ is a cuf set. Theorem 6 of \cite{kttych} states that, for every infinite set $I$ and every infinite von Neumann ordinal $\alpha$, if the Tychonoff cube $[0, 1]^{I}$ is compact, then, for every family $\{A_i: i\in\alpha\}$ of non-empty finite subsets of $I$, the union $\bigcup\limits_{i\in\alpha}A_i$ is well-orderable. Hence, by \cite[Theorem 6]{kttych}, if $J$ is a cuf set and the cube $[0, 1]^J$ is compact, then $J$ is countable as a well-orderable union of finite sets. This, together with Theorem \ref{s1t09}, completes the proof to (i). 

(ii) Let $\mathbf{X}$ be a $T_3$-space, If $\mathbf{X}$ is embeddable in a compact metrizable Tychonoff cube $[0, 1]^{J}$, then, by (i), $\mathbf{X}$ is embeddable  in $[0, 1]^{\omega}$, so $\mathbf{X}$ is second-countable because $[0, 1]^{\omega}$ is second-countable. On the other hand, assuming that $\mathbf{X}$ is second-countable, the proof to Theorem \ref{s2t01} shows that there exists a countable set $\mathcal{F}$ such that $\mathbf{X}$ is embeddable in $[0, 1]^{\mathcal{F}\times\mathcal{F}}$. To complete the proof, it suffices to notice that, since $\mathcal{F}$ is countable, the cube $[0, 1]^{\mathcal{F}\times\mathcal{F}}$ is embeddable in the compact metrizable cube $[0, 1]^{\omega}$ (see Theorem \ref{t:loeb}).
\end{proof}

\begin{theorem}
\label{s3t3}
It is consistent with $\mathbf{ZF}$ the existence of an infinite cuf set $J$ such that $\mathbf{2}^J$ is compact but $[0, 1]^J$ is not compact. In particular, it is consistent with $\mathbf{ZF}$ the existence of an infinite cuf set $J$ such that $[0, 1]^J$ is not a continuous image of $\mathbf{2}^J$.
\end{theorem}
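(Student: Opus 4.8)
The plan is to reduce the statement to the construction of a single $\mathbf{ZF}$-model and then to isolate the one genuinely hard point: the compactness of a Cantor cube over a non-well-orderable index set. The \emph{in particular} clause is immediate from the first assertion, since a continuous image of a compact space is compact in $\mathbf{ZF}$; hence if the non-compact space $[0,1]^J$ were a continuous image of the compact space $\mathbf{2}^J$, it would be compact, a contradiction. For the first assertion it suffices to produce a model of $\mathbf{ZF}$ containing an \emph{uncountable} cuf set $J$ with $\mathbf{2}^J$ compact. Indeed, since $J$ is cuf and $[0,1]$ is a metrizable space with more than one point, $[0,1]^J$ is metrizable by Theorem \ref{s1t09}; were it also compact, Theorem \ref{s3t2}(i) would force $J$ to be countable. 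As $J$ is uncountable, $[0,1]^J$ is not compact, and the whole theorem rests on exhibiting an uncountable cuf $J$ whose Cantor cube is compact.

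Writing $J=\bigcup_{n\in\omega}F_n$ with the $F_n$ finite, disjoint and non-empty, two constraints shape the construction. First, the compactness of $\mathbf{2}^J$ cannot be produced from Loeb functions of the finite factors $\mathbf{2}^{F_n}$: a sequence $(g_n)_{n\in\omega}$ of such Loeb functions would, after transport along the canonical bijection $\omega\times\omega\to\omega$, give a sequence of Loeb functions for the factors of $\mathbf{2}^{\omega\times J}=(\mathbf{2}^J)^{\omega}$, whence $\mathbf{2}^{\omega\times J}$ would be compact by Theorem \ref{t:loeb} and $[0,1]^J$ compact by Theorem \ref{s1t017}, contradicting the requirement. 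So the model must make $\mathbf{2}^J$ compact ``non-constructively''. Second, $\mathbf{2}^J$ compact forces the partition $\{F_n:n\in\omega\}$ to admit a transversal in the model: the nested clopen sets $D_m=\{f\in\mathbf{2}^J:\ |f^{-1}(1)\cap F_n|=1\ \text{for all}\ n\le m\}$ are non-empty, and in a compact space their intersection, which is the set of transversals, must be non-empty. Such a transversal only makes $J$ Dedekind-infinite, not countable, so it is compatible with $J$ uncountable; more generally every in-model ``cardinality pattern'' on the blocks must be realized by an in-model selector. These two constraints rule out the naive Fraenkel--Mostowski models with $J$ the set of atoms split into symmetric finite blocks, for there the required selectors would force fixed points inside the blocks, that is, a choice function, collapsing $J$ to a countable set. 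I would therefore build $J$ inside a symmetric forcing extension, adjoining a cuf family of finite sets together with generically many selectors of every cardinality pattern, arranged so that $J$ is not well-orderable while $\mathbf{2}^J$ receives enough points.

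The main obstacle is verifying that $\mathbf{2}^J$ is compact in the resulting model. Alexander's subbase lemma is unavailable in $\mathbf{ZF}$ (it yields $\mathbf{BPI}$), so I would instead use the $\mathbf{ZF}$ criterion that a space is compact if and only if every family of closed sets with the finite intersection property has non-empty intersection, applied to the families that exist in the model. The clopen algebra of $\mathbf{2}^J$ is generated by the cylinders depending on finitely many coordinates, and the finite initial products $\prod_{n<N}\mathbf{2}^{F_n}$ are finite and well-orderable; using this together with metrizability I would reduce an arbitrary in-model closed family with the finite intersection property to a countable decreasing one built from basic cylinders, and then produce a common point by meeting the successive cylinder-constraints with one of the generically adjoined selectors. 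Showing that the required selector actually lies in the symmetric submodel, via a density and genericity argument and the homogeneity of the forcing, is the heart of the matter. The delicate balance is to adjoin enough selectors to meet every symmetric finite-intersection requirement while adjoining so few coherent ones that the blocks are never well-ordered and $J$ stays uncountable; this is exactly where the difficulty concentrates. If the combinatorics prove more transparent in $\mathbf{ZFA}$, the resulting independence transfers to $\mathbf{ZF}$ by the Pincus transfer theorems used elsewhere in the paper.
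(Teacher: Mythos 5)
Your reductions are exactly the ones the paper uses: the ``in particular'' clause follows because continuous images of compact spaces are compact in $\mathbf{ZF}$, and the first assertion reduces, via Theorem~\ref{s1t09} and Theorem~\ref{s3t2}(i), to producing a model of $\mathbf{ZF}$ with an \emph{uncountable} cuf set $J$ such that $\mathbf{2}^J$ is compact. Your supporting observations are also sound: the nested clopen sets $D_m$ do show that compactness of $\mathbf{2}^J$ forces an in-model transversal of the blocks (hence $J$ Dedekind-infinite but not necessarily countable), and your first constraint is correct because a sequence of Loeb functions for the finite factors would, via Proposition~\ref{s3p3}, Theorem~\ref{t:loeb} and Theorem~\ref{s1t017}, make $[0,1]^J$ compact and so force $J$ countable by Theorem~\ref{s3t2}(i). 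Indeed, a useful sanity check you almost state explicitly: any attempt to iterate your transversal argument into a coherent tower of selectors would amount to compactness of $\mathbf{2}^{\omega\times J}$, which by Corollary~\ref{s3c6} is equivalent to compactness of $[0,1]^J$ --- precisely what must fail in the target model.

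The genuine gap is that the crux --- the existence of such a model --- is never established. Everything after your (correct) reduction is a research plan, not a proof: no forcing poset is defined, no group and normal filter (or symmetric names) are specified, and the two claims you yourself identify as ``the heart of the matter'' --- that enough selectors lie in the symmetric submodel to verify compactness of $\mathbf{2}^J$ against \emph{all} in-model families of closed sets with the finite intersection property, while $J$ remains uncountable --- are asserted as goals rather than proved. Your proposed reduction of an arbitrary closed family to a countable decreasing family of cylinders is itself suspect in $\mathbf{ZF}$, since extracting such a countable subfamily is typically a choice-like step; it would need a careful argument exploiting the symmetry of the model, which you do not supply. The paper disposes of this entire difficulty by citation: it invokes the model constructed by Howard and Tachtsis in \cite{howtach}, in which an uncountable cuf set $J$ with $\mathbf{2}^J$ compact exists, and then applies exactly your reduction. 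So your proposal is correct in architecture and in all the elementary steps, but the single nontrivial ingredient of the theorem is left unproven; as written it is a plausible programme for reproving the result of \cite{howtach}, not a proof of Theorem~\ref{s3t3}.
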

\begin{proof}
In \cite{howtach}, it has been established the existence of a $\mathbf{ZF}$-model $\mathcal{M}$ in which there exists an uncountable cuf set $J$ such that $\mathbf{2}^J$ is compact. Then, by Theorem \ref{s3t2}(i), the Tychonoff cube $[0, 1]^J$ is not compact in $\mathcal{M}$. Therefor,  it is true in $\mathcal{M}$ that  $[0, 1]^J$ is not a continuous image of the compact  Cantor cube $\mathbf{2}^{J}$. 
\end{proof}

We omit an easy and standard proof to the following proposition:
\begin{proposition}
\label{s3p3}
$(\mathbf{ZF})$ For every infinite set $J$, 
$$(\mathbf{2}^{\omega})^J\simeq\mathbf{2}^{\omega\times J}\simeq\mathbf{2}^{J\times\omega}\simeq(\mathbf{2}^J)^{\omega}.$$
\end{proposition}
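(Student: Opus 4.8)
The plan is to reduce the entire chain to two elementary, choice-free principles about rearranging Tychonoff products, and then to read off each of the three homeomorphisms by a suitable instantiation. First I would record the \emph{reindexing principle}: if $\phi: I\to K$ is a bijection and $\mathbf{X}$ is any space, then $\mathbf{X}^K\simeq\mathbf{X}^I$. The witnessing map $\Phi: X^K\to X^I$ is $\Phi(x)=x\circ\phi$; it is a bijection with inverse $y\mapsto y\circ\phi^{-1}$, and since $\pi_i\circ\Phi=\pi_{\phi(i)}$ for each $i\in I$, the universal property of the product topology gives continuity of $\Phi$ and, symmetrically, of $\Phi^{-1}$. No choice is used, since $\Phi$ is defined outright from $\phi$.

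Next I would record the \emph{exponential law}: for every space $\mathbf{X}$ and all sets $A,B$, one has $(\mathbf{X}^A)^B\simeq\mathbf{X}^{A\times B}$. Here the witnessing bijection $\Psi: (X^A)^B\to X^{A\times B}$ is $\Psi(f)(a,b)=f(b)(a)$. Continuity of $\Psi$ follows because $\pi_{(a,b)}\circ\Psi=\pi_a\circ\pi_b$, a composition of projections, is continuous for every $(a,b)\in A\times B$; continuity of $\Psi^{-1}$ follows by applying the universal property twice, via $\pi_a\circ\pi_b\circ\Psi^{-1}=\pi_{(a,b)}$. Once again everything is given by explicit formulas, hence available in $\mathbf{ZF}$.

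With these in hand the three homeomorphisms are immediate. Taking $\mathbf{X}=\mathbf{2}$, $A=\omega$, $B=J$ in the exponential law yields $(\mathbf{2}^{\omega})^J\simeq\mathbf{2}^{\omega\times J}$. The bijection $\omega\times J\to J\times\omega$, $(n,j)\mapsto(j,n)$, together with the reindexing principle, yields $\mathbf{2}^{\omega\times J}\simeq\mathbf{2}^{J\times\omega}$. Finally, taking $\mathbf{X}=\mathbf{2}$, $A=J$, $B=\omega$ in the exponential law yields $(\mathbf{2}^J)^{\omega}\simeq\mathbf{2}^{J\times\omega}$, and chaining these (composing with inverses as needed) gives the whole string of homeomorphisms.

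The only point requiring any care, and the closest thing to an obstacle, is the verification that these canonical bijections are genuinely homeomorphisms without invoking a choice principle. This is exactly why I reduce continuity to the behaviour on the projections $\pi_i$: the universal property of the Tychonoff topology is choice-free, and the maps $\Phi,\Psi$ are given by explicit formulas rather than by any selection, so no product of nonempty sets needs to be witnessed. Nothing beyond this is needed, which is consistent with the paper's description of the proof as easy and standard.
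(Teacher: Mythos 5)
Your proof is correct, and since the paper explicitly omits the argument as ``easy and standard,'' your route via the reindexing principle and the exponential law $(\mathbf{X}^A)^B\simeq\mathbf{X}^{A\times B}$, with continuity checked through the projections, is precisely the standard argument the authors have in mind. The explicit formulas $\Phi(x)=x\circ\phi$ and $\Psi(f)(a,b)=f(b)(a)$ indeed require no choice, so the claim holds in $\mathbf{ZF}$ as stated.
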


\begin{remark}
\label{s3r4}
If $C$ is the Cantor ternary subset of $[0, 1]$, then we denote by $\mathbf{C}$ the space $\langle C,\tau(d_e\upharpoonright C\times C)\rangle$ where $d_e$ is the Euclidean metric on $\mathbb{R}$. 
\end{remark}

\begin{theorem}
\label{s3t5}
Let $J$ be an infinite set such that $[0,1]^{J}$ is compact (respectively, Loeb).
Then $\mathbf{2}^{\omega\times J}$ is compact (respectively, Loeb). 
\end{theorem}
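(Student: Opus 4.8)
The plan is to realize $\mathbf{2}^{\omega\times J}$ as a closed subspace of $[0,1]^J$ and then invoke the fact, provable in $\mathbf{ZF}$, that a closed subspace of a compact (respectively, Loeb) space is itself compact (respectively, Loeb). This direction is the converse of the implication already recorded in Theorem \ref{s1t017}, so together with it one obtains an equivalence.

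First I would identify the Cantor cube with a power of the Cantor ternary set. By Proposition \ref{s3p3}, $\mathbf{2}^{\omega\times J}\simeq(\mathbf{2}^{\omega})^J$, and since $\mathbf{2}^{\omega}$ is homeomorphic to the Cantor ternary set $\mathbf{C}$ of Remark \ref{s3r4}, fixing one such homeomorphism $h:\mathbf{2}^{\omega}\to\mathbf{C}$ and applying it in every coordinate yields a homeomorphism $\mathbf{2}^{\omega\times J}\simeq\mathbf{C}^J$. No choice is needed here, since a single explicit $h$ is chosen once and reused in all coordinates.

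Next I would observe that $\mathbf{C}^J$ sits as a closed subspace of $[0,1]^J$. Because $C$ is closed in $[0,1]$, the set $C^J=\bigcap_{j\in J}\pi_j^{-1}(C)$ is an intersection of closed subsets of $[0,1]^J$, hence closed; moreover the product topology on $\mathbf{C}^J$ agrees with the subspace topology inherited from $[0,1]^J$ (a standard $\mathbf{ZF}$ fact about Tychonoff products of subspaces). Thus $\mathbf{C}^J$ is, up to homeomorphism, a closed subspace of $[0,1]^J$.

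Finally I would transfer the two properties across this closed embedding. For compactness, a closed subspace of a compact space is compact in $\mathbf{ZF}$, the usual open-cover argument (adjoin the complement of the closed set to any cover, extract a finite subcover) using no choice. For the Loeb property, if $f$ is a Loeb function of $[0,1]^J$, that is, a choice function on the family of non-empty closed subsets of $[0,1]^J$, then every non-empty closed subset $F$ of the closed subspace $\mathbf{C}^J$ is also closed in $[0,1]^J$ and satisfies $f(F)\in F\subseteq C^J$; hence the restriction of $f$ to the non-empty closed subsets of $\mathbf{C}^J$ is a Loeb function of $\mathbf{C}^J$. Carrying either property back along the homeomorphism $\mathbf{2}^{\omega\times J}\simeq\mathbf{C}^J$ (homeomorphisms plainly preserve compactness and, by conjugating the choice function through $h$, the Loeb property) finishes the proof. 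The only point demanding care is the Loeb case, but it rests entirely on the elementary remark that a closed subset of a closed subspace is already closed in the ambient space, so the restriction introduces no new choices.
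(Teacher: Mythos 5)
Your proof is correct and follows essentially the same route as the paper: identify $\mathbf{2}^{\omega\times J}$ with $(\mathbf{2}^{\omega})^J$ via Proposition \ref{s3p3}, embed it coordinatewise as the closed subspace $C^J$ of $[0,1]^J$ using a single fixed homeomorphism $\mathbf{2}^{\omega}\simeq\mathbf{C}$, and transfer compactness and the Loeb property to a closed subspace. Your explicit verification that the restriction of a Loeb function to closed subsets of $C^J$ works is exactly the step the paper leaves implicit.
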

\begin{proof}
As in Remark \ref{s3r4}, let $C$ be the Cantor ternary set. It is well known that there exists a homeomorphism from $\mathbf{2}^{\omega}$ onto $\mathbf{C}$. This implies that there exists a homeomorphic embedding $h:(\mathbf{2}^{\omega})^J\to [0, 1]^J$ such that  $h((2^{\omega})^J)=C^J$.  Clearly, $C^J$ is a closed subset of $[0, 1]^J$. Hence, if $[0, 1]^J$ is compact, so is  $(\mathbf{2}^{\omega})^J$, and if  $[0, 1]^J$ is Loeb, so is $(\mathbf{2}^{\omega})^J$. Proposition \ref{s3p3} completes the proof. 
\end{proof}

Theorems \ref{s1t017} and \ref{s3t5} lead to the following corollary:
 
\begin{corollary}
\label{s3c6}
$(\mathbf{ZF})$ For every infinite set $J$, the Tychonoff cube $[0,1]^J$ is compact (respectively, Loeb) if and only if the Cantor cube $\mathbf{2}^{\omega\times J}$ is compact (respectively, Loeb).
\end{corollary}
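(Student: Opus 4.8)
The plan is to obtain the biconditional by simply combining the two implications that have already been established, handling the ``compact'' and ``Loeb'' cases in parallel throughout. Fix an infinite set $J$.

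For the implication that compactness (respectively, the Loeb property) of the Cantor cube forces the same property on the Tychonoff cube, I would invoke Theorem \ref{s1t017}. That theorem supplies a continuous surjection of $\mathbf{2}^{\omega\times J}$ onto $[0,1]^J$, and its ``in consequence'' clause states precisely that if $\mathbf{2}^{\omega\times J}$ is compact (respectively, Loeb), then so is $[0,1]^J$. Thus this direction requires no new argument beyond citing the relevant half of Theorem \ref{s1t017}.

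For the converse implication, I would invoke Theorem \ref{s3t5} directly: it asserts that if $[0,1]^J$ is compact (respectively, Loeb), then $\mathbf{2}^{\omega\times J}$ is compact (respectively, Loeb). Recall that this direction rests on the closed embedding $C^J\hookrightarrow[0,1]^J$ induced by a homeomorphism $\mathbf{2}^{\omega}\simeq\mathbf{C}$, together with the homeomorphisms $(\mathbf{2}^{\omega})^J\simeq\mathbf{2}^{\omega\times J}$ recorded in Proposition \ref{s3p3}. Combining the two implications yields, for each of the two properties separately, the equivalence claimed in the corollary.

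I expect no genuine obstacle: both nontrivial directions are done, and the argument is a bookkeeping combination of Theorems \ref{s1t017} and \ref{s3t5}. The only point demanding a little care is to keep the ``compact'' and ``Loeb'' cases aligned; since each cited result treats both properties simultaneously through its ``respectively'' clauses, the two parenthetical readings of the corollary are obtained in lockstep, and no separate case analysis is needed.
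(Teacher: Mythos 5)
Your proposal is correct and matches the paper exactly: the paper derives Corollary \ref{s3c6} by combining Theorem \ref{s1t017} (Cantor cube compact/Loeb implies Tychonoff cube compact/Loeb) with Theorem \ref{s3t5} (the converse), which is precisely your argument. The parallel handling of the ``compact'' and ``Loeb'' cases via the ``respectively'' clauses is likewise how the paper treats them.
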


\section{Second-countable compact Hausdorff spaces as remainders}
\label{s4}

Let us pass to a deeper analysis of Theorem \ref{t:hm}. The proof to this theorem in \cite{hm0} (cf. \cite[Theorem 2.1]{hm0}) involves the Axiom of Choice and the existence of the \v Cech-Stone compactification, while, in $\mathbf{ZF}$, a Tychonoff space may fail to have its \v Cech-Stone compactification (cf, e.g., \cite{kw0}). In Section \ref{s5}, it is shown that Theorem \ref{t:hm} is unprovable in $\mathbf{ZF}$. However, we can offer the following $\mathbf{ZF}$-modification of Theorem \ref{t:hm}:

\begin{theorem}
\label{s4t:main1}
$(\mathbf{ZF})$. For every locally compact Hausdorff space $\mathbf{X}$, the following conditions are all equivalent:
\begin{enumerate}
\item[(a)] condition (A) of Theorem \ref{t:hm};
\item[(b)] every non-empty second-countable compact Hausdorff space is a remainder of $\mathbf{X}$;
\item[(c)] there exists a family $\mathcal{V}=\{\mathcal{V}^n_i: n\in\mathbb{N}, i\in \{1,\ldots, 2^n\}\}$ such that, for every $n\in\mathbb{N}$, the following conditions are satisfied:
\begin{enumerate}
\item[(i)] for every $i\in\{1,\ldots, 2^n\}$, $\mathcal{V}_i^n$ is a non-empty family of open sets of $\mathbf{X}$ such that $\mathcal{V}_i^n$ is stable under finite unions and finite intersections, and, for every $U\in V_i^n$, the set $\cl_{\mathbf{X}}U$ is non-compact;
\item[(ii)]  for every $i\in\{1,\ldots, 2^n\}$ and for any $U, V\in\mathcal{V}_i^n$, $\cl_{\mathbf{X}}(U)\setminus V$ is compact;
\item[(iii)] for every pair $i,j$ of distinct elements of $\{1,\ldots, 2^n\}$, for any $W\in \mathcal{V}_{i}^n$ and $G\in\mathcal{V}_j^n$,  there exist $U\in \mathcal{V}_{2i-1}^{n+1}, V\in \mathcal{V}_{2i}^{n+1}$ with $\cl_{\mathbf{X}}(U\cup V)\setminus W$ compact and $\cl_{\mathbf{X}}(( U\cup V)\cap G)$ compact; 
\item[(iv)] if, for every $i\in\{1,\ldots, 2^n\}$,   $V_i\in\mathcal{V}_i^n$, then $X\setminus\bigcup\limits_{i=1}^{2^n}V_i$ is compact.
\end{enumerate}
\end{enumerate}
\end{theorem}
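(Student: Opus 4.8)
The plan is to prove the cycle $(a)\Rightarrow(c)\Rightarrow(b)\Rightarrow(a)$, after first reducing $(b)$ to the single statement that the Cantor cube $\mathbf{2}^{\omega}$ is a remainder of $\mathbf{X}$. Indeed, if $\mathbf{2}^{\omega}$ is a remainder of $\mathbf{X}$, then, since every non-empty second-countable compact Hausdorff space $\mathbf{K}$ is a continuous image of $\mathbf{2}^{\omega}$ by Theorem \ref{s1t016}, Magill's Theorem \ref{t:Mag} yields that $\mathbf{K}$ is a remainder of $\mathbf{X}$; conversely, $\mathbf{2}^{\omega}$ is itself a non-empty second-countable compact Hausdorff space, so $(b)$ trivially gives it as a remainder. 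I may also assume $\mathbf{X}$ is non-compact, since otherwise the remainder of every Hausdorff compactification is empty and each of $(a)$, $(b)$, $(c)$ fails, whence the three conditions are vacuously equivalent.

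For $(a)\Rightarrow(c)$ I use the given tower $(\alpha_n\mathbf{X})_{n\in\mathbb{N}}$ and the bijections $\psi_n$. For each $n$ and each $i\in\{1,\dots,2^{n}\}$, I let $\mathcal{V}^{n}_{i}$ be the family of all traces $U\cap X$, where $U$ ranges over the open neighbourhoods of the remainder point $\psi_{n}(i)$ in $\alpha_{n}\mathbf{X}$ whose closure meets the remainder only in $\psi_n(i)$. The verification of (i)--(iv) rests on two standard facts about a Hausdorff compactification: the trace on $X$ of a neighbourhood of a remainder point has non-compact closure in $\mathbf{X}$ (else that closure would be closed in the compactification and could not contain the remainder point), and a closed subset of $X$ is compact if and only if its closure in the compactification does not meet the remainder. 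Stability under finite unions and intersections is inherited from the neighbourhood filter; (ii) and (iv) express that neighbourhoods of a remainder point, and of all remainder points together, swallow everything outside a compact set; and (iii) is precisely the translation of the two-to-one bonding maps $h$ of $(a)$, since $\psi_{n+1}(2i-1)$ and $\psi_{n+1}(2i)$ are the two points of $\alpha_{n+1}X\setminus X$ lying over $\psi_n(i)$.

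The heart of the argument, and the main obstacle, is $(c)\Rightarrow(b)$: a direct $\mathbf{ZF}$-construction of a Hausdorff compactification of $\mathbf{X}$ whose remainder is homeomorphic to $\mathbf{2}^{\omega}$. I identify $\mathbf{2}^{\omega}$ with the set $B$ of branches of the binary tree whose level-$n$ nodes are $\{1,\dots,2^{n}\}$ and whose children of $i$ are $2i-1$ and $2i$; a branch is a sequence $b$ with $b(n+1)\in\{2b(n)-1,2b(n)\}$, and the basic clopen sets are the sets of branches through a fixed node. On $\gamma X=X\cup B$ I declare a set open if and only if its trace on $X$ is open and each of its branches $b$ admits some $n$ and some $V\in\mathcal{V}^{n}_{b(n)}$ with $V\cup\{b'\in B:b'(n)=b(n)\}$ contained in it. Condition (i) makes $X$ a dense open subspace carrying its original topology and makes $B$ the remainder, with subspace topology that of $\mathbf{2}^{\omega}$. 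Compactness follows from the compactness of $\mathbf{2}^{\omega}$ in $\mathbf{ZF}$ (Theorem \ref{t:loeb}, applied to the factors $\mathbf{2}$) together with condition (iv), which guarantees that once finitely many basic neighbourhoods cover $B$, the remaining part of $X$ is compact. The genuinely delicate point is Hausdorffness: conditions (ii) and (iii) only separate the relevant sets up to a compact subset of $X$, so to produce honestly disjoint open neighbourhoods I must excise these compact sets, and must do so uniformly, with no appeal to any choice principle. Separating two branches first differing at level $n$ is handled by applying (iii) to their distinct nodes $b(n)\neq b'(n)$ and passing to the children through which the branches continue, while separating a point of $X$ from a branch uses local compactness of $\mathbf{X}$; in both cases the compact slack is removed by deleting a compact, hence closed, subset of $X$. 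Once $\mathbf{2}^{\omega}$ is realised as a remainder, $(b)$ follows from the reduction in the first paragraph.

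Finally, for $(b)\Rightarrow(a)$ I start from a Hausdorff compactification $\gamma\mathbf{X}$ with $\gamma X\setminus X\simeq\mathbf{2}^{\omega}$ and collapse the remainder along the canonical finite quotients $q_{n}\colon\mathbf{2}^{\omega}\to\{1,\dots,2^{n}\}$ recording the level-$n$ node of a branch. For each $n$, identifying two remainder points whenever $q_n$ sends them to the same node yields $\alpha_{n}\mathbf{X}=X\cup\{1,\dots,2^{n}\}$; since the fibres of $q_n$ are clopen in the closed remainder, this quotient is a compact Hausdorff compactification of $\mathbf{X}$, and $\psi_n$ is the obvious bijection onto its remainder. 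Because $q_n$ factors through $q_{n+1}$ via the two-to-one map sending $2i-1$ and $2i$ to $i$, the quotient maps give $\alpha_{n}\mathbf{X}\leq\alpha_{n+1}\mathbf{X}$ together with the bonding relation demanded in condition~(A). The only points needing care here are that these quotients of a compact Hausdorff space are again compact Hausdorff and that the bonding maps behave as prescribed, both of which are routine but must be checked in $\mathbf{ZF}$.
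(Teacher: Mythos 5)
Your overall architecture --- reducing $(b)$ to the single claim that $\mathbf{2}^{\omega}$ is a remainder via Theorems \ref{s1t016} and \ref{t:Mag}, extracting the families $\mathcal{V}^n_i$ from neighbourhood traces for $(a)\Rightarrow(c)$, and collapsing the Cantor remainder along the dyadic quotients for $(b)\Rightarrow(a)$ --- agrees with the paper, and those two outer steps are sound. The genuine gap is in the central step $(c)\Rightarrow(b)$, in your declaration of the topology on $X\cup B$. You declare a set open iff its trace on $X$ is open and every branch $b$ in it admits $n$ and $V\in\mathcal{V}^n_{b(n)}$ with the \emph{entire} set $V\cup\{b'\in B: b'(n)=b(n)\}$ contained in it. But conditions (i)--(iv) control containments and intersections only \emph{modulo compact subsets of} $X$, and your announced remedy (``the compact slack is removed by deleting a compact, hence closed, subset of $X$'') is incompatible with your own definition: after deleting a compact $K$, the resulting set is open only if each of its branches still admits some $V'\in\mathcal{V}^{n'}_{j}$ with $V'\subseteq V\setminus K$, and nothing in (i)--(iv) produces such a $V'$, since the families $\mathcal{V}^n_i$ need not be stable under removing compact sets.

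This is not a presentational defect; Hausdorffness genuinely fails for your topology. Take any family $\{\mathcal{V}^n_i\}$ satisfying (i)--(iv) (for instance one arising from a compactification of the discrete space $\mathbb{N}$ with remainder $\mathbf{2}^{\omega}$, which exists by Corollary \ref{s2c06}(ii)), fix a non-empty open $W_0$ with $\cl_{\mathbf{X}}(W_0)$ compact, and replace every member $V$ of every $\mathcal{V}^n_i$ by $V\cup W_0$. One checks directly that (i)--(iv) survive this enlargement: closures stay non-compact, stability under finite unions and intersections is preserved because $(V_1\cup W_0)\cap(V_2\cup W_0)=(V_1\cap V_2)\cup W_0$, and the compactness clauses in (ii)--(iv) absorb the extra $\cl_{\mathbf{X}}(W_0)$. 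Yet now every member of every family contains $W_0$, so in your topology every neighbourhood of every branch meets $W_0$, and no two branches can be separated. The repair is exactly what the paper does: build the excision into the base from the outset, taking as basic neighbourhoods of branches the sets $\bigl(V\cup\{b'\in B: b'(n)=i\}\bigr)\setminus F$ with $F\in\mathcal{K}(\mathbf{X})$ --- the paper's base $\mathcal{B}=\tau\cup\{(A^n_i\cup V)\setminus F: F\in\mathcal{K}(\mathbf{X}),\, n\in\mathbb{N},\, i\in\{1,\ldots,2^n\},\, V\in\mathcal{V}^n_i\}$. With that base, (ii) and (iii) give Hausdorffness because the compact ``slack'' can now legitimately be subtracted, density of $X$ follows from the non-compactness of closures in (i), and your compactness argument (compactness of $\mathbf{2}^{\omega}$ plus (iv)) goes through essentially as you sketched it.
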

\begin{proof} Let $\mathbf{X}$ be a locally compact space. We use the notation from Remark \ref{s3r4} for the Cantor ternary set $C$. As in \cite[the proof to Theorem 2.1]{hm0},  we can fix a sequence $(\mathcal{A}_n)_{n\in\mathbb{N}}$ of families $\mathcal{A}_n$ of pairwise disjoint clopen sets of $\mathbf{C}$ such that $\mathcal{A}=\bigcup\limits_{n\in\mathbb{N}}\bigcup\mathcal{A}_n$ is a base of $\mathbf{C}$ and, for every $n\in\mathbb{N}$ and $i\in\{1,\ldots, 2^n\}$, $C=\bigcup\limits_{i=1}^{2^n}A_i^n$,  $\mathcal{A}_n=\{A_i^n: i\in\{1,\ldots,2^n\}\}$  and $A_{2i-1}^{n+1}\cup A_{2i}^{n+1}=A_{i}^n$. We may assume that $C\cap X=\emptyset$.

$(b)\rightarrow (c)$  Suppose that  $\alpha\mathbf{X}$ is a Hausdorff compactification of  $\mathbf{X}$ such that the subspace $\alpha X\setminus X$ of $\alpha\mathbf{X}$ is equal to $\mathbf{C}$. Let $\tau_{\alpha}$ be the topology of $\alpha\mathbf{X}$. For arbitrary $n\in\mathbb{N}$ and $i\in\{1,\ldots, 2^n\}$, we define
$$\mathcal{V}_i^n=\{ V\in\tau: V\cup A_i^n\in\tau_{\alpha}\text{ and } \cl_{\alpha\mathbf{X}}(V)=\cl_{\mathbf{X}}(V)\cup A_i^n\}.$$

\noindent Let us check that the family $\mathcal{V}=\{\mathcal{V}_i^n: n\in\mathbb{N}, i\in\{1,\ldots, 2^n\}\}$ sastisfies conditions (i)-(iv) of ($c$). 

Let $n\in\mathbb{N}$ $i,j\in\omega$ and $i\neq j$. To see that $\mathcal{V}_i^n\neq\emptyset$, we take disjoint sets $H_1,H_2\in\tau_{\alpha}$ such that $C\setminus A_{i}^n\subseteq H_1$ and $A_{i}^n\subseteq H_2$. Then $H_2\cap X\in \mathcal{V}_i^n$. Let $U,V\in\mathcal{V}_i$. One can easily check that $U\cap V\in\mathcal{V}_i^n$ and $U\cup V\in\mathcal{V}_i^n$. Since $\cl_{\mathbf{X}}(U)\setminus V=\cl_{\alpha\mathbf{X}}(U)\setminus(V\cup A_i^n)$, the set $\cl_{\mathbf{X}}(U)\setminus V$ is compact. Let $W\in\mathcal{V}_i^n$ and $G\in\mathcal{V}_j^n$. There exist disjoint sets $G_1, G_2\in\tau_{\alpha}$ such that $G_1\cup G_2\subseteq H_2$, $A_{2i-1}^{n+1}\subseteq G_1$ and $A_{2i}^{n+1}\subseteq G_2$. Then $V_1=G_1\cap X\in\mathcal{V}_{2i-1}^{n+1}$, $V_2=G_2\cap X\in\mathcal{V}_{2i}^{n+1}$ and $V_1\cup V_2\in\mathcal{V}_i^n$.  Hence $\cl_{\mathbf{X}}(V_1\cup V_2)\setminus W$ is compact. There exists $G^{\prime}\in\mathcal{V}_j^n$ such that $G^{\prime}\subseteq H_1$. Then $(V_1\cup V_2)\cap G \subseteq \cl_{\mathbf{X}}(G)\setminus G^{\prime}$, so $\cl_{\mathbf{X}}( (V_1\cup V_2)\cap G)$ is compact.  All this taken together shows that $\mathcal{V}$ satisfies conditions (i)--(iii). It remains to show that $\mathcal{V}$ satisfies (iv). 

Suppose that $n\in\mathbb{N}$ and $V_i\in\mathcal{V}_i^n$ for every  $i\in\{1,\ldots, 2^n\}$. For every $i\in\{1,\ldots, 2^n\}$, we can fix $V_i^{\prime}\in\mathcal{V}_i^n$ such that  the set $K=X\setminus\bigcup\limits_{i=1}^{2^n}V_i^{\prime}$ is compact. Then $X\setminus\bigcup\limits_{i=1}^{2^n}V_i\subseteq K\cup\bigcup\limits_{i=1}^{2^n}(\cl_{\mathbf{X}}(V_i^{\prime}\setminus V_i)$, so $X\setminus\bigcup\limits_{i=1}^{2^n}V_i$ is compact. Hence $\mathcal{V}$ satisfies (iv). This completes the proof that ($b$) implies ($c$).

$(c)\rightarrow (b)$ Now, suppose that $\mathbf{X}$ has a family $\mathcal{V}=\{\mathcal{V}_i^n: n\in\mathbb{N}, i\in\{1,...,2^n\}\}$ satisfying conditions (i)-(iv) of ($c$). In the light of Theorems \ref{s1t016} and \ref{t:Mag}, to show that every non-empty second-countable compact Hausdorff space is a remainder of $X$, it suffices to prove that $\mathbf{C}$ is a remainder of $\mathbf{X}$. To this aim, we put  $Y=X\cup C$ and define
$$\mathcal{B}=\tau\cup\{(A_{i}^n\cup V)\setminus F: F\in\mathcal{K}(\mathbf{X}), n\in\mathbb{N}, i\in\{1,..., 2^n\}, V\in\mathcal{V}_i^n\}.$$
It follows from (i)-(iii) that $\mathcal{B}$ is a base for a Hausdorff topology $\tau_{Y}$ in $Y$ such that both $\mathbf{X}$ and $\mathbf{C}$ are subspaces of $\mathbf{Y}=\langle Y, \tau_Y\rangle$. Condition (i) implies that $X$ is dense in $\mathbf{Y}$. To check that $\mathbf{Y}$ is compact, we consider a family $\mathcal{G}\subseteq\mathcal{B}$ such that $Y=\bigcup\mathcal{G}$. By the compactness of $\mathbf{C}$, there exists a finite $\mathcal{G}_0\subseteq\mathcal{G}$ such that $C\subseteq\mathcal{G}_0$.  For every $G\in\mathcal{G}_0$, we choose $n(G)\in\mathbb{N}$, $i(G)\in\{1,\ldots, 2^n\}$, $V(G)\in\mathcal{V}_{i(G)}^{n(G)}$ and a compact subset $F(G)$ of $\mathbf{X}$, such that $G=(A_{i(G)}^{n(G)}\cup V(G))\setminus F(G)$. By the compactness of $F^{\ast}= (X\setminus \bigcup\limits_{G\in\mathcal{G}_0}V(G))\cup\bigcup\limits_{G\in\mathcal{G}_0}F(G)$, there exists a finite $\mathcal{G}_1\subseteq\mathcal{G}$ such that $F^{\ast}\subseteq \mathcal{G}_1$. Then $\mathcal{G}_0\cup\mathcal{G}_1$ is a finite subcover of $\mathcal{G}$. Hence $\mathbf{Y}$ is a Hausdorff compactification of $\mathbf{X}$ such that $Y\setminus X=C$. Hence ($c$) implies ($b$) and, in consequence, ($b$) and ($c$) are equivalent. 

$(c)\rightarrow (a)$.  Suppose that $\alpha\mathbf{X}$ is a Hausdorff compactification of $\mathbf{X}$ having $\mathbf{C}$ as the remainder. For every $n\in\mathbb{N}$, let $\alpha_{n}\mathbf{X}$ be the compactification of $\mathbf{X}$ obtained from $\alpha X$ by identifying the sets $A_{2i-1}^n$ and $A_{2i}^n$ with points and let $\psi_n:\{1,\ldots, 2^n\}\to\alpha_n X\setminus X$ be defined by: $\psi_{i}=A_i^n$ for every $i\in\{1,\ldots, 2^n\}$. Then $(\alpha_n\mathbf{X})_{n\in\mathbb{N}}$ and $(\psi_n)_{n\in\mathbb{N}}$ are sequences satisfying the requirements from condition (A) of Theorem \ref{t:hm}. Hence ($c$) implies ($a$).

$(a)\rightarrow (c)$ Let  $(\alpha_n\mathbf{X})_{n\in\mathbb{N}}$  be a sequence of Hausdorff compactifications of $\mathbf{X}$ and $(\psi_n)_{n\in\mathbb{N}}$ be a sequence of bijections $\psi_n:\{1,\ldots, 2^n\}\to\alpha_n X\setminus X$ such that, for every $n\in\mathbb{N}$, $\alpha_n\mathbf{X}\leq\alpha_{n+1}\mathbf{X}$ and if $h:\alpha_{n+1}\mathbf{X}\to\alpha_n\mathbf{X}$ is the continuous mapping with $h\circ\alpha_{n+1}=\alpha_n$, then $h^{-1}_{n+1}(\psi_{n+1}(2i-1))=h^{-1}_{n+1}(\psi(2i))=h^{-1}_n(\psi_n(i))$.  For every $n\in\mathbb{N}$ and $i\in\{1,\ldots 2^n\}$, let $\mathcal{V}_i^n$ be the collection of all $V\subseteq X$ such that $V$ is open in $\mathbf{X}$, $V\cup\{\psi_n(i)\}$ is open in $\alpha_n\mathbb{N}$ and $\cl_{\alpha_n\mathbf{X}}(V)=\cl_{\mathbf{X}}(V)\cup\{\psi_n(i)\}$. Using similar arguments to the ones  in the proof that ($b$) implies ($c$), one can check that $\mathcal{V}=\{\mathcal{V}_i: i\in\{1,\ldots, 2^n\}\}$ satisfies the conditions (i)-(iv) of ($c$). Hence ($a$) implies ($c$).
\end{proof}

The following corollary follows directly from Theorems \ref{s4t:main1}, \ref{t:Mag} and \ref{s1t016}.

\begin{corollary}
\label{s4c02}
$(\mathbf{ZF})$ $\mathbf{M}(C,S)$ implies that if a locally compact Hausdorff space $\mathbf{X}$ satisfies conditions ($a$)-($c$) of Theorem \ref{s4t:main1}, then every non-empty metrizable compact space is a remainder of $\mathbf{X}$.
\end{corollary}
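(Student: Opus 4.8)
The plan is to combine the equivalence established in Theorem \ref{s4t:main1} with the ``continuous image'' form of Magill's Theorem. First I would fix a locally compact Hausdorff space $\mathbf{X}$ satisfying conditions $(a)$--$(c)$ of Theorem \ref{s4t:main1}. Since those three conditions are equivalent, in particular condition $(b)$ holds: every non-empty second-countable compact Hausdorff space is a remainder of $\mathbf{X}$. Because the Cantor cube $\mathbf{2}^{\omega}$ is a non-empty second-countable compact Hausdorff space, it follows at once that $\mathbf{2}^{\omega}$ is a remainder of $\mathbf{X}$; that is, there is a Hausdorff compactification $\alpha\mathbf{X}$ of $\mathbf{X}$ with $\alpha X\setminus X\simeq\mathbf{2}^{\omega}$.

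Next I would take an arbitrary non-empty metrizable compact space $\mathbf{K}$ and upgrade it to a second-countable one using the hypothesis. Being metrizable, $\mathbf{K}$ is Hausdorff, so under $\mathbf{M}(C,S)$ the space $\mathbf{K}$ is separable, and then Theorem \ref{s1t07} yields that $\mathbf{K}$ is second-countable. Thus $\mathbf{M}(C,S)$ collapses the class of non-empty metrizable compact spaces into the class of non-empty second-countable compact Hausdorff spaces.

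Finally I would transfer the remainder property from $\mathbf{2}^{\omega}$ to $\mathbf{K}$. By Theorem \ref{s1t016}, the second-countable compact Hausdorff space $\mathbf{K}$ is a continuous image of $\mathbf{2}^{\omega}$. Since $\mathbf{2}^{\omega}$ is a remainder of $\mathbf{X}$, Magill's Theorem \ref{t:Mag} gives that $\mathbf{K}$, being a continuous image of a remainder of $\mathbf{X}$, is itself a remainder of $\mathbf{X}$. (Alternatively, once $\mathbf{K}$ is known to be second-countable one may invoke condition $(b)$ of Theorem \ref{s4t:main1} directly, bypassing Theorems \ref{s1t016} and \ref{t:Mag}.) I do not expect a genuine obstacle: the argument is just a chaining of the cited results. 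The only point requiring care is to verify, in $\mathbf{ZF}$, that $\mathbf{M}(C,S)$ together with Theorem \ref{s1t07} really does promote ``metrizable compact'' to ``second-countable'' without any hidden appeal to choice -- and this promotion is precisely what makes the class of metrizable compact remainders coincide with the class of second-countable compact Hausdorff remainders.
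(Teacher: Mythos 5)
Your proposal is correct and follows essentially the same route as the paper, which derives the corollary directly from Theorem \ref{s4t:main1} (condition ($b$) gives $\mathbf{2}^{\omega}$ as a remainder), Theorem \ref{s1t016} and Magill's Theorem \ref{t:Mag}, with $\mathbf{M}(C,S)$ plus Theorem \ref{s1t07} promoting ``compact metrizable'' to ``second-countable'' exactly as you describe. Your parenthetical shortcut of invoking condition ($b$) directly once $\mathbf{K}$ is known to be second-countable is also valid and merely inlines the same ingredients.
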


Our next theorem shows that, in $\mathbf{ZF}$, condition (C) of Theorem \ref{t:hm} is sufficient for a locally compact Hausdorff space to have all non-empty second-countable compact Hausdorff spaces as remainders. Contrary to \cite{hm0}, to construct a Hausdorff compactification having the Cantor ternary set as a remainder, we need not refer to \v Cech-Stone compactifications nor any form of  $\mathbf{AC}$.

\begin{theorem}
\label{s4t:main2}
 $(\mathbf{ZF})$ Suppose that $\mathbf{X}$ is a locally compact Hausdorff space which admits a sequence $(\mathcal{G}_n)_{n\in\mathbb{N}}$ such that, for every $n\in\mathbb{N}$, $\mathcal{G}_n$ is a family of pairwise disjoint open sets of $\mathbf{X}$, $\mathcal{G}_n=\{G_i^n: i\in\{1,...,2^n\}\}$ and, for every $i\in\{1,..., 2^n\}$, the following conditions are satisfied:
\begin{enumerate}
\item[(i)] for every $i\in\{1,\ldots, 2^n\}$, $G^{n+1}_{2i-1}\cup G^{n+1}_{2i}\subseteq G^n_{i}$;
\item[(ii)] $K_n=X\setminus\bigcup\{G^n_i: i\in\{1,\ldots, 2^n\}\}$ is compact;
\item[(iii)] for every $i\in\{1,\ldots, 2^n\}$, $K_n\cup G^n_i$ is non-compact.
\end{enumerate}
Then every non-empty compact second-countable Hausdorff space is a remainder of $\mathbf{X}$. Furthermore, if $\mathbf{X}$ has a cuf base, then, for every non-empty compact second-countable Hausdorff space $\mathbf{Z}$, there exists a metrizable compactification $\gamma\mathbf{X}$ of $\mathbf{X}$ such that $\mathbf{Z}$ is homeomorphic to $\gamma X\setminus X$.
\end{theorem}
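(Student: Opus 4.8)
The plan is to reduce the whole statement to a single assertion: that the Cantor ternary space $\mathbf{C}$ (as in Remark \ref{s3r4}) is a remainder of $\mathbf{X}$. Once this is established, the first conclusion is immediate, since by Theorem \ref{s1t016} every non-empty second-countable compact Hausdorff space is a continuous image of $\mathbf{2}^{\omega}\simeq\mathbf{C}$, and Magill's Theorem \ref{t:Mag} promotes a continuous image of a remainder to a genuine remainder. I will observe in passing that the hypotheses (i)--(iii) quickly imply condition (c) of Theorem \ref{s4t:main1} by taking $\mathcal{V}_i^n=\{G_i^n\setminus F:F\in\mathcal{K}(\mathbf{X})\}$ (one checks stability and the compactness of boundaries $\cl_{\mathbf{X}}(G_i^n)\setminus G_i^n\subseteq K_n$), so the first conclusion already follows from that theorem; however, I will instead give the direct construction, because it is what also yields the metrizability addendum.

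For the construction I reuse the dyadic clopen decomposition $\{A_i^n:n\in\mathbb{N},\ i\in\{1,\ldots,2^n\}\}$ of $\mathbf{C}$ fixed in the proof of Theorem \ref{s4t:main1}, with $C=\bigcup_{i}A_i^n$, pairwise disjoint clopen pieces at each level, $A_{2i-1}^{n+1}\cup A_{2i}^{n+1}=A_i^n$, and $C\cap X=\emptyset$. Setting $Y=X\cup C$, I take the base $\mathcal{B}=\tau\cup\{(A_i^n\cup G_i^n)\setminus F:n\in\mathbb{N},\ i\in\{1,\ldots,2^n\},\ F\in\mathcal{K}(\mathbf{X})\}$, the open set $G_i^n$ playing the role the filterbase member $V\in\mathcal{V}_i^n$ plays in Theorem \ref{s4t:main1}. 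The routine verifications are that $\mathcal{B}$ is a base for a topology $\tau_Y$ (using that (i) iterates to $A_j^m\subseteq A_i^n\Rightarrow G_j^m\subseteq G_i^n$), that $\mathbf{X}$ and $\mathbf{C}$ are subspaces of $\mathbf{Y}=\langle Y,\tau_Y\rangle$, and that $X$ is dense. Hausdorffness splits into three cases; the only delicate one separates $x\in X$ from $c\in C$, where local compactness supplies an open $U\ni x$ with $\cl_{\mathbf{X}}(U)$ compact, so that $U$ and $(A_i^n\cup G_i^n)\setminus\cl_{\mathbf{X}}(U)$ are disjoint.

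The technical heart is the compactness of $\mathbf{Y}$, where conditions (ii)--(iii) enter and where I must stay within $\mathbf{ZF}$. Given a cover $\mathcal{G}\subseteq\mathcal{B}$ of $Y$, compactness of $\mathbf{C}$ (a closed bounded subset of $\mathbb{R}$, hence compact in $\mathbf{ZF}$) yields a finite $\mathcal{G}_0=\{(A_{i_k}^{n_k}\cup G_{i_k}^{n_k})\setminus F_k:k\le m\}$ covering $C$. Putting $N=\max_k n_k$ and $F'=\bigcup_k F_k$, the fact that the $A_{i_k}^{n_k}$ cover $C$ forces each $A_i^N$ into some $A_{i_k}^{n_k}$, whence $G_i^N\subseteq G_{i_k}^{n_k}$ by (i); this shows the uncovered set $R=X\setminus\bigcup_k(G_{i_k}^{n_k}\setminus F_k)$ satisfies $R\subseteq(X\setminus\bigcup_{i=1}^{2^N}G_i^N)\cup F'=K_N\cup F'$, which is compact by (ii). Being closed, $R$ is compact, so covering it by the traces $\{G\cap X:G\in\mathcal{G}\}$ gives a finite $\mathcal{G}_1$ with $\mathcal{G}_0\cup\mathcal{G}_1$ a finite subcover. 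Thus $\mathbf{Y}$ is a Hausdorff compactification of $\mathbf{X}$ with $Y\setminus X=C$, proving the first assertion.

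For the metrizability addendum, assume $\mathbf{X}$ has a cuf base $\bigcup_n\mathcal{C}_n$ with each $\mathcal{C}_n$ finite and increasing; replacing it by $\mathcal{B}^{\ast}=\{U\in\mathcal{B}:\cl_{\mathbf{X}}(U)\text{ compact}\}$, which is still a base (by regularity of $\mathbf{X}$, inherited from $\mathbf{X}(\infty)$) and a cuf set, I would exhibit a cuf base of $\mathbf{Y}$. The compact sets $\mathcal{K}^{\ast}=\{\cl_{\mathbf{X}}(\bigcup\mathcal{F}):m\in\mathbb{N},\ \mathcal{F}\in\mathcal{P}(\mathcal{C}_m)\}$ form a cuf family cofinal in $\mathcal{K}(\mathbf{X})$, so $\mathcal{B}^{\ast}\cup\{(A_i^n\cup G_i^n)\setminus K:n\in\mathbb{N},\ i\in\{1,\ldots,2^n\},\ K\in\mathcal{K}^{\ast}\}$ is a base indexed by a countable (over $(n,m)\in\mathbb{N}^2$) union of finite sets, hence a cuf base; as $\mathbf{Y}$ is compact Hausdorff, so $T_3$, Theorem \ref{s2t01} makes $\mathbf{Y}$ metrizable. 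For arbitrary non-empty second-countable compact Hausdorff $\mathbf{Z}$, Theorems \ref{s1t016} and \ref{t:Mag} give a compactification $\gamma\mathbf{X}\le\mathbf{Y}$ with $\gamma X\setminus X\simeq\mathbf{Z}$, realized by a continuous surjection $f:\mathbf{Y}\to\gamma\mathbf{X}$; then $\{\gamma X\setminus f(Y\setminus\bigcup\mathcal{F}):\mathcal{F}\text{ a finite subset of the cuf base of }\mathbf{Y}\}$ is a cuf base of $\gamma\mathbf{X}$, so $\gamma\mathbf{X}$ is metrizable by Theorem \ref{s2t01}. The main obstacle I anticipate is precisely the $\mathbf{ZF}$ compactness argument of paragraph three, where the finite subcover of the remainder must be reduced to one common level $N$ so that (ii) applies while making only finitely many choices; a secondary delicate point is the cuf bookkeeping of the last paragraph, which relies on the fact that the set of finite subsets of a cuf set is again cuf (a property that fails for an arbitrary set).
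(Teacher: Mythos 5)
Your proposal is correct and follows essentially the same route as the paper's own proof: the same base $\mathcal{B}=\tau\cup\{(A_i^n\cup G_i^n)\setminus F\}$ built on the dyadic clopen decomposition of $\mathbf{C}$, the same reduction of the general case via Theorems \ref{s1t016} and \ref{t:Mag}, and the same cuf-base bookkeeping (your cofinal family $\mathcal{K}^{\ast}$ plays exactly the role of the paper's sets $F_j$, and your push-down $\{\gamma X\setminus f(Y\setminus\bigcup\mathcal{F})\}$ matches the paper's $\mathcal{B}_{\gamma}$ after closing under finite unions) before invoking Theorem \ref{s2t01}. The only difference is that you spell out the compactness verification (common level $N$, $R\subseteq K_N\cup F'$) that the paper leaves as ``one can easily verify,'' and your argument there is sound.
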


\begin{proof}
That every non-empty second-countable compact Hausdorff space is a remainder of $\mathbf{X}$ can be deduced from Theorem \ref{s4t:main1}; however, we need a direct less complicated construction of a compactification here. Therefore, as in the proof to Theorem \ref{s4t:main1}, for the Cantor ternary set $C$, we fix a sequence $(\mathcal{A}_n)_{n\in\mathbb{N}}$ of families $\mathcal{A}_n$ of pairwise disjoint clopen sets of $\mathbf{C}$ such that $\mathcal{A}=\bigcup\limits_{n\in\mathbb{N}}\bigcup\mathcal{A}_n$ is a base of $\mathbf{C}$ and, for every $n\in\mathbb{N}$ and $i\in\{1,\ldots, 2^n\}$,  $\mathcal{A}_n=\{A_i^n: i\in\{1,\ldots,2^n\}\}$, $C=\bigcup\limits_{i=1}^{2^n}A_i^n$  and $A_{2i-1}^{n+1}\cup A_{2i}^{n+1}=A_{i}^n$.  Assuming that $X\cap C=\emptyset$, we put $Y=X\cup C$ and define 
$$\mathcal{B}=\tau\cup\{(A_{i}^n\cup G_i^n)\setminus K: K\in\mathcal{K}(\mathbf{X}), n\in\mathbb{N}, i\in\{1,\ldots, 2^n\}\}.$$
One can easily verify that $\mathcal{B}$ is a base for a topology $\tau_Y$ in $Y$ such that space $\mathbf{Y}=\langle Y, \tau_Y\rangle$ is a compact Hausdorff space. Of course, $\mathbf{X}$ and $\mathbf{C}$ are subspaces of $\mathbf{Y}$ and, moreover, $X$ is dense in $\mathbf{Y}$. Let $\alpha\mathbf{X}=\mathbf{Y}$ and let $\mathbf{Z}$ be a non-empty second-countable compact Hausdorff space. By  Theorem \ref{t:Mag}, there exists a Hausdorff compactification $\gamma\mathbf{X}$ of $\mathbf{X}$ such that $\gamma\mathbf{X}\leq\alpha\mathbf{X}$ and $\gamma X\setminus X$ is homeomorphic to $\mathbf{Z}$.

Now, suppose that $\mathbf{X}$ has a cuf base $\mathcal{B}_X$. Let $\mathcal{B}_X=\bigcup\limits_{n\in\omega}\mathcal{B}_n$ where each $\mathcal{B}_n$ is finite. Let $\mathcal{B}_X^{\ast}=\{V\in\mathcal{B}_X: \text{cl}_{\mathbf{X}}(V)\in\mathcal{K}(\mathbf{X})\}$ and, for every $n\in\omega$, let $\mathcal{K}_n=\{\text{cl}_{\mathbf{X}}(V): V\in\mathcal{B}_X^{\ast}\cap\mathcal{B}_n\}$,   $K_n=\bigcup\mathcal{K}_n$ and $F_n=\bigcup\limits_{i\in n+1}K_i$.  We notice that the collection
$$\mathcal{B}^{\ast}=\mathcal{B}_X^{\ast}\cup\{ (A_i^n\cup G_i^n)\setminus F_j: j\in\omega, n\in\mathbb{N} \text{ and }i\in{1,\ldots ,2^n}\}$$
\noindent is a base of $\mathbf{Y}$ such that $\mathcal{B}^{\ast}$ is a cuf set. Then $\mathbf{Y}$ has a cuf base $\mathcal{B}^{\ast}_Y$ such that, for all $U,V\in\mathcal{B}^{\ast}_Y$, $U\cup V\in\mathcal{B}^{\ast}_Y$.  Since $\gamma\mathbf{X}\leq\alpha\mathbf{X}$, there exists a continuous mapping $f:\alpha\mathbf{X}\to\gamma\mathbf{X}$ such that $f\circ\alpha=\gamma$. Then the family $\mathcal{B}_{\gamma}=\{\gamma X\setminus f(\alpha X\setminus U): U\in\mathcal{B}^{\ast}_Y\}$ is a cuf base of $\gamma\mathbf{X}$. Hence $\gamma\mathbf{X}$ is metrizable by Theorem \ref{s2t01}. 
\end{proof}

Let us notice that it can be deduced from Theorem \ref{s4t:main2} that item (A) of Corollary \ref{c:hm} is provable in $\mathbf{ZF}$. However, for a non-empty second-countable compact Hausdorff space $\mathbf{Z}$, the construction of $\alpha\mathbf{X}$ with $\alpha X\setminus X$ homeomorphic to $\mathbf{Z}$, which is described in the proof to the forthcoming theorem, is simpler than that given in the proof to Theorem \ref{s4t:main2} and much simpler than that in \cite{hm0}.

\begin{theorem}
\label{s2t02}
$(\mathbf{ZF})$
Let $\mathbf{X}=\langle X, \tau\rangle$ be a locally compact Hausdorff space which has a family $\{G_n: n\in\omega\}$ of pairwise disjoint open sets such that the set $K=X\setminus\bigcup\limits_{n\in\omega}G_n$ is compact and, for every $n\in\omega$,  $K\cup G_n$ is non-compact. Then the following conditions are satisfied:
\begin{enumerate}
 \item[(i)] every non-empty second-countable compact Hausdorff space is a remainder of $\mathbf{X}$;
 \item[(ii)] if $\mathbf{X}$ has a cuf base, then, for every non-empty second-countable compact Hausdorff space $\mathbf{Z}$, there exists a metrizable compactification $\gamma\mathbf{X}$ of $\mathbf{X}$ such that $\gamma X\setminus X$ is homeomorphic to $\mathbf{Z}$. 
 \end{enumerate}
\end{theorem}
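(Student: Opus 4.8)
The plan is to reduce Theorem \ref{s2t02} to the already-established Theorem \ref{s4t:main2} by manufacturing, from the single countable disjoint family $\{G_n : n\in\omega\}$, a dyadic array $(\mathcal{G}_n)_{n\in\mathbb{N}}$ of open families satisfying conditions (i)--(iii) of Theorem \ref{s4t:main2}. The combinatorial idea is that $\omega$ admits a fixed, choice-free bijection with the set of finite binary strings, equivalently with the nodes of the infinite binary tree; I will use this to distribute the given boxes $G_n$ among the $2^n$ slots at level $n$ so that the containment relation $G^{n+1}_{2i-1}\cup G^{n+1}_{2i}\subseteq G^n_i$ holds by construction and each slot receives infinitely many of the original $G_n$'s. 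Concretely, fix once and for all the canonical bijection between $\{1,\dots,2^n\}$ (for level $n$) and binary strings of length $n$, and partition $\omega$ into the sets $S^n_i$ of indices assigned to the $i$-th node at level $n$ in such a way that each $S^n_i$ is infinite and $S^{n+1}_{2i-1}\cup S^{n+1}_{2i}\subseteq S^n_i$; then set $G^n_i=\bigcup\{G_m : m\in S^n_i\}$. This is a wholly explicit, $\mathbf{ZF}$-definable construction requiring no choice, since everything is built from the fixed indexing of $\omega$ and the fixed family $\{G_n\}$.

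The key steps, in order, are as follows. First I would verify condition (i): the nesting $G^{n+1}_{2i-1}\cup G^{n+1}_{2i}\subseteq G^n_i$ is immediate from $S^{n+1}_{2i-1}\cup S^{n+1}_{2i}\subseteq S^n_i$, and disjointness within each level $\mathcal{G}_n$ follows from the disjointness of the original family $\{G_n\}$ together with the fact that the index-sets $S^n_i$ ($i\in\{1,\dots,2^n\}$) are pairwise disjoint. Second, I would check condition (ii): the set $K_n=X\setminus\bigcup_{i=1}^{2^n}G^n_i$ equals $K\cup\bigcup\{G_m : m\notin\bigcup_i S^n_i\}$; if I arrange the partition so that $\bigcup_{i=1}^{2^n}S^n_i=\omega$ for every $n$ (i.e.\ every original index is used at every level), then $K_n=K$, which is compact by hypothesis. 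Third, for condition (iii) I must ensure $K_n\cup G^n_i=K\cup\bigcup\{G_m : m\in S^n_i\}$ is non-compact; since $S^n_i$ is infinite and each $K\cup G_m$ is non-compact, and the $G_m$ are pairwise disjoint open sets, the union contains an infinite closed discrete family of non-empty relatively clopen pieces, so it cannot be compact (a finite subcover argument, using local compactness to separate the infinitely many disjoint boxes, gives the contradiction). Once (i)--(iii) are in hand, Theorem \ref{s4t:main2} delivers both conclusions (i) and (ii) of the present theorem verbatim, including the metrizability clause under the cuf-base hypothesis.

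The main obstacle I anticipate is the bookkeeping that simultaneously achieves three competing demands on the partition $\{S^n_i\}$: the tree-nesting $S^{n+1}_{2i-1}\cup S^{n+1}_{2i}\subseteq S^n_i$, the covering $\bigcup_i S^n_i=\omega$ at each level, and the infinitude of every $S^n_i$. One cannot have all of the original indices land in one node and also keep both children infinite while preserving nesting, so the clean statement $K_n=K$ competes with keeping all children infinite. The resolution I would adopt is to let each $S^n_i$ be a union of a tree-indexed ``spine'' (guaranteeing infinitude and nesting) together with leftover indices distributed to keep the cover exact; more cautiously, I may drop the demand $\bigcup_i S^n_i=\omega$ and instead only require that the complement $\omega\setminus\bigcup_i S^n_i$ be finite at each level, which still yields $K_n=K\cup(\text{finitely many }G_m)$, a compact set since $K$ is compact and each $\cl_{\mathbf{X}}(G_m)$ need not be compact---so I would instead absorb the finitely many stray boxes into the slots rather than into $K_n$, keeping $K_n=K$. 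Verifying non-compactness in step three is the only genuinely topological point and is where I would spend the most care, invoking local compactness to exhibit a free open cover of $K\cup G^n_i$ with no finite subcover. Everything else is routine set-theoretic combinatorics carried out explicitly in $\mathbf{ZF}$.
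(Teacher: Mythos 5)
Your proposal is correct in substance but takes a genuinely different route from the paper. You reduce the theorem to Theorem \ref{s4t:main2} by manufacturing a dyadic array from the single family $\{G_n:n\in\omega\}$; the paper explicitly acknowledges (in the remark immediately preceding Theorem \ref{s2t02}) that such a deduction from Theorem \ref{s4t:main2} is possible, but deliberately avoids it in favour of a simpler direct construction: since $\mathbf{Z}$ is second-countable compact Hausdorff, it is metrizable (Theorem \ref{t:UMT}) and separable (Theorem \ref{s1t07}) in $\mathbf{ZF}$, so the paper fixes a countable dense set $\{z_n:n\in\omega\}$ and a countable base $\mathcal{C}$ of $\mathbf{Z}$, puts $U_C=C\cup\bigcup\{G_n:z_n\in C\}$, and takes $\tau\cup\{U_C\setminus F: C\in\mathcal{C},\ F\in\mathcal{K}(\mathbf{X})\}$ as a base on $Y=X\cup Z$. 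That direct construction produces a compactification whose remainder is $\mathbf{Z}$ itself, with no detour through the Cantor set, Theorem \ref{s1t016}, or Magill's Theorem, and part (ii) falls out by exhibiting a cuf base of the compactification directly. Your route recycles the heavier dyadic machinery but is logically unobjectionable, since Theorem \ref{s4t:main2} precedes this theorem in the paper and its ``furthermore'' clause delivers conclusion (ii) verbatim once conditions (i)--(iii) of its hypothesis are verified.

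Two local repairs are needed in your write-up, though neither is fatal. First, the ``main obstacle'' you anticipate is not an obstacle at all: the residue classes $S^n_j=\{m\in\omega: m\equiv j \ (\mathrm{mod}\ 2^n)\}$ simultaneously achieve exact covering of $\omega$ at every level, infinitude of every slot, and the nesting $S^{n+1}_j\cup S^{n+1}_{j+2^n}=S^n_j$, all $\mathbf{ZF}$-definably, so no spine-plus-leftovers bookkeeping is required. Second, your non-compactness argument for $K_n\cup G^n_i$ is flawed as sketched: the pieces $G_m$ need not be closed in $K\cup G^n_i$ (their closures may accumulate on $K$), so there is no ``infinite closed discrete family'' of the kind you describe, and no appeal to local compactness is needed either. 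The correct argument is one line: for any fixed $m\in S^n_i$, the set $K\cup G_m$ is closed in $K\cup G^n_i$, because its complement there is the open set $\bigcup\{G_{m'}: m'\in S^n_i\setminus\{m\}\}$; hence compactness of $K_n\cup G^n_i=K\cup G^n_i$ would force $K\cup G_m$ to be compact, contradicting the hypothesis. In particular, only non-emptiness of each $S^n_i$, not infinitude, is needed for condition (iii).
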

\begin{proof}
Let $\mathbf{Z}$ be a non-empty second-countable Hausdorff space. By Theorem \ref{t:UMT} (thus, also by Theorem \ref{s2t01}), $\mathbf{Z}$  is metrizable. In view of Theorem \ref{s1t07}, $\mathbf{Z}$ is separable. Let $D=\{z_n: n\in\omega\}$ be a countable dense set of $\mathbf{Z}$ and let $\mathcal{C}$ be a countable base of $\mathbf{Z}$. We may assume that $Z\cap X=\emptyset$. Let $Y=X\cup Z$.  For every $C\in\mathcal{C}$, let $U_C=C\cup\bigcup\{G_n: z_n\in C\}$ and let
$$\mathcal{B}=\tau\cup\{U_C\setminus F: C\in\mathcal{C}\text{ and } F\in\mathcal{K}(\mathbf{X})\}.$$
Then $\mathcal{B}$ is a base for a topology $\tau_Y$ in $Y$. Since $\mathbf{X}$ is a locally compact Hausdorff space, $G_m\cap G_n=\emptyset$  for every pair $m,n$ of distinct elements of $\omega$ and the space $\mathbf{Z}$ is also Hausdorff, it follows that the space $\mathbf{Y}=\langle Y, \tau_Y\rangle$ is Hausdorff. Clearly, $\mathbf{X}$ and $\mathbf{Z}$ are subspaces of $\mathbf{Y}$. Since the sets $G_n\cup K$ are all non-compact, $X$ is dense in $\mathbf{Y}$. One can easily check that $\mathbf{Y}$ is compact. 

Now, suppose that $\mathbf{X}$ has a cuf base $\mathcal{B}_X$. Let $\mathcal{B}_X=\bigcup\limits_{n\in\omega}\mathcal{B}_n$ where each $\mathcal{B}_n$ is finite. In much the same way, as in the proof to Theorem \ref{s4t:main2}, we notice that $\mathcal{B}_X^{\ast}=\{V\in\mathcal{B}_X: \text{cl}_{\mathbf{X}}(V)\in\mathcal{K}(\mathbf{X})\}$ is also a cuf base of $\mathbf{X}$. For every $n\in\omega$, we put $\mathcal{K}_n=\{\text{cl}_{\mathbf{X}}(V): V\in\mathcal{B}_X^{\ast}\cap\mathcal{B}_n\}$,   $K_n=\bigcup\mathcal{K}_n$ and $F_n=\bigcup\limits_{i\in n+1}K_i$. Then the collection
$$\mathcal{B}^{\ast}=\mathcal{B}_X^{\ast}\cup\{ U_C\setminus F_i: i\in\omega \text{ and }C\in\mathcal{C}\}$$
\noindent is a base of $\mathbf{Y}$ such that $\mathcal{B}^{\ast}$ is a cuf set. Hence $\mathbf{Y}$ is metrizable by Theorem \ref{s2t01}.
\end{proof}

\begin{corollary}
\label{s2c03}
$(\mathbf{ZF})$
 Let $\mathbf{X}_1$ and  $\mathbf{X}_2$ be disjoint locally compact Hausdorff spaces. Suppose that $\mathbf{X}_2$ admits a family $\{G_n: n\in\omega\}$ of pairwise disjoint open sets such that the set $K=X_2\setminus\bigcup\limits_{n\in\omega}G_n$ is compact and, for every $n\in\omega$,  $K\cup G_n$ is non-compact. Then, for the direct sum $\mathbf{X}=\mathbf{X}_1\oplus\mathbf{X}_2$, the following conditions are satisfied:
 \begin{enumerate}
\item[(i)] all non-empty second-countable compact Hausdorff spaces are remainders of $\mathbf{X}$;
\item[(ii)] if both $\mathbf{X}_1$ and $\mathbf{X}_2$ have cuf bases, then all non-empty second-countable compact Hausdorff spaces are remainders of metrizable compactifications of $\mathbf{X}$.
\end{enumerate}
\end{corollary}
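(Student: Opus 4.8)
The plan is to reduce everything to Theorem \ref{s2t02} applied to the single space $\mathbf{X}=\mathbf{X}_1\oplus\mathbf{X}_2$. First I would note that $\mathbf{X}$ is again a locally compact Hausdorff space, being a direct sum of two such spaces, and that each $G_n\subseteq X_2$ remains open in $\mathbf{X}$ because $\mathbf{X}_2$ is clopen in the direct sum. The naive attempt to feed the family $\{G_n:n\in\omega\}$ directly into Theorem \ref{s2t02} for $\mathbf{X}$ fails, since $X\setminus\bigcup_{n\in\omega}G_n=X_1\cup K$ need not be compact when $\mathbf{X}_1$ is non-compact. To repair this, I would absorb the whole first summand into one member of the family: set $G_0'=G_0\cup X_1$ and $G_n'=G_n$ for $n\geq 1$.

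Next I would verify that $\{G_n':n\in\omega\}$ satisfies the hypotheses of Theorem \ref{s2t02} for $\mathbf{X}$. Pairwise disjointness is immediate, since $X_1$ is disjoint from every $G_n\subseteq X_2$, and openness of $G_0'$ holds because $X_1$ is open in the direct sum. Moreover $X\setminus\bigcup_{n\in\omega}G_n'=X_2\setminus\bigcup_{n\in\omega}G_n=K$ is compact by hypothesis. The only non-routine point is that $K\cup G_n'$ must be non-compact for every $n$. For $n\geq1$ this is exactly the hypothesis on $\mathbf{X}_2$. For $n=0$ one has $K\cup G_0'=(K\cup G_0)\cup X_1$; here I would use that $X_2$ is clopen in $\mathbf{X}$, so $K\cup G_0=X_2\cap(K\cup G_0\cup X_1)$ is a relatively clopen, hence closed, subset of $K\cup G_0'$. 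Were $K\cup G_0'$ compact, its closed subset $K\cup G_0$ would be compact too, contradicting the hypothesis; thus $K\cup G_0'$ is non-compact. This clopen-subset argument is the main (small) obstacle, and everything else is bookkeeping; part (i) then follows directly from Theorem \ref{s2t02}(i).

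For part (ii), I would first observe that if $\mathcal{B}^1=\bigcup_{n\in\omega}\mathcal{B}^1_n$ and $\mathcal{B}^2=\bigcup_{n\in\omega}\mathcal{B}^2_n$ are cuf bases of $\mathbf{X}_1$ and $\mathbf{X}_2$ respectively, then $\mathcal{B}^1\cup\mathcal{B}^2=\bigcup_{n\in\omega}(\mathcal{B}^1_n\cup\mathcal{B}^2_n)$ is a base of the direct sum $\mathbf{X}$ (since a set is open in $\mathbf{X}$ exactly when its traces on $X_1$ and $X_2$ are open) and is itself a cuf set, so $\mathbf{X}$ has a cuf base. With $\mathbf{X}$ and the repaired family $\{G_n':n\in\omega\}$ now meeting all hypotheses of Theorem \ref{s2t02}(ii), I would conclude that for every non-empty second-countable compact Hausdorff space $\mathbf{Z}$ there exists a metrizable compactification $\gamma\mathbf{X}$ of $\mathbf{X}$ with $\gamma X\setminus X$ homeomorphic to $\mathbf{Z}$, which is precisely statement (ii).
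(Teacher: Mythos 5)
Your proof is correct, but it takes a genuinely different route from the paper's. You absorb the first summand into the family by setting $G_0'=G_0\cup X_1$, $G_n'=G_n$ for $n\geq 1$, and apply Theorem \ref{s2t02} once to $\mathbf{X}=\mathbf{X}_1\oplus\mathbf{X}_2$; the key verification, that $K\cup G_0'$ is non-compact because $K\cup G_0=X_2\cap(K\cup G_0')$ is clopen (hence closed) in it, is sound and choice-free, and your observation that $\mathcal{B}^1\cup\mathcal{B}^2=\bigcup_{n\in\omega}(\mathcal{B}^1_n\cup\mathcal{B}^2_n)$ is a cuf base of the direct sum settles (ii) directly via Theorem \ref{s2t02}(ii). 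The paper instead splits into cases: when $\mathbf{X}_1$ is compact it applies Theorem \ref{s2t02} directly, and when $\mathbf{X}_1$ is non-compact it forms $\mathbf{X}_1(\infty)\oplus\gamma\mathbf{X}_2$, where $\gamma\mathbf{X}_2$ has remainder $\mathbf{Z}$, and glues $\infty$ to a chosen point $z_0$ of the remainder, using Proposition \ref{s2p02}(i) and Theorem \ref{s2t01} to get metrizability in (ii). Your argument is shorter and uniform, avoiding the case distinction, the quotient construction, and the attendant Hausdorffness and cuf-base checks for the glued space. What the paper's heavier route buys is reusability: its gluing argument needs nothing about $\mathbf{X}_1$ beyond local compactness and is exactly what justifies the subsequent Proposition \ref{s2p05} (that having all second-countable compacta as remainders of one summand suffices for the direct sum), which your absorption trick cannot deliver, since it depends on the specific family $\{G_n:n\in\omega\}$ in $\mathbf{X}_2$.
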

\begin{proof}
If $\mathbf{X}_1$ is compact, then (i) and (ii) follow directly from Theorem \ref{s2t02}. Assuming that $\mathbf{X}_1$ is not compact, we fix an element $\infty\notin Y\cup X$ and consider the Alexandroff compactification $\mathbf{X_1}(\infty)$ of $\mathbf{X}_1$. Let $\mathbf{Z}$ be a non-empty second-countable compact Hausdorff space. It follows from Theorem \ref{s2t02}(i) that there exists a Hausdorff compactification $\gamma\mathbf{X}_2$ of $\mathbf{X}_2$ such that $\gamma X_2\setminus X_2$ is homeomorphic to $\mathbf{Z}$. We may assume that $\infty\notin\gamma X_2$. Then $\mathbf{X}_1(\infty)\oplus\gamma\mathbf{X}_2$ is a Hausdorff compactification of $\mathbf{X}$. Let us fix an element $z_0\in\gamma X_2\setminus X_2$ and denote by $\alpha\mathbf{X}$ the space obtained from $\mathbf{X}_1(\infty)\oplus\gamma\mathbf{X}_2$ by identifying the set $\{\infty, z_0\}$ with a point. Then $\alpha\mathbf{X}$ is a Hausdorff compactification of $\mathbf{X}$ such that $\alpha X\setminus X$ is homeomorphic to $\mathbf{Z}$. This shows that (i) holds. To prove (ii), let us notice that if both $\mathbf{X}_1$ and $\mathbf{X}_2$ have cuf bases, then it follows from Proposition \ref{s2p02} that $\mathbf{X}_1(\infty)$ has a cuf base and, moreover,  it follows from the proof to Theorem \ref{s2t02} that we may assume that $\gamma\mathbf{X}_2$  has a cuf base. Then $\mathbf{X}_1(\infty)\oplus\gamma\mathbf{X}_2$ has a cuf base. This implies that $\alpha{X}$ has a cuf base. Hence $\alpha\mathbf{X}$ is metrizable by Theorem \ref{s2t01}.    
\end{proof}

\begin{corollary}
\label{s2c003}
$(\mathbf{ZF})$ Let $\mathbf{X}$ be a locally compact Hausdorff space which admits a locally finite disjoint family $\{G_n: n\in\omega\}$ of non-compact clopen sets. Then all non-empty second-countable Hausdorff compact spaces are remainders of $\mathbf{X}$. Furthermore, if $\mathbf{X}$ has a cuf base, then all non-empty second-countable Hausdorff compact spaces are remainders of metrizable compactifications of $\mathbf{X}$.
\end{corollary}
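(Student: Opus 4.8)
The plan is to reduce the statement to Corollary \ref{s2c03} by splitting $\mathbf{X}$ into the piece that carries the family $\{G_n:n\in\omega\}$ and its complement. The first step, which is the only genuinely topological one, is to observe that $G=\bigcup_{n\in\omega}G_n$ is clopen in $\mathbf{X}$. Indeed, each $G_n$ is closed and the family is locally finite, and a locally finite union of closed sets is closed (this is proved constructively in $\mathbf{ZF}$: any point outside $G$ has a neighbourhood meeting only finitely many $G_n$, and deleting those finitely many closed sets from it yields a neighbourhood disjoint from $G$). Since $G$ is also a union of open sets, it is open as well. Consequently $X_1=X\setminus G$ and $X_2=G$ are both clopen, so the subspaces $\mathbf{X}_1$ and $\mathbf{X}_2$ are disjoint locally compact Hausdorff spaces with $\mathbf{X}=\mathbf{X}_1\oplus\mathbf{X}_2$.

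Next I would verify that $\mathbf{X}_2$ meets the hypotheses imposed on the second summand in Corollary \ref{s2c03}. Because $X_2=\bigcup_{n\in\omega}G_n$, the compact set $K$ occurring there is $X_2\setminus\bigcup_{n\in\omega}G_n=\emptyset$, and each $K\cup G_n=G_n$ is non-compact by hypothesis; moreover the $G_n$ are pairwise disjoint and clopen, hence open, in $\mathbf{X}_2$. Thus Corollary \ref{s2c03}(i) applies verbatim and yields that every non-empty second-countable compact Hausdorff space is a remainder of $\mathbf{X}$, which is the first assertion.

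For the metrizability clause, assume $\mathbf{X}$ has a cuf base $\mathcal{B}=\bigcup_{n\in\omega}\mathcal{B}_n$ with each $\mathcal{B}_n$ finite. I would pass to the traces $\{B\cap X_1:B\in\mathcal{B}\}$ and $\{B\cap X_2:B\in\mathcal{B}\}$; since $X_1$ and $X_2$ are clopen, these are bases of $\mathbf{X}_1$ and $\mathbf{X}_2$, and each is a cuf set, its $n$-th layer being the finite image of $\mathcal{B}_n$. Hence both $\mathbf{X}_1$ and $\mathbf{X}_2$ have cuf bases, and Corollary \ref{s2c03}(ii) delivers the required metrizable compactifications. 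The one point that really has to be noticed is the closedness of $\bigcup_{n\in\omega}G_n$ forced by local finiteness: it is exactly what allows the direct-sum splitting and what distinguishes this corollary from a naive attempt to apply Theorem \ref{s2t02} directly to $\mathbf{X}$, where $X\setminus\bigcup_{n\in\omega}G_n$ need not be compact. Everything else is routine, and no choice principle beyond those already embedded in Corollary \ref{s2c03} is invoked.
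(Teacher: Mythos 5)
Your proposal is correct and takes essentially the same route as the paper: both set $X_2=\bigcup_{n\in\omega}G_n$ and $X_1=X\setminus X_2$, use local finiteness of the clopen family to see that $X_2$ is closed (hence both pieces are clopen and $\mathbf{X}=\mathbf{X}_1\oplus\mathbf{X}_2$), and then invoke Corollary \ref{s2c03}. The extra details you supply --- that the compact set $K$ in the hypotheses of Corollary \ref{s2c03} is $\emptyset$ for $\mathbf{X}_2$, and that the traces of a cuf base on the two clopen summands are again cuf bases --- are exactly the routine verifications the paper leaves implicit.
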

\begin{proof}
Let $X_2=\bigcup\limits_{n\in\omega}G_n$ and $X_1=X\setminus X_2$. Since the family $\{G_n: n\in\omega\}$ is locally finite and consists of clopen sets, the set $X_2$ is closed. Hence,  the subspaces $\mathbf{X}_1$ and $\mathbf{X}_2$ of $\mathbf{X}$ are both open, so locally compact. Furthermore, $\mathbf{X}=\mathbf{X}_1\oplus\mathbf{X}_2$. Corollary \ref{s2c03} completes the proof. 
\end{proof}

From the proof to Corollary \ref{s2c03}, we can deduce that the following proposition holds:

\begin{proposition}
\label{s2p05}
$(\mathbf{ZF})$
Let $\mathbf{X}_1$ and $\mathbf{X}_2$ be disjoint locally compact Hausdorff spaces. If all non-empty second-countable compact Hausdorff spaces are remainders of $\mathbf{X}_2$ or of $\mathbf{X}_1$, then all non-empty second-countable compact Hausdorff spaces are remainders of $\mathbf{X}_1\oplus\mathbf{X}_2$. 
\end{proposition}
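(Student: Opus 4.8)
The plan is to follow verbatim the construction given in the proof to Corollary \ref{s2c03}, observing that the only role played there by the special family $\{G_n:n\in\omega\}$ was, via Theorem \ref{s2t02}(i), to guarantee that every non-empty second-countable compact Hausdorff space is a remainder of $\mathbf{X}_2$; this is now precisely part of our hypothesis. Since $\mathbf{X}_1\oplus\mathbf{X}_2$ and $\mathbf{X}_2\oplus\mathbf{X}_1$ are homeomorphic, I may assume without loss of generality that all non-empty second-countable compact Hausdorff spaces are remainders of $\mathbf{X}_2$.

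Fix a non-empty second-countable compact Hausdorff space $\mathbf{Z}$. By hypothesis there is a Hausdorff compactification $\gamma\mathbf{X}_2$ of $\mathbf{X}_2$ with $\gamma X_2\setminus X_2\simeq\mathbf{Z}$. If $\mathbf{X}_1$ is compact, then $\mathbf{X}_1\oplus\gamma\mathbf{X}_2$ is already a Hausdorff compactification of $\mathbf{X}=\mathbf{X}_1\oplus\mathbf{X}_2$ whose remainder is $\gamma X_2\setminus X_2\simeq\mathbf{Z}$, and we are done. If $\mathbf{X}_1$ is not compact, I would pass to its Alexandroff compactification $\mathbf{X}_1(\infty)$ and form the Hausdorff compactification $\mathbf{X}_1(\infty)\oplus\gamma\mathbf{X}_2$ of $\mathbf{X}$ (here one uses that $X_1$ is dense in $\mathbf{X}_1(\infty)$, since $\mathbf{X}_1$ is a non-compact locally compact Hausdorff space), fix a point $z_0\in\gamma X_2\setminus X_2$ --- a single existential selection, so no appeal to any form of $\mathbf{AC}$ is needed --- and let $\alpha\mathbf{X}$ be the space obtained by identifying the two-point set $\{\infty,z_0\}$ to a single point, exactly as in the proof to Corollary \ref{s2c03}.

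The verifications I expect to carry out are routine. The space $\alpha\mathbf{X}$ is compact as a quotient of the compact space $\mathbf{X}_1(\infty)\oplus\gamma\mathbf{X}_2$, and it is Hausdorff because collapsing the closed two-point set $\{\infty,z_0\}$ to a point in a compact Hausdorff space yields a compact Hausdorff quotient. The quotient map is injective on $X=X_1\cup X_2$, and $X$ is saturated (no point of $X$ is identified with anything), so $\mathbf{X}$ embeds as a dense subspace of $\alpha\mathbf{X}$. Finally, the remainder $\alpha X\setminus X$ is the image of $\{\infty\}\cup(\gamma X_2\setminus X_2)$; since this is a continuous bijective image of the compact space $\gamma X_2\setminus X_2$ onto a Hausdorff space (the point $z_0$ being sent to the common identification point, and $\infty$ contributing nothing new), it is homeomorphic to $\gamma X_2\setminus X_2\simeq\mathbf{Z}$.

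The main obstacle, such as it is, is purely bookkeeping: confirming that the identified space remains a Hausdorff compactification and that merging $z_0$ into the extra remainder point $\infty$ returns precisely $\mathbf{Z}$ rather than $\mathbf{Z}$ together with an added isolated point. Both issues are settled by the standard fact that collapsing a closed set to a point in a compact Hausdorff space gives a compact Hausdorff quotient, combined with the observation that the identification only merges the isolated remainder point $\infty$ of $\mathbf{X}_1(\infty)$ with a point already lying in $\mathbf{Z}$. As the entire argument is carried out in $\mathbf{ZF}$ and uses only the single selection of $z_0$, it is choice-free.
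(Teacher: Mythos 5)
Your proposal is correct and takes essentially the same route as the paper, which derives Proposition \ref{s2p05} from the proof of Corollary \ref{s2c03}: obtain $\gamma\mathbf{X}_2$ with remainder homeomorphic to $\mathbf{Z}$, pass to $\mathbf{X}_1(\infty)\oplus\gamma\mathbf{X}_2$ when $\mathbf{X}_1$ is non-compact, and identify $\{\infty, z_0\}$ with a point. Your supplementary verifications (Hausdorffness of the quotient of a compact Hausdorff space by a closed set, and the continuous bijection from the compact set $\gamma X_2\setminus X_2$ onto the new remainder being a homeomorphism) are exactly the details the paper leaves implicit, and they go through in $\mathbf{ZF}$.
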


\begin{theorem}
\label{s2t03}
$(\mathbf{ZF})$ Let $\mathbf{X}$ be a non-compact locally compact $T_1$-space which has a cuf base of clopen sets. Then every non-empty second-countable compact Hausdorff space is the remainder of a metrizable compactification of $\mathbf{X}$. 
\end{theorem}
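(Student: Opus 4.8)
The plan is to reduce everything to Corollary \ref{s2c003} by manufacturing, inside $\mathbf{ZF}$, a locally finite disjoint family $\{G_n:n\in\omega\}$ of non-compact clopen subsets of $\mathbf{X}$. First I would record the standing structural facts. Since $\mathbf{X}$ is $T_1$ and has a base of clopen sets, it is Hausdorff (separate distinct $x,y$ by a clopen $W$ with $x\in W\subseteq X\setminus\{y\}$ and its complement), hence zero-dimensional and therefore $T_3$; in particular $\mathbf{X}$ is metrizable by Theorem \ref{s2t01}, and its cuf base will be reused when the metrizable-compactification clause of Corollary \ref{s2c003} is invoked. Next I would upgrade the given cuf base to a cuf base of \emph{compact} clopen sets. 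In a zero-dimensional locally compact Hausdorff space the compact clopen sets form a base: given $x\in U$, pick a compact neighborhood $C$ of $x$ and then a clopen $W$ with $x\in W\subseteq U\cap\operatorname{int}C$; as a closed subset of the compact set $C$, $W$ is compact. Refining any such $W$ by a member of the given base produces a clopen subset of a compact set, hence a compact member of the base, so the compact members $\mathcal{B}^{\ast}=\bigcup_n\mathcal{B}_n^{\ast}$ of each finite layer $\mathcal{B}_n$ still form a base, and visibly a cuf one. All of this is pointwise and uses no choice.

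The core construction is a canonical exhaustion. Set $W_n=\bigcup_{j\le n}\bigcup\mathcal{B}_j^{\ast}$; then $(W_n)_{n\in\omega}$ is an increasing sequence of compact clopen sets with $\bigcup_n W_n=X$, and since $\mathbf{X}$ is non-compact no $W_n$ equals $X$. I would then define a strictly increasing cofinal subsequence by the definable recursion $n_0=\min\{n:W_n\neq\emptyset\}$ and $n_{k+1}=\min\{m>n_k:W_m\supsetneq W_{n_k}\}$, the minima existing precisely because each $W_{n_k}\neq X$ while the $W_m$ exhaust $X$. Putting $D_0=W_{n_0}$ and $D_{k+1}=W_{n_{k+1}}\setminus W_{n_k}$, each $D_k$ is a non-empty clopen set which, as a clopen subset of the compact set $W_{n_{k+1}}$, is compact, and $\{D_k:k\in\omega\}$ partitions $X$. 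Finally I would fix once and for all a partition of $\omega$ into infinitely many infinite pieces $\{S_n:n\in\omega\}$ (e.g.\ via a fixed bijection $\omega\simeq\omega\times\omega$) and set $G_n=\bigcup_{k\in S_n}D_k$.

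It remains to verify that $\{G_n:n\in\omega\}$ has the required properties, and this is where the one genuine subtlety lies: every set-forming step must be canonical so that no choice intrudes, and the topological claims must be read off the partition rather than from selection-dependent compactness arguments. The sets $G_n$ are pairwise disjoint; each is open, and clopen because its complement $\bigcup_{m\neq n}G_m=\bigcup_{k\notin S_n}D_k$ is again a union of clopen pieces, hence open; each $G_n$ is non-compact since $\{D_k:k\in S_n\}$ is an open cover of $G_n$ by pairwise disjoint non-empty sets admitting no finite subcover; and the family is locally finite because every point lies in exactly one $D_k$, whose clopen neighborhood $D_k$ meets only the single $G_n$ with $k\in S_n$. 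With this family in hand, Corollary \ref{s2c003} yields that all non-empty second-countable compact Hausdorff spaces are remainders of $\mathbf{X}$, and, since $\mathbf{X}$ carries a cuf base, remainders of metrizable compactifications of $\mathbf{X}$, which is exactly the assertion. The main obstacle is thus not any single hard estimate but the bookkeeping needed to keep the exhaustion, the subsequence extraction, and the regrouping all explicitly definable in $\mathbf{ZF}$.
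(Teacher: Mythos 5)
Your proof is correct, and it follows the same overall strategy as the paper -- decompose $X$ into a countable disjoint family of non-empty clopen pieces, regroup them along a partition of $\omega$ into infinitely many infinite sets to get non-compact clopen $G_n$'s, and then invoke the compactification machinery already established -- but the two executions differ in a genuine way. The paper never uses local compactness in the construction itself: it extracts from the cuf base a cover $\mathcal{V}\subseteq\mathcal{B}$ with no finite subcover (this is where non-compactness enters), forms the clopen layer-unions $V_n$, disjointifies to $W_n$, discards empty terms, and feeds the regrouped family directly into Theorem \ref{s2t02} with $K=\emptyset$; local compactness is consumed only inside Theorem \ref{s2t02}. You instead exploit local compactness up front to refine the given base to a cuf base of \emph{compact} clopen sets, so that the canonical exhaustion $W_n$ consists of compact sets, properness of each stage is automatic ($W_n$ compact, $X$ not), and the definable subsequence extraction replaces the paper's ``we may assume $W_n\neq\emptyset$'' reindexing; you then route through Corollary \ref{s2c003}, for which the local finiteness you verify is actually automatic since your $D_k$'s partition $X$ into open sets (and note that the compactness of the individual $D_k$'s, unlike that of the $W_{n_k}$'s, is never used). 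Your version buys a fully canonical recursion with no ``without loss of generality'' steps, and it makes explicit a point the paper leaves tacit, namely that a $T_1$-space with a clopen base is Hausdorff, which is needed before Theorem \ref{s2t02} or Corollary \ref{s2c003} can legitimately be applied; the paper's version buys a shorter argument whose combinatorial core (cover without finite subcover plus disjointification) would survive even in settings where one cannot upgrade to compact basis elements. Both are choice-free for the reasons you indicate, and both conclude the metrizability clause identically from the cuf base via Theorem \ref{s2t01} as packaged in Theorem \ref{s2t02}(ii).
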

\begin{proof} 
Let $\mathcal{B}=\bigcup\limits_{n\in\omega}\mathcal{B}_n$ be a base of $\mathbf{X}$ such that, for every $n\in\omega$, the family $\mathcal{B}_n$ is finite and consists of clopen sets. Since $\mathbf{X}$ is non-compact, there exists a cover $\mathcal{V}$ of $X$ such that $\mathcal{V}\subseteq\mathcal{B}$ and $\mathcal{V}$ does not have a finite subcover. Let $V_n=\bigcup\{V\in\mathcal{V}: V\in\mathcal{B}_n\}$ for every $n\in\omega$. The sets $V_n$ are clopen and the cover $\{V_n: n\in\omega\}$ of $\mathbf{X}$ does not have a finite subcover. For every $n\in\omega$, let $W_{n+1}=V_{n+1}\setminus\bigcup\limits_{i\in n+1}V_i$ and $W_0=V_0$. The sets $W_n$ are all clopen and $X=\bigcup\limits_{n\in\omega} W_n$ but the cover $\{W_n: n\in\omega\}$ of $X$ does not have a finite subcover. Therefore, we may assume that $W_n\neq\emptyset$ for every $n\in\omega$. Let $\{A_k: k\in\omega\}$ be a partition of $\omega$ such that, for every $k\in\omega$, the set $A_k$ is infinite. For every $k\in\omega$, let us define $G_k=\bigcup\{W_n: n\in A_k\}$.  Then, for every $k\in\omega$, the set $G_k$ is open and non-compact in $\mathbf{X}$. Moreover, $X=\bigcup\limits_{k\in\omega}G_k$ and $G_i\cap G_j=\emptyset$ for each pair $i,j$ of distinct elements of $\omega$. To conclude the proof, it suffices to apply Theorem \ref{s2t02}.
\end{proof}

\begin{corollary} 
\label{s2c06}
$(\mathbf{ZF})$ For a set $D$, let  $\mathbf{D}=\langle D, \mathcal{P}(D)\rangle$. Then the following statements are true:
\begin{enumerate}
\item[(i)] $D$ is dyadically filterbase infinite if and only if $\mathbf{2}^{\omega}$ (equivalently, every non-empty second-countable compact Hausdorff space) is a remainder of $\mathbf{D}$.
\item[(ii)] If $\mathbf{D}$ is a cuf space, then every non-empty second-countable compact Hausdorff space is a remainder of a metrizable compactification of $\mathbf{D}$. In particular, all non-empty second countable compact Hausdorff spaces are remainders of metrizable compactifications of $\mathbb{N}$.
\item[(iii)] If $D$ is weakly Dedekind-infinite, then $D$ is dyadically filterbase infinite. In particular, if $D$ contains an infinite cuf set or $D$ is equipotent to a subset of $\mathbb{R}$, then $D$ is dyadically filterbase infinite. 
\end{enumerate}
\end{corollary}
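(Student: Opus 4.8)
For (i), the decisive observation is that in the discrete space $\mathbf{D}=\langle D,\mathcal P(D)\rangle$ (which is locally compact and Hausdorff) every subset is open, the closure operator is the identity, and a subset is compact exactly when it is finite. Hence, reading the four clauses of condition ($c$) of Theorem \ref{s4t:main1} for $\mathbf{X}=\mathbf{D}$ turns ``$\cl_{\mathbf{D}}U=U$ non-compact'' into ``$U$ infinite'' and ``compact'' into ``finite'' throughout, so that clauses (i)--(iv) of \ref{s4t:main1}($c$) become literally clauses (v)--(viii) of Definition \ref{s1d:fbi}($b$) for the set $D$. The only thing to remark is that a non-empty family stable under finite intersections whose members are infinite and satisfy ``$U\setminus V$ finite'' is automatically a filterbase, since then $U\cap V$ is infinite, hence non-empty. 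Thus $D$ is dyadically filterbase infinite if and only if $\mathbf{D}$ satisfies \ref{s4t:main1}($c$), which by Theorem \ref{s4t:main1} is equivalent to: every non-empty second-countable compact Hausdorff space is a remainder of $\mathbf{D}$. It then remains to identify this with ``$\mathbf{2}^{\omega}$ is a remainder of $\mathbf{D}$'': one implication is trivial, and for the converse, if $\mathbf{2}^{\omega}$ is a remainder and $\mathbf{Z}$ is any non-empty second-countable compact Hausdorff space, then by Theorem \ref{s1t016} $\mathbf{Z}$ is a continuous image of $\mathbf{2}^{\omega}$, so Magill's Theorem \ref{t:Mag} makes $\mathbf{Z}$ a remainder of $\mathbf{D}$ as well.

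For (ii), if $D$ is an infinite cuf set then $\mathbf{D}$ is a non-compact, locally compact $T_1$-space, and the family $\{\{d\}:d\in D\}$ of singletons is a base of $\mathbf{D}$ consisting of clopen sets that is itself a cuf set, being the one-to-one image of the cuf set $D$. Theorem \ref{s2t03} therefore applies directly and gives that every non-empty second-countable compact Hausdorff space is the remainder of a metrizable compactification of $\mathbf{D}$. The particular case of $\mathbb{N}$ is immediate, since $\mathbb{N}$ is countable, hence a cuf set.

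For (iii), my plan is to reduce to Theorem \ref{s2t02} by producing a partition of $D$ into countably many infinite pieces; I may assume $D$ infinite. First I would invoke the standard $\mathbf{ZF}$-equivalence that $D$ is weakly Dedekind-infinite, that is, $\mathcal P(D)$ is Dedekind-infinite, if and only if there is a surjection $f\colon D\to\omega$. Fixing such an $f$ and any explicit partition $\omega=\bigcup_{k\in\omega}B_k$ into infinite sets, the sets $G_k=f^{-1}(B_k)$ are pairwise disjoint, cover $D$, and each is infinite (being a union of infinitely many non-empty fibres of $f$); taking $K=D\setminus\bigcup_{k}G_k=\emptyset$, which is compact, and noting $K\cup G_k=G_k$ is non-compact, Theorem \ref{s2t02} yields that every non-empty second-countable compact Hausdorff space is a remainder of $\mathbf{D}$, whence $D$ is dyadically filterbase infinite by part (i). For the two ``in particular'' clauses it suffices to verify weak Dedekind-infiniteness: if $D$ contains an infinite cuf set $C=\bigcup_nC_n$, the strictly increasing unions $\bigcup_{i<n}C_i$ give $\omega$ distinct subsets of $C\subseteq D$; and if $D$ is (equipotent to) an infinite subset of $\mathbb{R}$, the chain $\{D\cap(-\infty,q):q\in\mathbb{Q}\}$ takes infinitely many values, because for each $m$ any $m+1$ points of $D$ separated by rationals yield $m+1$ distinct members, giving again an injection of $\omega$ into $\mathcal P(D)$.

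The genuinely routine steps above are the topological translations in (i) and the direct applications of Theorems \ref{s2t03} and \ref{s2t02}. The one step requiring real care is the $\mathbf{ZF}$-equivalence used in (iii): weak Dedekind-infiniteness of $D$ must be upgraded to an \emph{honest} surjection of $D$ onto $\omega$, not merely to $\omega$ many distinct subsets. I expect this to be the main obstacle. A choice-free argument is available: from a one-to-one sequence $\langle A_n\rangle$ of subsets of $D$ one passes to the tree of realised finite membership-patterns in $2^{<\omega}$, which is infinite and canonically well-orderable, and one extracts from its splitting structure a surjection onto $\omega$; I would either quote this as a known theorem or carry out the tree argument in full.
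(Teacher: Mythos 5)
Your proposal is correct, and parts (i) and (ii) coincide with the paper's route: the paper disposes of (i) by appeal to the proof of Theorem \ref{s4t:main1} (exactly your discrete translation, where closure is the identity and compact means finite, with the Cantor-cube/Magill reduction via Theorems \ref{s1t016} and \ref{t:Mag} already built into the implication $(c)\rightarrow(b)$ there), and of (ii) by direct citation of Theorem \ref{s2t03}, just as you do; your explicit remark that stability under finite intersections together with infinite members and finite differences forces the filterbase property is a point the paper leaves tacit. The genuine divergence is in (iii). The paper starts from the same nontrivial $\mathbf{ZF}$ fact you isolate --- it cites Lemma 2.3 of \cite{knd} to extract from a Dedekind-infinite $\mathcal{P}(D)$ a denumerable family $\{E_n:n\in\omega\}$ of pairwise disjoint non-empty subsets of $D$ --- but then performs a case split: if infinitely many $E_n$ are infinite it invokes Corollary \ref{s2c003}, and otherwise it assembles an infinite cuf subset $X_1$, applies (ii) to $\mathbf{X}_1$, and uses the direct-sum Proposition \ref{s2p05} to pass to $\mathbf{D}$. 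You instead upgrade to a surjection $f:D\to\omega$ (equivalent in $\mathbf{ZF}$ to the disjoint-family lemma, since the fibres of a surjection are such a family and, conversely, $x\mapsto n$ for $x\in E_n$, $x\mapsto 0$ otherwise, is onto) and pool fibres along a partition of $\omega$ into infinite pieces, so that Theorem \ref{s2t02} applies uniformly with $K=\emptyset$; this eliminates the case analysis at the cost of needing the surjection form of the lemma, whose tree-extraction proof you sketch adequately and correctly flag as the one step requiring care --- it is indeed a known theorem, and the paper likewise only cites it. Your direct verifications of weak Dedekind-infiniteness for the two ``in particular'' clauses (strictly increasing partial unions for a cuf subset; rational cuts for a subset of $\mathbb{R}$) are sound and slightly more self-contained than the paper, which handles the cuf case inside its case split and quotes the $\mathbb{R}$ fact without proof.
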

\begin{proof}
We deduce from the proof to Theorem \ref{s4t:main1} that (i) is true. It follows directly from  Theorem \ref{s2t03} that (ii) holds. 

(iii) Assume that $D$ is weakly Dedekind-infinite. Then $\mathcal{P}(D)$ is Dedekind-infinite, so we may use \cite[Lemma 2.3]{knd} or similar arguments to that in the proof to Proposition 2 (b) in \cite{ktcell} to deduce that there exists a denumerable family of pairwise disjoint subsets of $D$. Suppose that $\mathcal{E}=\{E_n: n\in\omega\}$ is a family of pairwise disjoint non-empty subsets of $D$. Let $M=\{n\in\omega: E_n\text{ is infinite}\}$. If the set $M$ is infinite, we deduce from Corollary \ref{s2c003} that $D$ is dyadically filterbase infinite. Suppose that the set $M$ is finite. Then  $D$ contains an infinite cuf set $X_1=\bigcup\limits_{n\in\omega\setminus M}E_n$. It follows from (i) that all non-empty second-countable compact Hausdorff spaces are remainders of the subspace $\mathbf{X}_1$ of $\mathbf{D}$. For the subspace $\mathbf{X}_2$ of $\mathbf{D}$ with $X_2=D\setminus X_1$, we have $\mathbf{D}=\mathbf{X}_1\oplus\mathbf{X}_2$. Hence, by Proposition \ref{s2p05}, all non-empty second-countable compact Hausdorff spaces are remainders of $\mathbf{D}$. This, together with the fact that every infinite subset of $\mathbb{R}$ is weakly Dedekind-infinite, implies that the second statement of (iii) is also true.
\end{proof}

The following theorem gives in $\mathbf{ZF}$ a general internal characterization of locally compact Hausdorff spaces having $\mathbb{N}(\infty)$ as a remainder.

\begin{theorem}
\label{t:char}
$(\mathbf{ZF})$ For every locally compact Hausdorff space $\mathbf{X}=\langle X, \tau\rangle$, the following conditions ($a$)-($c$) are equivalent:
\begin{enumerate}
\item[(a)] there exist a sequence $(\alpha_n\mathbf{X})_{n\in\mathbb{N}}$ of Hausdorff compactifications of $\mathbf{X}$, a denumerable set $A$ and a bijection $\psi:\omega\to A$,  such that, for every $n\in\mathbb{N}$, $\alpha_n X\setminus X= \{\psi(i): i\in n\}$, $\alpha_n\mathbf{X}\leq\alpha_{n+1}\mathbf{X}$ and if $h_n:\alpha_{n+1}\mathbf{X}\to\alpha_n\mathbf{X}$ is the continuous mapping such that $h_n\circ\alpha_{n+1}=\alpha_n$, then $h(\psi(i))=\psi(i)$ for each $i\in n$;
\item[(b)]  $\mathbb{N}(\infty)$ is a remainder of $\mathbf{X}$;
\item[(c)]  there exists a family $\mathcal{V}=\{\mathcal{V}_i: i\in\omega\}$ which has the following properties:
\begin{enumerate}
\item[(i)] for every $i\in\omega$, $\mathcal{V}_i$ is a non-empty family of open sets of $\mathbf{X}$ such that $\mathcal{V}_i$ is stable under finite unions and finite intersections, and,  for every $U\in\mathcal{V}_i$, $\cl_\mathbf{X}(U)$ non-compact;

\item[(ii)] for every $i\in\omega$ and any $U,V\in\mathcal{V}_i$, the set $\cl_{\mathbf{X}}(U)\setminus V$ is compact;
\item[(iii)] for all $i,j\in\omega$ with $i\neq j$, there exist $V\in\mathcal{V}_i$ and $W\in\mathcal{V}_j$ such that $\cl_{\mathbf{X}}(V\cap W)$ is compact;

\item[(iv)] for every $n\in\omega$ and $V_i\in\mathcal{V}_i$ with $i\in n$, the set $X\setminus\bigcup\limits_{i\in n} V_i$ is non-compact.
\end{enumerate}
\end{enumerate} 
\end{theorem}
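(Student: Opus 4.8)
The plan is to imitate the proof of Theorem~\ref{s4t:main1}, with the dyadic clopen decomposition of the Cantor set replaced by the neighbourhood structure of $\mathbb N(\infty)\simeq\omega+1$. Write the remainder as $\{a_i:i\in\omega\}\cup\{\infty\}$, with each $a_i$ isolated and the basic neighbourhoods of $\infty$ the tails $\{\infty\}\cup\{a_i:i\ge n\}$; the flat sequence $\mathcal V=\{\mathcal V_i:i\in\omega\}$ of condition~($c$) then plays the role that the tree $\{\mathcal V_i^n\}$ played for the Cantor set. I would prove the cycle $(b)\Rightarrow(c)\Rightarrow(b)$ and $(c)\Rightarrow(a)\Rightarrow(c)$, exactly as in Theorem~\ref{s4t:main1}, and nowhere form an inverse limit, so that the argument stays inside $\mathbf{ZF}$.

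For $(b)\Rightarrow(c)$ I would take a Hausdorff compactification $\alpha\mathbf X$ with $\alpha X\setminus X=\{a_i:i\in\omega\}\cup\{\infty\}$ and set $\mathcal V_i=\{V\in\tau:V\cup\{a_i\}\in\tau_\alpha\text{ and }\cl_{\alpha\mathbf X}(V)=\cl_{\mathbf X}(V)\cup\{a_i\}\}$. Non-emptiness, stability under finite unions and intersections, (i) and (ii) are verbatim copies of the corresponding lines of Theorem~\ref{s4t:main1}: if $\cl_{\mathbf X}(U)$ were compact it would be closed in $\alpha\mathbf X$ and could not have $a_i$ in its $\alpha\mathbf X$-closure, while $\cl_{\mathbf X}(U)\setminus V=\cl_{\alpha\mathbf X}(U)\setminus(V\cup\{a_i\})$ is closed in the compact space $\alpha\mathbf X$. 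For (iii) I would separate $a_i$ from $a_j$ by disjoint $\tau_\alpha$-open sets and intersect with $X$. The only genuinely new point is (iv): since $\bigcup_{i\in n}V_i$ absorbs only the remainder points $a_0,\dots,a_{n-1}$, the point $a_n$ still lies in $\cl_{\alpha\mathbf X}(X\setminus\bigcup_{i\in n}V_i)$, so the latter set is non-compact; this is exactly where non-compactness in (iv) replaces the compactness demanded in (iv) of Theorem~\ref{s4t:main1}, reflecting that $\omega+1$ is never exhausted by finitely many of its points.

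The main obstacle is $(c)\Rightarrow(b)$, the direct construction of the compactification. I would put $Y=X\cup\{a_i:i\in\omega\}\cup\{\infty\}$ and declare a base $\mathcal B$ consisting of $\tau$, of the sets $(\{a_i\}\cup V)\setminus F$ with $i\in\omega$, $V\in\mathcal V_i$, $F\in\mathcal K(\mathbf X)$, and of the ``tail'' sets $\{\infty\}\cup\{a_i:i\ge n\}\cup\bigl(X\setminus(\bigcup_{i\in n}\cl_{\mathbf X}(V_i)\cup F)\bigr)$ with $n\in\omega$, $V_i\in\mathcal V_i$ and $F\in\mathcal K(\mathbf X)$. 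Checking that $\mathcal B$ is a base and that each tail set is open requires, for $i\ge n$, producing a basic neighbourhood of $a_i$ inside a given tail set; this is where (iii) is used (to make $\cl_{\mathbf X}(V'\cap V_j)$ compact for the finitely many $j\in n$) together with the finite-intersection stability of $\mathcal V_i$. Density of $X$ at each $a_i$ follows from (i) and at $\infty$ from (iv) (a tail set meets $X$ in a non-compact, hence non-empty, set, because subtracting the compact set $\bigcup_{i\in n}(\cl_{\mathbf X}(V_i)\setminus V_i)\cup F$ from the non-compact $X\setminus\bigcup_{i\in n}V_i$ leaves something non-compact). Compactness I would verify as in Theorem~\ref{s4t:main1}: a basic cover first catches $\infty$ in one tail set, then the finitely many $a_0,\dots,a_{n-1}$ in one basic set each, and the residual subset of $X$ is contained in $\bigcup_{i\in n}(\cl_{\mathbf X}(V_i)\setminus V_i')\cup F$, which is compact by (ii); a finite subcover of that compact set finishes the proof, and Hausdorffness comes from (ii) and (iii). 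This yields a compactification $\alpha\mathbf X$ with remainder $\omega+1$.

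Finally, $(c)\Rightarrow(a)$ and $(a)\Rightarrow(c)$ copy the Cantor argument. Starting from the compactification $\alpha\mathbf X$ just built, I would obtain $\alpha_n\mathbf X$ by collapsing the tail $\{a_i:i\ge n-1\}\cup\{\infty\}$ to a single point labelled $\psi(n-1)$ while keeping $a_0,\dots,a_{n-2}$ as the points $\psi(0),\dots,\psi(n-2)$; each $\alpha_n\mathbf X$ is a quotient of a compact Hausdorff space, hence compact, the bonding map $h_n$ fixes $\psi(0),\dots,\psi(n-1)$ and sends the new point $\psi(n)$ to $\psi(n-1)$, and this is precisely condition~($a$). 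Conversely, given the sequence of ($a$), I would recover $\mathcal V_i$ by applying the recipe of $(b)\Rightarrow(c)$ to the point $\psi(i)$ inside $\alpha_n\mathbf X$ for $n\ge i+2$ (where $\psi(i)$ has already separated from the collapsed tail, so that it is an isolated remainder point with singleton preimage under the bonding maps), and verify (i)--(iv) as before. No appeal to the \v Cech--Stone compactification or to any form of $\mathbf{AC}$ enters, so all four implications are theorems of $\mathbf{ZF}$.
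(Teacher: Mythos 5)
Your cycle $(b)\Rightarrow(c)\Rightarrow(b)$ and the construction $(c)\Rightarrow(a)$ track the paper's proof essentially line for line: the same families $\mathcal{V}_i$ extracted from a compactification with remainder $\omega+1$, the same three kinds of basic sets (including the tail sets at $\infty$), the same covering argument for compactness, and the same tail-collapsing tower for ($a$). Indeed your verification of (c)(iv) in $(b)\Rightarrow(c)$ --- showing $a_n\in\cl_{\alpha\mathbf{X}}(X\setminus\bigcup_{i\in n}V_i)$ directly, since $a_n$ avoids each $\cl_{\alpha\mathbf{X}}(V_i)=\cl_{\mathbf{X}}(V_i)\cup\{a_i\}$ --- is cleaner than the paper's separation-and-contradiction argument, and your $(c)\Rightarrow(a)$ is sound because in the chain tower you build from ($c$) the bonding maps are completely explicit.

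The genuine gap is in $(a)\Rightarrow(c)$, precisely at your parenthetical claim that for $n\ge i+2$ the point $\psi(i)$ ``is an isolated remainder point with singleton preimage under the bonding maps.'' Condition ($a$) only requires $h_n(\psi(i))=\psi(i)$ for $i\in n$; it places no constraint on where the new point $\psi(n)$ of $\alpha_{n+1}\mathbf{X}$ is sent, so $h_n^{-1}(\psi(i))$ may fail to be a singleton at every level. This is not a pedantic point: let $X=\mathbf{2}^{\omega}\times[0,1)$ and, writing $[s]$ for the cylinder of all $x\in 2^{\omega}$ extending the finite string $s$, let $\alpha_n\mathbf{X}$ be the quotient of $\mathbf{2}^{\omega}\times[0,1]$ collapsing each of the $n$ compact sets $[0^{n-1}]\times\{1\}, [0^{n-2}1]\times\{1\},\ldots,[1]\times\{1\}$ to a point, labelled $\psi(0),\psi(n-1),\ldots,\psi(1)$ respectively (so $\psi(0)$ is the class of $[0^{n-1}]\times\{1\}$ and $\psi(j)$, for $1\le j\le n-1$, the class of $[0^{j-1}1]\times\{1\}$). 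This tower satisfies ($a$), but $h_n(\psi(n))=\psi(0)$ for every $n$, so $\psi(0)$ is split at every stage. Applying your recipe at level $i+2$ gives $V_0=[0]\times[0,1)\in\mathcal{V}_0$ (check it at level $2$) and $V_1=[1]\times[0,1)\in\mathcal{V}_1$ (check it at level $3$), and then $X\setminus(V_0\cup V_1)=\emptyset$ is compact: condition (c)(iv) fails for the family you construct. Demanding instead that the defining conditions hold at \emph{all} levels $n\ge i+2$ does not help, since then $\mathcal{V}_0=\emptyset$ in this example (openness of $V\cup\{\psi(0)\}$ in $\alpha_m\mathbf{X}$ forces a collar $[0^{m-1}]\times(t,1)\subseteq V$, which makes $\psi(m)$ adherent to $V$ at level $m+1$), violating (c)(i). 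To be fair, the paper itself disposes of $(a)\rightarrow(c)$ with the phrase ``using similar arguments,'' and its written recipe suffers from the same defect (its variant attached to the newest point $\psi(n)$ fails (c)(iii) for a tower that always splits $\psi(1)$); your explicit singleton-preimage assumption merely brings the lacuna to the surface. A correct $\mathbf{ZF}$ argument must analyse the genealogy function $T$ defined by $h_n(\psi(n))=\psi(T(n))$, $T(n)\in n$, as a tree on $\omega$: if some label has infinitely many children, attach the families $\mathcal{V}_i$ to those children at their birth levels (their subtrees are pairwise disjoint and omit the parent, which yields (iii) and (iv)); otherwise the tree is finitely branching, an infinite branch exists by the effective K\"onig argument available in $\mathbf{ZF}$ for trees on $\omega$, and one attaches $\mathcal{V}_i$ to the branch nodes at levels past their last splitting. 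As written, your $(a)\Rightarrow(c)$ does not go through.
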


\begin{proof}
We may assume that $(\omega+1)\cap X=\emptyset$ and consider $\omega+1$ with its order topology. Then $\mathbb{N}(\infty)$ and $\omega+1$ are homeomorphic, so $\omega+1$ is a remainder of $\mathbf{X}$ if and only if $\mathbb{N}(\infty)$ is a remainder of $\mathbf{X}$.

$(c)\rightarrow (b)$ Let $\mathcal{V}=\{\mathcal{V}_i: i\in\omega\}$ be a family which satisfies conditions (i)--(iv) of ($c$). Let $Y=X\cup(\omega+1)$. For every $x\in X$, let $\mathcal{B}(x)=\{U\in \tau: x\in U\text{ and } \text{cl}_{\mathbf{X}}(U)\in\mathcal{K}(\mathbf{X})\}$. For $n\in\omega$, we put 
$$\mathcal{B}(n)=\{ \{n\}\cup (V\setminus K): V\in\mathcal{V}_n \text{ and } K\in\mathcal{K}(\mathbf{X})\}.$$ 

Moreover, let $\mathcal{B}(\omega)$ be the family of all sets of the form
$$((\omega+1)\setminus n)\cup(X\setminus\bigcup\limits_{i\in n}\text{cl}_{\mathbf{X}}(V_i)\setminus K)$$
\noindent where $K\in\mathcal{K}(\mathbf{X})$,  $n\in\omega$ and, for every $i\in n$, $ V_i\in\mathcal{V}_i$. We denote by $\tau_Y$ be the topology in $Y$ such that, for every $y\in Y$, the collection $\mathcal{B}(y)$ is a neighborhood base at $y$ in $\mathbf{Y}=\langle Y, \tau_Y\rangle$. It  follows from (ii), (iii) and the definition of $\tau_Y$ that the space $\mathbf{Y}$ is Hausdorff. Clearly, $\mathbf{X}$ and $\omega+1$ are both subspaces of $\mathbf{Y}$. That $X$ is  dense  in $\mathbf{Y}$ follows from (iv) and (ii). Let $\mathcal{B}=\bigcup\limits_{y\in Y}\mathcal{B}(y)$. Then $\mathcal{B}$ is a base of $\mathbf{Y}$.  To prove that $\mathbf{Y}$ is compact, let us suppose that $\mathcal{G}\subseteq\mathcal{B}$ and $Y=\bigcup\mathcal{G}$. There exist $n\in\omega$, $V_i\in\mathcal{V}_i$ for $i\in n$ and $K\in\mathcal{K}(\mathbf{X})$ such that $G=((\omega+1)\setminus n)\cup(X\setminus\bigcup\limits_{i\in n}\text{cl}_{\mathbf{X}}(V_i)\setminus K)\in\mathcal{G}$. For every $m\in n$, there exist $W_m\in\mathcal{V}_m$ and $K_m\in\mathcal{K}$ such that $G_m=\{m\}\cup (W_m\setminus K_m)\in \mathcal{G}$. Then the set  $F=X\setminus(G\cup\bigcup\limits_{m\in n}G_m)$ is a subset of $K\cup\bigcup\limits_{m\in n}K_m\cup\bigcup\limits_{m\in n}[\text{cl}_{\mathbf{X}}(V_m)\setminus W_m]$, so $F$ is compact. There exists a finite set $\mathcal{G}_0\subseteq\mathcal{G}$ such that $F\subseteq\bigcup\mathcal{G}_0$. Then $\mathcal{G}_0\cup\{G\}\cup\{G_m: m\in n\}$ is a finite subcover of $\mathcal{G}$. Hence $\mathbf{Y}$ is compact. This completes the proof that $\omega+1$ is a remainder of $\mathbf{X}$. 

$(b)\rightarrow (c)$ Now, suppose that $\omega+1$ is a remainder of $\mathbf{X}$. Let $\alpha\mathbf{X}$ be a Hausdorff compactification of $\mathbf{X}$ such that $\alpha X\setminus X=\omega+1$. Let $\tau_{\alpha}$ be the topology of $\alpha\mathbf{X}$. For every $i\in\omega$, we put 
$$\mathcal{V}_i=\{ V\in\tau: V\cup\{i\}\in\tau_{\alpha}\text{ and }\text{cl}_{\alpha\mathbf{X}}(V)=\text{cl}_{\mathbf{X}}(V)\cup\{i\}\}.$$
Let $\mathcal{V}=\{\mathcal{V}_i: i\in\omega\}$ and check that $\mathcal{V}$ satisfies conditions (i)--(iv).
 
Let $i,j\in\omega$ and $i\neq j$. To see that $\mathcal{V}_i\neq\emptyset$, we take disjoint sets $H,G\in\tau_{\alpha}$ such that $(\omega+1)\setminus\{i\}\subseteq H$ and $i\in G$. Then $G\cap X\in \mathcal{V}_i$. Let $U,V\in\mathcal{V}_i$. It is easily seen that $U\cap V\in\mathcal{V}_i$ and $U\cup V\in\mathcal{V}_i$. Since $\cl_{\mathbf{X}}(U)\setminus V=\cl_{\alpha\mathbf{X}}U\setminus(V\cup\{i\})$, the set $\cl_{\mathbf{X}}(U)\setminus V$ is compact. There exists $W\in\mathcal{V}_j$ such that $W\subseteq H$. Then $W\cap G=\emptyset$. All this taken together shows that $\mathcal{V}$ satisfies conditions (i)--(iii). It remains to show that $\mathcal{V}$ satisfies (iv).

Suppose that $n\in\omega$ and $V_i\in\mathcal{V}_i$ for $i\in n$ are such that the set $K=X\setminus\bigcup\limits_{i\in n}V_i$ is compact. Then there exist disjoint $H_0, G_0\in\tau_{\alpha}$ such that $(\omega+1)\setminus n\subseteq H_0$, $n\subseteq G_0$ and $K\subseteq X\setminus (H_0\cup G_0)$.  For every $i\in n$, there exists $U_i\in\mathcal{V}_i$ such that $U_i\subseteq G_0$. Then $H_0\cap X\subseteq \bigcup\limits_{i\in n}\cl_{\mathbf{X}}(U_i)\setminus K\subseteq \bigcup\limits_{i\in n}(\cl_{\mathbf{X}}(U_i)\setminus V_i)$, so $\cl_{\mathbf{X}}(H_0\cap X)$ is compact. This implies that $\cl_{\alpha\mathbf{X}}(H_0\cap X)\subseteq X$ but this is impossible. The contradiction obtained proves that $\mathcal{V}$ satisfies (iv).

$(b)\rightarrow (a)$ If $\alpha\mathbf{X}$ is a Hausdorff compactification of $\mathbf{X}$ with $\alpha X\setminus X=\omega+1$,  then we put $\alpha_n\mathbf{X}$ to be the compactification of $\mathbf{X}$ obtained from $\alpha X$ by identifying the set $(\omega+1)\setminus(n+1)$ with a point, $A=\omega+1$ and $\psi(i)=i$ for each $i\in\omega$.

$(a)\rightarrow (c)$  Assuming ($a$), we fix a sequence $(\alpha_n\mathbf{X})_{n\in\mathbb{N}}$ be a sequence of Hausdorff compactifications  of $\mathbf{X}$ and a bijection $\psi:\omega\to A$ witnessing that ($a$) is satisfied.
For every $n\in\mathbb{N}$, we denote by $\tau_n$ the topology of $\alpha_n X$ and define
$$\mathcal{V}_{n-1}=\{V\in\tau: V\cup\{\psi(n)\}\in\tau_{n+1}\text{ and } \cl_{\alpha_{n+1}\mathbf{X}}(V)=\cl_{\mathbf{X}}(V)\cup\{n\}\}.$$
Then, using similar arguments to the ones in the proof that ($b$) implies ($c$), one can check that $(\mathcal{V}_n)_{n\in\omega}$ satisfies conditions (i)-(iv) of ($c$). Hence, since ($c$) implies ($b$), $\mathbb{N}(\infty)$ is a remainder of $\mathbf{X}$.
\end{proof} 

As an immediate corollary to Theorem \ref{t:char}, we obtain the following topological characterization of strongly filterbase infinite sets introduced in Definition \ref{s1d:fbi}($a$). 

\begin{corollary} 
\label{s4:def}
$(\mathbf{ZF})$ 
 A set $A$ is strongly filterbase infinite if and only if $\omega+1$ is a remainder of the discrete space $\langle A, \mathcal{P}(A)\rangle$.
\end{corollary}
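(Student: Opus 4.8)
The plan is to apply Theorem \ref{t:char} to the specific space $\mathbf{D}=\langle A,\mathcal{P}(A)\rangle$ and to verify that condition ($c$) of that theorem, when read off for this discrete space, is exactly the defining condition of a strongly filterbase infinite set from Definition \ref{s1d:fbi}($a$). Since $\mathbf{D}$ is a discrete space, it is locally compact and Hausdorff, so Theorem \ref{t:char} applies and gives the equivalence of its conditions ($a$)--($c$); in particular, ``$\omega+1$ is a remainder of $\mathbf{D}$'' (condition ($b$), which coincides with the statement about $\mathbb{N}(\infty)$ up to homeomorphism) is equivalent to condition ($c$). Thus it suffices to check that ($c$) for $\mathbf{D}$ says precisely that $A$ is strongly filterbase infinite.

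The key simplification is that in the discrete space $\mathbf{D}$ every subset is open, and a subset is compact if and only if it is finite. First I would use this to translate each clause of ($c$): in clause (i), ``$\cl_{\mathbf{X}}(U)$ non-compact'' becomes ``$U$ is infinite'' (since $\cl_{\mathbf{D}}(U)=U$), matching clause (i) of Definition \ref{s1d:fbi}($a$); the requirement that each $\mathcal{V}_i$ be a non-empty family of open sets stable under finite unions and intersections is identical in both places, and being a filterbase follows since each member is infinite, hence non-empty, and stability under finite intersection supplies the filterbase witness $W=U\cap V$. Second, in clause (ii), ``$\cl_{\mathbf{X}}(U)\setminus V$ compact'' becomes ``$U\setminus V$ finite'', matching clause (ii) of the definition. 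Third, in clause (iii), ``$\cl_{\mathbf{X}}(V\cap W)$ compact'' becomes ``$V\cap W$ finite'', matching clause (iii). Finally, in clause (iv), ``$X\setminus\bigcup_{i\in n}V_i$ non-compact'' becomes ``$A\setminus\bigcup_{i\in n}V_i$ infinite'', matching clause (iv). This gives a clause-by-clause identification of ($c$) with Definition \ref{s1d:fbi}($a$).

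The only point requiring a little care is matching the quantifier structure in clauses (iii): Definition \ref{s1d:fbi}($a$)(iii) asserts the \emph{existence} of $V\in\mathcal{V}_i$ and $W\in\mathcal{V}_j$ with $V\cap W$ finite, and Theorem \ref{t:char}($c$)(iii) asserts the same existential statement (``there exist $V\in\mathcal{V}_i$ and $W\in\mathcal{V}_j$ such that $\cl_{\mathbf{X}}(V\cap W)$ is compact''), so these align directly once compactness is read as finiteness. I expect the main (and essentially only) obstacle to be a bookkeeping one: confirming that the filterbase hypotheses and the stability-under-finite-operations hypotheses in the two formulations are genuinely the same, rather than merely similar, and that no hypothesis of Theorem \ref{t:char}($c$) is lost or strengthened in the translation. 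Once this dictionary between compactness/non-compactness in $\mathbf{D}$ and finiteness/infiniteness of subsets of $A$ is established, the corollary follows immediately from Theorem \ref{t:char} together with the observation that $\omega+1\simeq\mathbb{N}(\infty)$.
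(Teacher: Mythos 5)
Your proposal is correct and is exactly the paper's argument: the paper derives Corollary \ref{s4:def} as an immediate consequence of Theorem \ref{t:char}, using precisely your dictionary for the discrete space $\langle A,\mathcal{P}(A)\rangle$ (closure is the identity, compact $=$ finite), under which condition ($c$) of Theorem \ref{t:char} becomes clause-for-clause the defining condition of Definition \ref{s1d:fbi}($a$). Your observation that stability under finite intersections together with the infiniteness of all members automatically yields the filterbase property is the right way to reconcile the two formulations, so nothing is lost in the translation.
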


\begin{corollary}
\label{s4c13}
$(\mathbf{ZF})$ Every dyadically filterbase infinite set is strongly filterbase infinite, and every strongly filterbase infinite set is filterbase infinite. Every amorphous set is filterbase finite, so also strongly filterbase finite and dyadically filterbase finite.
\end{corollary}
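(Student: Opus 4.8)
The plan is to prove the three assertions separately, using the topological characterisations for the first implication and direct filterbase constructions for the other two.

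For the implication that every dyadically filterbase infinite set is strongly filterbase infinite, I would argue as follows. Let $D$ be dyadically filterbase infinite and put $\mathbf{D}=\langle D,\mathcal{P}(D)\rangle$, which is a locally compact Hausdorff (indeed discrete) space. By Corollary \ref{s2c06}(i), every non-empty second-countable compact Hausdorff space is a remainder of $\mathbf{D}$. Since $\omega+1$ with its order topology is a non-empty second-countable compact Hausdorff space (a countable base being $\{\{n\}:n\in\omega\}\cup\{(k,\omega]:k\in\omega\}$), it is in particular a remainder of $\mathbf{D}$. Corollary \ref{s4:def} then yields at once that $D$ is strongly filterbase infinite. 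No further work is needed; the content is carried entirely by the earlier characterisations.

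For the implication that every strongly filterbase infinite set is filterbase infinite, I would work directly from Definition \ref{s1d:fbi}(a). Let $\mathcal{V}=\{\mathcal{V}_i:i\in\omega\}$ witness that $A$ is strongly filterbase infinite. The main obstacle is that the $\mathcal{V}_i$ need not be \emph{free} (their members may have a common part), whereas Definition \ref{s1d04}(vi) asks for free filterbases; I would repair this by subtracting finite sets. For each $i\in\omega$ set
$$\mathcal{W}_i=\{U\setminus F: U\in\mathcal{V}_i,\ F\in[A]^{<\omega}\}.$$
Using condition (i) (each member of $\mathcal{V}_i$ is infinite and $\mathcal{V}_i$ is stable under finite intersections) one checks that $\mathcal{W}_i$ is a filterbase all of whose members are infinite, and that $\bigcap\mathcal{W}_i=\emptyset$ because every point of $A$ is deleted by a suitable singleton $F$; hence each $\mathcal{W}_i$ is free. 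For $i\neq j$, condition (iii) supplies $V\in\mathcal{V}_i$ and $W\in\mathcal{V}_j$ with $V\cap W$ finite; taking $F=V\cap W$ gives $V\setminus F\in\mathcal{W}_i$ and $W\setminus F\in\mathcal{W}_j$ with $(V\setminus F)\cap(W\setminus F)=\emptyset$. Thus $\{\mathcal{W}_i:i\in\omega\}$ witnesses that $A$ is filterbase infinite, and since this family is obtained uniformly it exists in $\mathbf{ZF}$.

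For the amorphous claim, the key preliminary is the $\mathbf{ZF}$-fact that \emph{every free filterbase has only infinite members}: if a free filterbase $\mathcal{V}$ contained a finite member $F$, then the (finitely many) members of $\mathcal{V}$ contained in $F$ form a downward-directed finite family whose intersection is again a member of $\mathcal{V}$ and is contained in every member of $\mathcal{V}$, forcing $\bigcap\mathcal{V}\neq\emptyset$, a contradiction. Granting this, suppose $A$ is amorphous and, for contradiction, filterbase infinite, witnessed by free filterbases $\{\mathcal{V}_i:i\in\omega\}$. Picking disjoint $U_0\in\mathcal{V}_0$ and $U_1\in\mathcal{V}_1$, both are infinite by the preliminary; but $U_0$ infinite forces $A\setminus U_0$ finite, whence $U_1\subseteq A\setminus U_0$ is finite, a contradiction. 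Hence every amorphous set is filterbase finite, and the contrapositives of the first two implications give that a filterbase finite set is strongly filterbase finite and dyadically filterbase finite. I expect the only genuinely delicate points to be the freeness repair in the second implication and the little lemma on free filterbases used in the third; everything else is routine bookkeeping.
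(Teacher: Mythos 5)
Your proposal is correct and takes essentially the same route as the paper: the first implication via Corollaries \ref{s2c06}(i) and \ref{s4:def} (noting $\omega+1$ is second-countable compact Hausdorff), the second via exactly the paper's construction $\tilde{\mathcal{V}}_i=\{V\setminus F: V\in\mathcal{V}_i,\ F\in[A]^{<\omega}\}$, and the amorphous claim by the evident two-disjoint-infinite-members contradiction, which the paper dismisses as obvious with a citation to \cite{kfbi}. Your auxiliary lemma that a free filterbase has only infinite members is a correct filling-in of a detail the paper leaves implicit, so no changes are needed.
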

\begin{proof}
In view of Corollaries \ref{s2c06}(i) and \ref{s4:def}, it is obvious that dyadically filterbase infinite sets are strongly filterbase infinite. If $\{\mathcal{V}_i: i\in\omega\}$ is a collection of filterbases in $A$ satisfying conditions (i)-(iv) of Definition \ref{s1d:fbi}($a$), then, by putting $\tilde{\mathcal{V}}_i=\{V\setminus F: V\in\mathcal{V}_i, F\in [A]^{<\omega}\}$ for every $i\in\omega$, we obtain a collection $\{\tilde{\mathcal{V}}_i: i\in\omega\}$ of free filterbases in $A$ which witnesses that $A$ is filterbase infinite. It is obvious that every amorphous set is filterbase finite, and this was noticed in \cite{kfbi}.  
\end{proof}

\begin{remark}
\label{s4r11}
(i) We do not know if the following statement is provable in $\mathbf{ZF}$ or independent of $\mathbf{ZF}$: For every locally compact Hausdorff space $\mathbf{X}$, if $\mathbf{X}$ has a denumerable remainder, then $\mathbb{N}(\infty)$ is a remainder of $\mathbf{X}$.

(ii)  One can prove that, in $\mathbf{ZFC}$, given a Hausdorff compactification $\alpha\mathbf{X}$ of a locally compact Hausdorff space $\mathbf{X}$ such that $\alpha X\setminus X$ is denumerable, there exists a sequence $(A_n)_{n\in\omega}$ of non-empty clopen sets of $\alpha X\setminus X$ such that $A_0=\alpha X\setminus X$ and, for every $n\in\omega$, $A_{n+1}$ is a proper subset of $A_n$. Then, to see that $\mathbb{N}(\infty)$ is a remainder of $\mathbf{X}$, we can check that condition ($a$) of Theorem \ref{t:char} is satisfied. Namely, we can define a sequence $(\alpha_n\mathbf{X})_{n\in\mathbb{N}}$ as follows. We denote by $\alpha_1\mathbf{X}$ the compactification of $\mathbf{X}$ obtained from $\alpha\mathbf{X}$ by identifying the set $A_0$ with a point. For every $n\in\omega$, we define $\alpha_{n+1}\mathbf{X}$ as the compactification of $\mathbf{X}$ obtained from $\alpha\mathbf{X}$ by identifying the sets $A_n$ and $A_i\setminus A_{i+1}$ for $i\in n$ with points. This also leads to a $\mathbf{ZFC}$-proof to \cite[Theorem 6.32]{ch}.
\end{remark}

It was noticed in \cite{kw0} that it holds in $\mathbf{ZF}$ that if $D$ is an amorphous set, then, for $\mathbf{D}=\langle D, \mathcal{P}(D)\rangle$, $\mathbf{D}(\infty)$ is the \v Cech-Stone compactification of $\mathbf{D}$, the spaces $\mathbf{D}(\infty)\times \mathbf{D}$ and $\mathbf{D}$ are both pseudocompact but $\mathbf{D}(\infty)\times\mathbf{D}(\infty)$ is not the \v Cech-Stone compactification of $\mathbf{D}(\infty)\times\mathbf{D}$ (see \cite[Theorem 2.32 and Corollary 2.27]{kw0}). Now, we can prove that, somewhat surprisingly, the following theorem holds in $\mathbf{ZF}$:

\begin{theorem}
\label{t:amdis}
$(\mathbf{ZF})$ Let $D$ be an amorphous set and let $\mathbf{K}$ be a non-empty first-countable compact Hausdorff space. Then, for $\mathbf{D}=\langle D, \mathcal{P}(D)\rangle$, $\mathbf{K}\times\mathbf{D}(\infty)$ is the \v Cech-Stone compactification of $\mathbf{K}\times\mathbf{D}$. 
\end{theorem}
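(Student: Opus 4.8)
The plan is to verify the two defining features of the \v Cech--Stone compactification (Definition \ref{s1d0.2}): that $\mathbf{K}\times\mathbf{D}(\infty)$ is a Hausdorff compactification of $\mathbf{K}\times\mathbf{D}$, and that every continuous map from $\mathbf{K}\times\mathbf{D}$ into a compact Hausdorff space extends continuously over $\mathbf{K}\times\mathbf{D}(\infty)$. Throughout I would use two elementary features of the amorphous set $D$: every subset of $D$ is finite or cofinite, and $D$ is \emph{not} a countable union of finite sets, so that every cuf subset of $D$ is finite (a cofinite cuf subset would force $D$ itself to be cuf). Since the compact subsets of the discrete space $\mathbf{D}$ are exactly its finite subsets, the neighbourhoods of $\infty$ in $\mathbf{D}(\infty)$ (Definition \ref{s1d01}) are precisely the cofinite subsets of $D(\infty)$ containing $\infty$.

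First I would show $\mathbf{K}\times\mathbf{D}(\infty)$ is a Hausdorff compactification. Hausdorffness is automatic, and $\mathbf{K}\times\mathbf{D}$ is an open, hence embedded, dense subspace, because every basic neighbourhood $U\times(D(\infty)\setminus F)$ of a remainder point meets $\mathbf{K}\times\mathbf{D}$ (as $D\setminus F\neq\emptyset$). Compactness is the finite Tychonoff theorem, which is provable in $\mathbf{ZF}$; here it is cleanest to argue directly, covering the compact slice $\mathbf{K}\times\{\infty\}$ by finitely many members of a given open cover, applying the tube lemma to obtain a cofinite $D(\infty)\setminus F$ with $\mathbf{K}\times(D(\infty)\setminus F)$ inside that finite union, and then covering the remaining compact set $\mathbf{K}\times F=\bigcup_{d\in F}\mathbf{K}\times\{d\}$ by finitely many more. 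The tube lemma is choice-free here provided one works with the collection of \emph{all} admissible open $U\subseteq K$ rather than choosing one per point, so that only finitely many choices are made after invoking compactness of $\mathbf{K}$.

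The substance is the extension property. Fix a compact Hausdorff space $\mathbf{L}$ and a continuous $f\colon\mathbf{K}\times\mathbf{D}\to\mathbf{L}$, and write $f_d=f(\cdot,d)\colon\mathbf{K}\to\mathbf{L}$. For a remainder point $(k_0,\infty)$ let $\mathcal N$ be the filter on $K\times D$ generated by the sets $U\times(D\setminus F)$ with $U$ an open neighbourhood of $k_0$ and $F$ finite. I would define $\tilde f(k_0,\infty)$ to be the limit in $\mathbf{L}$ of the image filter $f_{*}\mathcal N$, keeping $\tilde f=f$ on $K\times D$. Since $\mathbf{L}$ is compact, $f_{*}\mathcal N$ has a cluster point (a $\mathbf{ZF}$ fact), and in a compact Hausdorff space a filter with a unique cluster point converges to it; so everything rests on proving \textbf{uniqueness of the cluster point}, which is the main obstacle and the only place where first countability of $\mathbf{K}$ and amorphousness of $D$ are genuinely used.

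To prove uniqueness, suppose $\ell_1\neq\ell_2$ were two cluster points of $f_{*}\mathcal N$. Using that $\mathbf{L}$ is compact Hausdorff, hence regular, I would pick open $V_1\ni\ell_1$, $V_2\ni\ell_2$ with $\cl_{\mathbf L}V_1\cap\cl_{\mathbf L}V_2=\emptyset$, and fix a \emph{decreasing} countable neighbourhood base $(U_n)_{n\in\omega}$ at $k_0$ (this is where first countability enters, and since only the single point $k_0$ is involved no choice is needed). The key computation is that, for each $n$, the set $\{d:f_d(U_n)\cap V_1=\emptyset\}$ is finite (its complement is infinite, hence cofinite, because $\ell_1$ is a cluster point), and therefore $\{d\in D:k_0\notin\cl_{\mathbf K}f_d^{-1}(V_1)\}=\bigcup_{n}\{d:f_d(U_n)\cap V_1=\emptyset\}$ is a cuf subset of $D$, hence \emph{finite}; likewise for $V_2$. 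Consequently, for cofinitely many $d$ one has $k_0\in\cl_{\mathbf K}f_d^{-1}(V_1)\cap\cl_{\mathbf K}f_d^{-1}(V_2)$, and continuity of $f_d$ forces $f_d(k_0)\in\cl_{\mathbf L}V_1\cap\cl_{\mathbf L}V_2=\emptyset$, a contradiction. Thus $f_{*}\mathcal N$ has a unique cluster point and converges, so $\tilde f(k_0,\infty)$ is well defined. Continuity of $\tilde f$ at $(k_0,\infty)$ is then read off from convergence of $f_{*}\mathcal N$: given a neighbourhood $W$ of $\tilde f(k_0,\infty)$, shrink it by regularity to $W'$ with $\cl_{\mathbf L}W'\subseteq W$, choose $U\times(D\setminus F)$ with $f(U\times(D\setminus F))\subseteq W'$, and check that $U\times(D(\infty)\setminus F)$ is carried into $W$, since for $k\in U$ the set $U\times(D\setminus F)$ also lies in the neighbourhood filter of $(k,\infty)$, whence $\tilde f(k,\infty)\in\cl_{\mathbf L}W'\subseteq W$. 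Together with the first step this establishes the universal property and hence the theorem; one could alternatively route the extension step through Taimanov's Theorem \ref{t:Taim}, but the direct filter argument seems shortest.
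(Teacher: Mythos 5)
Your proposal is correct, and every step is choice-free as claimed: the tube-lemma argument avoids choice by working with the family of all admissible tubes; the cluster point of the image filter exists in $\mathbf{ZF}$ by the finite-intersection-property formulation of compactness; uniqueness makes the limit definable (so $\tilde f$ is a set, not a family of choices); and the two facts about the amorphous set $D$ that you use --- every subset is finite or cofinite, and every cuf subset is finite (an infinite cuf subset would be cofinite, making $D$ itself cuf, which amorphousness forbids) --- are correct and provable without choice. However, your route is genuinely different from the paper's. The paper does not build the extension by hand: it invokes Taimanov's Theorem (Theorem \ref{t:Taim}, valid in $\mathbf{ZF}$) and reduces the whole theorem to showing that any two disjoint closed sets $A,B$ of $\mathbf{K}\times\mathbf{D}$ have disjoint closures in $\mathbf{K}\times\mathbf{D}(\infty)$. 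Supposing $\langle x_0,\infty\rangle$ lies in both closures, it fixes a decreasing countable base $(G_n)_{n\in\mathbb{N}}$ at $x_0$ and considers the traces $A_n=\{t\in D: (G_n\times\{t\})\cap A\neq\emptyset\}$ and $B_n$ likewise; amorphousness enters through weak Dedekind-finiteness, forcing the decreasing sequences $(A_n)$ and $(B_n)$ to stabilize at some $A_m, B_m$, which are disjoint (a common $t$ would put $\langle x_0,t\rangle\in A\cap B$), so one of them, say $A_m$, is finite, whence the neighbourhood $G_m\times(D(\infty)\setminus A_m)$ of $\langle x_0,\infty\rangle$ misses $A$ --- a contradiction. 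So where the paper uses stabilization of monotone sequences of subsets, you use the finiteness of cuf subsets; these are two different faces of the same smallness of $D$, and your cuf computation $\{d: k_0\notin\cl_{\mathbf{K}}f_d^{-1}(V_1)\}=\bigcup_n\{d: f_d(U_n)\cap V_1=\emptyset\}$ is the analogue of the paper's trace analysis. What each approach buys: the paper's is considerably shorter, because Taimanov's Theorem packages exactly the filter-convergence machinery you redevelop (existence and uniqueness of the cluster point, and continuity of $\tilde f$ at remainder points), and it silently treats the compactification property as standard; yours is self-contained, makes the $\mathbf{ZF}$-validity of each ingredient explicit (including the compactness of the product, which the paper leaves implicit), and isolates the precise point --- uniqueness of the cluster point --- where first countability and amorphousness do their work. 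Your closing remark that one could instead route the extension through Theorem \ref{t:Taim} is exactly the paper's proof.
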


\begin{proof}  Put $\mathbf{X}=\mathbf{K}\times\mathbf{D}$ and $\mathbf{Y}=\mathbf{K}\times\mathbf{D}(\infty)$.  Consider any pair $A,B$ of non-empty disjoint closed sets of $\mathbf{X}$. Suppose that $\cl_{\mathbf{Y}}(A)\cap\cl_{\mathbf{Y}}(B)\neq\emptyset$. There exists $x_0\in\mathbf{K}$ such that $\langle x_0, \infty\rangle\in\cl_{\mathbf{Y}}(A)\cap\cl_{\mathbf{Y}}(B)$. Let $\{G_n: n\in\mathbb{N}\}$ be a base of neighborhoods of $x_0$ in $\mathbf{K}$ such that $G_{n+1}\subseteq G_n$ for every $n\in\mathbb{N}$.  For any $n\in\mathbb{N}$, let $A_n=\{t\in D: (G_n\times\{t\})\cap A\neq\emptyset\}$ and $B_n=\{t\in D: (G_n\times\{t\})\cap B\neq\emptyset\}$. Since $D$ is amorphous, there exists $m\in\mathbb{N}$ such that $A_n=A_m$ and $B_n=B_m$ for every $n\in\mathbb{N}$ with $n\geq m$. We notice that if $t\in A_m\cap B_m$, then $\langle x_0, t\rangle\in \cl_{\mathbf{X}}(A)\cap\cl_{\mathbf{X}}(B)=A\cap B$. Hence, since $A\cap B=\emptyset$, the sets $A_m$ and  $B_m$ are disjoint. Since $D$ is amorphous, either $A_m$ or $B_m$ is finite.  Suppose that $A_m$ is finite. Let $V=D(\infty)\setminus A_m$ and $U=G_m\times V$. Then $U$ is a neighborhood of $\langle x_0, \infty\rangle$ in $\mathbf{Y}$, so $U\cap A\neq\emptyset$. There exist $z_0\in G_m$ and $y_0\in D\cap V$, such that $\langle z_0, y_0\rangle\in A$. Then $y_0\in A_m$ which is impossible because $y_0\in V$. The contradiction obtained proves that $\cl_{\mathbf{Y}}(A)\cap\cl_{\mathbf{Y}}(B)=\emptyset$. This, together with Theorem \ref{t:Taim}, implies that $\mathbf{K}\times\mathbf{D}(\infty)$ is the \v Cech-Stone compactification of $\mathbf{K}\times\mathbf{D}$. 
\end{proof}

\begin{remark}
\label{s4r15}
 Let $\mathcal{M}$ be any model of $\mathbf{ZF}$ in which there is an amorphous set. Let $\mathbf{D}$ be any discrete amorphous space in $\mathcal{M}$. Then, in view of Theorem \ref{t:amdis}, it holds in $\mathcal{M}$ that the space $\mathbf{X}=(\omega+1)\times \mathbf{D}$ is a metrizable pseudocompact, locally compact space which has $\mathbb{N}(\infty)$ as a remainder but $\mathbf{2}^{\omega}$ is not a remainder of $\mathbf{X}$. 
\end{remark}

\section{Independence results on compact metrizable remainders}
\label{s5}

In this section, to prove some of our independence results, we apply permutation models and the following transfer theorem which generalizes and extends Jech-Sochor Embedding Theorem (cf. \cite[Theorem 6.1]{Je}):

\begin{theorem}
\label{pin}
(The Pincus Transfer Theorem.) (Cf. \cite{pin1}, \cite{pin2}, \cite[p. 286]{hr}, \cite[Theorem 2.19]{ktw0}.) Let $\mathbf{\Psi}$ be a conjunction of statements that are either injectively
boundable or $\mathbf{BPI}$. If $\mathbf{\Psi}$ has a permutation model, then $\mathbf{\Psi}$ has a $\mathbf{ZF}$-model.
\end{theorem}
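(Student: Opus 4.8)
The plan is to realize the required model of $\mathbf{ZF}$ as a symmetric submodel of a forcing extension, produced from the given permutation model by the Jech--Sochor embedding method (cf. \cite[Theorem 6.1]{Je}), and then to verify that each conjunct of $\mathbf{\Psi}$ survives the passage from the permutation model to the symmetric $\mathbf{ZF}$-model. First I would fix a permutation model $\mathcal{N}$ of $\mathbf{ZFA}$ in which $\mathbf{\Psi}$ holds, built over a set $A$ of atoms from a ground model $\mathcal{M}\models\mathbf{ZFA}+\mathbf{AC}$ equipped with a group $\mathcal{G}$ of permutations of $A$ and a normal filter of subgroups. Writing $\mathbf{\Psi}=\Phi_1\wedge\cdots\wedge\Phi_m$, I would separate the conjuncts into the injectively boundable ones and (at most one occurrence of) $\mathbf{BPI}$, since these two kinds of statement transfer by genuinely different mechanisms.

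For the injectively boundable conjuncts, each $\Phi_i$ carries by definition an ordinal bound $\alpha_i$ such that its truth in any model is decided by the structure of injections into $\mathcal{P}^{\alpha_i}(A)$; because injective boundability is closed under finite conjunction, I would set $\alpha=\sup_i\alpha_i$ (increased if necessary) and apply the Jech--Sochor Embedding Theorem at level $\alpha$. This yields a $\mathbf{ZF}$-model $\mathcal{N}^{\ast}$ together with an $\in$-isomorphism between $(\mathcal{P}^{\alpha}(A))^{\mathcal{N}}$ and its image in $\mathcal{N}^{\ast}$ that respects the injective structure on which the $\Phi_i$ depend. Since each $\Phi_i$ is decided at level $\le\alpha$ and that level is transported faithfully, every $\Phi_i$ holds in $\mathcal{N}^{\ast}$, and closure under conjunction guarantees that a single $\alpha$ handles all of them at once.

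The hard part will be the conjunct $\mathbf{BPI}$, which is \emph{not} injectively boundable, as it quantifies over Boolean algebras of unbounded rank, so the bounded-level preservation of Jech--Sochor does not apply to it directly; this is precisely why it appears as a separate alternative in the hypothesis. Here I would invoke Pincus's dedicated transfer argument for $\mathbf{BPI}$: one must arrange $\mathcal{N}^{\ast}$ so that it \emph{simultaneously} carries the faithful copy of $(\mathcal{P}^{\alpha}(A))^{\mathcal{N}}$ demanded by the $\Phi_i$ and satisfies $\mathbf{BPI}$. The crucial point is that the particular class of symmetric extensions used in the Jech--Sochor construction can be chosen to preserve $\mathbf{BPI}$ from $\mathcal{N}$ to $\mathcal{N}^{\ast}$, by taking the underlying forcing suitably homogeneous and the symmetry structure compatible with the data witnessing $\mathbf{BPI}$. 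The main obstacle is exactly this reconciliation of the \emph{local} preservation mechanism (bounded rank, for the injectively boundable conjuncts) with the \emph{global} principle $\mathbf{BPI}$ inside one and the same symmetric model; granting Pincus's analysis that $\mathbf{BPI}$ is preserved through this construction, the two preservation arguments combine to give $\mathcal{N}^{\ast}\models\mathbf{\Psi}$, which is the desired $\mathbf{ZF}$-model (cf. \cite{pin1}, \cite{pin2}, \cite[Theorem 2.19]{ktw0}).
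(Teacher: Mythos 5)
You should first note that the paper does not actually prove this theorem: Theorem~\ref{pin} is stated as a quoted result of Pincus, with references to \cite{pin1}, \cite{pin2} and \cite[p.~286]{hr}, and the authors only ever \emph{apply} it (or, in the proof of Theorem~\ref{thm:1}, apply the weaker Jech--Sochor First Embedding Theorem directly). So your attempt can only be judged against Pincus's actual argument, and against that standard it has two genuine gaps rather than being a proof.

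First, your handling of the injectively boundable conjuncts conflates \emph{boundable} with \emph{injectively boundable}. What the Jech--Sochor theorem at a level $\alpha$ transfers are boundable statements: essentially existential statements $\exists Y\varphi(Y,\gamma)$ whose quantifiers are restricted to $\mathcal{P}^{\gamma}(Y)$, so that their truth is witnessed inside $(\mathcal{P}^{\alpha}(A))^{\mathcal{N}}$ and carried over by the $\in$-isomorphism --- this is exactly how the paper uses it in the proof of Theorem~\ref{thm:1}. An injectively boundable statement, by contrast, is a conjunction of \emph{universal} statements $\forall x\,\varphi(x)$ whose hypotheses involve the injective cardinality $|x|_{-}$ of an arbitrary set $x$ of the universe; its truth is not decided by any single $\mathcal{P}^{\alpha}(A)$, so your step ``set $\alpha=\sup_i\alpha_i$ and apply Jech--Sochor'' does not go through. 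Pincus's contribution is precisely a finer analysis showing how injective cardinalities are controlled through the embedding, which is why the class of injectively boundable statements (strictly larger than the boundable ones) transfers. Second, for the $\mathbf{BPI}$ conjunct you write ``granting Pincus's analysis that $\mathbf{BPI}$ is preserved,'' which assumes the very content of the theorem being proved; the remark that the forcing can be chosen ``suitably homogeneous'' is not an argument, and the actual $\mathbf{BPI}$ transfer rests on substantial separate machinery (of Halpern--L\'evy/Halpern--L\"auchli type), whose compatibility with the level-$\alpha$ copy demanded by the other conjuncts is the hard technical core. In short, your sketch correctly identifies the architecture (embedding theorem plus a dedicated $\mathbf{BPI}$ mechanism), but it is incorrect about what plain Jech--Sochor delivers and circular exactly where the theorem has its substance.
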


The notions of boundable and injectively boundable statements are given in \cite{pin1} and in \cite[Note 103, pp. 284--285]{hr}). Every boundable statement is equivalent to an injectively boundable statement (see \cite{pin1} or \cite[p. 285]{hr}). 

To show that Theorem \ref{t:hm} is unprovable in $\mathbf{ZF}$ and that it may happen in a model of $\mathbf{ZF}$ that a locally compact Hausdorff space $\mathbf{X}$ satisfies conditions ($a$)-($c$) of Theorem \ref{s4t:main1} but not all metrizable compact spaces are remainders of $\mathbf{X}$, we need the following lemma whose proof is included here for completeness:

\begin{lemma}
\label{s5l01}
$(\mathbf{ZF})$ If a Hausdorff space $\mathbf{Y}$ is a continuous image of a compact Hausdorff second-countable space, then $\mathbf{Y}$ is second-countable. Therefore, a non-empty Hausdorff space $\mathbf{Y}$ is a continuous image of the Cantor cube $\mathbf{2}^{\omega}$ if and only if $\mathbf{Y}$ is compact and second-countable.
\end{lemma}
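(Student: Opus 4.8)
The plan is to prove the first statement by the standard closed-map argument, taking care that every step is choice-free, and then to derive the ``therefore'' clause from Theorem \ref{s1t016} together with the compactness and second-countability of $\mathbf{2}^{\omega}$ in $\mathbf{ZF}$.

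First I would fix a continuous surjection $f\colon\mathbf{X}\to\mathbf{Y}$ with $\mathbf{X}$ compact Hausdorff and second-countable, and a countable base $\mathcal{B}$ of $\mathbf{X}$. Since $\mathcal{B}$ is countable, it is well-orderable, and the collection $[\mathcal{B}]^{<\omega}$ of its finite subsets is also countable; hence I may replace $\mathcal{B}$ by its closure under finite unions without losing countability, so I assume $\mathcal{B}$ is stable under finite unions. For each $U\in\mathcal{B}$ the set $X\setminus U$ is closed in the compact space $\mathbf{X}$, hence compact, so $f(X\setminus U)$ is compact and therefore closed in the Hausdorff space $\mathbf{Y}$. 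Thus $U^{\ast}:=Y\setminus f(X\setminus U)$ is open, and one checks directly that $U^{\ast}=\{y\in Y: f^{-1}(y)\subseteq U\}$.

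The heart of the argument is to show that $\{U^{\ast}: U\in\mathcal{B}\}$ is a base of $\mathbf{Y}$. Given $y\in Y$ and an open $W\ni y$, I note that $f^{-1}(y)$ is closed in $\mathbf{X}$ (as $\{y\}$ is closed in the Hausdorff space $\mathbf{Y}$) and hence compact, while $f^{-1}(W)$ is open and contains $f^{-1}(y)$. Covering $f^{-1}(y)$ by those members of $\mathcal{B}$ that are contained in $f^{-1}(W)$, extracting a finite subcover by compactness, and using stability under finite unions, I obtain a single $U\in\mathcal{B}$ with $f^{-1}(y)\subseteq U\subseteq f^{-1}(W)$. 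Then $y\in U^{\ast}$, and if $z\in U^{\ast}$ then $\emptyset\neq f^{-1}(z)\subseteq U\subseteq f^{-1}(W)$ forces $z\in W$ by surjectivity of $f$; hence $y\in U^{\ast}\subseteq W$. Since $U\mapsto U^{\ast}$ is definable and $\mathcal{B}$ is countable, this base is countable, so $\mathbf{Y}$ is second-countable. No appeal to any form of choice is made: $\mathcal{B}$ is well-orderable, the only selections are finite (finite subcovers), and ``being a base'' is a property requiring mere existence of a suitable $U$, not a simultaneous choice.

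Finally, for the equivalence I would argue as follows. The backward implication is immediate from Theorem \ref{s1t016}, which states that every non-empty second-countable compact Hausdorff space is a continuous image of $\mathbf{2}^{\omega}$. For the forward implication, I first record that in $\mathbf{ZF}$ the Cantor cube $\mathbf{2}^{\omega}$ is compact (it is homeomorphic to the Cantor ternary set, a closed bounded subset of $\mathbb{R}$; alternatively it is a closed subspace of the compact cube $[0,1]^{\omega}$ of Theorem \ref{t:loeb}), Hausdorff, and second-countable (the clopen cylinders determined by finite partial functions $\omega\to 2$ form a countable base). Thus, if $\mathbf{Y}$ is a continuous image of $\mathbf{2}^{\omega}$, then $\mathbf{Y}$ is compact as a continuous image of a compact space, and second-countable by the first part. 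The main point requiring care throughout is the $\mathbf{ZF}$-admissibility of the steps; since the base $\mathcal{B}$ is well-orderable and all genuine selections are finite, the obstacle of choicelessness does not in fact arise.
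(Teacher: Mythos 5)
Your proposal is correct and follows essentially the same route as the paper: the paper's proof uses exactly your collection $\mathcal{G}=\{Y\setminus f(X\setminus U): U\in\mathcal{B}\}$ for a countable base $\mathcal{B}$ stable under finite unions, asserts (without the details you supply) that it is a base of $\mathbf{Y}$, and then invokes Theorem \ref{s1t016} for the equivalence. You have merely filled in the standard verification---the choice-free closure of $\mathcal{B}$ under finite unions, the openness of $U^{\ast}$, and the finite-subcover argument around $f^{-1}(y)$---all of which is $\mathbf{ZF}$-admissible as you say.
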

\begin{proof}
Let $\mathbf{X}$ be a second-countable compact Hausdorff space and let $f:\mathbf{X}\to\mathbf{Y}$ be a continuous mapping of $\mathbf{X}$ onto a Hausdorff space $\mathbf{Y}$.  Suppose that $\mathcal{B}$ is a base of $\mathbf{X}$ such that $\mathcal{B}$ is stable under finite unions. Since the spaces $\mathbf{X}$ and $\mathbf{Y}$ are Hausdorff, it follows from the continuity of $f$ and the compactness of $\mathbf{X}$ that the collection $\mathcal{G}=\{Y\setminus f(X\setminus U): U\in\mathcal{B}\}$ is a base of $\mathbf{Y}$. If $\mathcal{B}$ is countable, so is $\mathcal{G}$. This, together with Theorem \ref{s1t016}, completes the proof.
\end{proof}

\begin{theorem}
\label{s5t5.11}
\begin{enumerate}
\item[(i)] $(\mathbf{ZF})$ Suppose that $\mathbf{D}$ is an amorphous discrete space and  $\mathbf{K}$ is a metrizable compact but not second-countable space. Then the space  $\mathbf{X}=\mathbf{2}^{\omega}\times\mathbf{D}$ satisfies the assumptions of Theorem \ref{s4t:main2}; however,  $\mathbf{K}$ is not a remainder of $\mathbf{X}$.
\item[(ii)] It is relatively consistent with $\mathbf{ZF}$ that there exists a locally compact Hausdorff space $\mathbf{X}$ such that all non-empty compact second-countable Hausdorff spaces are remainders of $\mathbf{X}$ but not all non-empty metrizable compact spaces are remainders of $\mathbf{X}$.
\item[(iii)] The implications $(A)\rightarrow (B)$ and $(C)\rightarrow (B)$ of Theorem \ref{t:hm} are both unprovable in $\mathbf{ZF}$.
\end{enumerate}
\end{theorem}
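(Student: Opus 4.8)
The plan is to derive (iii) from (i) and (ii): once we have a single $\mathbf{ZF}$-model containing an amorphous set together with a compact metrizable space that is not second-countable, the space $\mathbf{X}=\mathbf{2}^{\omega}\times\mathbf{D}$ of part (i) will satisfy conditions (A) and (C) of Theorem \ref{t:hm} while failing (B), refuting both implications. So the real content lies in (i) and (ii). For (i), I would first check that $\mathbf{X}=\mathbf{2}^{\omega}\times\mathbf{D}$ meets the hypotheses of Theorem \ref{s4t:main2}. Using the dyadic partition of $\mathbf{2}^{\omega}$ into basic clopen cylinders $\{C_i^n:i\in\{1,\dots,2^n\}\}$ indexed so that $C_{2i-1}^{n+1}\cup C_{2i}^{n+1}=C_i^n$, I would set $G_i^n=C_i^n\times D$. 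These are pairwise disjoint open sets with $G_{2i-1}^{n+1}\cup G_{2i}^{n+1}=G_i^n$, with $K_n=X\setminus\bigcup_i G_i^n=\emptyset$ compact, and with each $G_i^n=C_i^n\times D$ non-compact (the clopen slices $C_i^n\times\{d\}$, $d\in D$, form an open cover with no finite subcover). Hence $\mathbf{X}$ satisfies the assumptions of Theorem \ref{s4t:main2}, so every non-empty second-countable compact Hausdorff space is a remainder of $\mathbf{X}$.

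The obstruction that $\mathbf{K}$ is \emph{not} a remainder comes from Theorem \ref{t:amdis}. Taking the first-countable compact Hausdorff space $\mathbf{2}^{\omega}$ as the factor there, Theorem \ref{t:amdis} gives that $\beta\mathbf{X}=\mathbf{2}^{\omega}\times\mathbf{D}(\infty)$ is the \v{C}ech--Stone compactification of $\mathbf{X}$, whose remainder is $\mathbf{2}^{\omega}\times\{\infty\}\simeq\mathbf{2}^{\omega}$. Now, for any Hausdorff compactification $\gamma\mathbf{X}$ of $\mathbf{X}$, the universal property (Definition \ref{s1d0.2}) applied to the embedding $\gamma\colon\mathbf{X}\to\gamma\mathbf{X}$ yields a continuous $f\colon\beta\mathbf{X}\to\gamma\mathbf{X}$ with $f\circ\beta=\gamma$, i.e. $\gamma\mathbf{X}\leq\beta\mathbf{X}$. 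Since $\mathbf{X}$ is locally compact, $\gamma(X)$ is open in $\gamma\mathbf{X}$, and a short argument (every point of the remainder of $\beta\mathbf{X}$ mapped into $\gamma(X)$ would lie in the closure of the image of a relatively compact open set, hence inside $\beta(X)$, a contradiction) shows $f$ carries $\beta X\setminus X$ \emph{onto} $\gamma X\setminus X$. Thus every remainder of $\mathbf{X}$ is a continuous image of $\mathbf{2}^{\omega}$, hence second-countable by Lemma \ref{s5l01}. As $\mathbf{K}$ is not second-countable, $\mathbf{K}$ is not a remainder of $\mathbf{X}$, which proves (i).

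For (ii) I need a $\mathbf{ZF}$-model possessing both an amorphous set and a compact metrizable non-second-countable space. Since $\mathbf{M}(C,S)\to\mathbf{CAC}_{fin}$ (Theorem \ref{s1t08}) and, for compact metrizable spaces, second-countability coincides with separability and with the Loeb property (Theorem \ref{s1t07}), it suffices to produce a model of ``there is an amorphous set'' $\wedge\,\neg\mathbf{CAC}_{fin}$: then $\neg\mathbf{M}(C,S)$ furnishes a compact metrizable non-second-countable $\mathbf{K}$, and part (i) builds $\mathbf{X}$. I would construct a permutation model whose atoms split into two independent blocks $A=B\sqcup C$, with the group $\Sym(B)\times\prod_{n\in\omega}\Sym(C_n)$ acting under the filter of finite supports, where $C=\bigcup_{n\in\omega}C_n$ is a denumerable disjoint union of pairs. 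A finite-support argument shows $B$ is amorphous in the model (the $C$-block cannot split $B$ into two infinite pieces), while $\{C_n:n\in\omega\}$ has no choice function, giving $\neg\mathbf{CAC}_{fin}$. Both ``$\exists$ amorphous set'' and $\neg\mathbf{CAC}_{fin}$ are injectively boundable, so the Pincus Transfer Theorem (Theorem \ref{pin}) produces a $\mathbf{ZF}$-model with the same properties, and part (i) supplies the space demanded by (ii).

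Finally, for (iii), in the model of (ii) the space $\mathbf{X}=\mathbf{2}^{\omega}\times\mathbf{D}$ satisfies condition (C) of Theorem \ref{t:hm} (this is exactly the family $(\mathcal{G}_n)$ exhibited in (i)) and condition (A) of Theorem \ref{t:hm} (by the equivalences of Theorem \ref{s4t:main1}, since $\mathbf{X}$ has every non-empty second-countable compact Hausdorff space as a remainder), yet it fails (B) because $\mathbf{K}$ is not a remainder. Hence $(A)\to(B)$ and $(C)\to(B)$ both fail in this model and are therefore unprovable in $\mathbf{ZF}$. I expect the main obstacle to be the model of (ii): checking that amorphousness of $B$ and $\neg\mathbf{CAC}_{fin}$ genuinely coexist under the single symmetry group and that their conjunction falls under Theorem \ref{pin}. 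By contrast, the remainder-onto-remainder step in (i) is routine once local compactness and the maximality of $\beta\mathbf{X}$ are in hand.
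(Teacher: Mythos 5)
Your proposal is correct and matches the paper's proof in all essentials: the same dyadic family $G_i^n=A_i^n\times D$ verifying the hypotheses of Theorem \ref{s4t:main2}, the same use of Theorem \ref{t:amdis} together with Lemma \ref{s5l01} to rule out $\mathbf{K}$ as a remainder (you merely spell out the remainder-onto-remainder step for $\beta\mathbf{X}\geq\gamma\mathbf{X}$ that the paper leaves implicit), and the same derivation of (iii) from (i), (ii) and Theorem \ref{s4t:main1}. The only deviation is in (ii), where the paper simply cites Brunner's Model II ($\mathcal{N}43$ of \cite{hr}), in which $\mathbf{CAC}_{fin}$ and $\mathbf{NAS}$ both fail, and realizes $\mathbf{K}$ concretely as $\mathbf{Y}(\infty)$ for an uncountable cuf set $Y$ via Proposition \ref{s2p02}(i), whereas you construct an equivalent hybrid permutation model by hand and extract $\mathbf{K}$ abstractly from $\neg\mathbf{M}(C,S)$ via Theorems \ref{s1t08} and \ref{s1t07}; both routes transfer by Theorem \ref{pin} and are sound.
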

\begin{proof} (i) It follows from Theorem \ref{t:amdis} that $\mathbf{2}^{\omega}\times\mathbf{D}(\infty)$ is the \v Cech-Stone compactification $\beta\mathbf{X}$ of $\mathbf{X}$. Since $\beta X\setminus X$ is homeomorpbic to $\mathbf{2}^{\omega}$, while $\mathbf{K}$ is not second-countable, it follows from Lemma \ref{s5l01} that $\mathbf{K}$ is not a continuous image of $\beta X\setminus X$. This implies that $\mathbf{K}$ is not a remainder of $\mathbf{X}$. To see that $\mathbf{X}$ satisfies the assumptions of Theorem \ref{s4t:main2}, we fix a sequence $(\mathcal{A}_n)_{n\in\mathbb{N}}$ of families $\mathcal{A}_n$ of pairwise disjoint clopen sets of $\mathbf{2}^{\omega}$ such that $\mathcal{A}=\bigcup\limits_{n\in\mathbb{N}}\bigcup\mathcal{A}_n$ is a base of $\mathbf{2}^{\omega}$ and, for every $n\in\mathbb{N}$ and $i\in\{1,\ldots, 2^n\}$, $2^{\omega}=\bigcup\limits_{i=1}^{2^n}A_i^n$,  $\mathcal{A}_n=\{A_i^n: i\in\{1,\ldots, 2^n\}\}$  and $A_{2i-1}^{n+1}\cup A_{2i}^{n+1}=A_{i}^n$. For each $n\in\mathbb{N}$ and each $i\in\{1,\ldots, 2^n\}$, we define $G_i^n=A_i^n\times D$. Then, for $\mathcal{G}_n=\{G_i^n: i\in\{1,\ldots, 2^n\}\}$, the sequence $(\mathcal{G}_n)_{n\in\mathbb{N}}$ satisfies the requirements of Theorem \ref{s4t:main2}.

(ii) In Brunner's Model II (labeled as $\mathcal{N}43$ in \cite{hr}), $\mathbf{CAC}_{fin}$ and $\mathbf{NAS}$ are both false. Hence, by Theorem \ref{pin}, the conjunction $(\neg \mathbf{CAC}_{fin})\wedge\neg\mathbf{NAS}$ has a $\mathbf{ZF}$-model. Let $\mathcal{M}$ be any $\mathbf{ZF}$- model for $(\neg \mathbf{CAC}_{fin})\wedge\neg\mathbf{NAS}$. Let $\mathbf{D}$ be an amorphous discrete space in $\mathcal{M}$. Since $\mathbf{CAC}_{fin}$ fails in $\mathcal{M}$, there exists an uncountable cuf set $Y$ in $\mathcal{M}$ (cf. \cite[Form \textbf{[10 A]}]{hr}). It follows from Proposition \ref{s2p02}(i) that, in $\mathcal{M}$, the space $\mathbf{K}=\mathbf{Y}(\infty)$ is metrizable. Clearly, $\mathbf{K}$ is not second-countable because $Y$ is uncountable. In the light of (i), it is true in $\mathcal{M}$ that $\mathbf{K}$ is not a remainder of $\mathbf{2}^{\omega}\times \mathbf{D}$, while all non-empty second-countable Hausdorff compact spaces are remainders of $\mathbf{2}^{\omega}\times\mathbf{D}$ by Theorem \ref{s4t:main2}.

That (iii) holds follows from the proof to (ii), taken together with (i) and Theorem \ref{s4t:main1}.
\end{proof}

\begin{theorem}
\label{s2t07} $(\mathbf{ZF})$
The statement ``All non-empty metrizable compact spaces are remainders of metrizable compactifications of $\mathbb{N}$'' is equivalent to $\mathbf{M}(C, S)$ and, thus, it implies $\mathbf{CAC}_{fin}$.
\end{theorem}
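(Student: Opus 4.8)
The plan is to establish the equivalence with $\mathbf{M}(C,S)$ by proving the two implications separately, and then to read off the final clause from Theorem \ref{s1t08}. Throughout I would write (S) for the statement ``All non-empty metrizable compact spaces are remainders of metrizable compactifications of $\mathbb{N}$'', and keep in mind that $\mathbb{N}$ carries its discrete topology and is therefore a countable, hence cuf, space.

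For the implication $\mathbf{M}(C,S)\rightarrow(\text{S})$, let $\mathbf{K}$ be an arbitrary non-empty metrizable compact space. Since $\mathbf{K}$ is compact and metrizable, $\mathbf{M}(C,S)$ makes it separable, whence $\mathbf{K}$ is second-countable by Theorem \ref{s1t07}. Because $\mathbb{N}$ is a cuf space, Corollary \ref{s2c06}(ii) then applies and gives that every non-empty second-countable compact Hausdorff space, in particular $\mathbf{K}$, is a remainder of a metrizable compactification of $\mathbb{N}$. As $\mathbf{K}$ was arbitrary, (S) follows.

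For the converse $(\text{S})\rightarrow\mathbf{M}(C,S)$, the decisive point is that a metrizable compactification of $\mathbb{N}$ is automatically second-countable. Concretely, given a non-empty metrizable compact space $\mathbf{K}$, use (S) to fix a metrizable compactification $\gamma\mathbb{N}$ of $\mathbb{N}$ with $\mathbf{K}$ homeomorphic to the remainder $\gamma\mathbb{N}\setminus\mathbb{N}$. By the very definition of a compactification, the image of $\mathbb{N}$ is dense in $\gamma\mathbb{N}$, and it is countable; hence $\gamma\mathbb{N}$ is separable, and being separable and metrizable it is second-countable by the standard $\mathbf{ZF}$ fact that separable metrizable spaces are second-countable. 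Since second-countability passes to subspaces and is a topological invariant, the remainder $\gamma\mathbb{N}\setminus\mathbb{N}$, and therefore $\mathbf{K}$, is second-countable; applying Theorem \ref{s1t07} once more shows that $\mathbf{K}$ is separable. As $\mathbf{K}$ was an arbitrary non-empty metrizable compact space and the empty space is trivially separable, $\mathbf{M}(C,S)$ holds.

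The final clause is then immediate: the equivalence just proved, combined with the implication $\mathbf{M}(C,S)\rightarrow\mathbf{CAC}_{fin}$ from Theorem \ref{s1t08}, shows that (S) implies $\mathbf{CAC}_{fin}$. I do not anticipate any genuine obstacle here; the only step requiring a moment of care is the extraction of separability of $\gamma\mathbb{N}$ from the density and countability of the embedded copy of $\mathbb{N}$, since everything must be done without invoking any form of choice, but this is purely topological and needs only the defining properties of a compactification together with the $\mathbf{ZF}$-theorem that separable metrizable spaces are second-countable.
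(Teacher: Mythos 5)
Your proposal is correct and follows essentially the same route as the paper's own proof: the forward direction via $\mathbf{M}(C,S)\Rightarrow$ (separable, hence second-countable by Theorem \ref{s1t07}) together with Corollary \ref{s2c06}(ii), the converse via the observation that a metrizable compactification $\gamma\mathbb{N}$ is separable (dense countable copy of $\mathbb{N}$) and hence second-countable in $\mathbf{ZF}$, so its remainder is second-countable and thus separable by Theorem \ref{s1t07}, and the final clause from Theorem \ref{s1t08}. The only discrepancy is cosmetic: the paper cites Corollary \ref{s2c06}(i) for the forward direction, but the relevant item is in fact \ref{s2c06}(ii), which is exactly what you invoke.
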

\begin{proof}  
By Theorem \ref{s1t08}, $\mathbf{M}(C, S)$ implies $\mathbf{CAC}_{fin}$. It follows from Corollary \ref{s2c06}(i)  that $\mathbf{M}(C, S)$ implies that every non-empty compact metrizable space is the remainder of a metrizable compactification of $\mathbb{N}$. 

To complete the proof, let us suppose that $\mathbf{Z}$ is a non-empty compact metrizable space which is the remainder of a metrizable compactification $\gamma\mathbb{N}$ of $\mathbb{N}$. Since $\gamma\mathbb{N}$ is a separable metrizable space, it is second-countable. Hence $\mathbf{Z}$ is second-countable, so $\mathbf{Z}$ is separable by Theorem \ref{s1t07}.
\end{proof}

\begin{corollary}
\label{s3c06}
The statement ``All non-empty metrizable compact spaces are remainders of metrizable compactifications of $\mathbb{N}$'' is independent of $\mathbf{ZF}$.
\end{corollary}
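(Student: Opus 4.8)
The plan is to exploit the equivalence just established in Theorem \ref{s2t07}: the statement in question is logically equivalent, over $\mathbf{ZF}$, to the form $\mathbf{M}(C, S)$. Hence proving the independence of the statement reduces entirely to proving that $\mathbf{M}(C, S)$ is neither provable nor refutable in $\mathbf{ZF}$, that is, that both $\mathbf{M}(C, S)$ and its negation are relatively consistent with $\mathbf{ZF}$. I would state this reduction first and then supply a model for each of the two directions.

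For the consistency of $\mathbf{M}(C, S)$ itself I would simply invoke $\mathbf{ZFC}$: since $\mathbf{BPI}$ holds in every model of $\mathbf{ZFC}$, Theorem \ref{s1t08} gives $\mathbf{BPI}\rightarrow\mathbf{M}(C, S)$, so $\mathbf{M}(C, S)$ is true in each model of $\mathbf{ZFC}$ and is therefore relatively consistent with $\mathbf{ZF}$. By Theorem \ref{s2t07}, the statement ``all non-empty metrizable compact spaces are remainders of metrizable compactifications of $\mathbb{N}$'' holds in those models. For the consistency of $\neg\mathbf{M}(C, S)$ I would reuse the $\mathbf{ZF}$-model already employed in the proof to Theorem \ref{s2t2}(b): that argument cites \cite{ktw0} for the existence of a model $\mathcal{M}$ of $\mathbf{ZF}$ in which $\neg\mathbf{M}(C, S)\wedge\mathbf{CAC}_{fin}$ is true, so $\mathbf{M}(C, S)$ fails in $\mathcal{M}$, and again by Theorem \ref{s2t07} the statement of the corollary fails in $\mathcal{M}$.

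Combining the two, the statement is true in some models of $\mathbf{ZF}$ and false in others, which is exactly its independence of $\mathbf{ZF}$. The only point that genuinely needs care is the reduction step, namely the equivalence in Theorem \ref{s2t07}; once that is in hand there is no real obstacle, since both halves of the independence are furnished by previously recorded models. One could alternatively derive the failure of $\mathbf{M}(C, S)$ from the Second Fraenkel Model $\mathcal{N}2$ (Theorem \ref{s2t2}(c)) and then transfer to $\mathbf{ZF}$ through the Pincus Transfer Theorem \ref{pin}, but that route would require verifying that $\neg\mathbf{M}(C, S)$ is (injectively) boundable, so I would prefer the ready-made $\mathbf{ZF}$-model $\mathcal{M}$ from \cite{ktw0}, which makes the proof immediate.
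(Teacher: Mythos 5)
Your proposal is correct and takes essentially the same route as the paper: the corollary is an immediate consequence of the equivalence with $\mathbf{M}(C,S)$ in Theorem \ref{s2t07}, combined with the consistency of $\mathbf{M}(C,S)$ in $\mathbf{ZFC}$ (via $\mathbf{BPI}\rightarrow\mathbf{M}(C,S)$, Theorem \ref{s1t08}) and a $\mathbf{ZF}$-model in which it fails. The only cosmetic difference is that the paper signals unprovability through the weaker consequence $\mathbf{CAC}_{fin}$ of Theorem \ref{s2t07}, whereas you reuse the model of $\neg\mathbf{M}(C,S)\wedge\mathbf{CAC}_{fin}$ from \cite{ktw0} already cited in the proof of Theorem \ref{s2t2}(b); both witnesses are legitimate and the arguments coincide in substance.
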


\begin{proposition}
\label{s5p06}
\begin{enumerate}
\item[(i)] $(\mathbf{ZF})$ $\mathbf{CAC}$ implies that every compact Hausdorff space with a countable network is second-countable.
\item[(ii)] It is relatively consistent with $\mathbf{ZF}$ the existence of a compact Hausdorff, not second-countable space having a countable network. 
\end{enumerate}
\end{proposition}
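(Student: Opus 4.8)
The plan is to treat the two items separately: item (i) by a choice-controlled version of the classical fact that for compact Hausdorff spaces the weight equals the network weight, and item (ii) by exhibiting a concrete witness in a $\mathbf{ZF}$-model already invoked in the introduction. Throughout, recall that a \emph{network} of $\mathbf{X}=\langle X,\tau\rangle$ is a family $\mathcal{N}$ of subsets of $X$ such that for every $x\in X$ and every open $U\ni x$ there is $N\in\mathcal{N}$ with $x\in N\subseteq U$.

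For (i), first I would verify, entirely within $\mathbf{ZF}$, that a compact Hausdorff space is normal. The key observation is that given disjoint closed sets $C,D$, the family of all open $O$ with $\cl_{\mathbf{X}}(O)\cap D=\emptyset$ covers $C$: each $x\in C$ lies in such an $O$ by regularity, and regularity itself follows in $\mathbf{ZF}$ from Hausdorffness together with compactness of $D$ (the family $\{W \text{ open}: x\notin\cl_{\mathbf{X}}(W)\}$ covers the compact set $D$, and a finite subcover separates $x$ from $D$). A finite subcover of $C$ then yields the separation, and no form of $\mathbf{AC}$ is used because only the existence of finite subcovers is invoked. Next, fixing an enumeration (an injection into $\omega$) $\mathcal{N}=\{N_k:k\in\omega\}$ of the given countable network, I would consider the set $P$ of pairs $(i,j)\in\omega\times\omega$ with $\cl_{\mathbf{X}}(N_i)\cap\cl_{\mathbf{X}}(N_j)=\emptyset$; since $P$ is a definable subset of $\omega\times\omega$, it is countable. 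Using $\mathbf{CAC}$ I would choose, simultaneously for all $(i,j)\in P$, a pair of disjoint open sets $U_{i,j}\supseteq\cl_{\mathbf{X}}(N_i)$ and $V_{i,j}\supseteq\cl_{\mathbf{X}}(N_j)$ (these exist by normality), and let $\mathcal{B}$ be the collection of all finite intersections of the sets $U_{i,j}$; as $[\omega\times\omega]^{<\omega}$ is countable in $\mathbf{ZF}$, the family $\mathcal{B}$ is countable.

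The core step is to verify that $\mathcal{B}$ is a base. Given $x\in W$ with $W$ open, regularity provides an open $O$ with $x\in O\subseteq\cl_{\mathbf{X}}(O)\subseteq W$, and the network property gives an index $i$ with $x\in N_i\subseteq O$, whence $\cl_{\mathbf{X}}(N_i)\subseteq W$. For each $y\in X\setminus W$ we have $y\notin\cl_{\mathbf{X}}(N_i)$, so regularity and the network property yield $j$ with $y\in N_j$ and $\cl_{\mathbf{X}}(N_j)\cap\cl_{\mathbf{X}}(N_i)=\emptyset$; thus $(i,j)\in P$ and $y\in V_{i,j}$. Hence $\{V_{i,j}:(i,j)\in P\}$ covers the compact set $X\setminus W$, and a finite subcover $V_{i,j_1},\dots,V_{i,j_m}$ gives $B=\bigcap_{l=1}^{m}U_{i,j_l}\in\mathcal{B}$ with $x\in B$ and $B$ disjoint from each $V_{i,j_l}$, so $B\subseteq W$. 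This shows that $\mathbf{X}$ is second-countable. The only place any choice enters is the single simultaneous selection of separators over the countable index set $P$, which is precisely what $\mathbf{CAC}$ supplies; normality, the finite subcovers, and the single instances of regularity are all choice-free. I expect the main obstacle to be exactly this bookkeeping: confirming that no uncountable or unindexed choice slips in, the delicate point being that the separators must be fixed simultaneously for all of $P$ while the other steps remain in $\mathbf{ZF}$.

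For (ii), I would exploit that in \emph{any} topological space the family of all singletons is a network, so a countable space automatically carries a countable network. It therefore suffices to produce, consistently with $\mathbf{ZF}$, a countable compact Hausdorff space that is not second-countable. As recalled in the introduction, \cite{keremTac} furnishes a model of $\mathbf{ZF}$ containing a countable compact Hausdorff space $\mathbf{X}$ that is not metrizable. Since a compact Hausdorff space is $T_3$ in $\mathbf{ZF}$ (by the normality argument of item (i)), Urysohn's Metrization Theorem (Theorem \ref{t:UMT}) forbids such an $\mathbf{X}$ from being second-countable. Hence in that model $\mathbf{X}$ is a compact Hausdorff, not second-countable space whose singletons form a countable network, which establishes (ii) and, together with (i), shows that the hypothesis $\mathbf{CAC}$ cannot be dropped.
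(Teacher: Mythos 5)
Your proof is correct and takes essentially the same route as the paper's: for (i), choice-free normality of compact Hausdorff spaces, a single application of $\mathbf{CAC}$ to select separating open pairs indexed by the countable set of network pairs with disjoint closures, and finite intersections yielding a countable base; for (ii), exactly the paper's argument (singletons form a countable network in the countable compact Hausdorff non-metrizable space of \cite{keremTac}, which cannot be second-countable by Urysohn's Metrization Theorem). The only divergence is the endgame of (i): you verify directly, via a covering-and-compactness argument, that the finite intersections of the $U_{i,j}$ form a base of $\tau$, whereas the paper (mimicking Engelking's proof that $w=nw$ for compact Hausdorff spaces) observes that these sets generate a coarser second-countable Hausdorff topology $\tau^{\ast}\subseteq\tau$ and concludes $\tau^{\ast}=\tau$ from compactness --- both finishes are choice-free, so the difference is cosmetic.
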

\begin{proof}
(i) Let $\mathbf{X}=\langle X, \tau\rangle$ be a  compact Hausdorff space which has a countable network $\mathcal{N}$. Assume that $X$ consists of at least two points.  Let $\mathcal{S}=\{\langle A, B\rangle\in\mathcal{N}\times\mathcal{N}: \cl_{\mathbf{X}}(A)\cap \cl_{\mathbf{X}}(B)=\emptyset\}$. For every $\langle A, B\rangle\in\mathcal{S}$, let $\mathcal{S}(\langle A, B\rangle)=\{\langle U, V\rangle\in\tau\times\tau: \cl_{\mathbf{X}}(A)\subseteq U, \cl_{\mathbf{X}}(B)\subseteq V\text{ and } U\cap V=\emptyset\}$. Since $\mathbf{X}$ is a compact Hausdorff space which consists of at least two points, it follows that $\mathcal{S}\neq\emptyset\neq \mathcal{S}(\langle A, B\rangle)$ for every $\langle A, B\rangle\in\mathcal{S}$. By $\mathbf{CAC}$, there exists $f\in\prod\{\mathcal{S}(\langle A, B\rangle): \langle A, B\rangle\in\mathcal{S}\}$. Now, we can mimic the proof to \cite[Theorem 3.1.19]{En} to show that $\mathbf{X}$ has a countable base. Namely,  let $\mathcal{U}=\{U\in \tau: (\exists V\in\tau)(\exists \langle A, B\rangle\in\mathcal{S}) \langle U, V\rangle=f(\langle A, B\rangle)\}$  and $\mathcal{V}= \{V\in \tau: (\exists U\in\tau)(\exists \langle A, B\rangle\in\mathcal{S}) \langle U, V\rangle=f(\langle A, B\rangle)\}$. The family $\mathcal{B}$ of all finite intersections of members of $\mathcal{U}\cup\mathcal{V}$ is a countable base of some Hausdorff topology $\tau^{\ast}$ in $X$ such that $\tau^{\ast}\subseteq\tau$. Then $\tau^{\ast}=\tau$ because $\mathbf{X}$ is compact and $\langle X, \tau^{\ast}\rangle$ is Hausdorff. 

(ii) It was shown in \cite{keremTac} that there exists a model $\mathcal{M}$ of $\mathbf{ZF}$ in which there exists a compact Hausdorff countable space which is not metrizable, so not second-countable. Clearly, every countable space has a countable network. Hence (ii) holds.
\end{proof}

\begin{remark}
\label{s5r07}
(i)  Let $\mathbf{X}$ be a locally compact Hausdorff space which has a cuf base. Suppose that $\gamma\mathbf{X}$ is a Hausdorff compactification of $\mathbf{X}$ such that the remainder $\gamma X\setminus X$ is compact and second-countable. We do not know if $\gamma\mathbf{X}$ must be metrizable in $\mathbf{ZF}$. However,  one can easily show in $\mathbf{ZF}$ that $\gamma\mathbf{X}$ has a network $\mathcal{N}$ such that $\mathcal{N}$ is a cuf set; therefore, in view of Proposition \ref{s5p06} and Theorem \ref{t:UMT}, $\mathbf{CAC}$ implies that $\gamma\mathbf{X}$ is metrizable.  Using similar arguments to the ones from the proof to Theorem \ref{s2t03}, one can show that it holds in $\mathbf{ZF}$ that if a locally compact non-compact Hausdorff space has a cuf base, then all Hausdorff compactifications of $\mathbf{X}$ with finite remainders are metrizable. 

(ii) Using similar arguments to the ones from the proof to Theorem \ref{s2t02}, one can show that it holds in $\mathbf{ZF}$ that if a locally compact non-compact Hausdorff space has a cuf base, then all Hausdorff compactifications of $\mathbf{X}$ with finite remainders are metrizable. 
 \end{remark}

Condition ($a$) of our next theorem shows that the statements $\mathbf{ISFBI}$ and $\mathbf{IDFBI}$ are both independent of $\mathbf{ZF}$, while condition ($c$), together with Theorem \ref{thm:1} leads to the conclusion that the converse of the second implication in Corollary \ref{s2c06}(iii) is not true in $\mathbf{ZF}$ because it fails in a model of $\mathbf{ZF}$.

\begin{theorem}
\label{s5t:main3}
\begin{enumerate}
\item[(a)] The following implications are true in $\mathbf{ZF}$ (and also in $\mathbf{ZFA}$):
$$\mathbf{IWDI}\rightarrow\mathbf{IDFBI}\rightarrow\mathbf{ISFBI}\rightarrow\mathbf{IFBI}\rightarrow\mathbf{NAS}.$$

\item[(b)]  Condition (B) of Corollary \ref{c:hm} is unprovable in $\mathbf{ZF}$.

\item[(c)] Let  $\mathcal{M}$ be a model of $\mathbf{ZF}$ in which there indeed exists a disjoint denumerable family of amorphous sets without a partial multiple choice function.  Then it is true in $\mathcal{M}$ that there exists a dyadically filterbase infinite set $A$ which neither contains an infinite cuf set nor is equipotent to a subset of $\mathbb{R}$.
\end{enumerate}
\end{theorem}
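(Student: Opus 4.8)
The three parts rest almost entirely on results already in hand, so the plan is to assemble (a) and (b) from earlier corollaries and then do the genuine verification inside $\mathcal{M}$ for (c). For (a), each arrow is a universally quantified instance of a prior statement: $\mathbf{IWDI}\to\mathbf{IDFBI}$ is Corollary \ref{s2c06}(iii) (every weakly Dedekind-infinite set is dyadically filterbase infinite) applied to every infinite set, while $\mathbf{IDFBI}\to\mathbf{ISFBI}$, $\mathbf{ISFBI}\to\mathbf{IFBI}$ and $\mathbf{IFBI}\to\mathbf{NAS}$ are the three inclusions recorded in Corollary \ref{s4c13}; for the last one I would argue contrapositively, since an amorphous set is an infinite set that is filterbase finite. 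No computation is needed.

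For (b) I would refute Corollary \ref{c:hm}(B) in any model of $\mathbf{ZF}$ possessing an amorphous set --- such models exist by the usual transfer (Theorem \ref{pin}, or Jech--Sochor) and are of the same kind already used in Theorem \ref{s5t5.11}(ii). Fixing an amorphous $D$, letting $\mathbf{D}=\langle D,\mathcal{P}(D)\rangle$ and taking $\mathbf{K}$ to be a one-point space, the sum $\mathbf{X}=\mathbf{K}\oplus\mathbf{D}$ is a direct sum of a compact Hausdorff space and an infinite discrete space, so Corollary \ref{c:hm}(B) would make $\mathbf{2}^{\omega}$ a remainder of $\mathbf{X}$. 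But $\mathbf{X}$ is the discrete space on $D\cup\{*\}$, which is again amorphous, hence dyadically filterbase finite by Corollary \ref{s4c13}, so $\mathbf{2}^{\omega}$ is not a remainder of $\mathbf{X}$ by Corollary \ref{s2c06}(i). This contradiction shows Corollary \ref{c:hm}(B) is unprovable in $\mathbf{ZF}$.

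The substance is (c). Working in $\mathcal{M}$, I would fix a disjoint denumerable family $\{E_n:n\in\omega\}$ of amorphous sets admitting no partial multiple choice function and set $A=\bigcup_{n\in\omega}E_n$. That $A$ is dyadically filterbase infinite is the easy part: since the $E_n$ are non-empty and disjoint, $n\mapsto\bigcup_{i\le n}E_i$ is a strictly increasing map $\omega\to\mathcal{P}(A)$, so $A$ is weakly Dedekind-infinite and Corollary \ref{s2c06}(iii) applies. (Equivalently, $\{E_n\}$ is a locally finite disjoint family of non-compact clopen subsets of the discrete space $\langle A,\mathcal{P}(A)\rangle$, so Corollary \ref{s2c003} and Corollary \ref{s2c06}(i) give the same conclusion.) To see $A$ contains no infinite cuf set, suppose $C\subseteq A$ is an infinite cuf set; then each $C\cap E_n$ is a cuf subset of the amorphous set $E_n$, and an amorphous set has no infinite cuf subset (disjointify the levels and split the countable index set to exhibit a partition into two infinite pieces), so every $C\cap E_n$ is finite. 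Since $C$ is infinite, $S=\{n:C\cap E_n\neq\emptyset\}$ is infinite, and then $n\mapsto C\cap E_n$ ($n\in S$) is a partial multiple choice function of $\{E_n\}$ --- contradiction. Finally, $A$ is not equipotent to a subset of $\mathbb{R}$: otherwise $E_0$ would embed into $\mathbb{R}$ and hence be linearly orderable, whereas an infinite amorphous set carries no linear order (splitting it into points with finitely many predecessors and points with finitely many successors yields an infinite subset of order type $\omega$ or $\omega^{*}$, hence a denumerable subset, contradicting Dedekind-finiteness).

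The main obstacle is the ``no infinite cuf set'' step of (c). It needs two ingredients fitted together: the amorphous-set lemma that no infinite subset of an amorphous set is a countable union of finite sets (so that each $C\cap E_n$ is forced to be finite), and the observation that finiteness of every $C\cap E_n$ is exactly what makes $n\mapsto C\cap E_n$ a legitimate partial multiple choice function; this is the precise point at which amorphousness of the individual pieces $E_n$, rather than of $A$ itself, is used. Everything else is bookkeeping with the definitions and appeals to the cited corollaries.
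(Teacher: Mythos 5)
Your proposal is correct and takes essentially the same route as the paper: part (a) is read off from Corollaries \ref{s2c06}(iii) and \ref{s4c13}, part (b) refutes Corollary \ref{c:hm}(B) in a model with an amorphous set by combining Corollary \ref{s2c06}(i) with the fact that amorphous sets are dyadically filterbase finite, and part (c) works with the very set $A=\bigcup_{n\in\omega}E_n$ that the paper uses, deriving weak Dedekind-infiniteness and then applying Corollary \ref{s2c06}(iii). The only difference is explicitness: where the paper merely asserts that $A$ is quasi Dedekind-finite and that every amorphous subset of $\mathbb{R}$ is finite, you supply the underlying lemmas (an amorphous set has no infinite cuf subset, and an amorphous set carries no linear order --- the latter using that amorphousness forces every initial segment to be finite or cofinite), which is a harmless filling-in of detail rather than a different argument.
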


\begin{proof}
($a$) This follows directly from Corollaries \ref{s2c06}(iii) and \ref{s4c13}. For a proof to ($b)$, we notice that, in view of ($a$), Condition (B) of Corollary \ref{c:hm} is false in every model of $\mathbf{ZF}+\neg\mathbf{NAS}$, for instance, in the model $\mathcal{M}37$ in \cite{hr}.

($c$) Let $\mathcal{A}=\{A_n: n\in\omega\}$ be a disjoint family of amorphous sets in a model $\mathcal{M}$ of $\mathbf{ZF}$ such that $\mathcal{A}$ does not have a partial multiple choice function in $\mathcal{M}$. Then it is true in $\mathcal{M}$ that the set $A=\bigcup\limits_{n\in\omega}A_n$ is quasi Dedekind-finite. Since $A$ is not a cuf set in $\mathcal{M}$ and  every amorphous subset of $\mathbb{R}$ is finite, it is true in $\mathcal{M}$ that there is no injection $f: A\to\mathbb{R}$. By Corollary \ref{s2c06}(iii), $A$ is dyadically filterbase infinite in $\mathcal{M}$ because $A$ is weakly Dedekind-infinite in $\mathcal{M}$. 
\end{proof}

In the following theorem, we show that there exists a model $\mathcal{M}$ of $\mathbf{ZF}$ which satisfies the assumptions of ($c$) of Theorem \ref{s5t:main3}. 

\begin{theorem}
\label{thm:1}
It is relatively consistent with $\mathbf{ZF}$ that there exists a denumerable collection of amorphous sets with no partial multiple choice function. 
\end{theorem}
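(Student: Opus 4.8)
The plan is to build a Fraenkel--Mostowski permutation model of $\mathbf{ZFA}$ carrying the required family and then transfer the conclusion to $\mathbf{ZF}$ by the Pincus Transfer Theorem (Theorem \ref{pin}). First I would fix a set $U$ of atoms partitioned into a denumerable family of pairwise disjoint denumerable blocks, $U=\bigcup_{n\in\omega}A_n$. Let $G$ be the group of all permutations of $U$ fixing each block $A_n$ setwise (that is, the full direct product $\prod_{n\in\omega}\Sym(A_n)$), and let $\mathcal{F}$ be the filter of subgroups of $G$ generated by the pointwise stabilizers $\fix(E)$ with $E\in[U]^{<\omega}$. Since $\sigma\fix(E)\sigma^{-1}=\fix(\sigma E)$ and $\sigma E$ is finite, $\mathcal{F}$ is a normal filter; let $\mathcal{N}$ be the associated permutation model of hereditarily symmetric sets, so every element of $\mathcal{N}$ has a finite support.

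Next I would verify the two structural properties in $\mathcal{N}$. The family $\{A_n:n\in\omega\}$ lies in $\mathcal{N}$ with empty support, because every $\sigma\in G$ fixes each $A_n$ setwise and hence fixes the injection $n\mapsto A_n$; thus it is a genuine denumerable collection of distinct sets. Each $A_n$ is amorphous in $\mathcal{N}$ by the usual Fraenkel argument: if $S\subseteq A_n$ were symmetric with support $E$ and both $S$ and $A_n\setminus S$ were infinite, I could pick $a\in S\setminus E$ and $b\in(A_n\setminus S)\setminus E$, and the transposition $(a\,b)\in\Sym(A_n)$ (identity off $A_n$) would fix $E$ pointwise yet move $S$, a contradiction; hence every subset of $A_n$ in $\mathcal{N}$ is finite or cofinite in the infinite set $A_n$.

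The crucial step is that $\{A_n:n\in\omega\}$ has no partial multiple choice function in $\mathcal{N}$. Suppose $f$ were one, with $I\subseteq\omega$ infinite and $f(n)\in[A_n]^{<\omega}\setminus\{\emptyset\}$ for all $n\in I$. As $f\in\mathcal{N}$ it has a finite support $E$, which meets only finitely many blocks; since $I$ is infinite I may choose $n\in I$ with $A_n\cap E=\emptyset$. Because $A_n$ is infinite and $f(n)$ is a nonempty finite \emph{proper} subset of $A_n$, there is $\pi\in\Sym(A_n)$ (extended by the identity) with $\pi(f(n))\neq f(n)$; but $\pi$ fixes $E$ pointwise and acts trivially on $\omega$, so $\pi$ fixes $f$ and therefore $\pi(f(n))=f(n)$, a contradiction. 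Hence $\mathcal{N}\models$ ``there is a denumerable family of amorphous sets with no partial multiple choice function''.

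Finally I would transfer to $\mathbf{ZF}$. Let $\Phi$ be the statement asserting the existence of such a family. A witness is a set $A$ together with the bijection $n\mapsto A_n$, and the matrix ``$A$ is denumerable, each $A_n$ is amorphous, and $\{A_n\}$ has no partial multiple choice function'' has all its quantifiers rangeable over sets obtained from $A\cup\omega$ by finitely many applications of the power-set operation (subsets of the $A_n$, bijections $\omega\to A$, and functions $I\to[A]^{<\omega}$ for $I\subseteq\omega$). Thus $\Phi$ is boundable, hence, by the remark following Theorem \ref{pin}, equivalent to an injectively boundable statement, and the existence of $\mathcal{N}$ yields a $\mathbf{ZF}$-model of $\Phi$. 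The construction and the two verifications in $\mathcal{N}$ are routine Fraenkel-model computations; the one point genuinely needing care is checking that $\Phi$ -- in particular its universally quantified ``no partial multiple choice function'' clause -- is boundable, so that Theorem \ref{pin} applies.
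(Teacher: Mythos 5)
Your proof is correct, and its mathematical core coincides with the paper's: your model is precisely the Brunner--Pincus model $\mathcal{N}26$ of \cite{hr} (atoms partitioned into denumerably many blocks $A_n$, group of all permutations fixing each block setwise, finite supports), which is the model the paper uses; your in-model verifications that each $A_n$ is amorphous and that $\{A_n: n\in\omega\}$ admits no partial multiple choice function are exactly the standard arguments the paper either cites from \cite{hr} or declares ``easily seen,'' and both of your symmetry arguments (transposition inside a block disjoint from the support) are sound. Where you genuinely diverge is the transfer step. The paper transfers via the Jech--Sochor First Embedding Theorem, fixing an ordinal $\gamma$ with $\omega<\gamma<\omega_1$, writing the statement as $\exists Y\varphi(Y,\gamma)$ with quantifiers bounded in $\mathcal{P}^{\gamma}(Y)$, and then verifying by hand, through the $\in$-isomorphism of $(\mathcal{P}^{\gamma}(A))^{\mathcal{N}26}$ with $(\mathcal{P}^{\gamma}(\tilde{A}))^{\mathcal{M}}$, that denumerability, disjointness, amorphousness and the absence of a partial multiple choice function all survive in the symmetric model $\mathcal{M}$ --- this occupies the bulk of the paper's proof. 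You instead invoke the Pincus Transfer Theorem (Theorem \ref{pin}) after checking that the statement is boundable, which is the route the paper itself takes for its Theorems \ref{thm:2} and \ref{thm:3}, so it is fully within the paper's toolkit and yields a shorter proof. You were right to flag boundability as the delicate point: with the bare family $Y$ as witness, subsets of the individual $A_n$ are not reachable inside $\mathcal{P}^{\gamma}(Y)$, so the witness must be taken to include $\bigcup Y$ (your ``$A\cup\omega$''), after which all quantifiers --- including the universal one over putative partial multiple choice functions $f:I\to[A]^{<\omega}$ --- are bounded at a stage well below $\omega+\omega$. What the paper's longer route buys is an explicit symmetric $\mathbf{ZF}$-model and independence from the notion of injective boundability; what yours buys is brevity at the cost of relying on Pincus' metatheorem.
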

\begin{proof}

Let $\mathbf{\Psi}$ be the statement: There exists a denumerable family $Y$ of amorphous sets which does not have a partial multiple choice function. To show that $\mathbf{\Psi}$ has a permutation model, let us consider the Brunner-Pincus Model $\mathcal{N}26$ in \cite{hr}. We recall that $\mathcal{N}26$ is a permutation model which can be defined as follows. We start with a ground model $M$ of $\mathbf{ZFA}+\mathbf{AC}$ which has  the set $A$ of all atoms such that, in $M$, $A=\bigcup\limits_{n\in\omega}A_n$ where, for all $m,n\in\omega$,  $A_n$ is infinite and $A_m\cap A_n=\emptyset$. Let $\mathcal{G}$ be the group of all permutations $\phi$ of $A$ such that, for every $n\in\omega$, $\phi(A_n)=A_n$. Let $\mathcal{I}=[A]^{<\omega}$. In the terminology established in \cite[Definition 2.9]{kw0} (see also \cite[Chapter 4]{Je}),  $\mathcal{N}26$ is the permutation model determined by $M$, $\mathcal{G}$ and the ideal of supports $\mathcal{I}$. Then, for every $n\in\omega$, the set $A_n$ is amorphous in $\mathcal{N}26$ (see \cite[p. 202]{hr}). It is easily seen that the family $\{A_n: n\in\omega\}$ does not have a partial multiple choice function in $\mathcal{N}26$. Hence $\mathbf{\Psi}$ is satisfied in $\mathcal{N}26$.

To transfer  $\mathbf{\Psi}$ to a $\mathbf{ZF}$-model, we will apply the Jech--Sochor First Embedding Theorem (see \cite[Theorem 6.1]{Je})
which embeds an arbitrarily large initial segment of a given permutation model
of $\mathbf{ZFA}$ into a symmetric model of $\mathbf{ZF}$. That is, suppose that $\mathcal{N}$ is a permutation model of $\mathbf{ZFA}$ with a set $A$ of atoms. Let $\alpha$ be an ordinal of the groud model of $\mathcal{N}$. Then, by \cite[Theorem 6.1]{Je}, there exists a symmetric model $\mathcal{M}$ of $\mathbf{ZF}$ and an $\in$-embedding $x\mapsto\tilde{x}$ of
$\mathcal{N}$ into $\mathcal{M}$ such that $(\mathcal{P}^{\alpha}(A))^{\mathcal{N}}$ is $\in$-isomorphic to $(\mathcal{P}^{\alpha}(\tilde{A}))^{\mathcal{M}}$,
where $\tilde{A}=\{\tilde{a}:a\in A\}$ (see \cite[equation (8.1), p. 128]{Je} for the definition of $\tilde{a}$). 

An application of the Jech--Sochor theorem concerns existential statements of the kind $\exists Y\varphi(Y,\gamma)$
where $\varphi(Y,\gamma)$ is a formula such that the only quantifiers in $\varphi$ are $\exists u\in \mathcal{P}^{\gamma}(Y)$
and $\forall u\in \mathcal{P}^{\gamma}(Y)$ (see \cite[Problem 1, p. 94]{Je}). More specifically, assume that $\mathcal{N}$ is a permutation model of $\mathbf{ZFA}$
satisfying $\exists Y\varphi(Y,\gamma)$. Let $Y$ be a set of $\mathcal{N}$ such that $\varphi(Y,\gamma)$ is true in $\mathcal{N}$. Suppose that $\alpha$ is an ordinal such that $\mathcal{P}^{\gamma}(Y)\subseteq\mathcal{P}^{\alpha}(A)$ where $A$ is the set of atoms of $\mathcal{N}$. By the embedding theorem, there exists a symmetric model
$\mathcal{M}$ of $\mathbf{ZF}$ such that $(\mathcal{P}^{\alpha}(A))^{\mathcal{N}}$ is $\in$-isomorphic to $(\mathcal{P}^{\alpha}(\tilde{A}))^{\mathcal{M}}$.
Since the quantifiers in $\varphi$ are restricted to $\mathcal{P}^{\gamma}(Y)\subseteq\mathcal{P}^{\alpha}(A)$,
it follows that $\mathcal{M}$ satisfies $\varphi(\tilde{Y},\gamma)$, and therefore $\exists Y\varphi(Y,\gamma)$ is true in $\mathcal{M}$. 

Now, let us come back to the model $\mathcal{N}26$. We fix any ordinal $\gamma$ with $\omega<\gamma<\omega_1$ in the ground model $M$ of $\mathbf{ZFA+AC}$. Let $\varphi(Y,\gamma)$ stand for the formula: $Y$ is a denumerable set of pairwise disjoint amorphous sets such that $Y$ does not have a partial multiple choice function. Clearly, the  existential statement $\exists Y\varphi(Y,\gamma)$, described above, is such that all quantifiers in $\varphi$ are of the kind $\exists u\in \mathcal{P}^{\gamma}(Y)$ or $\forall u\in \mathcal{P}^{\gamma}(Y)$.

In particular, we know that $\exists Y\varphi(Y,\gamma)$ is true in $\mathcal{N}26$ by taking $Y$ to be the denumerable partition $\{A_{n}:n\in\omega\}$ of the set $A$ of atoms of $\mathcal{N}26$ into the amorphous sets $A_{n}$ ($n\in\omega$). Letting $\kappa$ be a regular cardinal such that $\kappa>|\mathcal{P}^{\gamma}(A)|$ in $M$, one can follow the proof of \cite[Theorem 6.1]{Je} in order to obtain a symmetric model $\mathcal{M}$ and a set $\tilde{A}=\{\tilde{a}:a\in A\}\in\mathcal{M}$ such that $(\mathcal{P}^{\gamma}(A))^{\mathcal{N}26}$ is $\in$-isomorphic to $(\mathcal{P}^{\gamma}(\tilde{A}))^{\mathcal{M}}$.
 
Let $\tilde{Y}=\{\tilde{A_{n}}:n\in\omega\}$, where $\tilde{A_{n}}=\{\tilde{a}:a\in A_{n}\}$. Then $\tilde{Y}\in(\mathcal{P}^{2}(\tilde{A}))^{\mathcal{M}}$, so $\tilde{Y}\in(\mathcal{P}^{\gamma}(\tilde{A}))^{\mathcal{M}}$.  

For every $n\in\omega$, $\tilde{A_{n}}$ is amorphous in the model $\mathcal{M}$. Indeed, let $n\in\omega$. Assume, by way of contradiction, that $\tilde{A_{n}}$ has a partition $\mathcal{U}$ into two infinite sets in $\mathcal{M}$. Now $\mathcal{U}\in(\mathcal{P}^{2}(\tilde{A_{n}}))^{\mathcal{M}}\subseteq(\mathcal{P}^{2}(\tilde{A}))^{\mathcal{M}}$. Thus $\mathcal{U}\in(\mathcal{P}^{\gamma}(\tilde{A}))^{\mathcal{M}}$, and since $(\mathcal{P}^{\gamma}(\tilde{A}))^{\mathcal{M}}$ is $\in$-isomorphic to $(\mathcal{P}^{\gamma}(A))^{\mathcal{N}26}$, it follows that there exists $\mathcal{V}\in(\mathcal{P}^{\gamma}(A))^{\mathcal{N}26}$ such that $\mathcal{V}$ is a partition of $A_n$ in $\mathcal{N}26$ into two infinite sets. (Note that $\mathcal{U}=\{\tilde{Z},A_{n}\setminus\tilde{Z}\}$ for some infinite, co-infinite $Z\subseteq A_{n}$, so that $\mathcal{V}=\{Z,A_{n}\setminus Z\}$.) This contradicts the fact that $A_{n}$ is amorphous in $\mathcal{N}26$.

In a similar manner, one can check that the sets $\tilde{A_{n}}$ are pairwise disjoint. To show that $\tilde{Y}$ is denumerable in $\mathcal{M}$, we consider the bijection $f:\omega\to\tilde{Y}$ defined by  $f(n)=\tilde{A}_n$ for $n\in\omega$. We notice that since $|Y|=\aleph_{0}$ in $\mathcal{N}26$, the bijection $n\mapsto A_n$ is in $(\mathcal{P}^{\omega}(A))^{\mathcal{N}26}\subseteq(\mathcal{P}^{\gamma}(A))^{\mathcal{N}26}$. This implies that $f\in (\mathcal{P}^{\omega}(\tilde{A}))^{\mathcal{M}}$. Hence $f\in\mathcal{M}$ and $\tilde{Y}$ is denumerable in $\mathcal{M}$.

Now we assert that $\tilde{Y}$ has no partial multiple choice function in $\mathcal{M}$. Assume the contrary and let $\tilde{Z}$ be an infinite subset of $\tilde{Y}$ with a multiple choice function $f$ in $\mathcal{M}$, i.e. $f$ is a function with domain $\tilde{Z}$ such that for every $z\in\tilde{Z}$, $f(z)$ is a non-empty finite subset of $z$. Since all elements of $\tilde{Z}$ and $[\tilde{A}]^{<\omega}$ are in $(\mathcal{P}(\tilde{A}))^{\mathcal{M}}$, it follows that every element of $f$ is in $(\mathcal{P}^{3}(\tilde{A}))^{\mathcal{M}}$. Thus  $f\in(\mathcal{P}^{4}(\tilde{A}))^{\mathcal{M}}$, and therefore $f\in(\mathcal{P}^{\gamma}(\tilde{A}))^{\mathcal{M}}$. Since $(\mathcal{P}^{\gamma}(\tilde{A}))^{\mathcal{M}}$ is $\in$-isomorphic to $(\mathcal{P}^{\gamma}(A))^{\mathcal{N}26}$, it follows that there exists a $g\in(\mathcal{P}^{\gamma}(A))^{\mathcal{N}26}$ such that $g$ is a multiple choice function for the infinite subset $Z$ of $Y$. This contradicts the fact that $Y$ has no partial multiple choice function in $\mathcal{N}26$.     

The above arguments complete the proof that $\mathbf{\Psi}$ is satisfied in $\mathcal{M}$.
\end{proof}

The following corollary can be deduced from Theorems \ref{s5t:main3}($c$) and \ref{thm:1}:

\begin{corollary}
\label{s03c3}
The statement ``Every dyadically filterbase infinite set is quasi Dedekind-infinite or equipotent to a subset of $\mathbb{R}$'' is independent of $\mathbf{ZF}$.
\end{corollary}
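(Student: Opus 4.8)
The plan is to establish independence by exhibiting one model of $\mathbf{ZF}$ in which the displayed statement holds and one in which it fails; I write $\mathbf{\Phi}$ for the statement ``Every dyadically filterbase infinite set is quasi Dedekind-infinite or equipotent to a subset of $\mathbb{R}$''.

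For the consistency of $\mathbf{\Phi}$, I would argue that $\mathbf{\Phi}$ holds in every model of $\mathbf{ZFC}$, so that $\mathrm{Con}(\mathbf{ZF})$ yields $\mathrm{Con}(\mathbf{ZF}+\mathbf{\Phi})$ via the inner model $L$. First note that every dyadically filterbase infinite set is infinite: by Corollary \ref{s2c06}(i) such a set $D$ has $\mathbf{2}^{\omega}$ as a remainder of $\mathbf{D}=\langle D,\mathcal{P}(D)\rangle$, which is impossible when $D$ is finite, since then $\mathbf{D}$ is compact and admits no nontrivial remainder. Next, under $\mathbf{AC}$ every infinite set $X$ is Dedekind-infinite, so fixing an injection $n\mapsto x_n$ of $\omega$ into $X$ gives the injection $n\mapsto\{x_n\}$ of $\omega$ into $[X]^{<\omega}$; hence $X$ is quasi Dedekind-infinite. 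Therefore, in $\mathbf{ZFC}$, every dyadically filterbase infinite set is quasi Dedekind-infinite, so the first disjunct of $\mathbf{\Phi}$ is always satisfied and $\mathbf{\Phi}$ holds.

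For the consistency of $\neg\mathbf{\Phi}$, I would invoke the model produced in Theorem \ref{thm:1}. That theorem yields a model $\mathcal{M}$ of $\mathbf{ZF}$ containing a denumerable collection of amorphous sets with no partial multiple choice function, which is precisely the hypothesis of Theorem \ref{s5t:main3}($c$). Applying the latter inside $\mathcal{M}$ produces a dyadically filterbase infinite set $A$ that neither contains an infinite cuf set nor is equipotent to a subset of $\mathbb{R}$; moreover, as recorded in the proof of Theorem \ref{s5t:main3}($c$), the set $A=\bigcup_{n\in\omega}A_n$ is quasi Dedekind-finite, hence not quasi Dedekind-infinite. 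Thus $A$ is dyadically filterbase infinite while being neither quasi Dedekind-infinite nor equipotent to a subset of $\mathbb{R}$, so $\mathbf{\Phi}$ fails in $\mathcal{M}$.

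Combining the two directions gives that $\mathbf{\Phi}$ is independent of $\mathbf{ZF}$. No genuine obstacle remains, since the substantive work — the permutation-model construction and its Jech--Sochor transfer to $\mathbf{ZF}$ in Theorem \ref{thm:1}, together with the deduction of dyadic filterbase infiniteness from weak Dedekind-infiniteness in Corollary \ref{s2c06}(iii) used inside Theorem \ref{s5t:main3}($c$) — has already been carried out. The only point requiring care is to note explicitly that the witness $A$ supplied by Theorem \ref{s5t:main3}($c$) is quasi Dedekind-\emph{finite}, so that it genuinely refutes the disjunction in $\mathbf{\Phi}$ and not merely one of its disjuncts.
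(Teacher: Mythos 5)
Your proposal is correct and follows essentially the same route as the paper, which deduces Corollary \ref{s03c3} directly from Theorem \ref{thm:1} (the model with a denumerable family of amorphous sets lacking a partial multiple choice function) together with Theorem \ref{s5t:main3}($c$), whose witness $A$ is quasi Dedekind-finite and not equipotent to a subset of $\mathbb{R}$. Your only addition is to spell out the easy $\mathbf{ZFC}$ direction (dyadically filterbase infinite sets are infinite, hence quasi Dedekind-infinite under $\mathbf{AC}$), which the paper leaves implicit, and that verification is accurate.
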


\begin{remark}
\label{s03r4}
By applying Cohen's original model $\mathcal{M}1$ of \cite{hr}, it is easy to show that the statement ``Every dyadically filterbase infinite set is quasi Dedekind-infinite'' is independent of $\mathbf{ZF}$. Namely, it is known that, in $\mathcal{M}1$, the set $A$ of all added Cohen reals is an infinite Dedekind-finite subset of $\mathbb{R}$, so $A$ does not contain any infinite cuf set. By Corollary \ref{s2c06}(iii), it is true in $\mathcal{M}1$ that $A$ is dyadically filterbase infinite.  
\end{remark}

A natural question is whether the implications from Theorem \ref{s5t:main3}($a$) are reversible in $\mathbf{ZF}$. Only the the implication $\mathbf{IFBI}\rightarrow\mathbf{NAS}$ is known to be non-reversible in $\mathbf{ZF}$. Namely, it was shown in \cite{kfbi} that, in the permutation model $\mathcal{N}51$ of \cite{hr}, in which $\mathbf{NAS}$ (but not $\mathbf{BPI}$) is true, $\mathbf{IFBI}$ is false and, in consequence, the conjunction $\mathbf{NAS}\wedge\neg\mathbf{IFBI}$ has a $\mathbf{ZF}$-model. Let us remark that, since $\mathbf{NAS}$ does not imply $\mathbf{BPI}$ in $\mathbf{ZF}$, and the implications $\mathbf{BPI}\rightarrow\mathbf{AC}_{fin}\rightarrow\mathbf{NAS}$ are known to be true in $\mathbf{ZF}$ (see \cite[p. 335]{hr} and \cite[Theorem 4.37 and Proposition 4.39]{her}), $\mathbf{BPI}$ is essentially stronger than $\mathbf{NAS}$ in $\mathbf{ZF}$.  In Theorem \ref{thm:2} below, we provide a substantial strengthening of the result of [15] that $\mathbf{NAS}\wedge\neg\mathbf{IFBI}$ has a $\mathbf{ZF}$-model by establishing that $\mathbf{BPI}\wedge\neg\mathbf{IFBI}$ has a $\mathbf{ZF}$-model. 

\begin{theorem}
\label{thm:2}
\begin{enumerate}
\item[(i)] The Mostowski Linearly Ordered Model $\mathcal{N}3$ in \cite{hr} is a permutation model of $\mathbf{ZFA}$ in which $\mathbf{BPI}\wedge\neg\mathbf{IFBI}$ is true.
\item[(ii)] There exists a $\mathbf{ZF}$-model for $\mathbf{BPI}\wedge\neg\mathbf{IFBI}$. 
\end{enumerate}
\end{theorem}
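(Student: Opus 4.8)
The plan is to derive part (ii) from part (i) by the Pincus Transfer Theorem (Theorem \ref{pin}). Since $\mathbf{BPI}$ is explicitly admissible as a conjunct, it suffices to observe that $\neg\mathbf{IFBI}$ is an existential statement $\exists X(X\text{ infinite}\wedge X\text{ filterbase finite})$ all of whose quantifiers can be bounded inside a fixed finite iterate of the power set of the witnessing set: a family $\{\mathcal{V}_i:i\in\omega\}$ of filterbases lives in $\mathcal{P}^{k}(X)$ for a fixed $k$, so ``$X$ is filterbase finite'' is a bounded (universal) assertion. Hence $\neg\mathbf{IFBI}$ is boundable, and therefore equivalent to an injectively boundable statement, so $\mathbf{BPI}\wedge\neg\mathbf{IFBI}$ transfers from the permutation model of part (i) to a $\mathbf{ZF}$-model. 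Thus essentially all the work is in part (i).

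For part (i), I would first recall the structure of $\mathcal{N}3$: the atom set $A$ carries a dense linear order without endpoints, $\mathcal{G}$ is its group of order-automorphisms, the supports are the finite subsets of $A$, and $\mathbf{BPI}$ holds there (\cite{hr}). It then remains to show that $\mathbf{IFBI}$ fails, the natural candidate for an infinite filterbase-finite set being $A$ itself. The decisive structural input is the standard description of subsets of $A$ in $\mathcal{N}3$: every $Y\subseteq A$ lying in $\mathcal{N}3$ is a finite union of convex pieces (open intervals, singletons and end-rays) whose endpoints are atoms or $\pm\infty$. Given a putative family $\{\mathcal{V}_i:i\in\omega\}$ witnessing that $A$ is filterbase infinite in the sense of Definition \ref{s1d04}(vi), I would fix a finite support $E$ of the family and note that, because $\omega$ is pure, each individual $\mathcal{V}_i$ has support $E$ as well.

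To $E=\{e_1<\dots<e_k\}$ I would attach the finite set $P_E$ of \emph{invariant cuts}, i.e.\ the cuts of $A$ fixed by every automorphism fixing $E$ pointwise, namely $-\infty$, $+\infty$ and the two one-sided cuts $e^-,e^+$ at each $e\in E$. Each such cut is a one-sided limit of atoms, and, crucially, because a subset of $A$ has only finitely many convex pieces, an interval-set $U$ accumulates at an invariant cut $c$ \emph{if and only if} $U$ contains a one-sided interval-neighbourhood of $c$. Call $c\in P_E$ an \emph{anchor} of a free filterbase $\mathcal{V}$ with support $E$ if every member of $\mathcal{V}$ contains such a neighbourhood of $c$. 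I would then establish the two facts that drive the argument: (1) every free filterbase with support $E$ has at least one anchor; and (2) if $c$ is a common anchor of $\mathcal{V}_i$ and $\mathcal{V}_j$, then every $U\in\mathcal{V}_i$ meets every $V\in\mathcal{V}_j$, since two one-sided neighbourhoods of $c$ on the (unique) side from which $c$ is approached necessarily overlap; this contradicts the disjointness clause in the definition of filterbase infinite. From (1) and (2), the anchor sets $\{S(\mathcal{V}_i):i\in\omega\}$ are nonempty, pairwise disjoint subsets of the finite set $P_E$, which is impossible; hence $A$ is filterbase finite and $\mathbf{IFBI}$ fails in $\mathcal{N}3$.

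The hard part will be fact (1), the existence of an anchor. I would argue by contradiction: if no invariant cut is an anchor, then for each $c\in P_E$ some member omits a one-sided neighbourhood of $c$, and since $P_E$ is finite the filterbase property yields a single member $U^{*}$ omitting a one-sided neighbourhood of \emph{every} invariant cut (non-containment being inherited downward). Such a $U^{*}$ is then compactly contained in the open $E$-intervals: all its endpoints are extra support atoms lying strictly inside the intervals $(e_j,e_{j+1})$ and away from $\pm\infty$. Exploiting that each open $E$-interval is densely ordered without endpoints, I would produce $\pi\in\mathrm{Fix}(E)$ pushing the finitely many interior endpoints off themselves, so that $U^{*}\cap\pi(U^{*})$ is finite; since $\pi(U^{*})\in\mathcal{V}$ by invariance and $\mathcal{V}$ is a filterbase, $\mathcal{V}$ would then contain a finite member, whence $\bigcap\mathcal{V}\neq\emptyset$, contradicting freeness. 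The delicate points to get right are the exact description of $P_E$, the equivalence between accumulation at an invariant cut and containment of a one-sided neighbourhood (this is precisely where finiteness of the number of convex pieces is used), and the explicit construction of the push-off automorphism $\pi$. Once fact (1) is secured, the remainder is bookkeeping, and part (ii) follows immediately by the transfer argument above.
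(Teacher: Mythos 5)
Your proposal is correct and takes essentially the same route as the paper's proof: fix a common finite support $E$ of the family, use push-off automorphisms in $\fix_{\mathcal{G}}(E)$ to show that (modulo freeness) members must contain one-sided interval neighbourhoods of the finitely many $E$-invariant cuts (your anchors are exactly the paper's types (3.1.1)--(3.3.2)), note that same-side neighbourhoods of a common cut overlap, and conclude by a pigeonhole over this finite set of cuts, with part (ii) obtained exactly as in the paper from boundability of $\neg\mathbf{IFBI}$ and the Pincus Transfer Theorem. The only organizational difference is that you pigeonhole the pairwise disjoint anchor sets $S(\mathcal{V}_i)\subseteq P_E$ of the filterbases themselves, whereas the paper first extracts pairwise disjoint representatives $V_i\in\mathcal{V}_i$ for $i\in m+1$ with $m>2(n+1)$ and pigeonholes their types --- the same counting argument in slightly different clothing.
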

\begin{proof}
(i) Let us recall in brief a definition of $\mathcal{N}3$. We start with a ground model $M$ of $\mathbf{ZFA}+\mathbf{AC}$ such that, in $M$, there is a linear order $\leq$ on the set $A$ of all atoms of $M$ such that the set $\langle A, \leq\rangle$ is order isomorphic to the set of all rational numbers equipped with the standard linear order. Let $\mathcal{G}$ be the group of all order-automorphisms of $\langle A, \leq\rangle$. Let $\mathcal{I}=[A]^{<\omega}$. Then $\mathcal{N}3$ is the permutation model determined by $M$, $\mathcal{G}$ and the ideal $\mathcal{I}$ (see, e.g., \cite[p. 182]{hr}, \cite[Section 4]{Je} and \cite{ktw0}). It is known that $\mathbf{BPI}$ is true in $\mathcal{N}3$. To show that $\mathbf{IFBI}$ is false in $\mathcal{N}3$, let us prove that the set $A$ is filterbase finite in $\mathcal{N}3$.  By way of contradiction, we assume that $A$ is filterbase infinite in $\mathcal{N}3$. Then, in $\mathcal{N}3$, we can fix a collection $\mathcal{V}=\{\mathcal{V}_{i}:i\in\omega\}$ of free filterbases in $A$ such that, for every pair $i,j$ of distinct elements of $\omega$, there exist $U\in\mathcal{V}_i$ and $V\in\mathcal{V}_j$ with $U\cap V=\emptyset$.

We recall that, for $x\in M$, $\fix_{\mathcal{G}}(x)=\{\phi\in\mathcal{G}: (\forall t\in x) \phi(t)=t\}$ and $\sym_{\mathcal{G}}(x)=\{\phi\in\mathcal{G}: \phi(x)=x\}$. For $x\in M$, a set $E\in\mathcal{I}$ is called a support of $x$ if $\fix_{\mathcal{G}}(x)\subseteq\sym_{\mathcal{G}}(x)$. For $c,d\in A$  with $c<d$, let $(c, d)=\{a\in A: c<a<d\}$, $(-\infty, c)=\{a\in A: a<c\}$ and $(d, +\infty)=\{a\in A: d<a\}$.

Let $E\in\mathcal{I}$ be a support of $\mathcal{V}_{i}$ for all $i\in\omega$. There exists $n\in\omega$ such that $E=\{e_j: j\in n+1\}$ and $e_j<e_k$ if $j\in k\in n+1$. Since $E$ is finite and the filterbases from $\mathcal{V}$ are all free, by defining $\mathcal{V}_i^{\prime}=\{V\setminus E: V\in \mathcal{V}_i\}$ for $i\in\omega$, we obtain in $\mathcal{N}3$ a collection $\mathcal{V}^{\prime}=\{\mathcal{V}_i^{\prime}: i\in\omega\}$ of free filterbases in $A$ such that $E$ is a support of $\mathcal{V}_i^{\prime}$ for every $i\in\omega$. Therefore, without loss of generality, we may assume that, for every $i\in\omega$ and every $V\in\mathcal{V}_{i}$, $V\cap E=\emptyset$. 

Let $\mathcal{Z}=\{(-\infty,e_{0})\}\cup\{(e_{k},e_{k+1}): k\in n\}\cup\{(e_{n},+\infty)\}$. For $I\in\mathcal{Z}$, $i\in\omega$ and $V\in\mathcal{V}_i$, we say that $V$ is bounded in $I$ if there exist $a_I, b_I\in I$ such that $a_I< b_I$ and $V\cap I\subseteq (a_I, b_I)$. If $V$ is not bounded in $I$, we say that $V$ is unbounded in $I$. 

Suppose that $i\in\omega$ and $V\in\mathcal{V}_i$ are such that $V$ is bounded in every $I\in\mathcal{Z}$. For every $I\in\mathcal{Z}$, we fix $a_I, b_I\in I$ such that $a_I<b_I$ and $V\cap I\subseteq (a_I, b_I)$. There exists $\eta\in\fix_{\mathcal{G}}(E)$ such that, for every $I\in\mathcal{Z}$, $\eta((a_I, b_I))\cap (a_I, b_I)=\emptyset$. Clearly, $\eta(\mathcal{V}_i)=\mathcal{V}_i$ and $\eta(V)\cap V=\emptyset$. On the other hand, since $V,\eta(V)\in\mathcal{V}_i$ and $\mathcal{V}_i$ is a filterbase, we have $\eta(V)\cap V\neq\emptyset$. The contradiction obtained shows that the following condition is satisfied:
\begin{equation}
\label{eq:observe1}
(\forall i\in\omega)( \forall V\in\mathcal{V}_{i})(\exists I\in\mathcal{Z}) (V\cap I\text{ is unbounded in } I).
\end{equation} 
In view of this observation and the fact that, for every $i\in\omega$, $\mathcal{V}_{i}$ is a filterbase on $X$, we may assume without loss of generality that 
\begin{equation}
\label{eq:observe2}
(\forall i\in\omega)(\forall V\in\mathcal{V}_{i})(\forall I\in\mathcal{Z})(V\cap I\ne\emptyset\rightarrow V\cap I\text{ is unbounded in $I$}).
\end{equation}
To see this, we fix  $i\in\omega$ and, for $V\in\mathcal{V}_{i}$, we put $\mathcal{U}(V)=\{I\in\mathcal{Z}:V\cap I\text{ is bounded } \}$. We show that, for every $V\in\mathcal{V}_i$, there exists $W\in\mathcal{V}_i$ such that $W\subseteq V$ and, for every $I\in\mathcal{Z}$, if $W\cap I\neq\emptyset$, then $W$ is unbounded in $I$. To do this, we fix $V_0\in\mathcal{V}_i$. Suppose that $\mathcal{U}(V_0)\neq\emptyset$. By (\ref{eq:observe1}), $\mathcal{Z}\setminus\mathcal{U}(V_0)\neq\emptyset$. One can construct a $\psi\in\fix_{G}(E)$ such that, for every $I\in\mathcal{U}(V_0)$, $\psi(V_0\cap I)\cap (V_0\cap I)=\emptyset$ and, for every $I\in\mathcal{Z}\setminus\mathcal{U}(V_0)$, $\psi$ fixes $I$ pointwise (and hence $\psi$ fixes $V_0\cap I$ pointwise). Since $\psi\in\fix_{G}(E)$ and $E$ is a support of $\mathcal{V}_{i}$, we have that $\psi(V_0)\in\psi(\mathcal{V}_{i})=\mathcal{V}_{i}$. Furthermore, $\psi(V_0)\cap V_0=\bigcup\{V_0\cap I:I\in\mathcal{Z}\setminus\mathcal{U}(V_0)\}$ so, for every $I\in\mathcal{Z}$ such that $(\psi(V_0)\cap V_0)\cap I\neq\emptyset$, it is the case that $(\psi(V_0)\cap V_0)\cap I$ is unbounded in $I$. Since $\mathcal{V}_{i}$ is a filterbase, there exists $V_1\in\mathcal{V}_{i}$ such that $V_1\subseteq\psi(V_0)\cap V_0$. We notice that $\{I\in\mathcal{Z}: V_1\cap I\neq\emptyset\}\subseteq \mathcal{Z}\setminus \mathcal{U}(V_0)$ and $\mathcal{U}(V_0)\subseteq\mathcal{U}(V_1)$. If $\mathcal{U}(V_1)\cap\{I\in \mathcal{Z}: V_1\cap I\neq\emptyset\}=\emptyset$, we put $W=V_1$.  If $\mathcal{U}(V_1)\cap\{I\in \mathcal{Z}: V_1\cap I\neq\emptyset\}\neq\emptyset$, in much the same way, as above, we find a set $V_2\in\mathcal{V}_i$ such that $V_2\subseteq V_1$, $\{I\in\mathcal{Z}: V_2\cap I\neq\emptyset\}\subseteq \mathcal{Z}\setminus \mathcal{U}(V_1)$ and $\mathcal{U}(V_1)\subseteq\mathcal{U}(V_2)$. Since $\mathcal{Z}$ is finite, after finitely many steps, for a natural number $k$, we find a set $V_k\in\mathcal{V}_i$ such that $V_k\subseteq V_0$ and, for every $I\in\mathcal{Z}$, if $V_k\cap I\neq\emptyset$, then $V_k$ is unbounded in $I$. This implies that, for every $i\in\omega$,  $\mathcal{V}_i^{\ast}=\{V\in\mathcal{V}_i: (\forall I\in\mathcal{Z})(V\cap I\neq\emptyset\rightarrow V\text{ is unbounded in } I)\}$ is a free filterbase such that $E$ is a support of $\mathcal{V}_i^{\ast}$ and, moreover, if $j\in\omega\setminus\{i\}$, then there exist $U\in\mathcal{V}_i^{\ast}$ and $W\in\mathcal{V}_j^{\ast}$ with $U\cap W=\emptyset$. If $\mathcal{V}$ does not satisfy (\ref{eq:observe2}), we may replace $\{\mathcal{V}_i: i\in\omega\}$ with $\{\mathcal{V}_i^{\ast}: i\in\omega\}$. This is why, for simplicity and without loss of generality, we may assume that (\ref{eq:observe2}) is satisfied. 

Let us notice that it follows from (\ref{eq:observe2}) that we may also assume that the following condition holds:

\begin{enumerate}
\item[(3)] for every $i\in\omega$, every $V\in\mathcal{V}_i$ and every $I\in\mathcal{Z}$, if $V\cap I\neq\emptyset$, then the following conditions are satisfied:   
\begin{enumerate}
\item[(3.1)] if $I=(-\infty,e_{0})$, then there exists $a\in I$ such that $(-\infty,a)\subseteq V$ (in this case, we say that $V$ is of type (3.1.1)) or $(a,e_{0})\subseteq V$ (that is, $V$ is of type (3.1.2));

\item[(3.2)] if $I=(e_{k},e_{k+1})$ for some $k\in n$, then there exists $a\in I$ such that  $(e_{k},a)\subseteq V$(that is, $V$ is of type (3.2.1(k))) or $(a,e_{k+1})\subseteq V$ (that is, $V$ is of type (3.2.2(k)));

\item[(3.3)] if $I=(e_{n},+\infty)$, then there exists $a\in I$ such that $(e_{n},a)\subseteq V$ (this means that $V$ is of type (3.3.1)) or $(a,+\infty)\subseteq V$ (in this case, $V$ is of type (3.3.2)).
\end{enumerate}
\end{enumerate}

\noindent To see this, for $i\in\omega$ and $V\in\mathcal{V}_i$, let $\mathcal{W}(V)$ be the set of all $I\in\mathcal{Z}$ such that $V\cap I\neq\emptyset$ but none of (3.1)-(3.3) is true. Suppose that $i\in\omega$ and $V\in\mathcal{V}_i$ are such that $\mathcal{W}(V)\neq\emptyset$. Then we can find a permutation $\eta\in\fix_{\mathcal{G}}(E)$ such that, for every $I\in\mathcal{W}(V)$,  $\eta(V\cap I)\cap (V\cap I)=\emptyset$ and, for every $I\in\mathcal{Z}\setminus\mathcal{W}(V)$, if $x\in I$, then $\eta(x)=x$.  If $\mathcal{Z}\setminus\mathcal{W}(V)=\emptyset$, then $\eta(V)\cap V=\emptyset$ but this is impossible because $\mathcal{V}_i$ is a filterbase and since $\eta(\mathcal{V}_i)=\mathcal{V}_i$, we have $V,\eta(V)\in\mathcal{V}_i$, so $\eta(V)\cap V\neq\emptyset$ This contradiction shows that $\mathcal{Z}\setminus\mathcal{W}(V)\neq\emptyset$. Since $\eta(V), V\in\mathcal{V}_i$ and $\mathcal{V}_i$ is a filterbase, there exists $W\in\mathcal{V}_i$ such that $W\subseteq\eta(V)\cap V$. Now, using ideas similar to the ones in the paragraph following (\ref{eq:observe2}) above, one can show that, for every $i\in\omega$ and $V\in\mathcal{V}_i$, there exists $U\in\mathcal{V}_i$ such that $U\subseteq V$ and $\mathcal{W}(U)=\emptyset$. We thus take the liberty to leave the details to the interested reader. If $\mathcal{V}$ does not satisfy (3), then we may replace it by a suitable family $\mathcal{V}'=\{\mathcal{V}_{i}':i\in\omega\}$  where, for every $i\in\omega$,  $\mathcal{V}_i^{\prime}=\{V\in\mathcal{V}_i: \mathcal{W}(V)=\emptyset\}$. It is clear that $E$ is a support of every $\mathcal{V}_i^{\prime}$ and $\mathcal{V}^{\prime}$ witnesses that $A$ is filterbase infinite. This is why we may assume that $\mathcal{V}$ satisfies (3). 

Let us fix $m\in\omega$ with $m>2(n+1)$. Since $\mathcal{V}$ witnesses that $A$ is filterbase infinite,  we can chooce a family $\{V_i: i\in m+1\}$ of pairwise disjoint sets such that, for every $i\in m+1$, $V_i\in\mathcal{V}_i$. This is impossible because if $i,j\in N+1$ and $i\neq j$, then $V_i\cap V_j=\emptyset$, so, among the $2(n+1)$ types defined in (3.1)-(3.3), the types of $V_i$ and $V_j$ are different. This completes the proof that $A$ is filterbase finite in $\mathcal{N}3$.

(ii) We have already shown that the statement $\mathbf{BPI}\wedge\neg\mathbf{IFBI}$ has a permutation model.  To see that $\mathbf{BPI}\wedge\neg\mathbf{IFBI}$ can be transferred to a model of $\mathbf{ZF}$ by Theorem \ref{pin}, it suffices to notice that $\neg\mathbf{IFBI}$ is a boundable statement, and thus $\neg\mathbf{IFBI}$ is injectively boundable. 
\end{proof}

\begin{remark}
\label{s2r09}
 We do not know if $\mathbf{IDFBI}$ implies $\mathbf{IWDI}$ in $\mathbf{ZF}$. We also do not know if $\mathbf{ISFBI}$ implies $\mathbf{IDFBI}$ in $\mathbf{ZF}$. In the forthcoming theorem, we show that, in a model of $\mathbf{ZF}$, a weakly Dedekind-finite set can be strongly filterbase infinite.
\end{remark}

\begin{theorem}
\label{thm:3}
The statement ``Every strongly filterbase infinite set is weakly Dedekind-infinite'' is not provable in $\mathbf{ZF}$. 
\end{theorem}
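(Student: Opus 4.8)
The plan is to refute the implication by producing a model of $\mathbf{ZF}$ containing a set $A$ that is strongly filterbase infinite but \emph{not} weakly Dedekind-infinite, i.e.\ with $\mathcal{P}(A)$ Dedekind-finite. By Theorem~\ref{pin} it suffices to exhibit such a set in a permutation model: the witnessing statement ``there is a strongly filterbase infinite set $A$ with $\mathcal{P}(A)$ Dedekind-finite'' is existential with all its quantifiers ranging only over $\mathcal{P}^{k}(A)$ for small $k$ (strong filterbase infiniteness speaks of a family in $\mathcal{P}^{2}(A)$ indexed by $\omega$, and weak Dedekind-finiteness is the non-existence of an injection $\omega\to\mathcal{P}(A)$), so it is boundable, hence injectively boundable, and Theorem~\ref{pin} transfers it to a $\mathbf{ZF}$-model. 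To certify strong filterbase infiniteness I would use Definition~\ref{s1d:fbi}$(a)$ directly, or equivalently its topological form from Corollary~\ref{s4:def}: it is enough that $\omega+1$ be a remainder of the discrete space $\langle A,\mathcal{P}(A)\rangle$.

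The core of the argument is the construction of the permutation model. I would arrange the atoms as $A=\bigsqcup_{n\in\omega}B_{n}$ with each $B_{n}$ infinite, and choose the group $\mathcal{G}$ together with the ideal of supports so that the block structure survives \emph{only} at the level of mod-finite classes. The target configuration is this: the family $\mathcal{V}=\{\mathcal{V}_{n}:n\in\omega\}$, where $\mathcal{V}_{n}$ consists of the symmetric infinite sets whose symmetric difference with $B_{n}$ is finite, should be a single element of the model, so that $n\mapsto\mathcal{V}_{n}$ is symmetric and the $\mathcal{V}_n$ are pairwise distinct with almost-disjoint floors; the group must be chosen precisely so that such representatives exist while $B_{n}$ itself is not symmetric and no enumeration $n\mapsto G_{n}$ of representatives $G_{n}\in\mathcal{V}_{n}$ lies in the model. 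Writing $T=\bigcup_{n}\mathcal{V}_{n}\subseteq\mathcal{P}(A)$, the model must then satisfy that $T$ is Dedekind-finite (no injection $\omega\to T$) yet carries the symmetric surjection $T\to\omega$ sending each representative to the index of its class; in other words, the group should act on the representatives within and across classes with a ``Russell-socks'' obstruction forbidding a symmetric choice of one representative per class. This is exactly what makes $\mathcal{P}(A)$ Dedekind-finite while $\mathcal{P}^{2}(A)$ is Dedekind-infinite (via $n\mapsto\mathcal{V}_{n}$).

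With the model in hand the verification splits into three steps. First, check (i)--(iv) of Definition~\ref{s1d:fbi}$(a)$ for $\mathcal{V}$: stability of each $\mathcal{V}_{n}$ under finite unions and intersections and infiniteness of its members follow from the mod-finite description; (ii) holds because any two members of $\mathcal{V}_{n}$ have finite symmetric difference; (iii) holds because distinct blocks are disjoint, so the floors are almost disjoint; and (iv) holds because any finite union $\bigcup_{i<n}V_{i}$ is contained, up to a finite set, in $B_{0}\cup\dots\cup B_{n-1}$, whose complement meets infinitely many blocks. Second, prove weak Dedekind-finiteness by the support argument: any injection $h:\omega\to\mathcal{P}(A)$ in the model has a finite support $E$, whence each $h(k)$ is $\fix_{\mathcal{G}}(E)$-invariant and the $h(k)$ are distinct $E$-supported subsets of $A$; one then shows that the $E$-supported subsets of $A$ admit no symmetric enumeration, so no such $h$ exists. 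Third, apply Theorem~\ref{pin} as above.

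The step I expect to be the main obstacle is the design of the group action, and with it the proof of weak Dedekind-finiteness. The difficulty is almost paradoxical: strong filterbase infiniteness hands us an $\omega$-indexed family of filterbases whose floors are pairwise almost-disjoint infinite sets, and the naive temptation is to read off an injection $\omega\to\mathcal{P}(A)$ by sending $n$ to a floor, which would force $A$ to be weakly Dedekind-infinite and destroy the construction. The model must therefore be engineered so that the indexing lives genuinely one power-set level up---the classes $\mathcal{V}_{n}$ are enumerated, but no representatives can be selected---and the crux is to verify rigorously that, for every finite support $E$, the $E$-supported subsets of $A$ carry no symmetric enumeration, so that $\mathcal{P}(A)$ is Dedekind-finite even though $\mathcal{P}^{2}(A)$ is Dedekind-infinite. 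Tuning the group to be large enough to block every symmetric selection of floor-representatives, yet small enough to keep the whole family $\{\mathcal{V}_{n}:n\in\omega\}$ symmetric, is the delicate balance on which the theorem turns.
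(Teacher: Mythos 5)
Your strategy coincides with the paper's: both refute the implication by exhibiting a permutation model with a strongly filterbase infinite, weakly Dedekind-finite set and transferring via Theorem~\ref{pin} (the witnessing statement being boundable, hence injectively boundable), and both take as witness the $\omega$-indexed family of mod-finite classes of the blocks of a partition $A=\bigcup_{i\in\omega}A_i$ into infinite pieces; your verification of (i)--(iv) of Definition~\ref{s1d:fbi}($a$) for that family matches the paper's. The problem is that everything you defer --- ``the design of the group action, and with it the proof of weak Dedekind-finiteness'' --- is precisely the content of the theorem, and your sketch points at a setup that does not deliver it. You speak of a group $\mathcal{G}$ ``together with the ideal of supports,'' i.e.\ supports that are finite sets of \emph{atoms}. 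Under that regime the two natural choices both fail: if $\mathcal{G}$ preserves each block setwise, then the enumeration $n\mapsto B_n$ (hence $n\mapsto\mathcal{V}_n$, hence an injection $\omega\to\mathcal{P}(A)$) is fixed by all of $\mathcal{G}$ and lies in the model, destroying weak Dedekind-finiteness; if $\mathcal{G}$ permutes the blocks among themselves, then no finite set of atoms supports the map $n\mapsto\mathcal{V}_n$, and Definition~\ref{s1d:fbi}($a$) requires this $\omega$-indexed family itself to be an element of the model, so strong filterbase infiniteness is lost. You gesture at this tension (``the delicate balance on which the theorem turns'') but supply no resolution.

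The paper's resolution is structurally different from anything in your sketch: it abandons atom-supports altogether. Its group $\mathcal{G}$ consists of permutations moving only finitely many atoms within each block and mapping each block's displaced part onto a finite subset of a \emph{single} block, and its normal filter is generated by the stabilizers $\mathcal{G}_E$ of finite sets $E\in[\mathcal{Q}]^{<\omega}$ of block-images, where $\mathcal{Q}=\{\phi(A_i):\phi\in\mathcal{G},\ i\in\omega\}$ --- so supports are finite \emph{families of subsets} of $A$, and normality of the filter must be verified by hand rather than inherited from an ideal. With this filter every $\phi\in\mathcal{G}$ fixes each class $\mathcal{V}_i$, so the enumeration has empty support, while weak Dedekind-finiteness follows from a short two-transposition argument: given a symmetric denumerable $\mathcal{U}=\{U_n:n\in\omega\}\subseteq\mathcal{P}(A)$ with support $E$, first $\bigcup\mathcal{U}\subseteq\bigcup E$ (swap $x\in U_k\setminus\bigcup E$ with $y\notin U_k\cup\bigcup E$), and then a transposition of two atoms inside a single $Z\in E$ separating some $U_k$ from some $U_m$ gives a contradiction. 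Your proposed reduction of weak Dedekind-finiteness to ``the $E$-supported subsets of $A$ admit no symmetric enumeration'' is not established and is not how the argument runs. Note also that your side condition that ``$B_n$ itself is not symmetric'' is false in the paper's model: each $A_i$ lies in $\mathcal{N}$ with support $\{A_i\}$ (and is amorphous there, Theorem~\ref{thm:4}); what must fail is only the symmetric enumeration of representatives, not membership of the individual blocks. So the proposal correctly identifies the target configuration and the transfer step, but it does not prove the theorem: it reduces it to an unconstructed model whose construction is the actual mathematical work.
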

\begin{proof}
Let us denote by $\mathbf{\Phi}$ the statement: There exists a strongly filterbase infinite set which is not weakly Dedekind-infinite. This statement is boundable, so also injectively boundable. Therefore, by Theorem \ref{pin}, to prove that $\mathbf{\Phi}$ has a $\mathbf{ZF}$-model, it suffices to construct a permutation model for $\mathbf{\Phi}$. We remark that, since $\mathbf{\Phi}$ is boundable, if we show that $\mathbf{\Phi}$ has a permutation model, we can also use Jech-Sochor First Embedding Theorem (see \cite[Theorem 6.1 and Problem 1 (p. 94)]{Je}) to deduce that $\mathbf{\Phi}$ has a (symmetric) $\mathbf{ZF}$-model. Let us introduce a new Fraenkel-Mostowski model $\mathcal{N}$ in which $\mathbf{\Phi}$ is true. To this end, we start with a ground model $M$ of $\mathbf{ZFA}+\mathbf{AC}$ with a denumerable set $A$ of atoms, which has a denumerable partition $\{A_{i}:i\in\omega\}$ into infinite sets. For a set $S$ and a mapping $\psi:S\to S$, let $\supp(\psi)=\{ x\in S: \psi(x)\neq x\}$ (i.e., $\supp(\psi)$ is the support of the mapping $\psi$). Let $\mathcal{G}$ be the group of all permutations $\phi$ of $A$ which have the following two properties:
\begin{enumerate}
\item[(1)] for every $i\in\omega$, $\phi$ moves only finitely many elements of $A_{i}$;
\item[(2)] for every $i\in\omega$, there exist $j\in\omega$ and $F\in [A_{j}]^{<\omega}$ such that 
$${\phi[\supp(\phi\upharpoonright A_{i})]=F}.$$
\end{enumerate}
For every $i\in\omega$, we let  
$$\mathcal{Q}_{i}=\{\phi(A_{i}):\phi\in \mathcal{G}\}.$$
 We also let
 $$\mathcal{Q}=\bigcup\{\mathcal{Q}_{i}:i\in\omega\}.$$
 
 For every $E\in [\mathcal{Q}]^{<\omega}$, let $\mathcal{G}_{E}=\{\phi\in \mathcal{G}:{\forall Q\in E}(\phi(Q)=Q)\}$. Then $\mathcal{G}_E$ is a subgroup of $\mathcal{G}$. Since, for $E,E^{\prime}\in [\mathcal{Q}]^{<\omega}$, $\mathcal{G}_{E\cup E^{\prime}}\subseteq\mathcal{G}_E\cap\mathcal{G}_{E^{\prime}}$, the collection $\{\mathcal{G}_E: E\in [\mathcal{Q}]^{<\omega}\}$ is a filterbase in the set of all subgroups of $\mathcal{G}$. We define $\mathcal{F}$ to be the filter of subgroups of $\mathcal{G}$ generated by the filterbase $\{\mathcal{G}_E: E\in [\mathcal{Q}]^{<\omega}\}$.  To see that $\mathcal{F}$ is a normal filter on $\mathcal{G}$ (see \cite[p. 46]{Je} for a definition of a normal filter), we check that $\mathcal{F}$ has the following two properties:
\begin{equation}
\label{eq:normalf1}
\forall a\in A(\{\pi\in \mathcal{G}:\pi(a)=a\}\in\mathcal{F})
\end{equation}
and
\begin{equation}
\label{eq:normalf2}
(\forall\pi\in \mathcal{G})(\forall H\in\mathcal{F})(\pi H\pi^{-1}\in\mathcal{F}).
\end{equation}

To see that (\ref{eq:normalf1}) holds, we fix $a\in A$. There exists a unique $i\in\omega$ such that $a\in A_{i}$. We fix $j\in\omega\setminus\{i\}$ and $a'\in A_{j}$. Let $\phi\in\mathcal{G}$ be the transposition  $(a,a')$. Then $\phi(A_j)=(A_j\setminus\{a^{\prime}\})\cup\{a\}$. The set $E_0=\{A_{i},\phi(A_{j})\}$ is a finite subset of $\mathcal{Q}_{i}\cup\mathcal{Q}_{j}\subset\mathcal{Q}$, so $E_0\in [\mathcal{Q}]^{<\omega}$. Furthermore, it is easy to see that $\mathcal{G}_{E_0}\subseteq \{\pi\in \mathcal{G}:\pi(a)=a\}$. Thus $\{\pi\in \mathcal{G}: \pi(a)=a\}\in\mathcal{F}$, and so (\ref{eq:normalf1}) holds.

To check that (\ref{eq:normalf2}) holds, we fix $\pi\in \mathcal{G}$ and $H\in\mathcal{F}$. There exists $E\in [\mathcal{Q}]^{<\omega}$ such that $\mathcal{G}_{E}\subseteq H$. By the definition of $\mathcal{Q}$, we have $\pi[E]\in[\mathcal{Q}]^{<\omega}$. We assert that $\mathcal{G}_{\pi[E]}\subseteq\pi H\pi^{-1}$. Let $\rho\in \mathcal{G}_{\pi[E]}$. For every $T\in E$ we have the following:
$$\rho(\pi T)=\pi T\rightarrow\pi^{-1}\rho\pi (T)=T;$$
\noindent Hence, since $\mathcal{G}_{E}\subseteq H$, we have:
$$\pi^{-1}\rho\pi\in \mathcal{G}_{E}\rightarrow\rho\in\pi \mathcal{G}_{E}\pi^{-1}\subseteq \pi H\pi^{-1}.$$ 
\noindent Therefore, $\rho\in\pi H\pi^{-1}$. Since $\rho$ is an arbitrary element of $\mathcal{G}_{\pi[E]}$, we conclude that $\mathcal{G}_{\pi[E]}\subseteq\pi H\pi^{-1}$. This implies that $\pi H\pi^{-1}\in\mathcal{F}$, so (\ref{eq:normalf2}) holds.

Let $\mathcal{N}$ be the permutation model determined by $M$, $\mathcal{G}$ and $\mathcal{F}$. We say that an element $x\in\mathcal{N}$ has \emph{support} $E\in [\mathcal{Q}]^{<\omega}$ if, for all $\phi\in \mathcal{G}_{E}$, $\phi(x)=x$. 

To show that the set $A$ of atoms is strongly filterbase infinite in $\mathcal{N}$, for every $i\in\omega$, we define $\mathcal{W}_{i}=\{\bigcap\mathcal{R}:\mathcal{R}\in [\mathcal{Q}_{i}]^{<\omega}\setminus\{\emptyset\}\}$ and  $\mathcal{V}_i=\{\bigcup\mathcal{C}: \mathcal{C}\in[\mathcal{W}_i]^{<\omega}\setminus\{\emptyset\}\}$. We put $\mathcal{V}=\{\mathcal{V}_{i}:i\in\omega\}$. On the basis of the definition of $\mathcal{Q}_{i}$ ($i\in\omega$), it is not hard to verify that any permutation of $A$ in $\mathcal{G}$ fixes $\mathcal{V}$ pointwise. Hence, $\fix_{\mathcal{G}}(\mathcal{V}) =\mathcal{G}\in\mathcal{F}$, and thus $\mathcal{V}$ is well-orderable in the model $\mathcal{N}$  (see also [9, item (4.2), p. 47]). Since $\mathcal{V}$ is denumerable in $M$ and well-orderable in $\mathcal{N}$, it follows that $\mathcal{V}$ is denumerable in $\mathcal{N}$. Let us leave to the interested readers an easy verification that $\mathcal{V}$ satisfies all conditions of Definition \ref{s1d:fbi}($a$). Hence $A$ is strongly filterbase infinite in $\mathcal{N}$. To complete the proof, it remains to show that $A$ is weakly Dedekind-finite in $\mathcal{N}$. 

Suppose that $A$ is weakly Dedekind-infinite in $\mathcal{N}$. Then $(\mathcal{P}(A))^{\mathcal{N}}$ has a denumerable subset $\mathcal{U}=\{U_{n}:n\in\omega\}$ which is in $\mathcal{N}$. Let $E\in [\mathcal{Q}]^{<\omega}$ be a support of $U_{n}$ for all $n\in\omega$.  

Suppose that $\bigcup\mathcal{U}\nsubseteq\bigcup E$. Hence, we can fix $k\in\omega$, $x\in U_{k}\setminus \bigcup E$ and $y\in A\setminus (U_{k}\cup\bigcup E)$.  Let $\psi\in\mathcal{G}$ be the transposition $(x,y)$. Then $\psi\in \mathcal{G}_{E}$, so $\psi(U_{k})=U_{k}$. It follows that $y=\psi(x)\in\psi(U_{k})=U_{k}$, which is impossible. The contradiction obtained shows that $\bigcup\mathcal{U}\subseteq\bigcup E$. This implies that there exist $Z\in E$ and $k,m\in\omega$, such that $k\neq m$ and $ U_{k}\cap Z\neq\emptyset\neq (U_{m}\cap Z)\setminus U_{k}$. We can fix $a\in U_k\cap Z$ and $b\in (U_m\cap Z)\setminus U_k$. 
Let $\phi\in\mathcal{G}$ be the transposition $(a, b)$. Then $\phi\in \mathcal{G}_{E}$, so $\phi(U_{k})=U_{k}$. This implies that $b=\phi(a)\in\phi(U_{k})=U_{k}$, which is impossible. This contradiction shows that $A$ is weakly Dedekind-finite in $\mathcal{N}$.
\end{proof}

\begin{theorem}
\label{thm:4}
Let $\mathcal{N}$ be the permutation model from the proof to Theorem \ref{thm:3}. Then $\mathbf{NAS}$ is false in $\mathcal{N}$. Therefore,  $\mathbf{ISFBI}$ is also false in $\mathcal{N}$.
\end{theorem}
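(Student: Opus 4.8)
The plan is to refute $\mathbf{NAS}$ by exhibiting an explicit amorphous set in $\mathcal{N}$, the natural candidate being a single block $A_0$ of the ground-model partition $\{A_i:i\in\omega\}$ of the atoms $A$. First I would record that $A_0\in\mathcal{N}$: the finite set $\{A_0\}\in[\mathcal{Q}]^{<\omega}$ is a support of $A_0$, since every $\phi\in\mathcal{G}_{\{A_0\}}$ fixes $A_0$ setwise by the very definition of $\mathcal{G}_{\{A_0\}}$. Moreover $A_0$ is infinite in $\mathcal{N}$, because it is infinite in the ground model $M$, $\mathcal{N}\subseteq M$, and any witnessing bijection onto a natural number would already exist in $M$. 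It then remains to prove that every $B\subseteq A_0$ lying in $\mathcal{N}$ is finite or cofinite in $A_0$.

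Fix such a $B$ together with a support $E=\{Q_1,\dots,Q_r\}\in[\mathcal{Q}]^{<\omega}$. The key structural observation, which I would extract from the two conditions defining $\mathcal{G}$, is that each $Q\in\mathcal{Q}$ differs from exactly one block $A_k$ by a finite set: writing $Q=\phi(A_k)$, it contains the cofinite set $A_k\setminus\supp(\phi\upharpoonright A_k)$ and adds only the finite set $\phi[\supp(\phi\upharpoonright A_k)]$, which by condition (2) lies in a single block. (The index $k$ is unique, since two distinct blocks have infinite symmetric difference.) Consequently, for each $Q\in E$, the trace $Q\cap A_0$ is cofinite in $A_0$ when the block associated with $Q$ is $0$, and is finite otherwise.

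Let $N\subseteq A_0$ be the finite set obtained by collecting, for each $Q\in E$ whose block is $0$, the finite set $A_0\setminus Q$, and for each $Q\in E$ whose block is different from $0$, the finite set $Q\cap A_0$. For any two atoms $x,y\in A_0\setminus N$ I would verify both that the transposition $(x,y)$ belongs to $\mathcal{G}$ (it moves finitely many atoms of $A_0$ and no atom of any other block, and maps $\{x,y\}$ into $A_0$, so conditions (1) and (2) hold) and that it fixes every $Q\in E$ (by the choice of $N$, both $x$ and $y$ lie in $Q$ when the block of $Q$ is $0$, and both lie outside $Q$ otherwise). Hence $(x,y)\in\mathcal{G}_E$, so $(x,y)(B)=B$, which forces $x\in B\Leftrightarrow y\in B$. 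As $x,y$ range over $A_0\setminus N$, either $A_0\setminus N\subseteq B$ or $(A_0\setminus N)\cap B=\emptyset$, so $B$ is cofinite or finite in $A_0$. Thus $A_0$ is amorphous in $\mathcal{N}$, and $\mathbf{NAS}$ is false there.

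Finally, $\mathbf{ISFBI}$ fails in $\mathcal{N}$ immediately: by Corollary \ref{s4c13} every amorphous set is strongly filterbase finite, so $A_0$ is an infinite set that is not strongly filterbase infinite, contradicting $\mathbf{ISFBI}$. The main obstacle I anticipate is the verification, for the swaps $(x,y)$, that they genuinely lie in the somewhat unusual group $\mathcal{G}$ and simultaneously in the stabilizer $\mathcal{G}_E$; this rests entirely on the finite-versus-cofinite dichotomy for $Q\cap A_0$, which is precisely where conditions (1) and (2) defining $\mathcal{G}$ must be invoked with care.
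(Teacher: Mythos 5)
Your proposal is correct and follows essentially the same route as the paper: both prove $\mathbf{NAS}$ fails by showing a block $A_i$ is amorphous in $\mathcal{N}$ via a transposition $(x,y)\in\mathcal{G}_E$ fixing a given support $E\in[\mathcal{Q}]^{<\omega}$, and then conclude that $\mathbf{ISFBI}$ fails because amorphous sets are strongly filterbase finite (you cite Corollary \ref{s4c13}, the paper cites the equivalent implication $\mathbf{ISFBI}\rightarrow\mathbf{NAS}$ from Theorem \ref{s5t:main3}($a$)). Your explicit finite-versus-cofinite analysis of the traces $Q\cap A_0$ for $Q\in E$ simply spells out the detail the paper leaves implicit when it asserts a suitable $\phi\in\mathcal{G}_E$ with $\phi(x)=y$ exists.
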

\begin{proof}
Let us use the same notation concerning the definition of $\mathcal{N}$, as in the proof to Theorem \ref{thm:3}. Let $i\in\omega$. We notice that $A_{i}\in\mathcal{N}$ because $\{A_i\}$ is a support of $A_i$.  Suppose that  there exists an infinite set $X\in\mathcal{N}$ such that $X\subseteq A_i$ and $A_i\setminus X$ is infinite. Now, let $E\in [\mathcal{Q}]^{<\omega}$ be a support of $X$. We can fix $x\in X$, $y\in A_{i}\setminus X$ and $\phi\in\mathcal{G}_E$, such that $\phi(x)=y$. Since $E$ is a support of $X$ and $\phi\in \mathcal{G}_{E}$, we have $\phi(X)=X$. However, $y=\phi(x)\in\phi(X)=X$, but this is impossible. The contradiction obtained shows that $A_i$ is amorphous in $\mathcal{N}$, so $\mathbf{NAS}$ is false in $\mathcal{N}$. In view of Theorem \ref{s5t:main3}($a$), $\mathbf{ISFBI}$ implies $\mathbf{NAS}$ in $\mathbf{ZFA}$. Hence $\mathbf{ISFBI}$ is false in $\mathcal{N}$.  
\end{proof}

\begin{proposition}
\label{s2p010}
There exists a model $\mathcal{M}$ of $\mathbf{ZF}$ in which it is true that there exists a locally compact Hausdorff, not completely regular space $\mathbf{X}$ such that all non-empty second-countable compact Hausdorff spaces are remainders of $\mathbf{X}$.
\end{proposition}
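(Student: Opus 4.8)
The plan is to reduce the whole statement to a single ingredient: a \emph{compact} (hence locally compact) Hausdorff space $\mathbf{K}$, living in some model of $\mathbf{ZF}$, which fails to be completely regular. Granting such a $\mathbf{K}$, I would work in that model and set $\mathbf{X}=\mathbf{K}\oplus\mathbb{N}$, where $\mathbb{N}$ carries the discrete topology and is taken disjoint from $K$. Since the direct sum of two locally compact Hausdorff spaces is locally compact Hausdorff, $\mathbf{X}$ is locally compact Hausdorff. Note that $\mathbf{K}$ by itself is useless here, as a compact space has only the empty remainder; the summand $\mathbb{N}$ is what supplies non-trivial remainders. Indeed, $\mathbb{N}$ is a cuf space, so Corollary \ref{s2c06}(ii) gives that every non-empty second-countable compact Hausdorff space is a remainder of $\mathbb{N}$, and then Proposition \ref{s2p05}, applied to the disjoint summands $\mathbf{K}$ and $\mathbb{N}$, yields that every non-empty second-countable compact Hausdorff space is a remainder of $\mathbf{X}$. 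Finally, $\mathbf{K}$ is a clopen subspace of $\mathbf{X}$ and complete regularity is hereditary; hence, were $\mathbf{X}$ completely regular, so would be $\mathbf{K}$, contrary to the choice of $\mathbf{K}$. Thus $\mathbf{X}$ is the desired space, and the entire problem is shifted onto producing $\mathbf{K}$.

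For the ingredient itself, the strategy is to build $\mathbf{K}$ in a permutation model and then transfer. The governing heuristic is the standard one in choiceless topology: in a permutation (or symmetric) model every continuous map $f\colon\mathbf{K}\to[0,1]$ must itself be an element of the model and so must have a finite support, whence $f$ is forced to be constant on the tails of the orbits of the support-stabiliser. This can make the supply of continuous real-valued functions so coarse that a suitable point $p$ cannot be separated by any continuous function from a suitable closed set $C$ with $p\notin C$, which is exactly the failure of complete regularity. I would take the atoms arranged as a disjoint denumerable union of amorphous blocks, exploit that an amorphous block admits a \emph{unique} Hausdorff cofinite limit point (since it cannot be split into two infinite pieces), and attach a top point so that the resulting space is compact Hausdorff, with the separation obstruction engineered into the interaction of the blocks with the resulting countable skeleton. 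Compactness is checked directly from the basic neighbourhoods (any open cover reduces to a finite one because cofinitely much of each block, and cofinitely many blocks, are absorbed by a single basic set) and Hausdorffness from the disjointness of the blocks. Since ``there is a compact Hausdorff space that is not completely regular'' asserts the existence of a set of bounded rank over the atoms and quantifies over functions lying in a bounded power of that set, it is injectively boundable; hence by the Pincus Transfer Theorem (Theorem \ref{pin}), or by the Jech--Sochor embedding directly, it holds in a model of $\mathbf{ZF}$.

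The main obstacle is precisely the construction of $\mathbf{K}$, owing to a tension between its two required features. On one hand, obtaining compactness and Hausdorffness without choice pushes one toward rigid limits: over an amorphous block only one point can serve as a cofinite limit, so each block essentially forces a one-point compactification and the genuinely interesting topology is confined to a countable skeleton. On the other hand, defeating complete regularity requires that a point and a closed set be inseparable by every continuous function, and on a merely countable skeleton this cannot come from symmetry alone, so it must be produced by the way the amorphous blocks feed into the skeleton (or, alternatively, imported from a non-completely-regular compact Hausdorff space already available in a $\mathbf{ZF}$-model, in the circle of examples of \cite{keremTac} and \cite{kw0}). Reconciling these demands---keeping the space compact Hausdorff while keeping the continuous functions too few to witness complete regularity---is where the real work lies; once such a $\mathbf{K}$ is in hand, the remainder assertion for $\mathbf{X}=\mathbf{K}\oplus\mathbb{N}$ is immediate from Corollary \ref{s2c06}(ii) and Proposition \ref{s2p05}, and the non-complete-regularity is immediate from heredity.
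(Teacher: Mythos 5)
Your reduction is exactly the paper's argument: the paper also takes a model of $\mathbf{ZF}$ containing a compact Hausdorff space $\mathbf{Z}$ that is not completely regular, forms $\mathbf{X}=\mathbf{Z}\oplus\mathbb{N}$, and invokes Corollary \ref{s2c06}(ii) together with Proposition \ref{s2p05} for the remainders, with non-complete-regularity inherited by the clopen summand. That half of your proposal is correct and complete (and in fact cleaner than the paper's write-up, which inserts an unnecessary remark about a compactification $\gamma\mathbf{D}$ with remainder $\mathbf{Z}$ before switching to the direct sum).

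The genuine gap is the ingredient $\mathbf{K}$ itself. The paper does not construct it: it simply cites the known consistency result (Good and Tree \cite{gt}) that a compact Hausdorff, not completely regular space exists in some model of $\mathbf{ZF}$. You instead undertake to build $\mathbf{K}$ in a permutation model, but what you give is a heuristic, not a construction: no space is actually defined, and the failure of complete regularity is never witnessed. Worse, the specific blueprint you sketch cannot work as stated. If each amorphous block contributes its one-point compactification and a top point is attached, the resulting space has a base of clopen sets (singletons, cofinite-plus-limit-point sets in each block, and complements of compact sets at the top), and a zero-dimensional compact Hausdorff space is completely regular in $\mathbf{ZF}$ outright --- the characteristic functions of clopen basic sets separate points from closed sets, with no choice used anywhere. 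So the separation obstruction cannot be ``engineered into the countable skeleton'' by symmetry, as you yourself half-concede; defeating complete regularity while keeping compactness and Hausdorffness is precisely the hard content, and it is left unproved. Your parenthetical fallback --- importing $\mathbf{K}$ from the literature --- is the right move and is what the paper does, but the references you point to (\cite{keremTac}, \cite{kw0}) concern countable compact Hausdorff non-metrizable spaces and compactifications, and neither is stated to provide a compact Hausdorff space failing complete regularity; the citation you need is \cite{gt}. With that substitution your proof coincides with the paper's; without it, the proposal does not establish the proposition.
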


\begin{proof} Let $\mathcal{M}$ be any model of $\mathbf{ZF}$ in which there exists a compact Hausdorff not completely regular space $\mathbf{Z}$ (cf, e.g., \cite{gt}) and let us work inside $\mathcal{M}$. It was shown in $\cite{kw0}$ that there exists a Hausdorff compactification $\gamma\mathbf{D}$ of a discrete space $\mathbf{D}$ such that $\gamma D\setminus D$ is homeomorphic to $\mathbf{Z}$. Then assuming that $\mathbb{N}\cap Z=\emptyset$ where $\mathbb{N}$ is the set of all natural numbers of $\mathcal{M}$, we can consider the space $\mathbf{X}=\mathbf{Z}\oplus\mathbb{N}$ which is a Hausdorff locally compact but not completely regular space.  By Proposition \ref{s2p05} and Corollary \ref{s2c06}(ii), all non-empty second-countable compact Hausdorff spaces are remainders of $\mathbf{X}$. 
\end{proof}

\section{The list of open problems}
\label{s6}
To show a direction of future research relevant to the topic of this paper and for the convenience of readers, we include a shortlist of open problems below. 
\begin{enumerate}
\item[(1)] Is there a model of $\mathbf{ZFA}$ (or of $\mathbf{ZF}$) in which $\mathbf{ISFBI}$ is true but $\mathbf{IWDI}$ is false?
\item[(2)] Is there a model of $\mathbf{ZFA}$ (or of $\mathbf{ZF}$) in which $\mathbf{ISFBI}$ is true but $\mathbf{IDFBI}$ is false?
\item[(3)] Is there a model of $\mathbf{ZFA}$ (or of $\mathbf{ZF}$) in which $\mathbf{IDFBI}$ is true but $\mathbf{IWDI}$ is false?
\item[(4)] Is there a model of $\mathbf{ZFA}$ (or of $\mathbf{ZF}$) in which a filterbase infinite set can be strongly filterbase finite?
\item[(5)] Is there a model of $\mathbf{ZFA}$ (or of $\mathbf{ZF}$) in which a strongly filterbase infinite set can be dyadically filterbase finite?
\item[(6)] Is there a model of $\mathbf{ZFA}$ (or of $\mathbf{ZF}$) in which a dyadically filterbase infinite set can be weakly Dedekind-finite?
\item[(7)]  Is it provable in $\mathbf{ZF}$ that if an infinite discrete space $\mathbf{D}$  has a denumerable remainder, then $\mathbb{N}(\infty)$ is a remainder of $\mathbf{D}$?
\item[(8)] Is it provable in $\mathbf{ZF}$ that if a locally compact Hausdorff space $\mathbf{X}$ has a denumerable remainder, then $\mathbb{N}(\infty)$ is a remainder of $\mathbf{X}$?
\item[(9)] Is is provable in $\mathbf{ZF}$ that if a compact Hausdorff space $\mathbf{X}$ has a cuf base, then $\mathbf{X}$ is weakly Loeb?
\item[(10)] Is it provable in $\mathbf{ZF}$ that if a compact Hausdorff space has a cuf base and a Hausdorff compactification $\gamma\mathbf{X}$ has a second-countable remainder, then $\gamma\mathbf{X}$ is metrizable?
\end{enumerate}

Clearly, a positive answer to question (3) gives a positive answer to question (1). A negative answer to question (7) gives a negative answer to question (8).

\end{document}